\documentclass[11pt]{amsart}

\usepackage{marginnote}
\usepackage{anysize} 
\usepackage{comment}
\marginsize{1in}{1in}{1in}{1in}
\usepackage{comment}
\usepackage{listings}
\usepackage{xcolor}
\usepackage{amsmath}
\usepackage{mathtools}
\usepackage{booktabs}
\usepackage[all]{xy}
\usepackage[utf8]{inputenc}
\usepackage{varioref}
\usepackage{upgreek}
\usepackage{amsfonts}
\usepackage{amssymb}
\usepackage{bbm}
\usepackage{esint}
\usepackage{graphicx}
\usepackage{subcaption}
\usepackage{tikz}
\usepackage{empheq}
\usepackage{enumitem}
\usepackage{tikz-cd}
\usepackage{upgreek}
\usepackage{todonotes}
\usetikzlibrary{matrix,arrows,decorations.pathmorphing}
\usepackage{mathrsfs}
\usepackage[hypertexnames=false,backref=page,pdftex,
 	pdfpagemode=UseNone,
 	breaklinks=true,
 	extension=pdf,
 	colorlinks=true,
 	linkcolor=blue,
 	citecolor=red,
 	urlcolor=blue,
 ]{hyperref}

\def\sp{\operatorname{sp}}

\newcommand{\HH}{{\mathbb H}}

\newcommand{\tr}{\operatorname{tr}}

\newcommand{\str}{\operatorname{str}}

\newcommand{\id}{\operatorname{id}}

\newcommand{\End}{\operatorname{End}}
\newcommand{\ad}{\operatorname{ad}}

\newcommand{\Dirac}{\operatorname{D}}

\def\EV{{\text{EV}}}
\def\sp{{\mathfrak{sp}}}
\def\wt{{\mathrm{wt}}}
\def\WW{{\mathcal{W}}}

\def\so{{\mathfrak{so}}}

\def\CC{{\mathbb C}}
\def\RR{{\mathbb R}}

\def\su{{\mathfrak{su}}} 

\def\psl{{\mathfrak{psl}}}

\def\sl{{\mathfrak{sl}}}
\def\st{{\text{st}}}
\def\nst{{\text{nst}}}
\def\self{{\text{self}}}
\def\skew{{\text{skew}}}

\def\HH{{\mathcal{H}}}

\def\osp{{\mathfrak{osp}}}

\def\hh{{\mathfrak h}}

\def\kk{{\mathfrak k}}

\def\bb{{\mathfrak b}}
\def\even{{\mathfrak{g}_{\bar{0}}}}
\def\odd{{\mathfrak{g}_{\bar{1}}}}

\def\nn {{\mathfrak{n}}}

\def\gg{{\mathfrak{g}}}

\def\ZZ{{\mathbb Z}}

\def\slmn{{\mathfrak{sl}(m\vert n)}}

\def\glmn{{\mathfrak{gl}(m\vert n)}} 
\def\UE{{\mathfrak U}}

\newcommand{\gl}{{\mathfrak{gl}}}

\makeatletter
\newcommand*{\da@rightarrow}{\mathchar"0\hexnumber@\symAMSa 4B }
\newcommand*{\da@leftarrow}{\mathchar"0\hexnumber@\symAMSa 4C }
\newcommand*{\xdashrightarrow}[2][]{%
  \mathrel{%
    \mathpalette{\da@xarrow{#1}{#2}{}\da@rightarrow{\,}{}}{}%
  }%
}
\newcommand{\xdashleftarrow}[2][]{%
  \mathrel{%
    \mathpalette{\da@xarrow{#1}{#2}\da@leftarrow{}{}{\,}}{}%
  }%
}
\newcommand*{\da@xarrow}[7]{%
  \sbox0{$\ifx#7\scriptstyle\scriptscriptstyle\else\scriptstyle\fi#5#1#6\m@th$}%
  \sbox2{$\ifx#7\scriptstyle\scriptscriptstyle\else\scriptstyle\fi#5#2#6\m@th$}%
  \sbox4{$#7\dabar@\m@th$}%
  \dimen@=\wd0 %
  \ifdim\wd2 >\dimen@
    \dimen@=\wd2 %
  \fi
  \count@=2 %
  \def\da@bars{\dabar@\dabar@}%
  \@whiledim\count@\wd4<\dimen@\do{%
    \advance\count@\@ne
    \expandafter\def\expandafter\da@bars\expandafter{%
      \da@bars
      \dabar@ 
    }%
  }%
  \mathrel{#3}%
  \mathrel{%
    \mathop{\da@bars}\limits
    \ifx\\#1\\%
    \else
      _{\copy0}%
    \fi
    \ifx\\#2\\%
    \else
      ^{\copy2}%
    \fi
  }%
  \mathrel{#4}%
}
\makeatother

\makeatletter
\newsavebox\myboxA
\newsavebox\myboxB
\newlength\mylenA

\newcommand*\xtilde[2][0.8]{%
    \sbox{\myboxA}{$\m@th#2$}%
    \setbox\myboxB\null
    \ht\myboxB=\ht\myboxA%
    \dp\myboxB=\dp\myboxA%
    \wd\myboxB=#1\wd\myboxA
    \sbox\myboxB{$\m@th\widetilde{\copy\myboxB}$}
    \setlength\mylenA{\the\wd\myboxA}
    \addtolength\mylenA{-\the\wd\myboxB}%
    \ifdim\wd\myboxB<\wd\myboxA%
       \rlap{\hskip 0.5\mylenA\usebox\myboxB}{\usebox\myboxA}%
    \else
        \hskip -0.5\mylenA\rlap{\usebox\myboxA}{\hskip 0.5\mylenA\usebox\myboxB}%
    \fi}

\newbox\usefulbox

\def\getslant #1{\strip@pt\fontdimen1 #1}

\def\xxtilde #1{\mathchoice
 {{\setbox\usefulbox=\hbox{$\m@th\displaystyle #1$}%
    \dimen@ \getslant\the\textfont\symletters \ht\usefulbox
    \divide\dimen@ \tw@ 
    \kern\dimen@ 
    \xtilde{\kern-\dimen@ \box\usefulbox\kern\dimen@ }\kern-\dimen@ }}
 {{\setbox\usefulbox=\hbox{$\m@th\textstyle #1$}%
    \dimen@ \getslant\the\textfont\symletters \ht\usefulbox
    \divide\dimen@ \tw@ 
    \kern\dimen@ 
    \xtilde{\kern-\dimen@ \box\usefulbox\kern\dimen@ }\kern-\dimen@ }}
 {{\setbox\usefulbox=\hbox{$\m@th\scriptstyle #1$}%
    \dimen@ \getslant\the\scriptfont\symletters \ht\usefulbox
    \divide\dimen@ \tw@ 
    \kern\dimen@ 
    \xtilde{\kern-\dimen@ \box\usefulbox\kern\dimen@ }\kern-\dimen@ }}
 {{\setbox\usefulbox=\hbox{$\m@th\scriptscriptstyle #1$}%
    \dimen@ \getslant\the\scriptscriptfont\symletters \ht\usefulbox
    \divide\dimen@ \tw@ 
    \kern\dimen@ 
    \xtilde{\kern-\dimen@ \box\usefulbox\kern\dimen@ }\kern-\dimen@ }}%
 {}}

\newcommand*\xoverline[2][0.75]{%
    \sbox{\myboxA}{$\m@th#2$}%
    \setbox\myboxB\null
    \ht\myboxB=\ht\myboxA%
    \dp\myboxB=\dp\myboxA%
    \wd\myboxB=#1\wd\myboxA
    \sbox\myboxB{$\m@th\overline{\copy\myboxB}$}
    \setlength\mylenA{\the\wd\myboxA}
    \addtolength\mylenA{-\the\wd\myboxB}%
    \ifdim\wd\myboxB<\wd\myboxA%
       \rlap{\hskip 0.5\mylenA\usebox\myboxB}{\usebox\myboxA}%
    \else
        \hskip -0.5\mylenA\rlap{\usebox\myboxA}{\hskip 0.5\mylenA\usebox\myboxB}%
    \fi}

\def\xxoverline #1{\mathchoice
 {{\setbox\usefulbox=\hbox{$\m@th\displaystyle #1$}%
    \dimen@ \getslant\the\textfont\symletters \ht\usefulbox
    \divide\dimen@ \tw@ 
    \kern\dimen@ 
    \overline{\kern-\dimen@ \box\usefulbox\kern\dimen@ }\kern-\dimen@ }}
 {{\setbox\usefulbox=\hbox{$\m@th\textstyle #1$}%
    \dimen@ \getslant\the\textfont\symletters \ht\usefulbox
    \divide\dimen@ \tw@ 
    \kern\dimen@ 
    \xoverline{\kern-\dimen@ \box\usefulbox\kern\dimen@ }\kern-\dimen@ }}
 {{\setbox\usefulbox=\hbox{$\m@th\scriptstyle #1$}%
    \dimen@ \getslant\the\scriptfont\symletters \ht\usefulbox
    \divide\dimen@ \tw@ 
    \kern\dimen@ 
    \xoverline{\kern-\dimen@ \box\usefulbox\kern\dimen@ }\kern-\dimen@ }}
 {{\setbox\usefulbox=\hbox{$\m@th\scriptscriptstyle #1$}%
    \dimen@ \getslant\the\scriptscriptfont\symletters \ht\usefulbox
    \divide\dimen@ \tw@ 
    \kern\dimen@ 
    \xoverline{\kern-\dimen@ \box\usefulbox\kern\dimen@ }\kern-\dimen@ }}%
 {}}
\makeatother

\makeatletter
\newcommand{\mylabel}[2]{#2\def\@currentlabel{#2}\label{#1}}
\makeatother

\makeatletter
\newcommand{\Mac}{}
\DeclareRobustCommand{\Mac}{%
  M%
  \raisebox{\dimexpr\fontcharht\font`M-\height}{%
    \check@mathfonts\fontsize{\sf@size}{0}\selectfont
    c%
  }%
}
\makeatother

\newtheoremstyle{citing}
  {}
  {}
  {\itshape}
  {}
  {\bfseries}
  {\textbf{.}}
  {.5em}
  {\thmnote{#3}}

\theoremstyle{plain}
\newtheorem{theorem}{Theorem}

\newtheorem{lemma}[theorem]{Lemma}
\newtheorem{corollary}[theorem]{Corollary}

\newtheorem{proposition}[theorem]{Proposition}

\theoremstyle{remark}
\newtheorem{example}[theorem]{Example}

\theoremstyle{definition}

\newtheorem{definition}[theorem]{Definition}

\numberwithin{equation}{section}

\theoremstyle{remark}
\newtheorem{remark}[theorem]{Remark}

{\theoremstyle{citing}
}

{\theoremstyle{definition}
}

\title{Unitary Branching for $\slmn, \osp(m\vert 2n)$ and $F(4)$}

\author{Steffen Schmidt}
\address{Center for Quantum Mathematics, 
University of Southern Denmark, DK-5230 Odense, Denmark}
\email{stschmidt@imada.sdu.dk}

\let\origmaketitle\maketitle
\def\maketitle{
  \begingroup
  \def\uppercasenonmath##1{} 
  \let\MakeUppercase\relax 
  \origmaketitle
  \endgroup
}

\begin{document}
\thispagestyle{empty}

\begin{abstract}
We determine the branching of irreducible unitary representations of the basic classical Lie superalgebras $\mathfrak{sl}(m|n)$, $\mathfrak{osp}(m|2n)$, and $F(4)$ under restriction to their even subalgebras. The proof combines Dirac inequalities for the Huang--Pand\v{z}i\'{c}--Dirac operator $\Dirac_{\gg,\even}$ with a super analogue of Freudenthal's recursion derived from the Dirac formalism. We extend the Dirac method to positive systems that are not adapted to the relevant real form by transporting the Dirac inequality through odd reflections and determining the resulting correction terms. This yields finite recursive procedures that determine the occurring even constituents and their branching multiplicities, together with closed formulas for the latter, uniformly in the degree of atypicality. For $\mathfrak{sl}(m|n)$, the corrected Dirac inequalities also give a characterization of unitarity for all positive systems relevant to unitary representations.
\end{abstract}

\maketitle

\setlength{\parindent}{1em}
\setcounter{tocdepth}{2}

\tableofcontents

\section{Introduction}
\noindent Unitary representation theory of Lie superalgebras is fundamental to supersymmetric physics, where states are organized into irreducible unitary representations, called supermultiplets, of the relevant supersymmetry algebra. Of particular interest are the irreducible unitary representations of the basic classical Lie superalgebras that emerge as complex superconformal algebras. In spacetime dimension greater than two, these occur only in dimensions three through six \cite{Shnider}: in dimension three they are the orthosymplectic Lie superalgebras $\osp(m\vert4)$, in dimension four the special linear Lie superalgebras $\sl(4\vert n)$, with $\psl(4\vert4)$ in the exceptional case $n=4$, in dimension five the exceptional Lie superalgebra $F(4)$, and in dimension six the orthosymplectic Lie superalgebras $\osp(8\vert2n)$.

These superconformal examples form part of the larger families $\sl(m\vert n)$, $\osp(m\vert2n)$, and $F(4)$ considered in this article. The broader families are likewise relevant in physics. For example,  $\sl(2\vert1)$ appears in the supersymmetric $t$--$J$ model of strongly correlated electrons \cite{EsslerKorepin}, while $\osp(m\vert2n)$ occurs in integrable supersphere sigma models and Gross--Neveu models \cite{SaleurWehefritzKaufmann}, as well as in integrable superspin chains \cite{ArnaudonEtAl}. Throughout, $\gg$ denotes one of the Lie superalgebras $\sl(m\vert n)$, $\osp(m\vert2n)$, or $F(4)$.

General unitary representations of $\gg$ are defined with respect to the choice of a real form. These real forms are in one-to-one correspondence with conjugate-linear anti-involutions \cite{ParkerRealForms, serganova1983classification}, that is, conjugate-linear parity-preserving maps $\omega\colon\gg\to\gg$ satisfying $\omega^{2}=\operatorname{id}_{\gg}$ and 
$
\omega([X,Y])=[\omega(Y),\omega(X)]$
for all $X,Y\in\gg$, the corresponding real form being
\begin{equation}
\gg^{\omega}=\{X\in\gg:\omega(X)=-(-i)^{p(X)}X\}.
\end{equation}
A representation $\mathcal{H} = \HH_{\bar{0}}\oplus \HH_{\bar{1}}$ of $\gg$ is then called $\omega$-unitary if it carries a positive definite Hermitian form $\langle\cdot,\cdot\rangle_{\mathcal{H}}$ such that $\langle \HH_{\bar{0}},\HH_{\bar{1}}\rangle_{\HH}=0$ and 
\begin{equation}
\langle Xv,w\rangle_{\mathcal{H}}
=
\langle v,\omega(X)w\rangle_{\mathcal{H}}
\end{equation}
for all $X\in\gg$ and $v,w\in\mathcal{H}$ \cite[Definition~2.3]{jakobsen1994full}. This is equivalent to an alternative formulation in terms of super Hermitian forms \cite{SchmidtDirac, SchmidtWalcher}. 
The unitary representation theory is concentrated in a small class of real forms. Nontrivial $\omega$-unitary representations exist only for $\su(p,q\vert n,0)\cong\su(p,q\vert0,n),$ $\osp(m,0\vert2n;\RR)\cong\osp(0,m\vert2n;\RR),$ and for the real forms $F(4,0)$ and $F(4,2)$. Moreover, every irreducible unitary representation for these real forms is either a highest weight or a lowest weight representation. We therefore restrict to unitary highest weight representations of $\sl(m\vert n)$ corresponding to $\su(p,q\vert0,n)$, with $p+q=m$, of $\osp(m\vert2n;\RR)$ corresponding to $\osp(m,0\vert2n;\RR)$, and of $F(4)$ corresponding to $F(4,0)$ and $F(4,2)$. We suppress $\omega$ from the notation and refer to them simply as unitary representations. The lowest weight case is completely analogous.

Every unitary representation of $\gg$ is completely reducible, so it is enough to study irreducible unitary highest weight modules
$
\HH\cong L(\Lambda).
$
Their restriction to the even subalgebra $\even$ is again completely reducible and unitary with respect to the corresponding real form of $\even$. Thus, up to parity,
\begin{equation}
\HH\big\vert_{\even}
\cong
\bigoplus_{\mu\in\Gamma_L(\Lambda)}
L_0(\mu)^{\oplus m_\mu},
\end{equation}
where $m_\mu\in\ZZ_{+}$ is the multiplicity of the irreducible $\even$-module $L_0(\mu)$. The resulting problem is to determine which highest weights $\mu$ occur and with what multiplicities. Equivalently, one seeks to determine the set $\Gamma_L(\Lambda)$ together with the integers $m_\mu$. This is the unitary branching problem for the pair $(\gg,\even)$.

An explicit branching law reduces the study of $\HH$ to the representation theory of the reductive Lie algebra $\even$ by determining the bosonic symmetry types occurring in $\HH$ and their multiplicities. This decomposition also provides the basis for assigning a generalized superdimension to any unitary representation of $\gg$, which will be developed in upcoming work. In superconformal applications, the same branching law decomposes a superconformal multiplet into conformal multiplets and determines its conformal-primary content together with the corresponding $R$-symmetry quantum numbers. At the same time, it makes the effect of atypicality explicit: shortening changes both the $\even$-types that occur and their multiplicities, and is therefore directly visible in the branching law. Determining this decomposition is nevertheless subtle, since the branching behavior depends sensitively on the atypicality of $\Lambda$ and on the choice of positive system.

In this article, we solve the unitary branching problem for $\sl(m\vert n)$, $\osp(m\vert2n)$, and $F(4)$ by combining Dirac inequalities with a super Freudenthal recursion. We first consider a positive system adapted to the real form. In this case, the relative Dirac operator $\Dirac_{\gg,\even}$ is compatible with the unitary structure, and $\Dirac_{\gg,\even}^{2}$ gives a Dirac inequality for each $\even$-highest weight occurring in $\HH\vert_{\even}$. The same quadratic expression occurs as the coefficient of the corresponding weight multiplicity in a super Freudenthal identity, obtained from matrix coefficients of $\Dirac_{\gg,\even}^{2}$. Unitarity implies the strict positivity of this coefficient and therefore permits the Freudenthal recursion to be inverted. This gives a procedure valid for arbitrary atypicality, without a separate analysis of the different atypical cases. The weight multiplicities are determined recursively, and the branching multiplicities then follow from a triangular relation. Iterating the recursion gives a closed formula. For positive systems not adapted to the real form, we introduce corrected Dirac inequalities which restore the positivity required for the same recursion. Along the way, we obtain a new characterization of unitarity for $\slmn$ with respect to the standard positive system, completing the characterization of unitarity in terms of eigenvalues of $\Dirac_{\gg,\even}^{2}$ in this case.

\subsection{Organization and results} Section~\ref{sec::preliminaries_slmn_and_osp} collects the structural results on unitary highest weight representations of $\sl(m\vert n)$, $\osp(m\vert2n)$, and $F(4)$ used below. We first determine the positive systems relevant to unitarity and relate them by explicit sequences of odd reflections. For $\sl(m\vert n)$, $\osp(m\vert2n)$, and $F(4)$, these relations are given in Lemma~\ref{lemm::relating_standard_and_non_standard_simple_system}, Lemma~\ref{lemm::relating_A_and_B_simple_system}, and Lemma~\ref{lemm::odd_reflections_F(4)}, respectively. They provide, in particular, an explicit comparison of the corresponding highest weights. Proposition~\ref{prop::transport_of_unitarity} shows that unitarity is preserved under these changes of positive system. We then introduce adapted positive systems and determine the real forms admitting nontrivial unitary representations, together with the positive systems compatible with their conjugate-linear anti-involutions. The corresponding statements are Lemma~\ref{lemm::adapted_systems_slmn} for $\sl(m\vert n)$, Lemmas~\ref{lemm::adapted_systems_osp} and~\ref{lemm::adapted_systems_osp_star} for $\osp(m\vert2n)$, and Lemma~\ref{lemm::F4_adapted_positive_systems} for $F(4)$. Finally, we recall the even filtration and the resulting restrictions on the irreducible $\even$-constituents. These results form the basis for the branching problem considered in the subsequent sections.
\\
\\
Section~\ref{sec::Kostants_cubic_Dirac_operator} relates the relative Dirac operator $\Dirac_{\gg,\even}$ to unitarity and explains the role of the positive system. We first recall the construction of $\Dirac_{\gg,\even}$, its square, and the oscillator module $M(\odd)$ for the Weyl algebra of $\odd$. Proposition~\ref{prop::unitarity_oscillator_representation} characterizes the positive systems for which $M(\odd)$ is unitary, while Theorem~\ref{thm::dirac-square-action} determines the action of $\Dirac_{\gg,\even}^{2}$ on the $\even$-composition factors of $M\otimes M(\odd)$.

For an $\omega$-adapted positive system of sign $\varepsilon$, Theorem~\ref{thm::Dirac_adjointness} gives
$\Dirac_{\gg,\even}^{\ast}=-\varepsilon\Dirac_{\gg,\even}$.
The square of the Dirac operator therefore has a definite sign, and Proposition~\ref{prop::Dirac_inequality_adapted} yields the corresponding Dirac inequality. Restricting to constituents of $M$, Corollary~\ref{cor::Dirac_inequality_on_M} gives
\begin{equation}
-\varepsilon\bigl((\mu+2\rho,\mu)-(\Lambda+2\rho,\Lambda)\bigr)\geq0,
\end{equation}
with strict inequality for $\mu\neq\Lambda$.

Thus, for adapted positive systems, unitarity is reflected directly in the sign of $\Dirac_{\gg,\even}^{2}$. For non-adapted systems this argument no longer applies, since the required adjointness property is lost. The main result of the section is to replace the resulting Dirac inequality by a corrected one, thereby extending the Dirac method to arbitrary relevant positive systems.

The case of $\sl(m\vert n)$ provides the basic example. Assume $pq\neq0$. The standard positive system is then not $\omega$-adapted to the relevant noncompact real form, so that $\Dirac_{\gg,\even}$ has no definite adjointness property. We remedy this by decomposing
\begin{equation}
\Dirac_{\gg,\even}=\Dirac_{\self}+\Dirac_{\skew},
\qquad
\Dirac_{\self}^{\ast}=\Dirac_{\self},
\qquad
\Dirac_{\skew}^{\ast}=-\Dirac_{\skew}.
\end{equation}
Lemma~\ref{lemm::properties_D_self_and_D_skew} identifies these two summands with the relative Dirac operators of the naturally embedded subsuperalgebras $\sl(p\vert n)$ and $\sl(q\vert n)$. Thus the failure of adjointness of the full operator is separated into two Dirac operators with definite adjointness. We next compare the standard realization with the adapted nonstandard realization
$L(\Lambda;\Delta_{\st}^{+})\cong
\Uppi^{\upepsilon}L(\Lambda';\Delta_{\nst}^{+})$.
For this purpose, we introduce a correction term $\mathcal C_{\nst}^{\st}$. Lemma~\ref{lemm::corrected_Dirac_action} shows that the action of
$\Dirac_{\gg,\even}^{2}+\mathcal C_{\nst}^{\st}$ in the standard realization agrees with that of the Dirac square in the adapted realization. The Dirac inequality for the latter therefore induces a corrected Dirac inequality for the standard positive system. This gives Theorem~\ref{thm::unitarity_standard_case}, which characterizes unitarity directly in the standard realization. Namely, $L(\Lambda;\Delta_{\st}^{+})$ is unitary if and only if $L_0(\Lambda')$ is unitary and
\begin{equation}
-(\Lambda+2\rho^{\st},\Lambda)
+(\mu+2\rho^{\st},\mu)
+
2\sum_{i=p+1}^{m}\sum_{a=1}^{n}
(\mu,\epsilon_i-\delta_a)> 0
\end{equation}
for every $\even$-highest weight $\mu\neq\Lambda'$ occurring in the even filtration. This extends the Dirac criterion of \cite{SchmidtDirac} from the adapted nonstandard system to the standard positive system and completes the Dirac-theoretic characterization of unitarity for the two positive systems relevant to $\sl(m\vert n)$.

In general, the weight $\Lambda'$ is obtained from $\Lambda$ by a sequence of odd reflections, and no closed formula for this transformation is available. Corollary~\ref{cor::finding_Lambda'} gives a direct alternative: $\Lambda'$ is the unique $\even$-constituent of the standard realization for which the corrected Dirac inequality is an equality. Thus the highest weight of the adapted realization can be recovered intrinsically from the branching data, without carrying out the odd-reflection procedure.

Finally, Section~\ref{subsec::corrected_Dirac} extends this construction to $\osp(m\vert2n)$ and $F(4)$. Transporting the Dirac inequality along the odd reflections relating the relevant positive systems, we obtain explicit corrected inequalities for the non-$\omega$-adapted systems. Thus a Dirac inequality is available for every positive system occurring in the unitary highest weight theory considered here. 
\\ 
\\
In Section~\ref{sec::unitary_branching}, we solve the branching problem for unitary highest weight representations. The even filtration of Corollary~\ref{corollary::even-filtration-simple} gives a first restriction on the $\even$-constituents which may occur. Unitarity and the Dirac inequality sharpen this restriction and lead to an explicit set of admissible highest weights, denoted by $\Lambda-\Gamma(\Lambda)$. For $\osp(m\vert2n)$ and $F(4)$, Lemma~\ref{lemm::finite_stopping_branching} further reduces the problem to a finite computation. These conditions are only necessary. To decide which candidates actually occur, we derive in Proposition~\ref{prop::Freudenthal_identity} a super analogue of Freudenthal's identity directly from matrix coefficients of the square of the relative Dirac operator:
\begin{equation}
\bigl((\Lambda+2\rho,\Lambda)-(\mu+2\rho,\mu)\bigr)d(\mu)
=
2\sum_{\alpha\in\Delta^{+}}\sum_{k\geq1}
(-1)^{(k+1)p(\alpha)}
(\mu+k\alpha,\alpha)d(\mu+k\alpha).
\end{equation}
The significance of this formula is that its leading coefficient is exactly the quadratic expression governed by the Dirac inequality. Thus the same operator which supplies the positivity condition also produces the recursion. For a unitary highest weight representation, strictness of the Dirac inequality at every non-top $\even$-highest weight makes this coefficient nonzero. The identity can therefore be inverted and determines the weight multiplicities $d(\mu)$ recursively inside the simple representation.

We then recover the branching multiplicities from these weight multiplicities. Writing
$a_{\nu,\mu}=\dim L_0(\nu)^\mu$, Theorem~\ref{thm::branching_recursion} gives
\begin{equation}
\begin{aligned}
m_\mu
&=
d(\mu)
-
\sum_{\substack{\nu\in\Lambda-\Gamma(\Lambda)\\ \nu>\mu}}
m_\nu a_{\nu,\mu}
\\
&=
\sum_{\substack{\nu\in\Lambda-\Gamma(\Lambda)\\ \nu>\mu}}
\left(
\frac{
2\displaystyle\sum_{\alpha\in\Delta^+}\sum_{k\geq1}
(-1)^{(k+1)p(\alpha)}
(\mu+k\alpha,\alpha)
a_{\nu,\mu+k\alpha}
}{
(\Lambda+2\rho,\Lambda)-(\mu+2\rho,\mu)
}
-a_{\nu,\mu}
\right)m_\nu .
\end{aligned}
\end{equation}
Since the even weight multiplicities $a_{\nu,\mu}$ are explicitly computable, this formula determines the branching multiplicities recursively from higher to lower weights. Corollary~\ref{cor::branching_support} then determines the actual branching support by the nonvanishing of these multiplicities. Thus the combination of the $\even$-filtration, unitarity, the strict Dirac inequality, and the super Freudenthal recursion determines both which $\even$-constituents occur and their multiplicities. Theorem~\ref{thm::closed_formula} solves the triangular recursion explicitly and gives a closed formula for $m_{\mu}$ depending only on $\Lambda$ and $\mu$.

Finally, the correction theory developed in Section~\ref{sec::Kostants_cubic_Dirac_operator} extends this construction to non-$\omega$-adapted positive systems. In this setting, the ordinary Freudenthal identity is replaced by a corrected identity whose coefficient is precisely the corrected Dirac expression. The same recursive argument therefore applies, and the resulting recursive and closed branching formulas extend to all positive systems relevant to the unitary highest weight representations considered in this paper.

\subsection{Conventions}
We write $\ZZ_{\geq0}$ for the set of nonnegative integers, $\ZZ_{+}$ for the set of positive integers, and $\ZZ_{2}\coloneqq\ZZ/2\ZZ={\bar0,\bar1}$. The ground field is $\CC $, unless otherwise stated. A super vector space is a $\ZZ_{2}$-graded vector space $V=V_{\bar0}\oplus V_{\bar1}$. For homogeneous $v\in V$, its parity is denoted by $p(v)$. For $a\in\ZZ_{2}$, we set $i^{\bar0}\coloneqq1$ and $i^{\bar1}\coloneqq i$, so that $i^{\overline{a}+\overline{b}}=(-1)^{\overline{ab}}i^{\overline{a}}i^{\overline{b}}$ for all $\overline{a},\overline{b}\in \ZZ_{2}$. 

For super vector spaces $V$ and $W$, a homogeneous linear map $f\colon V\to W$ has parity $\bar{a}\in\ZZ_{2}$ if $f(V_{\bar{b}})\subseteq W_{\bar{a}+\bar{b}}$ for every $\bar{b}\in\ZZ_{2}$. In particular, an even linear map preserves the grading. We grade $\operatorname{End}(V)$ by
\begin{equation}
\operatorname{End}(V)_{\bar{a}}\coloneqq{f\in\operatorname{End}(V):f(V_{\bar{b}})\subseteq V_{\bar{a}+\bar{b}}\text{ for all }\bar{b}\in\ZZ_{2}}.
\end{equation}
A bilinear form $B$ on $V$ is even if $B(V_{\bar{a}},V_{\bar{b}})=0$ whenever $\bar{a}+\bar{b}\neq\bar0$. Hermitian forms are taken to be conjugate-linear in the first argument and linear in the second. Unless stated otherwise, all linear maps and bilinear forms are even. 

A representation of a Lie superalgebra $\gg$ on $V$ is a Lie superalgebra homomorphism $\rho\colon\gg\to\operatorname{End}(V)$ satisfying $\rho(\gg_{\bar{a}})\subseteq\operatorname{End}(V)_{\bar{a}}$ for every $\bar{a}\in\ZZ_{2}$. We suppress $\rho$ whenever convenient and write $Xv\coloneqq\rho(X)v$. All representations are left representations. 

For a representation $V=V_{\bar0}\oplus V_{\bar1}$ of $\gg$, we denote by $\Pi V$ its parity reversal, defined by
$(\Pi V)_{\bar0}=V_{\bar1}$ and $(\Pi V)_{\bar1}=V_{\bar0}$, with the same action of $\gg$. In general, $V$ and $\Pi V$ are not isomorphic as $\ZZ_2$-graded representations, although they are isomorphic after forgetting the grading.

If $V=V_{\bar0}\oplus V_{\bar1}$ is a representation of $\gg$, then $\even$ preserves both homogeneous components. Hence $V_{\bar0}$ and $V_{\bar1}$ are ordinary $\even$-representations. We denote the restriction of $V$ to $\even$, with the grading forgotten, by
\begin{equation}
V\big\vert_{\even}=V_{\bar0}\oplus V_{\bar1}.
\end{equation}

When $\gg=\sl(m\vert n)$, we keep the ordered pair $(m,n)$ fixed and do not identify $\sl(m\vert n)$ with $\sl(n\vert m)$ by exchanging the even and odd blocks.

Finally, the universal enveloping algebra of $\gg$ and its center are denoted by $\UE(\gg)$ and $Z(\gg)$, respectively.
\section{\texorpdfstring{$\osp(m\vert 2n), \slmn$}{} and \texorpdfstring{F(4)}{}: real forms, positive systems, and unitarity}\label{sec::preliminaries_slmn_and_osp}
\noindent
In this section, we collect the structural facts concerning the Lie superalgebras $\sl(m\vert n),$ $\osp(m\vert 2n)$, and $F(4)$ that will be used in the sequel. We fix conventions for Cartan subalgebras, root systems, positive systems, weights, and invariant bilinear forms, and recall the corresponding decompositions into even and odd parts. We then describe the choices of Borel subalgebras relevant to the study of highest weight representations. Since, in contrast to the situation for reductive Lie algebras, Borel subalgebras of a basic classical Lie superalgebra need not be conjugate, the highest weight of a fixed simple representation depends essentially on the chosen positive system. Particular attention is given to the relation between the different positive systems occurring in the classification of unitary highest weight representations. These positive systems are related by sequences of odd reflections, and we record the corresponding transformations. We further discuss their compatibility with the conjugate-linear anti-involutions defining the relevant real forms. Finally, we recall the notion of a highest weight relative to a fixed Borel subalgebra and introduce the corresponding notion of atypicality.

\subsection{Structure theory}\label{subsec::structure_theory_slmn}
The starting point is the family of general linear Lie superalgebras. Let $V=\CC ^{m\vert n}$, where $m,n\geq1$ and $m+n>2$. The general linear Lie superalgebra $\gl(m\vert n)\coloneqq\gl(V)$ consists of block matrices $X=\left(\begin{smallmatrix}A&B\\ C&D\end{smallmatrix}\right)$, where $A$ and $D$ are square matrices of sizes $m$ and $n$, respectively, and $B$ and $C$ are matrices of sizes $m\times n$ and $n\times m$, respectively. The diagonal blocks form the even part $\gl(m\vert n)_{\bar{0}}$, while the off-diagonal blocks form the odd part $\gl(m\vert n)_{\bar{1}}$. The bracket is $[X,Y]=XY-(-1)^{p(X)p(Y)}YX$ for all homogeneous $X,Y \in \gl(m\vert n)$ and then extended linearly. 

The supertrace of $X=\left(\begin{smallmatrix}A&B\\ C&D\end{smallmatrix}\right)\in \glmn$ is defined by $\str(X)\coloneqq\operatorname{tr}(A)-\operatorname{tr}(D)$. The supertrace satisfies $\str([X,Y])=0$ for all $X,Y\in\gl(m\vert n)$. It defines a bilinear form $(X,Y)\coloneqq\str(XY)$, which is nondegenerate, even, supersymmetric and invariant. Thus, for homogeneous $X,Y,Z\in\gl(m\vert n)$, one has $(\gl(m\vert n)_{\bar0},\gl(m\vert n)_{\bar1})=0$, $(X,Y)=(-1)^{p(X)p(Y)}(Y,X)$, and $([X,Y],Z)=(X,[Y,Z])$.

The special linear and orthosymplectic Lie superalgebras $\sl(m\vert n)$ and $\osp(m\vert 2n)$ will be realized below as Lie subsuperalgebras of suitable general linear Lie superalgebras. For $m\neq n$, the Lie superalgebra $\sl(m\vert n)$ is basic classical, and the same holds for $\osp(m\vert 2n)$ and $F(4)$. Recall that a finite-dimensional complex Lie superalgebra $\gg=\even\oplus\odd$ is called basic classical if it is simple, its even part $\even$ is reductive, and it admits a nondegenerate invariant even supersymmetric bilinear form. Such a form is unique up to multiplication by a nonzero scalar. Throughout, we use the normalization
\begin{equation}\label{eq::supertrace_form}
(X,Y)\coloneqq
\begin{cases}
\str(XY)& \text{for} \ \sl(m\vert n) \ \text{with}\ m\neq n,\\
\frac12\str(XY)& \text{for} \ \osp(m\vert 2n),\\ 
\frac16\str_{\gg}\bigl(\ad(X)\ad(Y)\bigr)& \text{for} \ F(4).
\end{cases}
\end{equation}
Although $\gl(m\vert n)$ is not simple, we follow the convention of including it among the basic classical Lie superalgebras. In what follows, $\gg$ denotes one of the Lie superalgebras $\sl(m\vert n)$ with $m\neq n$, $\gl(n\vert n)$, $\osp(m\vert 2n)$ or $F(4)$. The case of $\sl(n\vert n)$ will be treated by restriction from $\gl(n\vert n)$.

In each of the standard matrix realizations below, the Lie superalgebras have a Cartan subalgebra $\hh\subseteq\even$. The adjoint action of $\hh$ yields the root space decomposition
\begin{equation}
\gg=\hh\oplus\bigoplus_{\alpha\in\Delta}\gg^\alpha,
\qquad
\gg^\alpha\coloneqq
\{X\in\gg:[H,X]=\alpha(H)X\text{ for all }H\in\hh\}.
\end{equation}
The root system decomposes as $\Delta=\Delta_{\bar0}\sqcup\Delta_{\bar1}$ according to the parity of the corresponding root spaces, all of which are one-dimensional. For a positive system $\Delta^+=\Delta_{\bar0}^+\sqcup\Delta_{\bar1}^+$, a root $\alpha\in\Delta^+$ is called simple if it cannot be written as a sum of two positive roots. The set of simple roots is denoted by $\Uppi$. The corresponding Weyl vector is
\begin{equation}
\rho=\rho_{\bar0}-\rho_{\bar1},
\qquad
\rho_{\bar z}\coloneqq\frac12\sum_{\alpha\in\Delta_{\bar z}^+}\alpha, \qquad \bar{z}\in \ZZ_{2}.
\end{equation} 
Moreover, any choice of a positive system $\Delta^{+}$ determines a triangular decomposition $\gg = \nn^{-}\oplus \hh \oplus \nn^{+}$, where $\nn^{\pm} \coloneqq \bigoplus_{\alpha\in \Delta^{+}}\gg^{\pm \alpha}$. We call $\bb\coloneqq\hh\oplus\nn^+$ the associated Borel subalgebra. For a fixed Cartan subalgebra $\hh$, positive systems, simple systems, and Borel subalgebras are in bijection, and we pass between them without further comment.

Finally, the Weyl group $W$ of $\gg$ is the Weyl group of the reductive Lie algebra $\even$. For an ordinary reductive Lie algebra, any two simple systems are conjugate under the Weyl group. In contrast to the ordinary case, $W$ need not act transitively on the simple systems of $\gg$. For instance, the standard and nonstandard simple systems for $\sl(m\vert n)$ (see~Example~\ref{ex::systems}) are not conjugate under $W$, but they are related by a sequence of odd reflections.
Let $\Uppi$ be a simple system and let $\theta\in\Uppi\cap\Delta_{\bar{1}}$ be isotropic. The odd reflection at $\theta$ produces the simple system $\Uppi_\theta\coloneqq r_\theta(\Uppi)$, where, for $\alpha\in\Uppi$,
\begin{equation}
r_\theta(\alpha)=
\begin{cases}
-\theta,&\alpha=\theta,\\
\alpha+\theta,&\alpha\neq\theta\text{ and }(\alpha,\theta)\neq0,\\
\alpha,&\alpha\neq\theta\text{ and }(\alpha,\theta)=0.
\end{cases}
\label{eq::odd-reflection}
\end{equation}
The corresponding positive system is $\Delta_\theta^+=(\Delta^+\setminus\{\theta\})\cup\{-\theta\}$. Any two simple systems inducing the same positive system $\Delta_{\bar{0}}^+$ on the even roots are related by a sequence of odd reflections.

\subsubsection{Special linear Lie superalgebras}

The special linear Lie superalgebra is the codimension-one ideal $\sl(m\vert n)\coloneqq\ker(\str)$ of $\gl(m\vert n)$. Its even part is $\sl(m\vert n)_{\bar0}\cong\sl(m)\oplus\sl(n)\oplus\CC z_{m\vert n}$, where $z_{m\vert n}\coloneqq\operatorname{diag}(nI_m,mI_n)$ spans the center of $\sl(m\vert n)_{\bar0}$. If $m\neq n$, then $\sl(m\vert n)$ is simple and $\gl(m\vert n)=\sl(m\vert n)\oplus\CC I_{m+n}$. If $m=n$, then $Z(\sl(n\vert n))=\CC I_{2n}$ and $\psl(n\vert n)\coloneqq\sl(n\vert n)/\CC I_{2n}$ is simple. The restriction of the supertrace form of $\gl(n\vert n)$ to $\sl(n\vert n)$ has radical $\CC I_{2n}$, and it is therefore not nondegenerate. In the $m=n$ case, $\gl(n\vert n)$ will be used instead of $\sl(n\vert n)$. Thus, throughout this subsubsection, $\gg$ denotes $\sl(m\vert n)$ if $m\neq n$ and $\gl(n\vert n)$ if $m=n$. Moreover, throughout we fix the Lie superalgebra $\sl(m\vert n)$ and do not identify it with $\sl(n\vert m)$ by exchanging the even and odd blocks.

As a $\even$-representation under the adjoint action, $\odd$ decomposes into two simple $\even$- representations $\gg_{\pm 1}$, where $\gg_{\pm 1}$ are exactly the case where either $C=0$ or $B=0$ in the standard block form. These spaces are moreover abelian. In particular, this gives a $\ZZ$-grading $\gg = \gg_{-1}\oplus \gg_{0}\oplus \gg_{+1}$ with $\gg_{0}\coloneqq \gg_{\bar{0}}$. 

Let $\mathfrak d$ be the diagonal subalgebra of $\gl(m\vert n)$. The dual space $\mathfrak d^{\ast}$ has basis $\epsilon_1,\ldots,\epsilon_m,\delta_1,\ldots,\delta_n$, defined by $\epsilon_i(\operatorname{diag}(h_1,\ldots,h_{m+n}))=h_i$ and $\delta_a(\operatorname{diag}(h_1,\ldots,h_{m+n}))=h_{m+a}$ for $1\leq i\leq m$ and $1\leq a\leq n$. The restriction of the supertrace form $(\cdot,\cdot)$ to $\mathfrak{d}$ is nondegenerate and therefore induces a bilinear form on $\mathfrak d^{\ast}$ satisfying $(\epsilon_i,\epsilon_j)=\delta_{ij}$, $(\delta_a,\delta_b)=-\delta_{ab}$, and $(\epsilon_i,\delta_a)=0$.

Set $\kappa\coloneqq\sum_{i=1}^m\epsilon_i-\sum_{a=1}^n\delta_a\in \mathfrak{d}^{\ast}$. The Cartan subalgebra and its dual are
\begin{equation}
\hh=
\begin{cases}
\mathfrak d\cap\sl(m\vert n)=\ker(\kappa),&m\neq n,\\
\mathfrak d,&m=n,
\end{cases}
\qquad
\hh^{\ast}\cong
\begin{cases}
\mathfrak d^{\ast}/\CC \kappa,&m\neq n,\\
\mathfrak d^{\ast},&m=n.
\end{cases}
\end{equation}

A weight $\lambda\in\hh^{\ast}$ is represented by a tuple $\widetilde\lambda=(\lambda_1,\ldots,\lambda_m\vert\mu_1,\ldots,\mu_n)\in\mathfrak d^{\ast}$, determined up to addition of a multiple of $\kappa$ if $m\neq n$. The tuple $\widetilde\lambda$ is not required to be orthogonal to $\kappa$. Pairings $(\lambda,\alpha)$ with roots $\alpha\in\Delta$ may be computed using any representative of $\lambda$, since $(\kappa,\alpha)=0$. By contrast, pairings between arbitrary weights are understood with respect to the bilinear form on $\hh^{\ast}$ induced by the nondegenerate restriction of $(\cdot,\cdot)$ to $\hh$.

The nonzero root spaces are the same as those of $\gl(m\vert n)$, and the root system is $\Delta=\Delta_{\bar0}\sqcup\Delta_{\bar1}$, where
\begin{equation}
\Delta_{\bar0}
=
\{\epsilon_i-\epsilon_j:i\neq j\}
\sqcup
\{\delta_a-\delta_b:a\neq b\},
\qquad
\Delta_{\bar1}
=
\{\pm(\epsilon_i-\delta_a):1\leq i\leq m,\ 1\leq a\leq n\}.
\label{eq:sl-roots}
\end{equation}

The positive even roots are fixed as
\begin{equation}
\Delta_{\bar0}^+
=
\{\epsilon_i-\epsilon_j:1\leq i<j\leq m\}
\sqcup
\{\delta_a-\delta_b:1\leq a<b\leq n\}.
\label{eq:sl-positive-even-roots}
\end{equation}
This is the standard choice for which the positive root spaces in $\even$ are spanned by the upper triangular matrix units $E_{ij}$, with $1\leq i<j\leq m$, and $E_{m+a,m+b}$, with $1\leq a<b\leq n$. For fixed $\Delta_{\bar{0}}^{+}$, two positive systems $\Delta^{+}$ extending $\Delta_{\bar{0}}^{+}$ are relevant for unitarity (see~Section~\ref{subsec:real_forms_slmb}).

\begin{example}\label{ex::systems} Let $m=p+q$ with $p,q\geq 0$. \begin{enumerate} \item[(a)] The standard, or distinguished, positive system is $$ \Delta_{\mathrm{st}}^+ =\Delta_{\bar{0}}^+\sqcup \{\epsilon_i-\delta_a:1\leq i\leq m,\ 1\leq a\leq n\}. $$
In this case, the odd positive root vectors span $\gg_{+1}$. We denote the associated Borel subalgebra by $\bb_{\text{st}}$. Its simple system is $$ \Uppi_{\mathrm{st}} =\{\epsilon_i-\epsilon_{i+1}:1\leq i<m\} \cup\{\epsilon_m-\delta_1\} \cup\{\delta_a-\delta_{a+1}:1\leq a<n\}. $$ 

\item[(b)] The $pq$-nonstandard positive system is defined by
\[
\Delta_{\mathrm{nst}}^+
=
\Delta_{\bar0}^+
\sqcup
\{\epsilon_i-\delta_a:1\leq i\leq p,\ 1\leq a\leq n\}
\sqcup
\{\delta_a-\epsilon_j:p<j\leq m,\ 1\leq a\leq n\}.
\]
For $q=0$, this coincides with the standard positive system. For $p=0$, the positive even roots remain unchanged, whereas the signs of all odd roots are reversed relative to the standard positive system. When $p$ and $q$ are fixed and $pq\neq 0$, we simply refer to $\Delta_{\mathrm{nst}}^+$ as the nonstandard positive system. We explain the origin of this positive system in Section~\ref{subsec::maximal_compact_subalgebra}. The corresponding Borel subalgebra will be denoted by $\bb_{\mathrm{nst}}$. If $pq\neq 0$, its simple system is $$ \begin{aligned} \Uppi_{\mathrm{nst}} ={}&\{\epsilon_i-\epsilon_{i+1}:1\leq i<p\} \cup\{\epsilon_p-\delta_1\} \cup\{\delta_a-\delta_{a+1}:1\leq a<n\}\\ &\cup\{\delta_n-\epsilon_{p+1}\} \cup\{\epsilon_j-\epsilon_{j+1}:p<j<m\}. \end{aligned} $$ \end{enumerate} \end{example}

We relate both systems using odd reflections. A simple system inducing $\Delta_{\bar0}^+$ can be encoded by an ordered weight word $s=(s_1,\ldots,s_{m+n})$ in which each element of $\{\epsilon_1,\ldots,\epsilon_m,\delta_1,\ldots,\delta_n\}$ occurs exactly once and the relative orders of the $\epsilon_i$ and of the $\delta_a$ are preserved. The corresponding simple system is $\Uppi_s\coloneqq\{s_i-s_{i+1}:1\leq i<m+n\}$. The standard and nonstandard systems correspond respectively to
\begin{equation}
s_{\mathrm{st}}
=
(\epsilon_1,\ldots,\epsilon_m,\delta_1,\ldots,\delta_n),
\qquad
s_{\mathrm{nst}}
=
(\epsilon_1,\ldots,\epsilon_p,\delta_1,\ldots,\delta_n,
\epsilon_{p+1},\ldots,\epsilon_m).
\end{equation}
An odd reflection interchanges two adjacent entries of different types. Moving $\epsilon_{p+1},\ldots,\epsilon_m$ successively across the $\delta_a$ therefore gives the following relation.

\begin{lemma}\label{lemm::relating_standard_and_non_standard_simple_system}
One has for $m=p+q$ with $p,q\geq 0$
\begin{equation*}
\Uppi_{\mathrm{st}}
=
\bigl(r_{\delta_1-\epsilon_m}\cdots r_{\delta_n-\epsilon_m}\bigr)
\cdots
\bigl(r_{\delta_1-\epsilon_{p+1}}\cdots r_{\delta_n-\epsilon_{p+1}}\bigr)
\Uppi_{\mathrm{nst}}.
\end{equation*}
\end{lemma}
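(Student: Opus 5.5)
The plan is to work entirely with the weight-word encoding introduced just before the statement, and to prove one basic fact: an odd reflection at a simple isotropic root $\theta = s_k - s_{k+1}$ (with $s_k, s_{k+1}$ of different types) turns $\Uppi_s$ into $\Uppi_{s'}$, where $s'$ is obtained from $s$ by swapping $s_k$ and $s_{k+1}$. To check this against \eqref{eq::odd-reflection}, note that the roots of $\Uppi_s$ not orthogonal to $\theta$ are among $s_{k-1}-s_k$ and $s_{k+1}-s_{k+2}$. Every other simple root $s_j - s_{j+1}$ involves only basis vectors different from $s_k, s_{k+1}$, so it pairs to zero with $\theta$ because the basis $\epsilon_i,\delta_a$ is orthogonal.

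For the two neighbouring roots, one computes $(s_{k-1}-s_k,\theta) = -(s_k,s_k) \neq 0$. Hence
\[
r_\theta(s_{k-1}-s_k) = s_{k-1}-s_{k+1},
\qquad
r_\theta(s_{k+1}-s_{k+2}) = s_k - s_{k+2},
\qquad
r_\theta(\theta) = s_{k+1}-s_k .
\]
These are exactly the consecutive differences of $s'$. Since $s$ and $s'$ differ by swapping entries of different types, the relative orders among the $\epsilon$'s and among the $\delta$'s are preserved, so $s'$ is again an admissible word. The same computation covers the boundary cases $k=1$ and $k+1 = m+n$, where one neighbour is simply absent.

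With this in hand, apply the reflections in the stated order, rightmost first, starting from $s_{\mathrm{nst}}$.

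\begin{itemize}
\item Initially $\epsilon_{p+1}$ sits immediately to the right of $\delta_n$, so $\delta_n-\epsilon_{p+1}\in\Uppi_{\mathrm{nst}}$ is simple. Applying $r_{\delta_n-\epsilon_{p+1}}$ swaps these two entries.
\item Now $\delta_{n-1}$ is immediately left of $\epsilon_{p+1}$, so $\delta_{n-1}-\epsilon_{p+1}$ is simple. By induction on $a$ from $n$ down to $1$, after applying $r_{\delta_{a}-\epsilon_{p+1}}$ the word reads
\[
(\epsilon_1,\ldots,\epsilon_p,\delta_1,\ldots,\delta_{a-1},\epsilon_{p+1},\delta_a,\ldots,\delta_n,\epsilon_{p+2},\ldots,\epsilon_m).
\]
In particular, the next root $\delta_{a-1}-\epsilon_{p+1}$ is simple, so each reflection in the product is well defined.
\item After the first block, the word is $(\epsilon_1,\ldots,\epsilon_{p+1},\delta_1,\ldots,\delta_n,\epsilon_{p+2},\ldots,\epsilon_m)$, which is the nonstandard word for $p+1$. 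An outer induction on $j = p+1,\ldots,m$ therefore moves each $\epsilon_j$ across all the $\delta$'s in the same way.
\item The process ends at $s_{\mathrm{st}}$, whose simple system is $\Uppi_{\mathrm{st}}$.
\end{itemize}

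If $q=0$ the product is empty, and indeed $\Uppi_{\mathrm{nst}}=\Uppi_{\mathrm{st}}$ by Example~\ref{ex::systems}. If $p=0$, the word $s_{\mathrm{nst}}$ begins with $\delta_1$ and the same argument applies verbatim.

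The only real point requiring care is the bookkeeping: one must confirm at each step that the root being reflected is simple in the current system, which the inductive description of the intermediate words provides. The rest is routine.
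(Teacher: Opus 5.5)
Your proposal is correct and follows the same route as the paper. The paper encodes simple systems by ordered weight words and notes that an odd reflection at an isotropic simple root interchanges two adjacent entries of different type. It then moves $\epsilon_{p+1},\ldots,\epsilon_m$ successively to the left across $\delta_n,\ldots,\delta_1$. You additionally verify this swap rule from the odd reflection formula and check inductively that each reflected root is simple when it is used, details the paper leaves implicit.
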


For later computations, we next record the Weyl vectors associated with the two positive systems introduced above. For the standard positive system, the even and odd Weyl vectors are given by
\begin{equation}
\rho_{\bar0}
=
\frac12
\left(
\sum_{i=1}^m(m-2i+1)\epsilon_i
+
\sum_{a=1}^n(n-2a+1)\delta_a
\right),
\qquad
\rho_{\bar1}
=
\frac12
\left(
n\sum_{i=1}^m\epsilon_i
-
m\sum_{a=1}^n\delta_a
\right).
\label{eq:rho}
\end{equation}
For the nonstandard positive system associated with $m=p+q$, Lemma~\ref{lemm::relating_standard_and_non_standard_simple_system} gives
\begin{equation}
\rho_{\mathrm{nst}}
=
\rho_{\mathrm{st}}
-
\sum_{i=p+1}^m\sum_{a=1}^n(\delta_a-\epsilon_i).
\label{eq:rho-standard-nonstandard}
\end{equation}

\subsubsection{Orthosymplectic Lie superalgebras} Let $V=\CC ^{m\vert 2n}$, where $m,n\geq1$, and set $r\coloneqq\lfloor m/2\rfloor$. We equip $V$ with a nondegenerate even supersymmetric bilinear form $\langle\cdot,\cdot\rangle$. The orthosymplectic Lie superalgebra is the Lie subsuperalgebra of $\gl(m\vert 2n)$ preserving this form. Concretely, for $\bar z\in\ZZ_2$, define $\osp(m\vert 2n)_{\bar z}$ to consist of all
$X\in\gl(m\vert 2n)_{\bar z}$ such that
\begin{equation}
\langle Xv,w\rangle
+
(-1)^{p(X)p(v)}
\langle v,Xw\rangle
=0
\end{equation}
for all homogeneous $v,w\in V$. Any two nondegenerate even supersymmetric bilinear forms on $V$ are equivalent under an even change of basis and therefore define isomorphic Lie superalgebras. It is convenient to realize $\osp(m\vert 2n)$ as a Lie subsuperalgebra of
$\sl(m\vert 2n)$ in the following matrix realization. For every $n$, let
$I_n$ denote the $n\times n$ identity matrix and set
$
J_n\coloneqq
\left(
\begin{smallmatrix}
0&I_n\\
-I_n&0
\end{smallmatrix}
\right).$ Then
\begin{equation}
\osp(m\vert 2n)
=
\left\{
\begin{pmatrix}
A&B\\
C&D
\end{pmatrix}
\in\sl(m\vert 2n)
:
\begin{pmatrix}
-A^T&-C^TJ_n\\
-J_nB^T&J_nD^TJ_n
\end{pmatrix}
=
\begin{pmatrix}
A&B\\
C&D
\end{pmatrix}
\right\}.
\end{equation}
In the following, we write $\gg\coloneqq\osp(m\vert 2n)$ and equip it with the
supertrace form~\eqref{eq::supertrace_form}. Its even subalgebra is
$\even\cong\so(m)\oplus\sp(2n)$, while
$\odd\cong\CC^m\otimes\CC^{2n}$ as a $\even$-representation. For
$m\neq2$, the odd part $\odd$ is irreducible as a $\even$-representation.
If $m=2$, the standard $\so(2)$-representation decomposes as
$\CC^2\cong\CC_{+}\oplus\CC_{-}$ into its two one-dimensional weight
spaces, and consequently $\odd$ decomposes into irreducible $\even$-representations.

Let $r=\lfloor m/2\rfloor$ and let $\mathfrak h\subseteq\mathfrak{osp}(m\vert 2n;\mathbb C)$ be the subalgebra consisting of elements
\begin{equation}
H=\operatorname{diag}\left(
t_1K,\ldots,t_rK,0_{m-2r},
\begin{pmatrix}
0&iD\\
-iD&0
\end{pmatrix}
\right),
\qquad
K=\begin{pmatrix}
0&i\\
-i&0
\end{pmatrix},
\end{equation}
where $D=\operatorname{diag}(d_1,\ldots,d_n)$ and $t_i,d_a\in\mathbb C$. The block $0_{m-2r}$ is absent when $m=2r$ and consists of a single zero entry when $m=2r+1$. Define $\epsilon_i,\delta_a\in\mathfrak h^*$ by $\epsilon_i(H)=t_i$ and $\delta_a(H)=d_a$. Note that the weights of the standard $\so(m)$-representation are $\{\pm\epsilon_i:1\leq i\leq r\}$ if $m=2r$ and $\{0,\pm\epsilon_i:1\leq i\leq r\}$ if $m=2r+1$, while those of the standard $\sp(2n)$-representation are $\{\pm\delta_a:1\leq a\leq n\}$. A weight $\lambda\in\hh^{\ast}$ is written as $\lambda=(\lambda_1,\ldots,\lambda_r\vert\mu_1,\ldots,\mu_n)$.

Writing $m=2r$ or $m=2r+1$, the root system $\Delta=\Delta_{\bar0}\sqcup\Delta_{\bar1}$ is given by
\begin{equation}
\begin{aligned}
\Delta_{\bar0}
&=
\{\pm\delta_a\pm\delta_b:1\leq a<b\leq n\}
\cup\{\pm2\delta_a:1\leq a\leq n\}
\cup\{\pm\epsilon_i\pm\epsilon_j:1\leq i<j\leq r\},\\
\Delta_{\bar1}
&=
\{\pm\epsilon_i\pm\delta_a:1\leq i\leq r,\ 1\leq a\leq n\}.
\end{aligned}
\label{eq:osp-roots}
\end{equation}
If $m=2r+1$, one additionally has $\{\pm\epsilon_i:1\leq i\leq r\}\subseteq\Delta_{\bar0}$ and $\{\pm\delta_a:1\leq a\leq n\}\subseteq\Delta_{\bar1}$. The roots $\epsilon_i\pm\delta_a$ are isotropic, whereas the additional odd roots $\delta_a$ occurring for odd $m$ are non-isotropic.

In this article, the standard positive system for the even roots is fixed as
\begin{equation}
\Delta_{\bar0}^+
=
\{\delta_a\pm\delta_b:1\leq a<b\leq n\}
\cup\{2\delta_a:1\leq a\leq n\}
\sqcup
\{\epsilon_i\pm\epsilon_j:1\leq i<j\leq r\}.
\label{eq:osp-positive-even-roots}
\end{equation}
If $m=2r+1$, we additionally include $\{\epsilon_i:1\leq i\leq r\}$ in $\Delta_{\bar0}^+$. For fixed $\Delta_{\bar{0}}^{+}$, two positive systems $\Delta^{+}$ extending $\Delta_{\bar{0}}^{+}$ are relevant for unitarity. We explain the choice in Section~\ref{subsec::maximal_compact_subalgebra}.

\begin{example}
\label{ex::osp-systems} We consider the two positive systems denoted by Case~A and Case~B in Jakobsen's classification \cite{jakobsen1994full}.
\begin{enumerate}
\item[(a)] In Case~A, the positive odd roots are
\[
\Delta_{A,\bar1}^{+}
=
\begin{cases}
\{\delta_a\pm\epsilon_i:1\leq a\leq n,\ 1\leq i\leq r\}
 \cup\{\delta_a:1\leq a\leq n\},
&m=2r+1,\\
\{\delta_a\pm\epsilon_i:1\leq a\leq n,\ 1\leq i\leq r\},
&m=2r.
\end{cases}
\]
Set $\Delta_A^{+}\coloneqq\Delta_{\bar0}^{+}\sqcup\Delta_{A,\bar1}^{+}$ and denote the corresponding Borel subalgebra by $\bb_A$. For $r\geq1$, the simple system of $\osp(2r+1\vert2n)$ is
\[
\Uppi_A
=
\{\delta_a-\delta_{a+1}:1\leq a<n\}
\cup\{\delta_n-\epsilon_1\}
\cup\{\epsilon_i-\epsilon_{i+1}:1\leq i<r\}
\cup\{\epsilon_r\},
\]
whereas, for $r\geq2$, the simple system of $\osp(2r\vert2n)$ is
\[
\Uppi_A
=
\{\delta_a-\delta_{a+1}:1\leq a<n\}
\cup\{\delta_n-\epsilon_1\}
\cup\{\epsilon_i-\epsilon_{i+1}:1\leq i<r\}
\cup\{\epsilon_{r-1}+\epsilon_r\}.
\]

\item[(b)] In Case~B, the positive odd roots are
\[
\Delta_{B,\bar1}^{+}
=
\begin{cases}
\{\epsilon_i\pm\delta_a:1\leq i\leq r,\ 1\leq a\leq n\}
 \cup\{\delta_a:1\leq a\leq n\},
&m=2r+1,\\
\{\epsilon_i\pm\delta_a:1\leq i\leq r,\ 1\leq a\leq n\},
&m=2r.
\end{cases}
\]
Set $\Delta_B^{+}\coloneqq\Delta_{\bar0}^{+}\sqcup\Delta_{B,\bar1}^{+}$ and denote the corresponding Borel subalgebra by $\bb_B$. For $r\geq1$, the simple system of $\osp(2r+1\vert2n)$ is
\[
\Uppi_B
=
\{\epsilon_i-\epsilon_{i+1}:1\leq i<r\}
\cup\{\epsilon_r-\delta_1\}
\cup\{\delta_a-\delta_{a+1}:1\leq a<n\}
\cup\{\delta_n\},
\]
whereas, for $r\geq2$, the simple system of $\osp(2r\vert2n)$ is
\[
\Uppi_B
=
\{\epsilon_i-\epsilon_{i+1}:1\leq i<r\}
\cup\{\epsilon_r-\delta_1\}
\cup\{\delta_a-\delta_{a+1}:1\leq a<n\}
\cup\{2\delta_n\}.
\]
\end{enumerate}
The roots $\delta_a+\epsilon_i$ are positive in both cases, while each isotropic root $\delta_a-\epsilon_i$ occurring in Case~A is replaced in Case~B by its negative $\epsilon_i-\delta_a$. When $m$ is odd, the non-isotropic odd roots $\delta_a$ are positive in both systems.

For $m=2$, the formulas become
\[
\Uppi_A
=
\{\delta_a-\delta_{a+1}:1\leq a<n\}
\cup\{\delta_n-\epsilon_1,\delta_n+\epsilon_1\},
\qquad
\Uppi_B
=
\{\epsilon_1-\delta_1\}
\cup\{\delta_a-\delta_{a+1}:1\leq a<n\}
\cup\{2\delta_n\}.
\]
For $m=1$, the two positive systems coincide and
$
\Uppi_A=\Uppi_B
=
\{\delta_a-\delta_{a+1}:1\leq a<n\}
\cup\{\delta_n\}.
$ \end{example}

We relate the Case~A and Case~B positive systems using a sequence of odd reflections. As for special linear Lie superalgebras, the simple systems may be encoded by ordered weight words, but the orthosymplectic root systems require an additional root at the right endpoint. Write $m=2r$ or $m=2r+1$ and set $\ell\coloneqq r+n$. An ordered weight word is a shuffle $s=(s_1,\ldots,s_\ell)$ of $\epsilon_1,\ldots,\epsilon_r,\delta_1,\ldots,\delta_n$ preserving the relative orders of the $\epsilon_i$ and of the $\delta_a$. The corresponding simple system is
\begin{equation}
\Uppi_s=
\begin{cases}
\{s_i-s_{i+1}:1\leq i<\ell\}\cup\{s_\ell\},
&m=2r+1,\\
\{s_i-s_{i+1}:1\leq i<\ell\}\cup\{s_{\ell-1}+s_\ell\},
&m=2r\text{ and }s_\ell\text{ is of }\epsilon\text{-type},\\
\{s_i-s_{i+1}:1\leq i<\ell\}\cup\{2s_\ell\},
&m=2r\text{ and }s_\ell\text{ is of }\delta\text{-type}.
\end{cases}
\label{eq:osp-weight-word}
\end{equation}
For $m=2r$, the complete parametrization additionally allows $\epsilon_r$ to be replaced by $-\epsilon_r$. The resulting data are called extended parity sequences. This additional sign is not needed for the two systems considered here. The ordered weight words corresponding to Cases~A and~B are
\begin{equation}
s_A=(\delta_1,\ldots,\delta_n,\epsilon_1,\ldots,\epsilon_r),
\qquad
s_B=(\epsilon_1,\ldots,\epsilon_r,\delta_1,\ldots,\delta_n).
\label{eq:osp-case-words}
\end{equation}
An odd reflection interchanges two adjacent entries of different types. Thus, the passage from Case~A to Case~B is obtained by moving the $\epsilon_i$ successively to the left across the $\delta_a$. Consequently, we obtain the following lemma.

\begin{lemma}\label{lemm::relating_A_and_B_simple_system} One has
\begin{equation*}
\Uppi_A
=
\bigl(r_{\epsilon_1-\delta_n}\cdots r_{\epsilon_1-\delta_1}\bigr)
\cdots
\bigl(r_{\epsilon_r-\delta_n}\cdots r_{\epsilon_r-\delta_1}\bigr)
\Uppi_B.
\end{equation*}
\end{lemma}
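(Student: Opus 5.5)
The plan is to argue entirely on the level of ordered weight words. First I establish a general \emph{swap rule}: if $\Uppi_s$ is the simple system of a word $s$ as in \eqref{eq:osp-weight-word} and $\theta=s_k-s_{k+1}$ is an isotropic simple root with $s_k,s_{k+1}$ of different types, then $r_\theta(\Uppi_s)=\Uppi_{s'}$. Here $s'$ is obtained from $s$ by interchanging $s_k$ and $s_{k+1}$. Granting this, the lemma becomes a bookkeeping statement about how the reflections permute the word.

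\textbf{Step 1 (the swap rule).} I apply \eqref{eq::odd-reflection} root by root, using $(\epsilon_i,\epsilon_j)=\delta_{ij}$, $(\delta_a,\delta_b)=-\delta_{ab}$ and $(\epsilon_i,\delta_a)=0$. For the root $\theta$ itself, $-\theta=s_{k+1}-s_k$ is the new $k$-th root. For the neighbours $s_{k-1}-s_k$ and $s_{k+1}-s_{k+2}$, the pairing with $\theta$ is $\pm(s_k,s_k)$ or $\pm(s_{k+1},s_{k+1})$, which is nonzero. Adding $\theta$ gives $s_{k-1}-s_{k+1}$ and $s_k-s_{k+2}$, which are exactly the corresponding roots of $s'$. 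All other roots $s_i-s_{i+1}$ involve letters disjoint from $\{s_k,s_{k+1}\}$ and are fixed.

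It remains to treat the endpoint root, which is where the three cases of \eqref{eq:osp-weight-word} enter.
\begin{itemize}
\item If $k+1=\ell$ and $m$ is odd, the endpoint $s_\ell$ pairs nontrivially with $\theta$, and $s_\ell+\theta=s_k$ is the new endpoint.
\item If $k+1=\ell$ and $m$ is even, the endpoint is $2s_\ell$ or $s_{\ell-1}+s_\ell$. Adding $\theta$ gives $s_k+s_\ell$, which is $s'_{\ell-1}+s'_\ell$ for the swapped word. One checks that this matches the type of the new last letter: for instance $2\delta_n$ becomes $\epsilon_r+\delta_n$.
\item If $k+1=\ell-1$ and $m$ is even with $s_\ell$ of $\epsilon$-type, the endpoint $s_{\ell-1}+s_\ell$ pairs nontrivially with $\theta$ and becomes $s_{\ell-2}+s_\ell$, again as required.
\end{itemize}
In the remaining situations the endpoint is orthogonal to $\theta$.

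\textbf{Step 2 (tracking the word).} Start from $s_B=(\epsilon_1,\ldots,\epsilon_r,\delta_1,\ldots,\delta_n)$. The composite acts rightmost factor first, so the first group applied is $r_{\epsilon_r-\delta_1}$, then $r_{\epsilon_r-\delta_2}$, up to $r_{\epsilon_r-\delta_n}$. At each stage the root $\epsilon_r-\delta_a$ is simple, because $\epsilon_r$ sits immediately before $\delta_a$. By Step~1 each reflection moves $\epsilon_r$ one place to the right. After the group the word is $(\epsilon_1,\ldots,\epsilon_{r-1},\delta_1,\ldots,\delta_n,\epsilon_r)$.

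Inductively, the group for $\epsilon_j$ moves $\epsilon_j$ across $\delta_1,\ldots,\delta_n$, ending directly before $\epsilon_{j+1}$. After the final group for $\epsilon_1$ the word is $(\delta_1,\ldots,\delta_n,\epsilon_1,\ldots,\epsilon_r)=s_A$. Hence the composite sends $\Uppi_B$ to $\Uppi_{s_A}=\Uppi_A$, which is the claim.

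\textbf{Expected obstacle.} The only delicate point is the endpoint analysis in Step~1 for even $m$, where the endpoint root changes form between $2\delta_n$ and $s_{\ell-1}+s_\ell$. The small cases need separate checks against the explicit formulas in Example~\ref{ex::osp-systems}: $m=2$, where $\Uppi_A$ contains $\delta_n\pm\epsilon_1$, and $m=1$, where the statement is vacuous. I would verify these endpoint pairings explicitly; the rest is routine.
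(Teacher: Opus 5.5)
Your proof is correct and takes the same approach as the paper: each odd reflection interchanges two adjacent letters of different type in the ordered weight word, and the $\epsilon_i$ are moved successively across the $\delta_a$; your Step~1 supplies the root-by-root verification that the paper leaves implicit. One small slip: in your second endpoint bullet, if the word ends in $(\ldots,\delta_b,\epsilon_i)$, the endpoint $s_{\ell-1}+s_\ell$ is sent to $2s_{\ell-1}$, not $s_k+s_\ell$. This still agrees with $\Uppi_{s'}$, and the configuration never occurs in Step~2 because you only move $\epsilon$'s to the right.
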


\subsubsection{The exceptional Lie superalgebra \texorpdfstring{$F(4)$}{F(4)}}\label{subsec::F4}
Unlike the classical Lie superalgebras
$\mathfrak{sl}(m\vert n)$ and $\mathfrak{osp}(m\vert 2n)$, the exceptional simple Lie superalgebra $F(4)$ does not come with a canonical defining matrix
realization. Its natural description is instead obtained as follows. Let $(V,\langle\cdot,\cdot\rangle_{V})$ be a
seven-dimensional complex orthogonal vector space, let $S$ be the
eight-dimensional spin representation of $\mathfrak{so}(V)$, and let
$(W,\langle\cdot,\cdot\rangle_{W})$ be a two-dimensional complex symplectic vector space. Then
\begin{equation}
F(4)_{\bar0}
=
\mathfrak{so}(V)\oplus\mathfrak{sp}(W),
\qquad
F(4)_{\bar1}
=
S\otimes W,
\end{equation}
where $\mathfrak{sp}(W)\cong\mathfrak{sl}(2)$ and $\so(V)\cong \so(7)$. The bracket on the even
part is the ordinary Lie bracket, and the action of the even part on the
odd part is the tensor product action. The bracket of two odd elements is,
up to normalization,
\begin{equation}
[s\otimes u,t\otimes v]
=
\langle u,v\rangle_{W}\,\Gamma(s,t)
+
\langle s,t\rangle_{S}(\langle u,\cdot\rangle_{W}v+\langle v,\cdot\rangle_{W}u),
\end{equation}
where
$\Gamma:\bigwedge^{2}S\to\mathfrak{so}(V)$ is the canonical
$\mathfrak{so}(V)$-equivariant map, and $\langle \cdot,\cdot\rangle_{S}$ is the invariant symmetric bilinear form on $S$. Of course, $F(4)$ admits faithful matrix realizations. For example, the
adjoint representation gives an embedding
$
F(4)\hookrightarrow\mathfrak{gl}(24\vert16).
$

For each of the real forms considered below (see Section~\ref{subsec:real_forms_slmb}), fix a Cartan subalgebra
$\mathfrak t_{\mathbb R}$ of a maximal compact subalgebra of
$F(4)_{\bar0,\mathbb R}$, and let
$
\mathfrak h\coloneqq\mathfrak t_{\mathbb R}\otimes_{\mathbb R}\mathbb C
\subseteq F(4)_{\bar{0}}.
$ The roots corresponding to the $\mathfrak{so}(7)$-factor are expressed
in terms of $\epsilon_1,\epsilon_2,\epsilon_3$, whereas $\delta$ is the
root of the $\mathfrak{sl}(2)$-factor. The even and odd root systems are
\begin{equation}
\Delta_{\bar0}
=
\{\pm\delta\}
\cup
\{\pm\epsilon_i:1\leq i\leq3\}
\cup
\{\pm\epsilon_i\pm\epsilon_j:1\leq i<j\leq3\},
\quad 
\Delta_{\bar1}
=
\{
\tfrac12
\bigl(
\pm\delta\pm\epsilon_1\pm\epsilon_2\pm\epsilon_3
\bigr)
\}.
\end{equation}

The normalized super Killing form $(X,Y)\coloneqq\frac16\str_{\gg}\bigl(\ad(X)\ad(Y)\bigr)$ on $F(4)$ induces a form $(\cdot,\cdot)$ on $\hh^{\ast}$ determined by 
\begin{equation}\label{eq::F4_bilinear_form}
(\epsilon_i,\epsilon_j)=\delta_{ij},
\qquad
(\delta,\delta)=-3,
\qquad
(\epsilon_i,\delta)=0.
\end{equation}
Note that every odd root is isotropic with respect to $(\cdot,\cdot)$.

In what follows, we fix the standard positive system $\Delta_{\bar0}^{+}$, that is,
\begin{equation}\Delta_{\bar0}^{+} = \{\delta\} \cup\{\epsilon_i:1\leq i\leq3\} \cup\{\epsilon_i\pm\epsilon_j:1\leq i<j\leq3\}.
\end{equation}

Up to the action of the Weyl group
$
W \cong \bigl((\mathbb Z_2)^3\rtimes S_3\bigr)\times\mathbb Z_2,$
there are five simple systems, whose associated positive systems contain $\Delta_{\bar{0}}^{+}$. Of particular importance are 
\begin{equation}
\begin{aligned}
\Uppi_1
&=
\left\{
\epsilon_1-\epsilon_2,\,
\epsilon_2-\epsilon_3,\,
\epsilon_3,\,
\frac12(-\epsilon_1-\epsilon_2-\epsilon_3+\delta)
\right\},
\\
\Uppi_2
&=
\left\{
\delta,\,
\epsilon_3,\,
\epsilon_2-\epsilon_3,\,
\frac12(\epsilon_1-\epsilon_2-\epsilon_3-\delta)
\right\},
\\
\Uppi_3
&=
\left\{
\frac12(\epsilon_1-\epsilon_2-\epsilon_3+\delta),\,
\frac12(\epsilon_1-\epsilon_2+\epsilon_3-\delta),\,
\frac12(-\epsilon_1+\epsilon_2+\epsilon_3+\delta),\,
\epsilon_2-\epsilon_3
\right\},
\\
\Uppi_4
&=
\left\{
\frac12(-\epsilon_1-\epsilon_2+\epsilon_3+\delta),\,
\frac12(\epsilon_1+\epsilon_2+\epsilon_3-\delta),\,
\epsilon_2-\epsilon_3,\,
\epsilon_1-\epsilon_2
\right\},
\\
\Uppi_5
&=
\left\{
\frac12(-\epsilon_1+\epsilon_2-\epsilon_3+\delta),\,
\frac12(\epsilon_1+\epsilon_2-\epsilon_3-\delta),\,
\epsilon_3,\,
\epsilon_1-\epsilon_2
\right\}.
\end{aligned}
\end{equation}
We denote the associated positive systems by $\Delta^{+}(\Uppi_{i})$ for $i=1,\ldots,5$. Among these positive systems, we are particularly interested in
$\Delta^{+}(\Pi_{1})$ and $\Delta^{+}(\Pi_{2})$, for reasons explained in
Section~\ref{subsec:real_forms_slmb}.

We introduce further notation which proves to be helpful in the discussion. We follow \cite[Section 10]{jakobsen1994full}. Set $\gg\coloneqq F(4)$. Choose nonzero odd root vectors $y_1,\ldots,y_8,z_1,\ldots,z_8$ in the one-dimensional odd root spaces 
\begin{equation}
\begin{aligned}
y_1&\in\gg^{\frac12(-\epsilon_1-\epsilon_2-\epsilon_3-\delta)},&
z_1&\in\gg^{\frac12(-\epsilon_1-\epsilon_2-\epsilon_3+\delta)},&
y_2&\in\gg^{\frac12(-\epsilon_1-\epsilon_2+\epsilon_3-\delta)},&
z_2&\in\gg^{\frac12(-\epsilon_1-\epsilon_2+\epsilon_3+\delta)},\\
y_3&\in\gg^{\frac12(-\epsilon_1+\epsilon_2-\epsilon_3-\delta)},&
z_3&\in\gg^{\frac12(-\epsilon_1+\epsilon_2-\epsilon_3+\delta)},&
y_4&\in\gg^{\frac12(-\epsilon_1+\epsilon_2+\epsilon_3-\delta)},&
z_4&\in\gg^{\frac12(-\epsilon_1+\epsilon_2+\epsilon_3+\delta)},\\
y_5&\in\gg^{\frac12(\epsilon_1-\epsilon_2-\epsilon_3-\delta)},&
z_5&\in\gg^{\frac12(\epsilon_1-\epsilon_2-\epsilon_3+\delta)},&
y_6&\in\gg^{\frac12(\epsilon_1-\epsilon_2+\epsilon_3-\delta)},&
z_6&\in\gg^{\frac12(\epsilon_1-\epsilon_2+\epsilon_3+\delta)},\\
y_7&\in\gg^{\frac12(\epsilon_1+\epsilon_2-\epsilon_3-\delta)},&
z_7&\in\gg^{\frac12(\epsilon_1+\epsilon_2-\epsilon_3+\delta)},&
y_8&\in\gg^{\frac12(\epsilon_1+\epsilon_2+\epsilon_3-\delta)},&
z_8&\in\gg^{\frac12(\epsilon_1+\epsilon_2+\epsilon_3+\delta)}
\end{aligned}
\end{equation}
as in \cite{jakobsen1994full}. Thus $y_i$ and $z_i$ have the same $\mathfrak{so}(7)$-weight and
opposite $\delta$-components. They span $\odd$. 

For the five systems $\Uppi_1,\ldots,\Uppi_5$ introduced above,
the corresponding positive odd subspaces are

\begin{equation}\label{eq::odd_root_space_different_systems_F(4)}
\begin{aligned}
\nn_{\bar1}^{+}(\Uppi_1)
&=
\operatorname{span}_{\CC}
\{z_1,z_2,z_3,z_4,z_5,z_6,z_7,z_8\},
\\
\nn_{\bar1}^{+}(\Uppi_2)
&=
\operatorname{span}_{\CC}
\{y_5,y_6,y_7,y_8,z_5,z_6,z_7,z_8\},
\\
\nn_{\bar1}^{+}(\Uppi_3)
&=
\operatorname{span}_{\CC}
\{y_6,y_7,y_8,z_4,z_5,z_6,z_7,z_8\},
\\
\nn_{\bar1}^{+}(\Uppi_4)
&=
\operatorname{span}_{\CC}
\{y_8,z_2,z_3,z_4,z_5,z_6,z_7,z_8\},
\\
\nn_{\bar1}^{+}(\Uppi_5)
&=
\operatorname{span}_{\CC}
\{y_7,y_8,z_3,z_4,z_5,z_6,z_7,z_8\}.
\end{aligned}
\end{equation}

We finally describe the odd reflections relating the five simple systems
$\Uppi_1,\ldots,\Uppi_5$. They form the chain
\begin{equation}
\Uppi_1
\longleftrightarrow
\Uppi_4
\longleftrightarrow
\Uppi_5
\longleftrightarrow
\Uppi_3
\longleftrightarrow
\Uppi_2.
\end{equation}
The corresponding positive odd systems differ successively by the
replacements
\[
z_1\longleftrightarrow y_8,\qquad
z_2\longleftrightarrow y_7,\qquad
z_3\longleftrightarrow y_6,\qquad
z_4\longleftrightarrow y_5.
\]
Indeed, the weight of $z_i$ is the negative of the weight of $y_{9-i}$.
Thus the odd reflection in the root of $z_i$ replaces this root by the
root of $y_{9-i}$.

\begin{lemma}\label{lemm::odd_reflections_F(4)}
The five simple systems are related by
\[
\begin{aligned}
r_{\frac12(-\epsilon_1-\epsilon_2-\epsilon_3+\delta)}(\Uppi_1)
&=\Uppi_4,
&
r_{\frac12(-\epsilon_1-\epsilon_2+\epsilon_3+\delta)}(\Uppi_4)
&=\Uppi_5,
\\
r_{\frac12(-\epsilon_1+\epsilon_2-\epsilon_3+\delta)}(\Uppi_5)
&=\Uppi_3,
&
r_{\frac12(-\epsilon_1+\epsilon_2+\epsilon_3+\delta)}(\Uppi_3)
&=\Uppi_2.
\end{aligned}
\]
\end{lemma}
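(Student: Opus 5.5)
The plan is to verify each of the four identities directly from the odd reflection rule \eqref{eq::odd-reflection}, applied to the explicit simple systems $\Uppi_1,\dots,\Uppi_5$. In each case I first check that the reflecting root $\theta$ lies in the given simple system and is isotropic; the latter holds for every odd root of $F(4)$ by \eqref{eq::F4_bilinear_form}. I then compute $(\alpha,\theta)$ for the remaining three simple roots $\alpha$, using $(\epsilon_i,\epsilon_j)=\delta_{ij}$, $(\delta,\delta)=-3$ and $(\epsilon_i,\delta)=0$. Finally I replace $\theta$ by $-\theta$, replace each $\alpha$ with $(\alpha,\theta)\neq0$ by $\alpha+\theta$, and keep the others fixed. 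The resulting set should then be compared with the target system.

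For the first identity, take $\theta=\tfrac12(-\epsilon_1-\epsilon_2-\epsilon_3+\delta)\in\Uppi_1$. The pairings are $(\epsilon_1-\epsilon_2,\theta)=0$, $(\epsilon_2-\epsilon_3,\theta)=0$ and $(\epsilon_3,\theta)=-\tfrac12\neq0$. Hence $r_\theta(\Uppi_1)$ consists of $\epsilon_1-\epsilon_2$, $\epsilon_2-\epsilon_3$, $\tfrac12(-\epsilon_1-\epsilon_2+\epsilon_3+\delta)$ and $\tfrac12(\epsilon_1+\epsilon_2+\epsilon_3-\delta)$, which is exactly $\Uppi_4$. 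The other three identities are handled the same way:
\begin{itemize}
\item In $\Uppi_4$, reflect at $\theta=\tfrac12(-\epsilon_1-\epsilon_2+\epsilon_3+\delta)$. It pairs to zero with $\epsilon_1-\epsilon_2$ and nontrivially with $\epsilon_2-\epsilon_3$ and with $\tfrac12(\epsilon_1+\epsilon_2+\epsilon_3-\delta)$. This should produce $\epsilon_3$ and $\tfrac12(\epsilon_1+\epsilon_2-\epsilon_3-\delta)$, i.e. $\Uppi_5$.
\item In $\Uppi_5$, reflect at $\tfrac12(-\epsilon_1+\epsilon_2-\epsilon_3+\delta)$; the result should be $\Uppi_3$.
\item In $\Uppi_3$, reflect at $\tfrac12(-\epsilon_1+\epsilon_2+\epsilon_3+\delta)$; the result should be $\Uppi_2$.
\end{itemize}
In each case the pairings are read off coordinate by coordinate.

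As an independent consistency check, I would use $\Delta_\theta^+=(\Delta^+\setminus\{\theta\})\cup\{-\theta\}$ together with \eqref{eq::odd_root_space_different_systems_F(4)}. Since the weight of $z_i$ is minus the weight of $y_{9-i}$, each step replaces one $z_i$ by $y_{9-i}$ in the positive odd subspace. Applying these swaps successively gives the chain $\Uppi_1\to\Uppi_4\to\Uppi_5\to\Uppi_3\to\Uppi_2$, which confirms the positive systems. Because a simple system is determined by its positive system, this alone already proves each identity. The explicit computation above is therefore only a confirmation.

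The main obstacle is a bookkeeping issue in the last two steps. There the reflected simple roots are sums of half-integral odd roots, and some of them become even roots such as $\delta$ or $\epsilon_3$. One must check that each such sum really is a root in $\Delta$ and that it matches the listed element of $\Uppi_2$ or $\Uppi_3$ rather than a Weyl-conjugate of it. I would handle this through the positive-system argument: identify $\Delta^+(\Uppi_i)$ as $\Delta_{\bar0}^+$ together with the listed odd roots, and read off its indecomposable elements.
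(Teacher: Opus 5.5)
Your proposal is correct and is essentially the paper's proof: you apply the odd reflection rule \eqref{eq::odd-reflection} to each $\Uppi_i$ using $(\epsilon_i,\epsilon_j)=\delta_{ij}$ and $(\delta,\delta)=-3$, doing the first case explicitly and treating the others the same way. Your positive-system check via $z_i\leftrightarrow y_{9-i}$ is the same remark the paper makes just before the lemma, and the worry in your last paragraph is unnecessary: the reflected sums are literally the listed roots, e.g. $\tfrac12(\epsilon_1-\epsilon_2-\epsilon_3+\delta)+\tfrac12(-\epsilon_1+\epsilon_2+\epsilon_3+\delta)=\delta$ and $\tfrac12(\epsilon_1+\epsilon_2-\epsilon_3-\delta)+\tfrac12(-\epsilon_1+\epsilon_2-\epsilon_3+\delta)=\epsilon_2-\epsilon_3$, so no Weyl-conjugation issue arises.
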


\begin{proof} We denote the weights of $y_i$ and $z_j$ by $\wt(y_i)$ and
$\wt(z_j)$, respectively. Consider
\[
\wt(z_1)
=
\frac12(-\epsilon_1-\epsilon_2-\epsilon_3+\delta)
\]
and
$ \Uppi_1 =  \{\epsilon_1-\epsilon_2,\epsilon_2-\epsilon_3,
\epsilon_3,\wt(z_1)\}$.
We use the odd reflection formula \eqref{eq::odd-reflection}. First,
\[
r_{\wt(z_1)}\bigl(\wt(z_1)\bigr)
=
-\wt(z_1)
=
\wt(y_8).
\]
Moreover, $\wt(z_1)$ is orthogonal to $\epsilon_1-\epsilon_2$ and
$\epsilon_2-\epsilon_3$, so these two roots remain unchanged. On the
other hand, $(\wt(z_1),\epsilon_3)\neq0$, and hence
\[
r_{\wt(z_1)}(\epsilon_3)
=
\epsilon_3+\wt(z_1)
=
\frac12(-\epsilon_1-\epsilon_2+\epsilon_3+\delta)
=
\wt(z_2).
\]
Consequently,
\[
r_{\wt(z_1)}(\Uppi_1)
=
\{\epsilon_1-\epsilon_2,\epsilon_2-\epsilon_3,
\wt(z_2),\wt(y_8)\}
=
\Uppi_4.
\]
The remaining reflections are computed in the same way.
\end{proof}

\subsection{Real forms}\label{subsec:real_forms_slmb}
A real form of $\gg$ is a real Lie superalgebra $\gg_{\mathbb R}$ with $\gg_{\mathbb R}\otimes_{\mathbb R}\CC \cong\gg$, equivalently the fixed-point algebra of a parity-preserving conjugate-linear involution $\sigma$. For unitarity, we use instead a conjugate-linear anti-involution $\omega$, characterized by $\omega^2=\operatorname{id}$ and $\omega([X,Y])=[\omega(Y),\omega(X)]$ for homogeneous $X,Y\in\gg$ \cite{SchmidtDirac,Schmidt,SchmidtWalcher}. The two conventions are related by
\begin{equation}
\sigma_\omega(X)=-(-i)^{p(X)}\omega(X),\qquad
\gg_{\mathbb R}=\bigoplus_{\bar{a}\in \ZZ_{2}}\{X\in\gg_{\bar a}:\omega(X)=-i^{\bar{a}}X\}.
\label{eq:real-form-anti-involution}
\end{equation}

The real forms of $\sl(m\vert n)$, $\osp(m\vert 2n)$ and $F(4)$ were classified by Parker \cite{ParkerRealForms} and Serganova \cite{serganova1983classification}. We start with $\sl(m\vert n)$. For $m\neq n$, the real forms are $\mathfrak{sl}(m\vert n;\mathbb R)$, $\mathfrak{su}(p,q\vert r,s)$ with $p+q=m$ and $r+s=n$, and, when $m$ and $n$ are even, $\mathfrak{su}^{\ast}(m\vert n)$. For $m=n$, the corresponding real forms descend to $\mathfrak{psl}(n\vert n)$, which admits additional real forms associated with outer automorphisms. 

We explicitly realize the unitary family $\mathfrak{su}(p,q\vert r,s)$. Put $I_{a,b}\coloneqq\operatorname{diag}(I_a,-I_b)$ and $I_{p,q\vert r,s}\coloneqq\operatorname{diag}(I_{p,q},I_{r,s})$, and define
\begin{equation}
\omega_{p,q\vert r,s}(X)
\coloneqq I_{p,q\vert r,s}^{-1}X^\dagger I_{p,q\vert r,s},
\qquad X^\dagger\coloneqq\overline{X}^{\,t}.
\label{eq:unitary-anti-involution}
\end{equation}
The associated real form is denoted by $\mathfrak{su}(p,q\vert r,s)$ and has even part $\mathfrak{s}(\mathfrak{u}(p,q)\oplus\mathfrak{u}(r,s))$. Of particular importance are the isomorphic real forms $\mathfrak{su}(p,q\vert0,n)$ and $\mathfrak{su}(p,q\vert n,0)$, where $p+q=m$. Their anti-involutions agree on $\even$ and differ by a sign on $\odd$, while both have even part $\mathfrak{s}(\mathfrak{u}(p,q)\oplus\mathfrak{u}(n))$. Moreover, these real forms are compact if and only if $pq=0$. 

The relation between the conjugate-linear anti-involution and the choice of positive system requires some care. Let $\Delta^{+}$ be a positive system extending the fixed positive even system $\Delta_{\bar0}^{+}$, and set
$\nn_{\bar1}^{\pm}
\coloneqq
\bigoplus_{\alpha\in\Delta_{\bar1}^{+}}
\gg_{\pm\alpha}$. 
We call $\Delta^{+}$ $\omega$-adapted if there exist dual bases
$\{\partial_i\}_{i=1}^{N}$ of $\nn_{\bar1}^{+}$ and
$\{x_i\}_{i=1}^{N}$ of $\nn_{\bar1}^{-}$, where
$N=\dim\nn_{\bar1}^{+}$, such that
\begin{equation}
(\partial_i,x_j)=\delta_{ij},
\qquad
\omega(x_i)=\varepsilon\partial_i
\end{equation}
for all $i,j$ and a sign $\varepsilon\in\{\pm1\}$ independent of $i$. Equivalently, the Hermitian form
$
h_\omega(x,y)\coloneqq(\omega(x),y)
$
on $\nn_{\bar1}^{-}$ is definite. Indeed, $h_\omega(x_i,x_j)=\varepsilon\delta_{ij}$.

\begin{lemma}\label{lemm::adapted_systems_slmn}
The nonstandard positive system of Example~\ref{ex::systems} is
$\omega_{p,q\vert 0,n}$-adapted with sign $-1$ and
$\omega_{p,q\vert n,0}$-adapted with sign $+1$. The standard positive
system is adapted to either anti-involution if and only if $pq=0$.
Consequently, if $p,q>0$, the nonstandard positive system is the unique
adapted system among the standard and nonstandard positive systems.
\end{lemma}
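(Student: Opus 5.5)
The proof is a direct computation with matrix units, using the equivalent formulation that $\Delta^{+}$ is $\omega$-adapted exactly when the Hermitian form $h_\omega(x,y)=(\omega(x),y)$ on $\nn_{\bar1}^{-}$ is definite. I will compute $h_\omega$ on the natural basis of odd root vectors. For $1\leq i\leq m$ and $1\leq a\leq n$, the root vector of $\epsilon_i-\delta_a$ is $E_{i,m+a}$ and that of $\delta_a-\epsilon_i$ is $E_{m+a,i}$. With $I=I_{p,q\vert r,s}$ diagonal with entries $\eta_k\in\{\pm1\}$, one has
$$\omega_{p,q\vert r,s}(E_{kl})=I^{-1}E_{lk}I=\eta_l\eta_k E_{lk}.$$
For $\omega_{p,q\vert0,n}$ set $\eta_{m+a}=-1$, and for $\omega_{p,q\vert n,0}$ set $\eta_{m+a}=+1$. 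In both cases $\eta_i=+1$ for $i\leq p$ and $\eta_i=-1$ for $p<i\leq m$.

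Next I record the supertrace pairings: $(E_{m+a,i},E_{i,m+a})=\str(E_{m+a,m+a})=-1$ and $(E_{i,m+a},E_{m+a,i})=\str(E_{ii})=1$, while all other pairings between distinct odd root vectors vanish. It follows that the matrix units form an orthogonal basis for $h_\omega$, and each diagonal value is
$$h_\omega(x,x)=\eta_i\eta_{m+a}\cdot(\pm1).$$
The sign $\pm1$ is $-1$ when $x=E_{i,m+a}$, a negative root vector for the roots $\delta_a-\epsilon_i$, and $+1$ when $x=E_{m+a,i}$. These diagonal values are exactly the $\varepsilon$ of the definition, after rescaling the dual basis $\partial_i$ by $\pm1$ if needed; the normalization $(\partial_i,x_j)=\delta_{ij}$ then forces $\omega(x_i)=\varepsilon\partial_i$ with $\varepsilon=h_\omega(x_i,x_i)$.

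For the nonstandard system, $\nn_{\bar1}^{-}$ is spanned by $E_{m+a,i}$ with $i\leq p$, contributing $\eta_i\eta_{m+a}=\eta_{m+a}$, and by $E_{j,m+a}$ with $j>p$, contributing $-\eta_j\eta_{m+a}=\eta_{m+a}$. All diagonal values therefore equal $\eta_{m+a}$. This gives sign $-1$ for $\omega_{p,q\vert0,n}$ and sign $+1$ for $\omega_{p,q\vert n,0}$, as claimed.

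For the standard system, $\nn_{\bar1}^{-}$ is spanned by all $E_{m+a,i}$, with diagonal values $\eta_i\eta_{m+a}$. These are constant in $i$ exactly when all $\eta_i$, $1\leq i\leq m$, agree, which happens iff $pq=0$. So the standard system is adapted to either anti-involution iff $pq=0$, and the uniqueness statement follows immediately for $p,q>0$.

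The only delicate point is the bookkeeping of signs: the supertrace sign $\str(E_{m+a,m+a})=-1$ and the convention fixing which root vectors lie in $\nn^{-}$. I will check these carefully, for instance on $\sl(1\vert1)$-type pairs, to confirm that the orthogonality and definiteness claims hold. The $m=n$ case, where $\gg=\gl(n\vert n)$, is handled identically.
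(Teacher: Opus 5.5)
Your proposal is correct and follows essentially the same route as the paper. Both compute $\omega$ on the odd matrix units $E_{i,m+a}$ and $E_{m+a,i}$, read off the signs of $h_\omega$ on the resulting orthogonal basis of $\nn_{\bar1}^{-}$, and conclude that the nonstandard system is always adapted while the standard one is adapted if and only if $pq=0$. The only difference is bookkeeping: your diagonal entries $\eta_k$ of $I_{p,q\vert r,s}$ handle both anti-involutions at once, whereas the paper writes out the dual bases $\partial_{ia},x_{ia}$ for $\omega_{p,q\vert 0,n}$ and then reverses the overall sign on the odd part for $\omega_{p,q\vert n,0}$.
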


\begin{proof}
We first consider $\omega_{p,q\vert 0,n}$. For $1\leq i\leq m$ and
$1\leq a\leq n$,
\[
\omega_{p,q\vert 0,n}(E_{i,m+a})
=
\begin{cases}
-E_{m+a,i},&i\leq p,\\
E_{m+a,i},&i>p,
\end{cases}
\qquad
\omega_{p,q\vert 0,n}(E_{m+a,i})
=
\begin{cases}
-E_{i,m+a},&i\leq p,\\
E_{i,m+a},&i>p.
\end{cases}
\]
For the standard positive system, $\nn_{\bar1}^{+}$ is spanned by the
$E_{i,m+a}$, and the corresponding form $h_{\omega_{p,q\vert 0,n}}$ has
sign $-1$ for $i\leq p$ and sign $+1$ for $i>p$. Hence it is definite if
and only if $pq=0$.

For the nonstandard positive system, set
\[
\partial_{ia}
=
\begin{cases}
E_{i,m+a},&i\leq p,\\
E_{m+a,i},&i>p,
\end{cases}
\qquad
x_{ia}
=
\begin{cases}
E_{m+a,i},&i\leq p,\\
-E_{i,m+a},&i>p.
\end{cases}
\]
Then $\{\partial_{ia}\}$ and $\{x_{ia}\}$ are dual bases of
$\nn_{\bar1}^{+}$ and $\nn_{\bar1}^{-}$, respectively, and
\[
(\partial_{ia},x_{jb})=\delta_{ij}\delta_{ab},
\qquad
\omega_{p,q\vert 0,n}(x_{ia})=-\partial_{ia}.
\]
Thus the nonstandard positive system is $\omega_{p,q\vert 0,n}$-adapted
with sign $-1$. Since $\omega_{p,q\vert n,0}$ differs on the odd part by
an overall sign, the same argument gives sign $+1$ for the nonstandard
system and shows again that the standard system is adapted precisely when
$pq=0$.
\end{proof}

The real forms $\su(p,q\vert 0,n)$ and $\su(p,q\vert n,0)$ are distinguished among all real forms by the following theorem.

\begin{theorem}[{\cite{neeb2011lie}}]
A real form of a fixed $\mathfrak{sl}(m\vert n)$ admits a nontrivial unitary representation if and only if it is isomorphic to $\mathfrak{su}(p,q\vert0,n)$ or $\mathfrak{su}(p,q\vert n,0)$ for some $p,q\in\ZZ_{\geq0}$ with $p+q=m$.
\end{theorem}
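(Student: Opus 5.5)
The theorem is cited from \cite{neeb2011lie}, so the plan is to reconstruct its two directions from structure theory rather than re-derive the general Neeb--Salmasian result. Throughout, a unitary representation is an $\omega$-unitary representation in the sense of the introduction. By \eqref{eq:real-form-anti-integration} each real form corresponds to a conjugate-linear anti-involution $\omega$, and we work with $\omega$.

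\emph{Sufficiency.} For $\mathfrak{su}(p,q\vert0,n)$ and $\mathfrak{su}(p,q\vert n,0)$ we exhibit a nontrivial unitary representation explicitly. For $pq=0$ the real form is compact, and the defining representation $\CC^{m\vert n}$ works. We take the standard Hermitian form $\langle v,w\rangle=\bar v^{t}w$, twisted by the signs of $I_{p,q\vert r,s}$ where needed. With $\omega(X)=X^\dagger$ or $\omega(X)=I^{-1}X^\dagger I$, this is positive definite and satisfies $\langle Xv,w\rangle=\langle v,\omega(X)w\rangle$.

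For $pq\neq0$ we use the oscillator (metaplectic--spinor) construction. We realize $\gl(m\vert n)$ by bilinears in $p+q$ bosonic and $n$ fermionic oscillators on a Fock space. The $q$ bosons are dualized ($a\leftrightarrow a^\dagger$), so that the $\mathfrak u(p,q)$ part acts by ladder representations. Checking the adjointness relation on generators $E_{ij}$ then reduces to $a_i^\dagger=(a_i)^{\ast}$ in the Fock inner product. The sign pattern $I_{p,q\vert0,n}$ versus $I_{p,q\vert n,0}$ is absorbed by $\omega_{p,q\vert0,n}$ and $\omega_{p,q\vert n,0}$ differing only by a sign on $\odd$. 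This sign is compensated by multiplying odd generators by $i$, which is an isomorphism of the two real forms.

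\emph{Necessity.} Suppose $\HH\neq0$ is $\omega$-unitary with nontrivial action. Pick homogeneous $X\in\odd$. From $\langle Xv,Xv\rangle=\langle v,\omega(X)Xv\rangle$, summing with $\omega(X)$ in place of $X$, we get
\[
\langle v,[\omega(X),X]v\rangle=\|Xv\|^2+\|\omega(X)v\|^2\ge0 .
\]
Hence the cone $C_\omega$ spanned by $\{[\omega(X),X]:X\in\odd\}\subseteq i\gg^{\omega}_{\bar0}$ acts by positive semidefinite operators. If $C_\omega$ contains a nonzero element together with its negative, that element acts by zero. The kernel of the representation is then a nonzero ideal, hence all of $\gg$ by simplicity (for $m=n$, pass to $\psl(n\vert n)$). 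This contradicts nontriviality.

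So a necessary condition is that $C_\omega$ be a pointed (proper) cone. We then run through the classification of Parker and Serganova. For $\sl(m\vert n;\RR)$, $\mathfrak{su}^\ast(m\vert n)$, and $\mathfrak{su}(p,q\vert r,s)$ with $rs\neq0$, we exhibit explicit $X,Y\in\odd$ with $[\omega(X),X]=-[\omega(Y),Y]\neq0$.

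For $\mathfrak{su}(p,q\vert r,s)$ this comes from the block computation as in the proof of Lemma~\ref{lemm::adapted_systems_slmn}. Taking $X=E_{i,m+a}$, one gets $[\omega(X),X]=\pm(E_{ii}\mp' E_{m+a,m+a})$, with sign determined by the $i$-th and $(m+a)$-th entries of $I_{p,q\vert r,s}$. Fixing $i$ and choosing $a\le r$ and $a'>r$ then produces two elements whose sum lies in the Cartan. A short combination with the corresponding $C$-block vectors gives opposite elements.

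For the split and quaternionic forms, $\omega$ mixes $\gg^{\alpha}$ with $\gg^{\beta}$ for $\beta\neq-\alpha$. Here we compute with a root vector and its $\omega$-image to obtain a nilpotent element of $\even$ lying in $C_\omega\cap -C_\omega$.

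\emph{Main obstacle.} The case analysis in the necessity direction is where I expect the real work, especially the $\mathfrak{su}^\ast$ forms and the extra $\psl(n\vert n)$ forms coming from outer automorphisms. There the anti-involution is not of the simple diagonal type, and one must choose $X$ carefully. I would first try $X=x+\omega(x)$-type combinations of root vectors, reducing everything to $2\times2$ or $4\times4$ blocks.
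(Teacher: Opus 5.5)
\textbf{There is a genuine gap, and it cannot be closed, because the step fails on real forms that do have unitary representations.}

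Your first necessity step is correct. The identity $\langle v,[\omega(X),X]v\rangle=\|Xv\|^2+\|\omega(X)v\|^2$ forces the cone $C_\omega$ to be pointed. This is also the starting point of Neeb--Salmasian; the paper gives no proof here, only the citation, and for $F(4)$ it sketches the structural continuation via quasihermitian even parts.

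The gap is in your case analysis. For $\su(p,q\vert r,s)$ with $rs\neq0$ but $pq=0$, no opposite elements exist, because $C_\omega$ is pointed:
\begin{itemize}
\item Write $I_{p,q\vert r,s}=\operatorname{diag}(I_1,I_2)$ and $X=\left(\begin{smallmatrix}0&B\\ C&0\end{smallmatrix}\right)$. The lower diagonal block of $[\omega(X),X]$ is $I_2B^{\dagger}I_1B+CI_1C^{\dagger}I_2$.
\item If $I_1=\pm 1_m$, the real functional $\ell(\operatorname{diag}(H_1,H_2))\coloneqq\pm\tr(I_2H_2)$ satisfies $\ell([\omega(X),X])=\tr(B^{\dagger}B)+\tr(CC^{\dagger})>0$ for every $X\neq0$.
\end{itemize}
Your block computation hides this because of a sign slip. $[\omega(E_{i,m+a}),E_{i,m+a}]$ is an anticommutator and equals $\eta_i\eta_{m+a}(E_{ii}+E_{m+a,m+a})$, where $\eta_k$ are the diagonal entries of $I_{p,q\vert r,s}$. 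The $C$-block vectors $E_{m+a,i}$ give exactly the same elements. So with a single fixed $i$ you only ever obtain $\eta_i(E_{m+a,m+a}-E_{m+a',m+a'})$, never its negative. The negative needs a second index $j$ with $\eta_j=-\eta_i$, i.e.\ $pq\neq0$. In that case, for $i\leq p<j$ and $a\leq r<a'$, summing the generators from $E_{i,m+a},E_{i,m+a'}$ and from $E_{j,m+a},E_{j,m+a'}$ gives $\pm(E_{m+a,m+a}-E_{m+a',m+a'})\in C_\omega$.

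\textbf{Why this cannot be repaired.} The forms $\su(m,0\vert r,s)$ with $rs\neq0$ carry nontrivial unitary representations.
\begin{itemize}
\item Conjugation $\phi$ by the block-swap matrix $\left(\begin{smallmatrix}0&1_n\\ 1_m&0\end{smallmatrix}\right)$ is an isomorphism $\sl(m\vert n)\to\sl(n\vert m)$ with $\phi\circ\omega_{m,0\vert r,s}=\omega_{r,s\vert m,0}\circ\phi$. So unitary representations of $\su(r,s\vert m,0)$ pull back.
\item For $m\neq n$, the even part of $\su(m,0\vert r,s)$ contains the noncompact simple ideal $\su(r,s)$, of dimension $n^2-1$. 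So it is not isomorphic to any $\su(p,q\vert0,n)$ or $\su(p,q\vert n,0)$ with $p+q=m$.
\end{itemize}
Hence, for the fixed ordered $\sl(m\vert n)$ that the paper insists on, the statement itself must be enlarged by $\su(m,0\vert r,s)$ and $\su(0,m\vert r,s)$. The Neeb--Salmasian list is symmetric under $\sl(m\vert n)\cong\sl(n\vert m)$. Your exclusion argument should target only $pq\neq0\neq rs$, $\sl(m\vert n;\RR)$, $\su^{\ast}(m\vert n)$ and the extra $\psl(n\vert n)$ forms.

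\textbf{Two smaller fixes on sufficiency.}
\begin{itemize}
\item Multiplying all of $\odd$ by $i$ is not an automorphism, since it negates $[\odd,\odd]$. Rescaling $\gg_{\pm1}$ by $a^{\pm1}$ cannot intertwine $\omega_{p,q\vert0,n}$ and $\omega_{p,q\vert n,0}$ either, since that would need $|a|^2=-1$. Use $\omega_I=\omega_{-I}$ instead: it identifies $\su(p,q\vert0,n)$ with $\su(q,p\vert n,0)$ after permuting coordinates.
\item $\CC^{m\vert n}$ is unitary for $\omega_{m,0\vert n,0}$. For $\omega_{m,0\vert0,n}$, its contravariant form is, up to a real scalar, $\bar v^{t}I_{m,0\vert0,n}w$, which is indefinite. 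Take the contragredient instead, as in Remark~\ref{rmk::HW_LW}.
\end{itemize}
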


The real forms of $\osp(m\vert 2n)$ consist of the families
$\osp(p,q\vert 2n;\RR)$, where $p+q=m$, and, when $m=2r$ is even,
$\osp^{\ast}(2r\vert a,b)$, where $a+b=2n$. To realize them, we use explicit
matrix realizations of the corresponding conjugate-linear anti-involutions.

We first consider $\osp(p,q\vert 2n;\RR)$. Set
$\Omega_n\coloneqq\left(\begin{smallmatrix}0&I_n\\-I_n&0\end{smallmatrix}\right)$
and
$J_{p,q\vert 2n}\coloneqq\operatorname{diag}(I_{p,q},i\Omega_n)$.
The associated conjugate-linear anti-involution is
\begin{equation}
\upomega_{p,q\vert 2n}(X)
\coloneqq
J_{p,q\vert 2n}^{-1}X^\dagger J_{p,q\vert 2n},
\qquad
X^\dagger\coloneqq\overline{X}^{\,t}.
\end{equation}
Of particular interest are the isomorphic real forms
$\osp(m,0\vert 2n;\RR)$ and $\osp(0,m\vert 2n;\RR)$, corresponding to
$\upomega_{m,0\vert 2n}$ and $\upomega_{0,m\vert 2n}$, respectively. These
anti-involutions agree on the even part and differ by a sign on the odd
part. Equivalently,
$
\upomega_{m,0\vert 2n}(x)
=
(-1)^{p(x)}\upomega_{0,m\vert 2n}(x)
$
for every homogeneous $x\in\osp(m\vert 2n)$.

For $\osp(m\vert 2n)$, we consider the Case~A and Case~B positive systems
$\Delta_A^{+}$ and $\Delta_B^{+}$ introduced in
Example~\ref{ex::osp-systems}. Their compatibility with the
conjugate-linear anti-involutions $\upomega_{m,0\vert 2n}$ and $\upomega_{0,m\vert 2n}$ is as follows. 

\begin{lemma}\label{lemm::adapted_systems_osp}
The Case~A positive system of Example~\ref{ex::osp-systems} is
$\upomega_{m,0\vert 2n}$-adapted with sign $-1$ and
$\upomega_{0,m\vert 2n}$-adapted with sign $+1$. The Case~B positive
system is adapted to either anti-involution if and only if $m=1$, in which
case $\Delta_A^{+}=\Delta_B^{+}$.
\end{lemma}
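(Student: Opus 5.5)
The plan mirrors the proof of Lemma~\ref{lemm::adapted_systems_slmn}. First I compute $\upomega_{m,0\vert 2n}$ on odd root vectors. Since $J_{m,0\vert 2n}=\operatorname{diag}(I_m,i\Omega_n)$ and $\upomega(X)=J^{-1}X^\dagger J$, an odd element with block $B$ (upper right) and $C$ (lower left) is sent to the odd element whose upper right block is $iC^\dagger\Omega_n$ and whose lower left block is $-i\Omega_n^{-1}B^\dagger$. The Cartan subalgebra $\hh$ was chosen inside a compact Cartan (the elements $t_iK$ and the block $\left(\begin{smallmatrix}0&iD\\-iD&0\end{smallmatrix}\right)$ are Hermitian for real parameters). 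Hence $\upomega$ fixes $\hh_{\RR}$ pointwise and maps $\gg^{\alpha}$ to $\gg^{-\alpha}$. Consequently, for every odd root $\alpha$ and nonzero $X_\alpha\in\gg^\alpha$, the number $(\upomega(X_{-\alpha}),X_{-\alpha})$ is a nonzero real number. Its sign $s(\alpha)=\pm1$ (after normalizing $(X_\alpha,X_{-\alpha})=1$) is well defined: rescaling $X_{-\alpha}$ by $c$ multiplies it by $|c|^2$. Adaptedness of $\Delta^+$ is then exactly the statement that $s(\alpha)$ is constant on $\Delta^+_{\bar1}$, since $h_\upomega$ is diagonal in root vectors (as $(\gg^{-\alpha},\gg^{-\beta})=0$ unless $\alpha+\beta=0$, which cannot happen for two positive roots).

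Second, I determine the signs $s(\alpha)$. The cleanest route is to use the $\so(m)\oplus\sp(2n)$-structure. $\upomega_{m,0\vert 2n}$ restricted to $\gg_{\bar0}$ is the Cartan-type anti-involution of the compact form $\so(m)\oplus\sp(n)$, and $\odd\cong\CC^m\otimes\CC^{2n}$, with $h_\upomega$ being (up to the factor $i$ coming from $i\Omega_n$) the tensor product of the positive Hermitian form on $\CC^m$ and the form on $\CC^{2n}$ given by $\overline{v}^{\,t}i\Omega_n w$. The latter form is Hermitian of signature $(n,n)$: it is positive on the $\delta_a$-weight vectors and negative on the $-\delta_a$-weight vectors (or vice versa, depending on the sign convention). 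Thus $s(\pm\epsilon_i+\delta_a)$ and $s(\delta_a)$ share one sign, and $s(\pm\epsilon_i-\delta_a)$ have the opposite sign. I will verify this with one explicit matrix computation for $\epsilon_1\pm\delta_1$ and, for odd $m$, for $\delta_1$. Then I fix the overall sign by matching it to the claimed value $-1$ in Case~A, using the conventions $(\partial_i,x_j)=\delta_{ij}$, $\omega(x_i)=\varepsilon\partial_i$.

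Third, I conclude. Case~A has $\Delta^+_{A,\bar1}=\{\delta_a\pm\epsilon_i\}\cup\{\delta_a\}$, all of $\delta$-sign $+$, so $s$ is constant and the sign is $-1$. Case~B contains both $\epsilon_i+\delta_a$ and $\epsilon_i-\delta_a$, which have opposite signs, so it is not adapted whenever $r\geq1$, i.e.\ $m\geq2$. For $m=1$ there are no $\epsilon_i$, the two systems coincide by Example~\ref{ex::osp-systems}, and the claim reduces to Case~A. The statement for $\upomega_{0,m\vert 2n}$ follows from $\upomega_{m,0\vert 2n}=(-1)^{p(x)}\upomega_{0,m\vert 2n}$, which flips $\varepsilon$ and preserves (non)adaptedness.

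The main obstacle is the second step: getting the sign bookkeeping right. This involves the factor $i$ in $J$, the $\tfrac12\str$ normalization, and the supersymmetry of the form on odd elements. Together these determine whether Case~A has sign $-1$ rather than $+1$. I would handle this by a direct computation in $\osp(1\vert 2)$ and $\osp(2\vert 2)$, where only one $\delta$ and one $\epsilon$ occur, and then extend by $\even$-equivariance, since the sign depends only on the $\pm\delta_a$ component.
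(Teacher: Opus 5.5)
Your overall route is the paper's. You reduce to $\upomega_{m,0\vert 2n}$ and observe that $h_{\upomega}$ is diagonal on root vectors of $\nn_{\bar1}^{-}$, with a sign that depends only on the $\delta$-coefficient of the root. From this, Case~A has a uniform sign, while Case~B mixes signs for $m\geq2$. That structural part is fine, apart from one slip. The orthogonality you need is $(\gg^{\alpha},\gg^{-\beta})=0$ for $\alpha\neq\beta$, since $\upomega(\gg^{-\alpha})=\gg^{\alpha}$; the statement about $(\gg^{-\alpha},\gg^{-\beta})$ is not the relevant one.

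The gap is that the one substantive claim of the first sentence is never derived: that the common Case~A sign for $\upomega_{m,0\vert 2n}$ is $-1$ rather than $+1$. You hedge (``or vice versa'') and then propose to ``fix the overall sign by matching it to the claimed value $-1$''. That step is circular, and the small-rank check is only promised. As written, your argument shows only that Case~A is adapted with some sign $\varepsilon_0$ for $\upomega_{m,0\vert2n}$ and $-\varepsilon_0$ for $\upomega_{0,m\vert2n}$, not the stated signs.

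This is exactly where sign errors creep in. The rows of the upper-right block $Z$ enter the weight condition through $-ZH$, so they carry the dual weight. The form then involves $Z^{T}$. So ``positive on $\delta_a$-weight vectors of $\CC^{2n}$'' must be applied to the correct weight.

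The missing computation is short. Write $X\in\gg^{-\alpha}$ as $\begin{pmatrix}0&Z\\-\Omega_nZ^{T}&0\end{pmatrix}$. Then $\upomega_{m,0\vert2n}(X)=\begin{pmatrix}0&-i\overline{Z}\\ i\Omega_nZ^{\dagger}&0\end{pmatrix}$, so $(\upomega(X),X)=\frac12\str(\upomega(X)X)=i\operatorname{tr}(\overline Z\Omega_nZ^{T})$. Let $\sigma$ be the $\delta$-coefficient of $\alpha$. The weight condition $H_{\so}Z-ZH_{\sp}=-\alpha(H)Z$ forces each row of $Z$ to be supported on the coordinates $a,n+a$, and gives $Z(i\Omega_n)=\sigma Z$. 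Hence $\overline Z\Omega_n=i\sigma\overline Z$, and $(\upomega(X),X)=-\sigma\operatorname{tr}(\overline ZZ^{T})$. In Case~A every $\sigma=+1$, so $h_{\upomega}$ is negative definite on $\nn^{-}_{A,\bar1}$. Since adaptedness forces $h_{\upomega}(x_i,x_i)=\varepsilon$, the sign is $-1$. With this inserted, your proof becomes the paper's proof.
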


\begin{proof}
It is enough to consider $\upomega_{m,0\vert 2n}$, since
$\upomega_{m,0\vert 2n}$ and $\upomega_{0,m\vert 2n}$ differ by a sign on
the odd part. Let $\alpha=\sigma\delta_a+\tau\epsilon_i$, where
$\sigma,\tau\in\{\pm1\}$, or $\alpha=\sigma\delta_a$ when $m$ is odd.
Any nonzero element of $\gg^{-\alpha}$ can be uniquely written as
\[
X=\begin{pmatrix}0&Z\\-\Omega_n Z^{T}&0\end{pmatrix},
\qquad Z\neq0.
\]
The weight condition gives $Z(i\Omega_n)=\sigma Z$, and hence
$\overline Z\Omega_n=i\sigma\overline Z$. Therefore
\[
(\upomega_{m,0\vert 2n}(X),X)
=
i\operatorname{tr}(\overline Z\Omega_n Z^{T})
=
-\sigma\operatorname{tr}(\overline Z Z^{T}).
\]
Since $\operatorname{tr}(\overline Z Z^T)>0$, we may choose
$x_\alpha\in\gg^{-\alpha}$ such that
$(\upomega_{m,0\vert 2n}(x_\alpha),x_\alpha)=-\sigma$ and set
$\partial_\alpha=-\sigma\upomega_{m,0\vert 2n}(x_\alpha)$. Then $(\partial_\alpha,x_\beta)=\delta_{\alpha\beta}$ and $\upomega_{m,0\vert 2n}(x_\alpha)=-\sigma\partial_\alpha$. In Case~A, every positive odd root has $\delta$-coefficient $+1$. Hence
\[
\upomega_{m,0\vert 2n}(x_\alpha)=-\partial_\alpha
\qquad
(\alpha\in\Delta_{A,\bar1}^{+}),
\]
so Case~A is $\upomega_{m,0\vert 2n}$-adapted with sign $-1$. In Case~B, one has
\[
\upomega_{m,0\vert 2n}(x_{\epsilon_i+\delta_a})
=
-\partial_{\epsilon_i+\delta_a},
\qquad
\upomega_{m,0\vert 2n}(x_{\epsilon_i-\delta_a})
=
+\partial_{\epsilon_i-\delta_a}.
\]
Thus, for $m\geq2$, both signs occur and the corresponding Hermitian form
is indefinite. If $m=1$, Case~A and Case~B coincide and the sign is
uniformly $-1$. Finally, passing from $\upomega_{m,0\vert 2n}$ to
$\upomega_{0,m\vert 2n}$ reverses all signs on the odd part, proving the
claim.
\end{proof}

For $m=2r$, let $a,b\geq0$ be even integers with $a+b=2n$, and put
\begin{equation}
R_r\coloneqq\bigoplus_{i=1}^{r}\Omega_1,\qquad
S_{a,b}\coloneqq\operatorname{diag}(I_{a/2,b/2},I_{a/2,b/2}),\qquad
J^{*}_{2r\vert a,b}\coloneqq\operatorname{diag}(iR_r,S_{a,b}).
\end{equation}
The conjugate-linear anti-involution
\begin{equation}
\upomega^{*}_{2r\vert a,b}(X)
\coloneqq(J^{*}_{2r\vert a,b})^{-1}X^\dagger J^{*}_{2r\vert a,b},
\qquad X^\dagger\coloneqq\overline X^{\,t},
\end{equation}
defines the real form $\mathfrak{osp}^{*}(2r\vert a,b)$, whose even part is
$\mathfrak{so}^{*}(2r)\oplus\mathfrak{sp}(a/2,b/2)$.
We retain the compact Cartan subalgebra introduced above and restrict attention to $(a,b)=(2n,0)$ and $(a,b)=(0,2n)$.

\begin{lemma}\label{lemm::adapted_systems_osp_star}
The Case~B positive system is
$\upomega^{*}_{2r\vert2n,0}$-adapted with sign $+1$ and
$\upomega^{*}_{2r\vert0,2n}$-adapted with sign $-1$.
The Case~A positive system is adapted to neither anti-involution.
\end{lemma}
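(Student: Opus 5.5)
I would follow the template of the proof of Lemma~\ref{lemm::adapted_systems_osp}, reducing first to a single anti-involution. The matrices $J^{*}_{2r\vert2n,0}$ and $J^{*}_{2r\vert0,2n}$ differ only in the odd-index block, $S_{2n,0}=I_{2n}$ versus $S_{0,2n}=-I_{2n}$. Hence $\upomega^{*}_{2r\vert0,2n}(X)=(-1)^{p(X)}\upomega^{*}_{2r\vert2n,0}(X)$, and it suffices to treat $\upomega^{*}\coloneqq\upomega^{*}_{2r\vert2n,0}$ and show that Case~B is adapted with sign $+1$. The Hermitian form $h_{\upomega^{*}}$ on $\nn^{-}_{\bar1}$ then flips sign under the passage to $0,2n$. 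Definiteness or indefiniteness is unaffected, which gives both halves of the statement.

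The core step is a sign computation for each odd root space. Write a nonzero $X\in\gg^{-\alpha}$, for $\alpha=\tau\epsilon_i+\sigma\delta_a$, in block form $\left(\begin{smallmatrix}0&B\\C&0\end{smallmatrix}\right)$. The $\osp$ condition expresses $C$ in terms of $B$, namely $C=-J_nB^{T}$ up to the fixed conventions. The weight condition under $\hh$ says that the rows of $B$ lie in the $\mp\tau$-eigenline of $t_iK$ in the $i$-th $2\times2$ block. It also says that the columns lie in the appropriate eigenline of $\left(\begin{smallmatrix}0&iD\\-iD&0\end{smallmatrix}\right)$. One then computes
\[
(\upomega^{*}(X),X)=\tfrac12\str\bigl((J^{*})^{-1}X^{\dagger}J^{*}X\bigr),
\]
which reduces to a trace of the form $\operatorname{tr}(\overline{B}^{T}\,(iR_r)^{-1}\cdots B)$. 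On the relevant eigenvectors, $R_r=\bigoplus\Omega_1$ acts on the $\epsilon$-eigenlines of $K$ by a scalar $\pm i$. Its sign is governed by $\tau$, the $\epsilon$-coefficient, and does not depend on $\sigma$, because $S_{2n,0}$ is the identity. So I expect
\[
(\upomega^{*}(X),X)=c\,\tau\,\|B\|^{2}
\]
with a universal sign $c$. The computation should parallel the $\osp(m\vert2n;\RR)$ case with the roles of $\epsilon$ and $\delta$ interchanged: there the sign was $-\sigma$, read off from the $\delta$-coefficient, and here it should be read off from the $\epsilon$-coefficient.

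Granting this, I would choose $x_\alpha\in\gg^{-\alpha}$ normalized so that $(\upomega^{*}(x_\alpha),x_\alpha)=c\tau$, and set $\partial_\alpha\coloneqq c\tau\,\upomega^{*}(x_\alpha)\in\gg^{\alpha}$. Since distinct root spaces pair trivially, these form dual bases with $\upomega^{*}(x_\alpha)=c\tau\,\partial_\alpha$. In Case~B every positive odd root $\epsilon_i\pm\delta_a$ has $\tau=+1$, so the sign is uniformly $c$. Fixing the normalization should give $c=+1$, and hence adaptedness with sign $+1$.

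For Case~A, the positive odd roots $\delta_a+\epsilon_i$ and $\delta_a-\epsilon_i$ have $\tau=+1$ and $\tau=-1$ respectively. Both occur as soon as $r\geq1$ and $n\geq1$, so $h_{\upomega^{*}}$ takes both signs on $\nn_{\bar1}^{-}$. It is therefore indefinite, and Case~A is adapted to neither anti-involution.

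The main obstacle is the explicit computation of $(\upomega^{*}(X),X)$. This needs the precise eigenvectors of $K$ and of the $\sp$-block, and it must confirm both that $\sigma$ really drops out and what the overall sign $c$ is. There is a known convention hazard: $\upomega^{*}$ must first be checked to preserve $\gg$ and to act on $\hh$ so that it maps $\gg^{-\alpha}$ to $\gg^{\alpha}$. I would do this by verifying $\upomega^{*}(H)=\overline{H}^{T}$ conjugated appropriately for $H\in\hh$, and I would fix $c$ by testing a single root vector when $r=n=1$.
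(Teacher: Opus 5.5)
Your proposal is correct and is essentially the paper's argument. Both reduce to $\upomega^{*}_{2r\vert2n,0}$, using that the two anti-involutions differ by $(-1)^{p(X)}$. Both then observe that the sign of $(\upomega^{*}(x_\alpha),x_\alpha)$ is the $\epsilon_i$-coefficient of $\alpha$, which gives definiteness in Case~B and indefiniteness in Case~A.

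The computation you defer does give $c=+1$. With $C=-J_nB^{T}$ one finds $(\upomega^{*}(X),X)=-i\operatorname{tr}(B^{\dagger}R_rB)$. Since $K=i\Omega_1$, $R_r$ acts by $i\tau$ on the columns of $B$, so the value is $\tau\,\|B\|^{2}$. Note that it is the columns of $B$ that lie in $\CC^{m}$; you had rows and columns swapped.
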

\begin{proof}
The proof is parallel to that of the preceding lemma. The only difference is
that, for $\upomega^{*}_{2r\vert2n,0}$, the sign of
$(\upomega^{*}_{2r\vert2n,0}(x_\alpha),x_\alpha)$ is determined by the
$\epsilon_i$-coefficient of the odd root
$\alpha=\sigma\epsilon_i+\tau\delta_a$. Thus, after normalization,
\[
\upomega^{*}_{2r\vert2n,0}(x_\alpha)
=
\sigma\,\partial_\alpha.
\]
In Case~B, every positive odd root has $\epsilon_i$-coefficient $+1$, whereas
in Case~A both signs occur. Hence Case~B is
$\upomega^{*}_{2r\vert2n,0}$-adapted with sign $+1$, while Case~A is not
adapted. Since $\upomega^{*}_{2r\vert0,2n}$ differs from
$\upomega^{*}_{2r\vert2n,0}$ by a sign on the odd part, the adapted sign in
Case~B is reversed.
\end{proof}

The real forms $\osp(m,0\vert 2n;\RR)$, $\osp(0,m\vert 2n;\RR)$ and, for
$m=2r$, $\osp^{\ast}(2r\vert 2n,0)$ and $\osp^{\ast}(2r\vert 0,2n)$ are distinguished among the real forms of
$\osp(m\vert 2n)$ by the following result.

\begin{theorem}[\cite{neeb2011lie}]
Up to isomorphism, a real form of a fixed $\osp(m\vert 2n)$ admits nontrivial unitary
representations if and only if it is $\osp(m,0\vert 2n;\RR)$ or, when
$m=2r$, $\osp^{\ast}(2r\vert 2n,0)$.
\end{theorem}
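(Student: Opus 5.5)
Since this theorem is cited from \cite{neeb2011lie}, the plan is to reduce it to a classification-based argument rather than rederive it from scratch. The first step is to use the Parker--Serganova list of real forms of $\osp(m\vert 2n)$: the families $\osp(p,q\vert 2n;\RR)$ with $p+q=m$, and, for $m=2r$, $\osp^{\ast}(2r\vert a,b)$ with $a+b=2n$. For each real form $\gg^{\omega}$ I would derive a necessary condition for the existence of a nontrivial $\omega$-unitary representation. The key mechanism is the following. If $\HH$ is $\omega$-unitary, then for any odd $X$ the operator $[X,\omega(X)]$ acts as $X\omega(X)+\omega(X)X$. This operator is positive semidefinite because $\langle v,(X\omega(X)+\omega(X)X)v\rangle=\|\omega(X)v\|^2+\|Xv\|^2$. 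Hence the even elements $[X,\omega(X)]$, $X\in\odd$, generate a cone $C_\omega\subseteq i\,\gg_{\bar0}^{\omega}$ on which every unitary representation is nonnegative.

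The next step is to show that a nontrivial unitary representation forces $C_\omega$ to be a proper (pointed) cone. If $C_\omega$ contains a line, i.e.\ both $Y$ and $-Y$ with $Y\neq0$, then $Y$ acts by zero. By simplicity of $\gg$ (for $m\neq n$, or after passing to the ideal generated by $Y$), this kills all of $\gg$ and leaves only trivial representations. This reduces the problem to understanding the closed convex $\ad$-invariant cone spanned by $[X,\omega(X)]$ for $X\in\odd$. Its projection to the $\sp(2n)$-factor, respectively the $\so(m)$- or $\so^{\ast}(2r)$-factor, must be pointed. It follows that a factor of $\gg_{\bar0}^{\omega}$ on which the cone projects nontrivially must contain a pointed invariant cone. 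Equivalently, that factor is compact, or it is Hermitian with the cone contained in the corresponding Olshanski-type cone.

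With this in hand, I would run through the cases using the explicit anti-involutions. For $\osp(p,q\vert 2n;\RR)$ the even part is $\so(p,q)\oplus\sp(2n;\RR)$. The computation in Lemma~\ref{lemm::adapted_systems_osp} gives $(\upomega(x_\alpha),x_\alpha)$ signs depending on the $\delta$-coefficient, and it shows the cone projects onto a generating cone in $\sp(2n;\RR)$, which is Hermitian and admissible. The cone also projects nontrivially to $\so(p,q)$, since $\odd\cong\CC^m\otimes\CC^{2n}$ is irreducible for $m\neq2$. If $pq\neq0$ and $\{p,q\}$ is not of Hermitian type, or if the signs coming from $I_{p,q}$ make the Hermitian form $h_\omega$ on $\odd$ indefinite in the $\so$-direction, then the cone contains a line. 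This leaves $pq=0$, i.e.\ $\osp(m,0\vert 2n;\RR)\cong\osp(0,m\vert2n;\RR)$. For $\osp^{\ast}(2r\vert a,b)$ the argument is symmetric, with Lemma~\ref{lemm::adapted_systems_osp_star} playing the role of Lemma~\ref{lemm::adapted_systems_osp}: the $\sp(a/2,b/2)$-factor must be compact, forcing $ab=0$, and the two survivors are isomorphic. For sufficiency, I would exhibit explicit nontrivial unitary representations. Oscillator (Fock) representations realized via the unitary $M(\odd)$-type constructions give $\osp(m,0\vert2n;\RR)$, and analogously one obtains $\osp^\ast(2r\vert 2n,0)$.

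The main obstacle is the exclusion step for indefinite cases such as $\osp(p,q\vert2n;\RR)$ with $pq\neq0$. There one must genuinely show the cone contains a line and not merely that $h_\omega$ is indefinite. I would first try to find explicit odd $X,X'$ in the two sign sectors with $[X,\omega(X)]=-[X',\omega(X')]$ modulo a compact part, using that $\gg_{\bar0}$ acts transitively enough on $\odd$; failing that, I would simply invoke Neeb--Salmasian's classification in \cite{neeb2011lie} as the paper does.
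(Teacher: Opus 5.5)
The paper does not prove this statement; it cites \cite{neeb2011lie}. Its only in-house version of the argument is the $F(4)$ lemma. There, a faithful unitary representation makes the cone pointed, and $[\odd,\odd]=\even$ makes it generating. Hence $\gg_{\bar0}^{\omega}$ is quasihermitian, and every noncompact simple ideal is Hermitian. Your cone $C_\omega$ is the right mechanism, and your positivity and ``line $\Rightarrow$ trivial'' steps are fine, but two steps of your derivation fail.

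First, the projection of $C_\omega$ to a simple factor need not be pointed, and it is not pointed for the real form that survives. Since $[\odd,\odd]=\even$, polarizing $X\mapsto[X,\omega(X)]$ (note $[iX,\omega(iX)]=[X,\omega(X)]$) shows that $C_\omega$ spans $i\gg_{\bar0}^{\omega}$. So its projection to the compact factor $\so(m)$ of $\osp(m,0\vert2n;\RR)$ is a generating invariant cone in a compact semisimple algebra, i.e.\ all of $\so(m)$. Your ``equivalently, compact or Hermitian'' also contradicts your own pointedness claim, since a compact simple algebra carries no nonzero pointed invariant cone. What you actually need is that $C_\omega$ is generating, plus the quasihermitian criterion. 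Generation also covers $m=2$, where your irreducibility argument does not apply.

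Second, and this is the real gap, the quasihermitian criterion does not finish the classification for $\osp$. It excludes $\osp^{\ast}(2r\vert a,b)$ with $ab\neq0$ and $\osp(p,q\vert2n;\RR)$ with $\so(p,q)$ non-Hermitian. But $\so(2,q)$ is Hermitian, so $\osp(2,q\vert2n;\RR)$ with $q\geq1$ passes this test, and these are exactly the cases you defer to \cite{neeb2011lie}. Your proposed remedy would not work: a relation $[X,\omega(X)]=-[X',\omega(X')]$ ``modulo a compact part'' does not put a line into $C_\omega$. You need an exact relation $c_1+\dots+c_k=0$ with $0\neq c_i\in C_\omega$.

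Such a relation comes from odd root vectors for a compact Cartan subalgebra $\hh$ adapted to $\so(p)\oplus\so(q)$. This exists unless $p,q$ are both odd, and among those only $\osp(1,1\vert2n;\RR)$ is not already excluded. In that case $\omega$ preserves the real eigenspaces $\gg_{\pm1}$ of the central element of $\so(1,1)$, so $[X,\omega(X)]\in[\gg_{1},\gg_{1}]=0$ and $\odd$ acts trivially.

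For $X\in\gg^{\alpha}$ one has $[X,\omega(X)]=(X,\omega(X))\,h_\alpha$, where $(h_\alpha,H)=\alpha(H)$. Redo the computation in the proof of Lemma~\ref{lemm::adapted_systems_osp} with $I_{p,q}$ inserted. It shows that for $\alpha=\delta_a+\tau\epsilon_i$ the sign of $(X,\omega(X))$ is $s_i$ times a universal sign, where $s_i=\pm1$ records whether the rotation block of $\epsilon_i$ lies in the $p$- or the $q$-part. Choose $\epsilon_i$ in the $p$-part and $\epsilon_j$ in the $q$-part. Then the four cone elements attached to $\delta_a\pm\epsilon_i$ and $\delta_a\pm\epsilon_j$ sum to a multiple of $2h_{\delta_a}-2h_{\delta_a}=0$. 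If one part contains only the zero weight, use the root $\delta_a$ twice instead.

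This also shows why indefiniteness of $h_\omega$ cannot be your criterion. $h_\omega$ is indefinite on Case~B for $\osp(m,0\vert2n;\RR)$ and on Case~A for $\osp^{\ast}$, yet both real forms have unitary representations.

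Finally, for sufficiency, $M(\odd)$ is only an $\even$-module via $\alpha$, not a $\gg$-module. You need genuine $\gg$-modules. One option is the super-oscillator (Weyl--Clifford) module for $\osp(m,0\vert2n;\RR)$ on $\CC[z_1,\dots,z_n]\otimes\bigwedge(\CC^{r})$. Another is unitary highest weight modules from \cite{jakobsen1994full}, or an analogous oscillator construction for $\osp^{\ast}$.
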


Together with the corresponding result for $\sl(m\vert n)$ and $\osp(m\vert 2n)$, this reduces the study of unitarity to the real forms $\su(p,q\vert0,n)$ and $\su(p,q\vert n,0)$ in the special linear case, and to $\osp(m,0\vert 2n;\RR)$, $\osp(0,m\vert 2n;\RR)$ and, when $m=2r$, $\osp^{\ast}(2r\vert 2n,0)$ and $\osp^{\ast}(2r\vert 0,2n)$ in the orthosymplectic case. Moreover, every irreducible unitary representation of these real forms is of highest or lowest weight type (see \cite[Section~7]{neeb2011lie}). This reduces the analysis to highest and lowest weight representations. By Remark~\ref{rmk::HW_LW}, it is sufficient to consider highest weight representations.

Finally, we discuss the real forms of $F(4)$. By the classification of
Serganova, the complex Lie superalgebra $F(4)$ has, up to isomorphism,
four real forms, denoted by $F(4,p)$ for $0\leq p\leq3$. They are
determined by their even parts as follows:
\begin{equation}
\begin{array}{c|c}
\text{real form} & \text{even part}\\
\hline
F(4,0)
&
\mathfrak{so}(7)\oplus\mathfrak{sl}(2,\mathbb R)
\\
F(4,1)
&
\mathfrak{so}(1,6)\oplus\mathfrak{su}(2)
\\
F(4,2)
&
\mathfrak{so}(2,5)\oplus\mathfrak{su}(2)
\\
F(4,3)
&
\mathfrak{so}(3,4)\oplus\mathfrak{sl}(2,\mathbb R).
\end{array}
\end{equation}
Of particular interest to us are the real forms $F(4,0)$ and $F(4,2)$. We denote the corresponding conjugate-linear anti-involutions by $\omega_{0}$ and $\omega_{2}$, respectively. They may be described as follows.

For $F(4,0)$, the restriction of $\omega_0$ to the even part defines
$
F(4,0)_{\bar0}
=
\mathfrak{so}(7;\mathbb R)\oplus\mathfrak{sl}(2;\mathbb R).
$
With the normalization of the odd root vectors fixed above,
$\omega_0$ acts on the odd part by
\begin{equation}
\omega_0(y_i)=-z_{9-i},
\qquad
\omega_0(z_i)=-y_{9-i},
\qquad
1\leq i\leq8.
\end{equation}
There is an equivalent choice of conjugate-linear anti-involution
defining an isomorphic real form. Namely, for homogeneous
$X\in F(4)$, set
$
\omega_0'(X)
\coloneqq (-1)^{p(X)}\omega_0(X).
$
The anti-involutions $\omega_0$ and $\omega_0'$ agree on the even part and differ by a sign on the odd part. To distinguish the two real forms, we denote by $F(4,0)$ the real form associated with $\omega_0$ and by
$F(4,0)'$ the real form associated with $\omega_0'$.

For $F(4,2)$,  the restriction of $\omega_{2}$ to the even part corresponds to
$
F(4,2)_{\bar0}
=
\mathfrak{so}(2,5)\oplus\mathfrak{su}(2)$. With the normalization above, $\omega_{2}$ acts on the odd part by
\begin{equation}
\begin{aligned}
\omega_2(y_1)&=-z_8,
&
\omega_2(y_2)&=-z_7,
&
\omega_2(y_3)&=-z_6,
&
\omega_2(y_4)&=-z_5,
\\
\omega_2(y_5)&=z_4,
&
\omega_2(y_6)&=z_3,
&
\omega_2(y_7)&=z_2,
&
\omega_2(y_8)&=z_1,
\end{aligned}
\end{equation}
and
\begin{equation}
\begin{aligned}
\omega_2(z_1)&=y_8,
&
\omega_2(z_2)&=y_7,
&
\omega_2(z_3)&=y_6,
&
\omega_2(z_4)&=y_5,
\\
\omega_2(z_5)&=-y_4,
&
\omega_2(z_6)&=-y_3,
&
\omega_2(z_7)&=-y_2,
&
\omega_2(z_8)&=-y_1.
\end{aligned}
\end{equation}
Again, $\omega_2'$ agrees with
$\omega_2$ on $F(4)_{\bar0}$ and differs from it by a sign on
$F(4)_{\bar1}$. To distinguish both real forms, we
denote by $F(4,2)$ and $F(4,2)'$ the real forms associated with
$\omega_2$ and $\omega_2'$, respectively.

Combining these formulas with the description in~\eqref{eq::odd_root_space_different_systems_F(4)}, we obtain the following result.

\begin{lemma}\label{lemm::F4_adapted_positive_systems}
Among the five positive systems $\Delta^{+}(\Uppi_1),\ldots,\Delta^{+}(\Uppi_5)$ extending $\Delta_{\bar0}^{+}$, precisely $\Delta^{+}(\Uppi_1)$ is $\omega_0$-adapted with sign $-1$, whereas precisely $\Delta^{+}(\Uppi_2)$ is $\omega_2$-adapted with sign $+1$.\end{lemma}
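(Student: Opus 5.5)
The plan is to verify the adaptedness criterion directly from the explicit action of $\omega_0$ and $\omega_2$ on the odd root vectors, combined with the list \eqref{eq::odd_root_space_different_systems_F(4)} of positive odd subspaces. Adaptedness depends only on $\nn_{\bar1}^{+}$ and on the Hermitian form $h_\omega(x,y)=(\omega(x),y)$ on $\nn_{\bar1}^{-}$. Both $\omega_0$ and $\omega_2$ send a root vector of weight $\beta$ to a root vector of weight $-\beta$, because $\wt(z_{9-i})=-\wt(y_i)$. Hence $h_\omega$ is diagonal in the basis of root vectors of $\nn_{\bar1}^{-}$. For each negative odd root $-\alpha$ we need one sign $s_\alpha$, defined by $(\omega(x_\alpha),x_\alpha)=s_\alpha\,c_\alpha$, where $c_\alpha$ is a positive normalizing constant. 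The system is then adapted precisely when $s_\alpha$ is the same for all $\alpha\in\Delta_{\bar1}^{+}$, with common value $\varepsilon$.

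The first step fixes the pairing signs. With the normalization of \cite[Section 10]{jakobsen1994full}, the invariant form pairs $y_i$ with $z_{9-i}$. Concretely, I would use $(z_{9-i},y_i)=\kappa_i$ and $(y_{9-i},z_i)=\kappa_i'$. The key preliminary claim, which I would check from the explicit bracket (or take from Jakobsen's normalization), is that these constants are such that $\omega_0$ is compatible with a definite form on $\nn_{\bar1}^{-}(\Uppi_1)=\operatorname{span}\{y_1,\dots,y_8\}$. The cleanest route avoids computing the $\kappa_i$ at all. Since $F(4,0)$ has compact $\so(7)$ factor, unitary representations exist and $\Delta^+(\Uppi_1)$ is the system Jakobsen uses. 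Equivalently, $\omega_0(y_i)=-z_{9-i}$ with $z_{9-i}$ dual to $y_i$ gives $\varepsilon=-1$ uniformly, exactly as in the proofs of Lemma~\ref{lemm::adapted_systems_slmn} and Lemma~\ref{lemm::adapted_systems_osp}. So I would normalize the $y_i$ so that $(z_{9-i},y_i)=1$, set $x_i=y_i$ and $\partial_i=z_{9-i}$, and read off $\omega_0(x_i)=-\partial_i$.

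Next, the signs for the other systems follow by tracking the swaps $z_1\leftrightarrow y_8,\dots,z_4\leftrightarrow y_5$ along the chain of Lemma~\ref{lemm::odd_reflections_F(4)}. In $\Uppi_4$, for example, $\nn_{\bar1}^{-}$ contains $z_1$ in place of $y_8$. Then $\omega_0(z_1)=-y_8$, and the dual of $z_1$ in $\nn_{\bar1}^{+}$ is $y_8$ up to the supersymmetry sign: since the form is skew on odd elements, $(y_8,z_1)=-(z_1,y_8)=-1$. Thus $z_1$ acquires the opposite sign $+1$. Systems $\Uppi_4,\Uppi_5,\Uppi_3,\Uppi_2$ each contain at least one swapped pair and at least one unswapped $y_i$ in $\nn^-$ ($y_1$ remains negative in all of them). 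So they are indefinite for $\omega_0$, and $\Uppi_1$ is the unique $\omega_0$-adapted system, with sign $-1$.

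For $\omega_2$ the same bookkeeping applies, with the extra sign flips given in the table. On $y_1,\dots,y_4$ the map $\omega_2$ agrees with $\omega_0$, giving sign $-1$ in the $\Uppi_1$ convention. On $y_5,\dots,y_8$ it is the negative, giving $+1$. On $z_1,\dots,z_4$ it gives $+1$ once the skew pairing sign is combined with $\omega_2(z_i)=y_{9-i}$, and on $z_5,\dots,z_8$ it gives $-1$. Now $\nn_{\bar1}^{-}(\Uppi_2)=\operatorname{span}\{z_1,\dots,z_4,y_1,\dots,y_4\}$. Carefully rechecking the duals $\partial$ of $y_1,\dots,y_4$, which are $z_8,\dots,z_5$, the plan is to find that the choice of orientation makes all eight signs equal to $+1$. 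Listing the sign of each of the eight negative root vectors in every other system then shows they are mixed, giving the uniqueness statement.

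The main obstacle is the sign bookkeeping in the odd pairing $(z_{9-i},y_i)$ versus $(y_{9-i},z_i)$, which involves supersymmetry and the normalization constants. A single misplaced sign flips which system is adapted. I would first fix everything by requiring $\Uppi_1$ to be $\omega_0$-adapted with sign $-1$, as the statement asserts and consistent with Jakobsen. I would then derive every other sign relative to this anchor using only the skew-symmetry of the form on $\odd$ and the explicit formulas for $\omega_0$ and $\omega_2$. This reduces the uniqueness claims to checking a sign table with four entries per system.
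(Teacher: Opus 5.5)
Your setup, reading off the sign of $h_\omega(x,x)=(\omega(x),x)$ on root vectors of $\nn_{\bar1}^{-}$, is the right one, and it is all the paper's one-line justification amounts to. The $\omega_2$ step fails, however, and more careful bookkeeping cannot repair it. You fix one normalization $(z_{9-i},y_i)=1$ and use the same $y_i,z_i$ for both anti-involutions. Since $\omega_2=\omega_0$ on $y_1,\dots,y_4$, you get $h_{\omega_2}(y_i,y_i)=(-z_{9-i},y_i)=h_{\omega_0}(y_i,y_i)=-1$ for $i\le 4$. These vectors lie in both $\nn_{\bar1}^{-}(\Uppi_1)$ and $\nn_{\bar1}^{-}(\Uppi_2)$, and the adaptedness sign is exactly the sign of $h_\omega$ on $\nn_{\bar1}^{-}$. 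So the sign for $(\Uppi_2,\omega_2)$ must equal the sign for $(\Uppi_1,\omega_0)$, for any normalization common to both.

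Your entry for $z_1,\dots,z_4$ is also a slip. The correct value is $h_{\omega_2}(z_j,z_j)=(y_{9-j},z_j)=-(z_j,y_{9-j})=-1$. You yourself found $+1$ for $z_1$ under $\omega_0$, and $\omega_2=-\omega_0$ there. With your table as written, $\nn_{\bar1}^{-}(\Uppi_2)$ would even be indefinite. Done correctly, $h_{\omega_2}\equiv -1$ on $\operatorname{span}\{y_1,\dots,y_4,z_1,\dots,z_4\}$, so $\Uppi_2$ comes out $\omega_2$-adapted with sign $-1$, and the promised ``rechecking'' cannot produce $+1$. To reach the stated sign you would have to normalize the odd root vectors separately for $F(4,2)$, or replace $\omega_2$ by $\omega_2'$, and say so explicitly. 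From the displayed formulas with a single normalization, the sign pair $(-1,+1)$ is not derivable.

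Separately, your anchor is circular. If you keep $\omega_0(y_i)=-z_{9-i}$ and rescale $y_i\mapsto cy_i$, this forces $z_{9-i}\mapsto\bar c\,z_{9-i}$. Hence the sign of each real number $(z_{9-i},y_i)$ is intrinsic. Declaring all of them equal to $1$ presupposes the definiteness of $h_{\omega_0}$ on $\nn_{\bar1}^{-}(\Uppi_1)$, which is what you must prove. Citing Jakobsen or the existence of unitary representations does not supply it.

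A clean substitute uses Schur's lemma. First, $h_\omega$ is invariant under $\gg_{\bar0,\RR}$, which follows from $\omega(k)=-k$ and the invariance of the form. Second, $\nn_{\bar1}^{-}(\Uppi_1)$ is irreducible under $\kk^{\CC}=\so(7)\oplus\CC h_\delta$, and $\nn_{\bar1}^{-}(\Uppi_2)$ is irreducible under $\so(2)\oplus\so(5)\oplus\sl(2)$. Schur's lemma for the compact $\kk$ then makes $h_\omega$ definite on each. Only the overall sign depends on the normalization.

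Your uniqueness argument is sound. Under $\omega_0$ one has $h(z_j,z_j)=-h(y_{9-j},y_{9-j})$. Under $\omega_2$ the sign of $h$ is opposite on the root vectors with $\epsilon_1$-coefficient $+\frac12$ and $-\frac12$. Every other $\nn_{\bar1}^{-}(\Uppi_j)$ meets both sides. Your $\omega_2$-signs on $z_5,\dots,z_8$ are also flipped, but these vectors never lie in any $\nn_{\bar1}^{-}(\Uppi_j)$, so that error is harmless.
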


The real forms $F(4,0)\cong F(4,0)'$ and $F(4,2)\cong F(4,2)'$ are distinguished among the real forms of $F(4)$.

\begin{lemma}
Up to isomorphism, neither $F(4,1)$ nor $F(4,3)$ admits a nontrivial
irreducible unitary representation.
\end{lemma}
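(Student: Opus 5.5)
The approach is a standard necessary-condition argument. A nontrivial unitary representation of a real Lie superalgebra restricts to a unitary representation of its even part. Moreover, for every odd element $X$ of the real form, $-i X^2$ (or $X^2$, depending on the convention in \eqref{eq:real-form-anti-involution}) acts as a positive semidefinite operator. Concretely, for $x\in\odd$ we have $\langle xv,xv\rangle=\langle v,\omega(x)xv\rangle\geq0$. Hence $[\omega(x),x]=\omega(x)x+x\omega(x)$ acts as a positive semidefinite operator for every $x\in\odd$.

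The brackets $[\omega(x),x]$ lie in $\even$ and are fixed by $\omega$ up to sign, so they belong to $i\gg_{\RR,\bar0}$. Their real span together with the brackets is an ideal. The key claim is that the closed convex cone $C_\omega$ generated by all $i[\omega(x),x]$ (suitably normalized) inside $\gg_{\RR,\bar0}$ must be contained in the cone of elements acting with nonnegative spectrum in $-i\,d\pi$. Let $\mathfrak k$ be a maximal compact subalgebra of $\gg_{\RR,\bar 0}$ containing $\mathfrak t_{\RR}$. The plan, following the approach of \cite{neeb2011lie}, is to show that a nontrivial unitary representation forces $C_\omega$ to be \emph{pointed}, i.e. $C_\omega\cap(-C_\omega)=0$. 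It also forces the simple ideals of $\gg_{\RR,\bar0}$ meeting the span of $C_\omega$ nontrivially to admit nonzero invariant cones. These are exactly the ideals that are compact or of Hermitian type. The argument runs in three steps.

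\textbf{Step 1.} Observe that if $C_\omega\cap(-C_\omega)\ni Y\neq0$, then $Y$ acts by $0$. The ideal generated by $Y$ is all of $[\odd,\odd]=\even$, since $\gg$ is simple. Then $\odd$ acts by zero via $\langle xv,xv\rangle$ vanishing, so the representation is trivial.

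\textbf{Step 2.} Check that for $F(4,1)$ and $F(4,3)$ the even part contains $\so(1,6)$, respectively $\so(3,4)$. These are noncompact simple Lie algebras not of Hermitian type, since $\so(p,q)$ is Hermitian only for $\min(p,q)=2$ or for $\so(2,n)$. Such algebras admit no nontrivial unitary representation in which some nonzero element of a nonzero invariant cone acts semidefinitely. More precisely, an $\operatorname{Ad}$-invariant convex cone in a simple non-Hermitian noncompact Lie algebra is trivial. Hence the projection of $C_\omega$ to this factor spans a nonzero invariant cone, since the odd part is faithful for $\so(V)$, and is therefore the whole factor. This gives elements $\pm Y$ in the factor, both acting semidefinitely, so $Y$ acts as $0$.

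\textbf{Step 3.} Apply Step 1: the factor acts trivially, then by simplicity of $F(4)$ everything acts trivially.

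The main obstacle is verifying that the projection of $C_\omega$ onto the $\so(1,6)$, respectively $\so(3,4)$, factor is nonzero, and then invoking the invariant-cone classification (Vinberg/Paneitz). For nonzeroness, I would compute $[\omega(x),x]$ for a single odd root vector $x$. Its $\so(V)$-component is a nonzero multiple of a coroot of the $\epsilon$-part, via the bracket formula with $\Gamma$. This suffices, and for $\so(3,4)$ the $\mathfrak{sl}(2,\RR)$ factor plays no role.
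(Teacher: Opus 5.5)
Your strategy is the paper's. A nontrivial unitary representation of the simple superalgebra $F(4)$ is faithful, so the closed convex cone $C_\omega\subseteq\gg_{\RR,\bar0}$ generated by the elements $i[\omega(x),x]$ is pointed. Your Step~1 is the $\star$-reducedness argument of Neeb--Salmasian, once phrased as ``the kernel is a graded ideal of $F(4)$.'' The cone is also generating, since $[\odd,\odd]=\even$, and this is then played off against the non-Hermitian factors $\so(1,6)$ and $\so(3,4)$.

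The gap is in Step~2, where you pass from ``the projection $p(C_\omega)$ onto the $\so$-factor is the whole factor'' to ``this gives elements $\pm Y$ in the factor, both acting semidefinitely.'' Surjectivity of $p$ only provides $Z_\pm$ in the complementary ideal ($\su(2)$, resp.\ $\sl(2,\RR)$) with $\pm Y+Z_\pm\in C_\omega$. Only these sums act semidefinitely, and nothing follows for $Y$ itself. As written, the inference proves too much. It uses nothing about the $\so$-factor beyond the fact that its only closed invariant cones are $\{0\}$ and the whole algebra, which holds equally for compact simple algebras. Applied verbatim to the factor $\su(2)$ of $\su(2\vert1)_{\bar0}\cong\mathfrak u(2)$, it would show that the unitary representation $\CC^{2\vert1}$ of the compact form of $\sl(2\vert1)$ is trivial. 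Indeed, in $\mathfrak u(2)$ the cone $\{X:-iX\geq0\}$ is pointed and invariant, projects onto all of $\su(2)$, and meets $\su(2)$ only in $0$.

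What you need is $C_\omega\cap\mathfrak s\neq\{0\}$ for the $\so$-factor $\mathfrak s$, not merely $p(C_\omega)\neq\{0\}$, and this must use that $\mathfrak s$ is noncompact. The paper gets this from the structural result it quotes, \cite[Section~5.3]{neeb2011lie}: a pointed generating invariant cone forces $\even$ to be quasihermitian.

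A self-contained repair runs as follows.
Fix $Y+Z\in C_\omega$ with $0\neq Y\in\mathfrak s$, and put $K_Z\coloneqq\{W\in\mathfrak s:W+Z\in C_\omega\}$. This is a closed convex set containing $Y$, invariant under the adjoint group of $\mathfrak s$ (which fixes $Z$). Its recession cone is a closed invariant cone in $\mathfrak s$. It is not $\{0\}$, since otherwise $K_Z$, and hence the orbit of $Y$, would be bounded. But nonzero adjoint orbits in a noncompact simple Lie algebra are unbounded: the elements with bounded orbit form an ideal, and if it were all of $\mathfrak s$ the adjoint group would be relatively compact. By the cone classification the recession cone is therefore $\mathfrak s$, so $K_Z=\mathfrak s$. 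Then $W+s^{-1}Z\in C_\omega$ for all $W\in\mathfrak s$ and $s>0$, and letting $s\to\infty$ gives $\mathfrak s\subseteq C_\omega$, contradicting pointedness.
For $F(4,1)$ you could instead average $Y+Z$ over the compact group $\mathrm{SU}(2)$ to kill $Z$.

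With this supplement your argument becomes a correct, more self-contained version of the paper's proof.
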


\begin{proof} The proof follows from the work of Neeb--Salmasian \cite{neeb2011lie}. Although the real forms of $F(4)$ are excluded from the statement of \cite[Theorem~6.2.1]{neeb2011lie} for simplicity, the invariant-cone argument applies without change to the present case. Concretely, suppose that either $F(4,1)$ or $F(4,3)$ admits a nontrivial irreducible unitary representation. Since $F(4)$ is simple, such a representation is faithful, and hence the corresponding Lie supergroup is $\star$-reduced. By \cite[Proposition~6.1.2(i)]{neeb2011lie}, the invariant cone $\operatorname{Cone}(\mathcal{G})\subseteq\even$ is therefore pointed. Moreover, since $[\odd,\odd]=\even$ for $F(4)$, this cone is generating. By \cite[Section~5.3]{neeb2011lie}, it follows that $\even$ is quasihermitian, so every noncompact simple ideal of $\even$ must be of Hermitian type. However, $F(4,1)_{\bar0}=\su(2)\oplus\so(1,6)$ and $F(4,3)_{\bar0}=\sl(2,\RR)\oplus\so(3,4)$. The maximal compact subalgebras of $\so(1,6)$ and $\so(3,4)$ are $\so(6)$ and $\so(3)\oplus\so(4)$, respectively, both of which have trivial center. Hence neither $\so(1,6)$ nor $\so(3,4)$ is of Hermitian type, a contradiction. 
\end{proof}

In particular, every irreducible unitary representation of $F(4)$ is
of either highest or lowest weight type. It is enough to consider highest weight
representations.

Before recalling highest weight representations and their basic properties, we describe the maximal compact subalgebras of these real forms and their relation to the positive systems introduced above.

\subsection{Maximal compact subalgebras}\label{subsec::maximal_compact_subalgebra}

The real forms $\gg_{\bar0,\RR}$ of $\even$ associated with the conjugate-linear anti-involutions considered above are reductive real Lie algebras. We denote by $\kk\subseteq\gg_{\bar0,\RR}$ a maximal compact subalgebra and by $\kk^{\CC}$ its complexification. For the real forms of $\osp(m\vert 2n)$ and $\sl(m\vert n)$ relevant to us, we fix the following standard block-diagonally embedded maximal compact subalgebras.

\begin{table}[ht]
\centering
\begin{tabular}{|c|c|c|}
\hline
$\gg$ & real form $\gg_{\RR}$ & maximal compact subalgebra $\kk$ \\
\hline
$\sl(m\vert n)$
&
$\su(p,q\vert 0,n)$, $\su(p,q\vert n,0)$
&
$\mathfrak{s}\bigl(
\mathfrak{u}(p)\oplus
\mathfrak{u}(q)\oplus
\mathfrak{u}(n)
\bigr)$
\\
\hline
$\osp(m\vert 2n)$
&
$\osp(m,0\vert 2n)$, $\osp(0,m\vert 2n)$
&
$\so(m)\oplus\mathfrak{u}(n)$
\\
\hline
$\osp^{\ast}(2r\vert 2n)$
&
$\osp^{\ast}(2r\vert 2n,0)$,
$\osp^{\ast}(2r\vert 0,2n)$
&
$\mathfrak{u}(r)\oplus\mathfrak{usp}(2n)$
\\
\hline
$F(4)$
&
$\begin{array}{c}
F(4,0)\\
F(4,2)
\end{array}$
&
$\begin{array}{c}
\so(7)\oplus\mathfrak{so}(2)\\
\mathfrak{so}(2)\oplus\mathfrak{so}(5)\oplus\su(2)
\end{array}$
\\
\hline
\end{tabular}
\end{table}
\noindent
Note that the two real forms in each pair have the same even real form and therefore the same maximal compact subalgebra. They differ in the action of the conjugate-linear anti-involution on the odd part.

For our choices of Cartan subalgebras, the equal-rank condition is satisfied:
\begin{equation}
\hh\subseteq\kk^{\CC}\subseteq\even\subseteq\gg.
\end{equation}
Thus $\hh$ is simultaneously a Cartan subalgebra of $\gg$, $\even$, and $\kk^{\CC}$. An even root $\alpha\in\Delta_{\bar0}$ is called compact if $\gg^\alpha\subseteq\kk^{\CC}$ and noncompact otherwise. We denote the compact roots by $\Delta_{c} \subset \Delta_{\bar{0}}$ and the noncompact roots by $\Delta_{nc}$. Fixing a positive system $\Delta^{+}$ induces a positive system on $\Delta_{c}$ and $\Delta_{nc}$ by $\Delta_{c}^{+} \coloneqq \Delta_{c}\cap \Delta^{+}$ and $\Delta_{nc}^{+} \coloneqq \Delta_{nc}\cap \Delta^{+}$. We denote the associated Weyl vectors by $\rho_{c}$ and $\rho_{nc}$, respectively. 

The decomposition of the odd part as a $\kk^{\CC}$-representation is important and explains the choice of the positive systems above. In the special linear case, when $p,q>0$, the odd part has four simple $\kk^{\CC}$-constituents. The standard and nonstandard positive systems introduced above select different halves of this decomposition. If $p=0$ or $q=0$, the full odd part has only two simple $\kk^{\CC}$-constituents. A detailed analysis can be found in \cite{SchmidtDirac}.

For $\gg=\osp(m\vert 2n)$ with $m\neq2$, the odd part $\odd$ decomposes as the direct sum of two simple $\kk^{\CC}$-representations, where $\kk$ is a maximal compact subalgebra of either $\osp(m,0\vert 2n)\cong\osp(0,m\vert 2n)$ or, when $m=2r$, $\osp^{\ast}(2r\vert 2n,0)\cong\osp^{\ast}(2r\vert 0,2n)$. The relation between the Case~A/B positive systems and the simple $\kk^{\CC}$-summands of $\odd$ depends on the real form. For $\osp(m,0\vert2n;\RR)$ with $m\neq2$, one has $\kk^{\CC}=\so(m)\oplus\gl(n)$, and the two simple summands of $\odd$ are $\nn_{A,\bar1}^{+}$ and $\nn_{A,\bar1}^{-}$. The Case~B positive odd subspace mixes these two summands. For $\osp^{\ast}(2r\vert2n,0)$, one has $\kk^{\CC}=\gl(r)\oplus\sp(2n)$, and the two simple summands are $\nn_{B,\bar1}^{+}$ and $\nn_{B,\bar1}^{-}$. In this case, the Case~A positive odd subspace mixes the two summands. For $m=2$ and $\osp(2,0\vert 2n)\cong \osp(0,2\vert 2n)$, the $\so(2)$-representation $\CC^{2}$ is reducible, so $\odd$
decomposes into four simple $\kk^{\CC}$-representations. Correspondingly,
$\nn_{A,\bar1}^{+}$ and $\nn_{A,\bar1}^{-}$ are each sums of two simple
constituents. Thus, in contrast to the case $m\neq2$, the decomposition
of $\odd$ into simple $\kk^{\CC}$-representations is finer than the decomposition
determined by the Case~A polarization. Nevertheless, for the purposes of
the unitary highest weight theory considered here, we restrict to the
Case~A and Case~B positive systems appearing in the classification
of \cite{jakobsen1994full}.

For $F(4)$, we only consider the positive systems $\Delta^{+}(\Uppi_1)$ and
$\Delta^{+}(\Uppi_2)$, since they are the only ones among the five
positive systems for which the corresponding $\nn_{\bar{1}}^{+}(\Uppi_{i})$ is stable
under the complexification of a maximal compact subalgebra. More
precisely, $\nn_{\bar1}^{+}(\Uppi_1)$ is $\kk^{\CC}$-stable only for
the maximal compact subalgebra of $F(4,0)$, whereas
$\nn_{\bar1}^{+}(\Uppi_2)$ is $\kk^{\CC}$-stable only for the maximal
compact subalgebra of $F(4,2)$. None of
$\nn_{\bar1}^{+}(\Uppi_3)$, $\nn_{\bar1}^{+}(\Uppi_4)$, and
$\nn_{\bar1}^{+}(\Uppi_5)$ is stable under either of these maximal
compact subalgebras.

To obtain explicit formulas for the dimensions of weight spaces of irreducible unitary highest weight representations of $\even$, we will later use a result of Enright and Willenbring \cite{Enright_Willenbring}. We therefore introduce the root-theoretic data entering the formula.

Let $\nu\in\hh^{\ast}$ be dominant integral with respect to $\Delta_{c}^{+}$, and let $L_{\kk^{\CC}}(\nu)$ denote the simple $\kk^{\CC}$-representation of highest weight $\nu$. Following \cite{Enright_Willenbring}, define
\begin{equation}
\Psi_{\nu}
=
\left\{
\beta\in\Delta_{\bar0}:
(\nu+\rho_{\bar0},\beta)=0
\right\}.
\end{equation}
Let $\Phi_{\nu}\subseteq\Delta_{\mathrm{nc}}^{+}$ be the set of roots $\alpha$ satisfying
\begin{enumerate}
\item $\langle\nu+\rho_{\bar0},\alpha^{\vee}\rangle\in\ZZ_{+}$;
\item $(\alpha,\beta)=0$ for every $\beta\in\Psi_{\nu}$;
\item if $\Psi_{\nu}$ contains a long root in the simple factor containing $\alpha$, then $\alpha$ is short.
\end{enumerate}
The associated reflection subgroup and root system are
$
W_{\nu}
=
\langle s_{\alpha}:\alpha\in\Phi_{\nu}\rangle$ and $\Delta_{\nu}
=
\left\{
\alpha\in\Delta_{\bar0}:
s_{\alpha}\in W_{\nu}
\right\},
$
where $s_{\alpha}$ denotes the usual even reflection.
Set
$
\Delta_{\nu}^{+}
=
\Delta_{\nu}\cap\Delta_{\bar0}^{+},$ and $
\Delta_{\nu,c}^{+}
=
\Delta_{\nu}^{+}\cap\Delta_{c}^{+},
$
and define
\begin{equation}
W_{\nu}^{\kk}
=
\left\{
w\in W_{\nu}:
w^{-1}\Delta_{\nu,c}^{+}
\subseteq
\Delta_{\nu}^{+}
\right\}.
\end{equation}
Finally, the relevant length function on $W_{\nu}$ is
\begin{equation}
\ell_{\nu}(w)
=
\#
\left\{
\alpha\in\Delta_{\nu}^{+}:
w\alpha\in-\Delta_{\nu}^{+}
\right\}.
\end{equation}
Thus $\ell_{\nu}$ is the Coxeter length associated with the root system $\Delta_{\nu}$ and, in general, need not coincide with the restriction to $W_{\nu}$ of the length function on $W$. 

\subsection{Highest and lowest weight representations and atypicality}
\label{subsec::highest_weight_representations_and_atypicality}

Let $\gg$ be either $\sl(m\vert n)$ with $m\neq n, \gl(m\vert n), \osp(m\vert 2n)$ or $F(4)$. Let $\Delta^+$ be a positive system with triangular decomposition $\gg=\nn^-\oplus\hh\oplus\nn^+$. A representation $M$ of $\gg$ is a \emph{highest weight representation} of highest weight $\Lambda\in\hh^{\ast}$ if it contains a nonzero homogeneous vector $v_\Lambda$ such that
\begin{equation}
\nn^+v_\Lambda=0,\qquad
Hv_\Lambda=\Lambda(H)v_\Lambda\quad\text{for all }H\in\hh,\qquad
\UE(\gg)v_\Lambda=M.
\end{equation}
A \emph{lowest weight representation} of lowest weight $\Lambda$ is defined by replacing $\nn^+$ with $\nn^-$. The vector $v_\Lambda$ is called a highest, respectively lowest, weight vector. Throughout this article, we assume that the highest weight vector is even. This entails no restriction, since a highest weight representation with an odd highest weight vector may be replaced by the same representation with the $\ZZ_2$-grading reversed.

We recall some elementary properties of highest weight representations that will be used throughout. Let $M$ be a highest weight $\gg$-representation of highest weight $\Lambda$ relative to a positive system $\Delta^{+}$, with triangular decomposition
$
\gg=\nn^{-}\oplus\hh\oplus\nn^{+}.
$
Then $M=\UE(\nn^{-})v_{\Lambda}$, where $v_{\Lambda}$ is a highest weight vector. For $\mu\in\hh^{\ast}$, the corresponding weight space is
$
M^{\mu}\coloneqq\{v\in M\mid hv=\mu(h)v\text{ for all }h\in\hh\}.
$
The PBW theorem implies that $M$ is a weight representation, that is,
$
M=\bigoplus_{\mu\in\hh^{\ast}}M^{\mu},
$
with $\dim M^{\mu}<\infty$ for all $\mu$ and $\dim M^{\Lambda}=1$. We denote the set of weights of $M$ by
$
\mathcal{P}_{M}\coloneqq\{\mu\in\hh^{\ast}\mid M^{\mu}\neq0\}.
$
Every $\mu\in\mathcal{P}_{M}$ satisfies
$
\Lambda-\mu\in\ZZ_{\geq0}\Delta^{+}.
$ We use the partial order on $\hh^{\ast}$ induced by $\Delta^{+}$, defined by
$
\mu\leq\nu
$
if
$
\nu-\mu\in\ZZ_{\geq0}\Delta^{+}.
$
In particular, $\mu\leq\Lambda$ for every $\mu\in\mathcal{P}_{M}$.

Every nonzero quotient of $M$ is again a highest weight representation of highest weight $\Lambda$. Moreover, $M$ has a unique maximal proper subrepresentation and hence a unique simple quotient, denoted by $L(\Lambda;\Delta^{+})$. The simple highest weight representation of highest weight $\Lambda$ is unique up to isomorphism. These properties follow immediately from the structure of two fundamental classes of highest weight representations, namely Verma modules and Kac modules.

\begin{example}[Verma modules] \label{ex::Verma_modules} Fix a positive system $\Delta^{+}$ and $\Lambda \in \hh^{\ast}$. Let $\CC _\Lambda$ be the one-dimensional representation of $\bb=\hh\oplus\nn^+$ on which $\nn^+$ acts trivially and each $H\in\hh$ acts by multiplication by $\Lambda(H)$. The \emph{Verma module} of highest weight $\Lambda$ is
\begin{equation}
M(\Lambda;\Delta^+)
\coloneqq
\UE(\gg)\otimes_{\UE(\bb)}\CC _\Lambda.
\end{equation}
It is defined for every positive system and is the universal highest weight representation of highest weight $\Lambda$: every highest weight representation of highest weight $\Lambda$ with respect to $\Delta^{+}$ is a quotient of $M(\Lambda;\Delta^+)$. Moreover, $M(\Lambda;\Delta^+)$ has a unique maximal proper subrepresentation and hence a unique simple quotient, denoted by $L(\Lambda;\Delta^+)$. Consequently, every simple highest weight representation of highest weight $\Lambda$ is isomorphic, up to parity reversal, to $L(\Lambda;\Delta^+)$.
\end{example}

\begin{example}[Kac modules] If $\gg=\sl(m\vert n)$, equipped with the standard positive system, or $\gg=\osp(2\vert 2n)$, equipped with the Case~B positive system, then $\gg$ admits a consistent $\ZZ$-grading $\gg=\gg_{-1}\oplus\gg_0\oplus\gg_{+1}$ with $\gg_0=\even$, $\gg_{+1}=\nn_{\bar1}^{+}$, and $\gg_{-1}=\nn_{\bar1}^{-}$. In particular, $\gg_{\pm1}$ are simple $\even$-representations under the adjoint action and are abelian. Consequently, one may induce directly from $\even\oplus\gg_{+1}$, with $\gg_{+1}$ acting trivially, yielding Kac modules. The grading defines the parabolic subsuperalgebra
$
\mathfrak p^+\coloneqq\even\oplus\gg_{+1}.
$
Let $V$ be a representation of $\even$, extended to $\mathfrak p^+$ by letting $\gg_{+1}$ act trivially. The associated \emph{Kac module} is
\begin{equation}
K(V)
\coloneqq
\UE(\gg)\otimes_{\UE(\mathfrak p^+)}V.
\end{equation}
As $\even$-representations 
$
K(V)\big\vert_{\even}
\cong
\bigwedge(\gg_{-1})\otimes V.
$ 
Suppose that $V=L_{0}(\Lambda)$ is the simple highest weight representation of $\even$ of highest weight $\Lambda$. Then $K(L_0(\Lambda))$ is a highest weight representation with respect to $\Delta_{\mathrm{st}}^{+}$ for $\sl(m\vert n)$ and with respect to $\Delta_{B}^{+}$ for $\osp(2\vert 2n)$, and is a quotient of $M(\Lambda;\Delta_{\mathrm{st}}^{+})$ or $M(\Lambda;\Delta_{B}^{+})$. Both representations have the same unique simple quotient $L(\Lambda;\Delta_{\mathrm{st}}^{+})$ or $L(\Lambda;\Delta_{B}^{+})$. Consequently, every simple highest weight representation with respect to the standard positive system arises as the unique simple quotient of a Kac module.
\end{example}

The significance of these two classes of representations is that their simple quotients include the irreducible unitary representations considered below.

\begin{theorem}[{\cite[Section~7]{neeb2011lie}}]
\label{theorem::unitary-highest-lowest-weight} \begin{enumerate}
 \item[(a)]  Let $p,q\in\ZZ_{\geq0}$ satisfy $p+q=m$ and $pq\neq 0$. Every irreducible unitary representation of $\sl(m\vert n)$ associated with $\mathfrak{su}(p,q\vert0,n)$ is of highest weight type. Every nontrivial irreducible unitary representation associated with $\mathfrak{su}(p,q\vert n,0)$ is of lowest weight type.
 \item[(b)] 
 Every irreducible unitary representation of
 $\osp(m\vert2n)$ associated with $\osp(m,0 \vert 2n; \RR)$ is of highest
 weight type, whereas every nontrivial irreducible unitary representation
 associated with $\osp(0,m\vert2n;\RR)$ is of lowest weight type. If
 $m=2r$, every nontrivial irreducible unitary representation associated
 with $\osp^{\ast}(2r\vert0,2n)$ is of highest weight type, whereas every
 nontrivial irreducible unitary representation associated with
 $\osp^{\ast}(2r\vert2n,0)$ is of lowest weight type. 
 \item[(c)]
Every irreducible unitary representation of $F(4)$ associated with $F(4,0)$ is
of highest weight type, whereas
every nontrivial irreducible unitary representation associated with $F(4,0)'$ is of
lowest weight type. Likewise, every irreducible unitary
representation of $F(4,2)$ is of highest weight type, whereas every nontrivial irreducible
 unitary representation of $F(4,2)'$ is of lowest weight type.
\end{enumerate}
\end{theorem}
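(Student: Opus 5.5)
The plan is to reduce each part of Theorem~\ref{theorem::unitary-highest-lowest-weight} to a positivity statement for a single even Hermitian-type element: a vector that is either annihilated by a positive nilradical or generates a finite-length $\hh$-string must exist. The steps below follow \cite[Section~7]{neeb2011lie}.

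\textbf{Step 1 (the element $H_0$).} Let $\HH$ be an irreducible $\omega$-unitary representation. Since $[\odd,\odd]=\even$ in all cases, the elements $[X,\omega(X)]$ with $X\in\odd$ span an $\operatorname{Ad}$-invariant convex cone $C\subseteq i\gg_{\bar0,\RR}$. For each such $X$, unitarity gives
\[
\langle [X,\omega(X)]v,v\rangle=\|\omega(X)v\|^2+\|Xv\|^2\geq 0 .
\]
Hence $C$ acts by nonnegative operators, and, as in the $F(4)$ lemma above, $C$ is pointed and generating. A pointed generating invariant cone meets the center $\mathfrak z(\kk)$ of the maximal compact subalgebra in its interior. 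This gives an element $H_0\in i\mathfrak z(\kk)\subseteq\hh$ that acts on $\HH$ with spectrum bounded below. The center is read off from the table in Section~\ref{subsec::maximal_compact_subalgebra}: it is nontrivial exactly in the listed cases ($\mathfrak u(p)\oplus\mathfrak u(q)\oplus\mathfrak u(n)$, $\so(m)\oplus\mathfrak u(n)$, $\mathfrak u(r)\oplus\mathfrak{usp}(2n)$, and the $\so(2)$ factors for $F(4)$). The sign of $H_0$ is fixed by the sign $\varepsilon$ in Lemmas~\ref{lemm::adapted_systems_slmn}, \ref{lemm::adapted_systems_osp}, \ref{lemm::adapted_systems_osp_star} and~\ref{lemm::F4_adapted_positive_systems}. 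Passing from $\omega$ to $\omega'$ flips the sign on $\odd$, so the odd contributions to $C$ change sign and the semiboundedness reverses.

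\textbf{Step 2 (positivity on roots).} Fix an adapted positive system $\Delta^+$, so that $\nn^+_{\bar1}$ is $\kk^\CC$-stable. Compute $\alpha(H_0)$ for every root. The compact roots vanish on $H_0$. The noncompact roots in $\Delta^+_{nc}$ and the odd roots in $\Delta^+_{\bar1}$ all take one sign, say positive after normalization. This is a direct check using the explicit eigenvalues of $H_0$ on $\odd$ (for example $\operatorname{diag}(qI_p,-pI_q,\ast)$ for $\su(p,q\vert0,n)$). It is also exactly what $\kk^\CC$-stability of $\nn^+_{\bar1}$ together with adaptedness encodes.

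\textbf{Step 3 (extremal vector).} Decompose $\HH$ into $H_0$-eigenspaces. By irreducibility, admissibility (finite $\kk$-multiplicities, which come from the semibounded spectrum) and boundedness, there is a lowest $H_0$-eigenvalue if $H_0$ is bounded below, or a highest one if it is bounded above. Its eigenspace is a finite-dimensional $\kk^\CC$-module; take a $\Delta_c^+$-highest weight vector in it. Every root vector in $\nn^+_{nc}\oplus\nn^+_{\bar1}$ shifts the $H_0$-eigenvalue strictly in the direction ruled out by extremality, so it kills this vector, and the compact positive roots kill it by choice. The vector is therefore a $\bb$-highest weight vector; in the opposite case one gets a lowest weight vector. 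Irreducibility makes it generating, and the non-adapted systems then follow by Proposition~\ref{prop::transport_of_unitarity} and the odd reflection lemmas. Nontrivial representations of the $\omega'$ forms get the opposite sign, hence lowest weight type. The exclusion of nontriviality there is needed because the trivial representation is both highest and lowest weight.

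\textbf{Main obstacle.} The hard part is Step 1, namely that $H_0$ lies in the interior of $C$ and that the $H_0$-eigenspaces of $\HH$ are finite-dimensional. The second claim needs the analytic input from \cite{neeb2011lie} (smooth vectors, and integrability of $\HH|_{\even}$ to a unitary representation of the even group with semibounded $H_0$). I would quote that input and verify only the algebraic sign conditions of Step 2 case by case.
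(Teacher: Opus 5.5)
\emph{Verdict: the proposal follows the right strategy, but the step that decides highest versus lowest weight is asserted rather than proved, and one claim in it is false.}

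The paper gives no proof of this statement; it is quoted from \cite{neeb2011lie}. Your outline (invariant cone $C$ generated by the $[X,\omega(X)]$, an interior point $H_0\in i\mathfrak z(\kk)$, semiboundedness, an extremal $\kk$-type) is the invariant-cone strategy of that reference. The problem is that everything distinguishing the two halves of (a)--(c) lies in your Steps~2--3, which you assert rather than prove.

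\textbf{Step 2 is false for an arbitrary interior point.} An arbitrary $H_0\in C^{\circ}\cap i\mathfrak z(\kk)$ need not have a single sign on $\Delta^{+}_{\bar1}$ once $\dim\mathfrak z(\kk)>1$.
\begin{itemize}
\item For $x_\beta\in\gg^{-\beta}$ normalized as in the definition of adaptedness, one has $[x_\beta,\omega(x_\beta)]=\varepsilon t_\beta$, where $t_\beta\in\hh$ is dual to $\beta$. Hence $C\cap i\mathfrak z(\kk)$ is the cone generated by the $W_{\kk}$-averages of the $\varepsilon t_\beta$.
\item For $\su(2,1\vert0,1)$ with the nonstandard system, these averages are $u=\operatorname{diag}(-\tfrac12,-\tfrac12,0\vert-1)$ and $w=\operatorname{diag}(0,0,1\vert1)$.
\item At $su+tw$ with $s>2t>0$, the positive odd root $\epsilon_1-\delta_1$ takes the value $\tfrac s2-t>0$, while $\delta_1-\epsilon_3$ takes the value $-s<0$.
\end{itemize}
This is repairable in either of two ways, but the repair has to be carried out.
\begin{itemize}
\item Take $H_0$ generic. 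The extremal vector is then highest for $\Delta_c^{+}\cup\{\beta:\beta(H_0)<0\}$, whose even part is still $\Delta^{+}_{\bar0}$. Then transport along odd reflections.
\item Alternatively, check directly that $\operatorname{diag}(-qI_p,pI_q,0)$ lies in $C^{\circ}$. This is the negative of your element, so your element is bounded above, not below.
\end{itemize}

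\textbf{The orientation is not ``fixed by $\varepsilon$''.} Whether the semibounded side of $C$ makes $\Delta^{+}_{\mathrm{nc}}$ negative (highest weight) or positive (lowest weight) is not encoded by adaptedness alone.
\begin{itemize}
\item On $\varepsilon t_\beta$, a root $\gamma\in\Delta^{+}_{\mathrm{nc}}$ takes the value $\varepsilon(\gamma,\beta)$. The sign of $(\gamma,\beta)$ depends on the signature of the form on the Hermitian factor.
\item It is $\ge0$ for $\sl(m\vert n)$, $\osp^{\ast}(2r\vert2n)$ and $F(4,2)$.
\item It is $\le0$ for $\osp(m,0\vert2n)$, where $\gamma=2\delta_a$ and $(\delta_a,\delta_a)=-1$, and for $F(4,0)$, where $\gamma=\delta$ and $(\delta,\delta)=-3$.
\end{itemize}
An equivalent check at a highest weight vector $v$: unitarity forces $\varepsilon(\Lambda,\beta)\ge0$ for $\beta\in\Delta^{+}_{\bar1}$ and $\Lambda(h_\gamma)\le0$ for $\gamma\in\Delta^{+}_{\mathrm{nc}}$. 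Writing $\gamma=\beta_1+\beta_2$, as is possible for every positive odd root here, shows that a nontrivial module requires $\varepsilon(\gamma,\gamma)<0$.

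Now run this check with the signs recorded in Lemma~\ref{lemm::adapted_systems_osp} and Lemma~\ref{lemm::F4_adapted_positive_systems}. For the orthosymplectic case, the value $\varepsilon=-1$ can be confirmed by hand for $\osp(1\vert2)$.
\begin{itemize}
\item You obtain lowest weight type for $\upomega_{m,0\vert2n}$, $\omega_0$ and $\omega_2$.
\item This is the opposite of the assignment stated in (b) and (c).
\item So the labelling must be settled by an explicit computation against the anti-involutions. A proof that never performs this computation would ``prove'' whichever orientation is stated.
\end{itemize}

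\textbf{Admissibility.} Finite $\kk$-multiplicities do not follow from semiboundedness alone. You need irreducibility together with finiteness of $\UE(\gg)$ over $\UE(\even)$. These reduce the problem to finitely many irreducible semibounded $\even$-constituents, which are highest weight modules by the classical theory.
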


\begin{remark}\label{rmk::HW_LW}
There is a one-to-one correspondence between unitary highest weight representations of highest weight $\Lambda$ with respect to a conjugate-linear anti-involution $\omega$ and unitary lowest weight representations of lowest weight $-\Lambda$ with respect to the sign-twisted anti-involution $\omega^{\vee}=(-1)^{p(\cdot)}\omega$. More precisely, let $(L(\Lambda),\langle\cdot,\cdot\rangle)$ be a unitary highest weight representation
with respect to a conjugate-linear anti-involution $\omega$. Since the
weight spaces of $L(\Lambda)$ are finite-dimensional, its contragredient
dual is defined by
\[
 L(\Lambda)^{\vee}
 \coloneqq
 \bigoplus_{\mu\in\mathcal{P}_{L(\Lambda)}}
 \bigl(L(\Lambda)^{\mu}\bigr)^{*}.
\]
For homogeneous $x\in\gg$ and $f\in L(\Lambda)^{\vee}$, the
contragredient action is given by
\[
 (xf)(v)
 =
 -(-1)^{p(x)p(f)}f(xv).
\]
With this action, $L(\Lambda)^{\vee}$ is a lowest weight representation
of lowest weight $-\Lambda$.

We use the convention that the Hermitian form on $L(\Lambda)$ is
conjugate-linear in the first argument and linear in the second. The
Riesz map $R\colon L(\Lambda)\to L(\Lambda)^{\vee}$, defined by
$R(v)(w)\coloneqq\langle v,w\rangle$, is conjugate-linear. Since
distinct weight spaces are orthogonal and each weight space is
finite-dimensional, $R$ takes values in the contragredient dual and is
a bijection. We therefore define a positive definite Hermitian form on
$L(\Lambda)^{\vee}$ by
$\langle R(v),R(w)\rangle_{\vee}\coloneqq\langle w,v\rangle$.

For homogeneous $x,v$, unitarity of $L(\Lambda)$ gives
$xR(v)=-(-1)^{p(x)p(v)}R(\omega(x)v)$. Define the conjugate-linear
anti-involution $\omega^{\vee}$ by
$\omega^{\vee}(x)\coloneqq(-1)^{p(x)}\omega(x)$ for homogeneous
$x\in\gg$. Then, for homogeneous $x,v,w$,
\[
\begin{aligned}
 \langle xR(v),R(w)\rangle_{\vee}
 &=
 -(-1)^{p(x)p(v)}
 \langle w,\omega(x)v\rangle,\\
 \langle R(v),\omega^{\vee}(x)R(w)\rangle_{\vee}
 &=
 -(-1)^{p(x)+p(x)p(w)}
 \langle w,\omega(x)v\rangle.
\end{aligned}
\]
If $\langle w,\omega(x)v\rangle\neq0$, evenness of the Hermitian form
implies $p(w)=p(x)+p(v)\pmod 2$, and hence
$p(x)+p(x)p(w)=p(x)p(v)\pmod 2$. Thus
$\langle xR(v),R(w)\rangle_{\vee}
=\langle R(v),\omega^{\vee}(x)R(w)\rangle_{\vee}$.

Consequently, $L(\Lambda)^{\vee}$ is a unitary lowest weight
representation of lowest weight $-\Lambda$ with respect to
$\omega^{\vee}$. Applying the same construction once more recovers
$L(\Lambda)$, since $L(\Lambda)^{\vee\vee}\cong L(\Lambda)$ and
$(\omega^{\vee})^{\vee}=\omega$. Hence the correspondence is
bijective.
\end{remark}

\begin{remark} \label{rmk::infinite_sl}
Consider $\gg=\sl(m\vert n)$. If $pq=0$, the real form is compact and every irreducible unitary representation is finite-dimensional, and hence of highest and lowest weight type. If $pq\neq0$, the real forms $\su(p,q\vert0,n)$ and $\su(p,q\vert n,0)$ contain the noncompact real form $\su(p,q)$ in their even part. Since $\su(p,q)$ admits no nontrivial finite-dimensional unitary representations, neither real form admits nontrivial finite-dimensional irreducible unitary representations. Indeed, every finite-dimensional unitary representation is trivial on the noncompact simple Lie algebra $\su(p,q)$ and hence also on its complexification $\sl(m)$. Since $[\sl(m),\odd]=\odd$, it follows that $\odd$ acts trivially. Finally, as $[\odd,\odd]=\even$, the entire even part acts trivially as well. Hence the representation is trivial. Thus, apart from the trivial representation, the cases $pq=0$ and $pq\neq0$ correspond to the finite- and infinite-dimensional unitary representation theories, respectively.
\end{remark}

\begin{remark}
For the real form $\mathfrak{osp}(m,0\vert 2n;\mathbb R)$ with $n>0$, there are no nontrivial finite-dimensional unitary representations. Indeed, its even part is $\mathfrak{so}(m)\oplus\mathfrak{sp}(2n,\mathbb R)$, and $\mathfrak{sp}(2n,\mathbb R)$ admits no nontrivial finite-dimensional unitary representations. The same line of argumentation as for $\sl(m\vert n)$ applies here. Consequently, among the real forms of $\mathfrak{osp}(m\vert 2n)$ relevant to unitarity, finite-dimensional unitary representations can occur only for the real forms of type $\mathfrak{osp}^{\ast}(2r\vert 2n)$.
\end{remark}

In this article, we work primarily with highest weight representations. For $\sl(m\vert n)$, if $pq\neq0$, these are associated with the real forms $\mathfrak{su}(p,q\vert0,n)$, while the corresponding results for lowest weight representations associated with $\mathfrak{su}(p,q\vert n,0)$ are obtained analogously and will be stated without separate proofs. If $pq=0$, the distinction between the highest and lowest weight cases is immaterial. For $\osp(m\vert2n)$, we consider highest weight representations associated with $\osp(m,0\vert2n;\RR)$ and, when $m=2r$, with $\osp^{\ast}(2r\vert0,2n)$. For $F(4)$, we consider unitary highest weight representations associated with the real forms $F(4,2)$ and $F(4,0)$. The corresponding statements for lowest weight representations are again obtained analogously. For a fixed positive system $\Delta^+$, we denote the simple highest weight representation of highest weight $\Lambda$ by $L(\Lambda;\Delta^+)$.

We next explain why the standard and nonstandard positive systems for $\sl(m\vert n)$, the Case~A and Case~B positive systems for $\osp(m\vert 2n)$, and the positive systems associated to $\Uppi_{1}$ and $\Uppi_{2}$ of $F(4)$ are distinguished in the study of unitary highest weight representations. We discuss this first for $\sl(m\vert n)$. The same line of argument applies to $\osp(m\vert 2n)$ and $F(4)$.

Assume that $p,q>0$, and let $\mathfrak{k}^{\mathbb C}\coloneqq\mathfrak{s}\bigl(\mathfrak{gl}(p)\oplus\mathfrak{gl}(q)\oplus\mathfrak{gl}(n)\bigr)\subseteq\even$ be the block-diagonally embedded complexification of the maximal compact even subalgebra. A positive system extending $\Delta_{\bar0}^{+}$ is called adapted to the real form if $\mathfrak{n}_{\bar1}^{+}$ is $\mathfrak{k}^{\mathbb C}$-stable. For a highest weight vector $v_\Lambda$, the inclusion $[\mathfrak{k}^{\mathbb C},\mathfrak{n}_{\bar1}^{+}]\subseteq\mathfrak{n}_{\bar1}^{+}$ then implies
$\mathfrak{n}_{\bar1}^{+}U(\mathfrak{k}^{\mathbb C})v_\Lambda=0$.
Thus the odd highest weight conditions hold on the entire $\mathfrak{k}^{\mathbb C}$-representation generated by $v_\Lambda$. In a unitary highest weight representation, this representation is the finite-dimensional irreducible highest $\mathfrak{k}^{\mathbb C}$-type. Adaptedness therefore makes the odd annihilation conditions compatible with this compact type. This is a natural condition. Indeed, it is the odd analogue of the admissibility condition used in the highest weight Harish--Chandra theory of Hermitian Lie supergroups \cite{Chuah_Fioresi,CVF,arXiv:2405.16251}. Already for an ordinary Hermitian Lie algebra $\gg_{\mathbb R}=\kk\oplus\mathfrak p$, an admissible positive system is characterized by a $\kk^{\mathbb C}$-stable decomposition $\mathfrak p^{\mathbb C}=\mathfrak p^{+}\oplus\mathfrak p^{-},$ where $\mathfrak p^{+}$ is the sum of the positive noncompact root spaces. This is precisely the structure underlying highest weight Harish--Chandra representations: their distinguished highest $\kk^{\mathbb C}$-type is annihilated by $\mathfrak p^{+}$, and the representation is generated from this compact type by $\mathfrak p^{-}$. The admissibility condition extends this classical compatibility between the triangular decomposition and the $K$-type decomposition to Lie superalgebras. In the present setting, the corresponding additional requirement on the odd part is exactly the $\kk^{\mathbb C}$-stability of $\nn_{\bar1}^{+}$.

For the fixed even positive system $\Delta_{\bar0}^{+}$, the adapted positive systems are precisely the standard, minus-standard, and nonstandard systems. The minus-standard system reverses the standard odd polarization and corresponds to the analogous lowest weight picture, so it yields no additional case for the highest weight analysis pursued below. We therefore restrict attention to the standard and nonstandard systems and study their relation, with the latter defined by the fixed decomposition $m=p+q$. If $pq=0$, the nonstandard system coincides with one of the two standard orientations, and it suffices to consider the standard positive system.

The highest weight of a simple representation depends on the choice of positive system. Its transformation under an odd reflection is given by the following standard lemma \cite[Lemma~1.40]{cheng2012dualities}.

\begin{lemma}
\label{lemma::relating_highest_weight_representations}
Let $\Uppi$ be a simple system, let $\theta\in\Uppi\cap\Delta_{\bar1}$ be isotropic, and let $\Uppi_\theta=r_\theta(\Uppi)$. Suppose that $L(\Lambda;\Delta^+)$ is simple. Then it is a highest weight representation relative to $\Uppi_\theta$, with highest weight
\begin{equation*}
\Lambda_\theta=
\begin{cases}
\Lambda,&(\Lambda,\theta)=0,\\
\Lambda-\theta,&(\Lambda,\theta)\neq0.
\end{cases}
\label{eq::odd-reflection-highest-weight}
\end{equation*}
In the second case, the parity of the highest weight vector is reversed.
\end{lemma}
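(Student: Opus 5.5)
The plan is to work directly with a highest weight vector $v_\Lambda\in L(\Lambda;\Delta^+)$ relative to $\Delta^+$ and to produce a highest weight vector relative to $\Delta_\theta^+=(\Delta^+\setminus\{\theta\})\cup\{-\theta\}$. Write $\nn^+_\theta$ for the nilradical of the new Borel. Since $\theta$ is simple, $\nn^+_\theta=\bigoplus_{\alpha\in\Delta^+\setminus\{\theta\}}\gg^\alpha\oplus\gg^{-\theta}$. Because $\theta$ is isotropic, $2\theta$ is not a root, so $\gg^{\pm\theta}$ square to zero: $[e_\theta,e_\theta]=0=[f_\theta,f_\theta]$ for root vectors $e_\theta\in\gg^\theta$, $f_\theta\in\gg^{-\theta}$. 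Normalize so that $h_\theta\coloneqq[e_\theta,f_\theta]\in\hh$ satisfies $\alpha(h_\theta)=(\alpha,\theta)$ for all $\alpha\in\hh^\ast$.

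The natural candidate is $w\coloneqq f_\theta v_\Lambda$, and I split into two cases according to $(\Lambda,\theta)$.

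\emph{Case $(\Lambda,\theta)\neq0$.} I show that $w$ is a nonzero $\nn^+_\theta$-primitive vector of weight $\Lambda-\theta$.
\begin{enumerate}
\item Nonvanishing: $e_\theta w=[e_\theta,f_\theta]v_\Lambda-f_\theta e_\theta v_\Lambda=(\Lambda,\theta)v_\Lambda\neq0$.
\item Annihilation by $f_\theta$: $f_\theta w=f_\theta^2v_\Lambda=\tfrac12[f_\theta,f_\theta]v_\Lambda=0$.
\item Annihilation by $\gg^\alpha$ for $\alpha\in\Delta^+\setminus\{\theta\}$: $e_\alpha w=[e_\alpha,f_\theta]v_\Lambda\pm f_\theta e_\alpha v_\Lambda=[e_\alpha,f_\theta]v_\Lambda$. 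The bracket lies in $\gg^{\alpha-\theta}$. Because $\theta$ is simple and $\alpha\neq\theta$ is positive, $\alpha-\theta$ is either not a root or a positive root: writing $\alpha$ in simple roots, it has a positive coefficient at some simple root other than $\theta$, which survives in $\alpha-\theta$. So the bracket kills $v_\Lambda$.
\end{enumerate}
Hence $w$ is a $\Delta^+_\theta$-highest weight vector of weight $\Lambda-\theta$. By simplicity, $\UE(\gg)w=L(\Lambda;\Delta^+)$. Its parity is opposite to that of $v_\Lambda$, since $f_\theta$ is odd.

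\emph{Case $(\Lambda,\theta)=0$.} Now $e_\theta w=0$ by the computation in step 1, and the argument of step 3 shows that $w$ is killed by all of $\nn^+$. So either $w=0$ or $w$ is a singular vector of weight $\Lambda-\theta\neq\Lambda$. The latter would generate a proper nonzero submodule, contradicting simplicity, so $w=0$. Then $v_\Lambda$ itself is annihilated by $\gg^{-\theta}$ and by $\gg^\alpha$ for $\alpha\in\Delta^+\setminus\{\theta\}$, hence by $\nn^+_\theta$. Thus $v_\Lambda$ is a $\Delta^+_\theta$-highest weight vector of weight $\Lambda$, with unchanged parity.

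The only point needing care, and the step I expect to be the main obstacle, is step 3: the claim that $\alpha-\theta\notin-\Delta^+$ for $\alpha\in\Delta^+\setminus\{\theta\}$. This uses simplicity of $\theta$ together with the fact that every positive root is a nonnegative integer combination of $\Uppi$. In the super setting this still holds for a base $\Uppi$ of $\Delta^+$, but one must rule out $\alpha=2\theta$ separately; that follows from isotropy, since $(2\theta,2\theta)=0$ yet $2\theta\notin\Delta$ for basic classical $\gg$. Once step 3 is settled, generation of the whole module by $w$ or $v_\Lambda$ follows from simplicity, and uniqueness of the $\Delta^+_\theta$-highest weight follows from the unique-simple-quotient property recalled after Example~\ref{ex::Verma_modules}.
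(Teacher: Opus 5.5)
Your proof is correct and is the standard argument for this result, which the paper does not prove but cites from \cite[Lemma~1.40]{cheng2012dualities}: test $f_\theta v_\Lambda$, use $e_\theta f_\theta v_\Lambda=(\Lambda,\theta)v_\Lambda$ and $f_\theta^2=0$, and, when $(\Lambda,\theta)=0$, use simplicity to force $f_\theta v_\Lambda=0$. In step 3 you can skip the root-coefficient argument and the separate check $\alpha\neq2\theta$ (the case $\alpha-\theta=\theta\in\Delta^+$ would be harmless anyway): if $\alpha-\theta\in-\Delta^+$, then $\theta=\alpha+(\theta-\alpha)$ would be a sum of two positive roots, contradicting the simplicity of $\theta$.
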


\begin{remark}\label{rmk::shifted_odd_reflections}
Let $\beta$ be an isotropic odd simple root in $\Uppi$ and let $\lambda'$ be the highest weight obtained from $\lambda$ by the odd reflection $r_\beta$. Then
\[
\lambda'+\rho'
=
\begin{cases}
\lambda+\rho, & (\lambda+\rho,\beta)\neq 0,\\[1mm]
\lambda+\rho+\beta, & (\lambda+\rho,\beta)=0,
\end{cases}
\]
where $\rho'$ is the Weyl vector associated with the reflected positive system. 
\end{remark}

As a consequence of the preceding lemma, for every
$\Lambda\in\hh^{\ast}$ there exists a unique $\Lambda'\in\hh^{\ast}$
such that, up to parity reversal,
\begin{equation}
\label{eq::equivalence_simples_odd_reflections}
\begin{aligned}
L(\Lambda;\Delta_{\st}^{+})
&\cong
L(\Lambda';\Delta_{\nst}^{+}),
\\
L(\Lambda;\Delta_{A}^{+})
&\cong
L(\Lambda';\Delta_{B}^{+}),
\\
L(\Lambda(\Uppi_{i});\Delta^{+}(\Uppi_{i}))
&\cong
L(\Lambda(\Uppi_{j});\Delta^{+}(\Uppi_{j})),
\end{aligned}
\end{equation}
according as $\gg=\sl(m\vert n)$ or $\gl(m\vert n)$,
$\gg=\osp(m\vert2n)$, or $\gg=F(4)$, respectively. In either case, the resulting correspondence $\Lambda\mapsto\Lambda'$ or $\Lambda(\Uppi_{i})\mapsto \Lambda(\Uppi_{j})$ is bijective. Although the highest weight changes under the passage between the two positive systems, unitarity is preserved up to parity reversal.

\begin{proposition}\label{prop::transport_of_unitarity}
For $\gg=\sl(m\vert n)$ or $\gl(m\vert n)$, the representation $L(\Lambda;\Delta_{\st}^{+})$ is unitary if and only if $L(\Lambda';\Delta_{\nst}^{+})$ is unitary. For $\gg=\osp(m\vert2n)$, the representation $L(\Lambda;\Delta_{A}^{+})$ is unitary if and only if $L(\Lambda';\Delta_{B}^{+})$ is unitary. For $F(4)$, the representation $L(\Lambda(\Uppi_{i});\Delta^{+}(\Uppi_{i}))$ is unitary if and only if $L(\Lambda(\Uppi_{j});\Delta^{+}(\Uppi_{j}))$ is unitary for all $i,j=1,\ldots, 5.$ In either case, the two representations are unitarily equivalent up to parity reversal.
\end{proposition}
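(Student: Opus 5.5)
The key observation is that unitarity is a property of the $\gg$-representation together with the anti-involution $\omega$, not of the chosen positive system. A change of positive system by odd reflections does not change the underlying module. So the proof reduces to showing that $L(\Lambda;\Delta_{\st}^{+})$ and $L(\Lambda';\Delta_{\nst}^{+})$ are the same $\gg$-module up to parity reversal, and that parity reversal preserves $\omega$-unitarity. The same reduction handles the pairs Case~A/Case~B for $\osp(m\vert2n)$ and $\Uppi_i/\Uppi_j$ for $F(4)$.

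First I would make precise that $\Lambda'$ is obtained by iterating Lemma~\ref{lemma::relating_highest_weight_representations} along the explicit chains of odd reflections. These chains are Lemma~\ref{lemm::relating_standard_and_non_standard_simple_system}, Lemma~\ref{lemm::relating_A_and_B_simple_system} (read in reverse, since each odd reflection is an involution: $r_{-\theta}r_\theta\Uppi=\Uppi$), and Lemma~\ref{lemm::odd_reflections_F(4)}. For $F(4)$ one composes along the path between $\Uppi_i$ and $\Uppi_j$ in the chain $\Uppi_1\leftrightarrow\Uppi_4\leftrightarrow\Uppi_5\leftrightarrow\Uppi_3\leftrightarrow\Uppi_2$. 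At each step the lemma applies to a simple module $L$ and an isotropic simple root $\theta$ of the current simple system. It shows that the very same module $L$, unchanged as a $\gg$-module, is a highest weight module for the reflected system, possibly with the highest weight vector now odd. Induction over the length of the chain then gives an isomorphism of $\gg$-modules $L(\Lambda;\Delta^{+})\cong\Uppi^{\upepsilon}L(\Lambda';\Delta'^{+})$ with $\upepsilon\in\{0,1\}$. Here $\upepsilon$ counts mod $2$ the steps with $(\Lambda,\theta)\neq0$, which is where the parity flips. Uniqueness of simple highest weight modules (Example~\ref{ex::Verma_modules}) shows that $\Lambda'$ is well defined, and reversing the chain shows that $\Lambda\mapsto\Lambda'$ is a bijection.

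Second, I would check that unitarity is invariant under $\Pi$. Take $\HH$ with a positive definite form satisfying $\langle\HH_{\bar0},\HH_{\bar1}\rangle=0$ and $\langle Xv,w\rangle=\langle v,\omega(X)w\rangle$. On $\Pi\HH$ I would use the same form. It is still positive definite and still makes the two homogeneous components orthogonal, since these are the same subspaces with their labels swapped. The $\gg$-action is literally the same, so the adjointness relation holds verbatim. Hence $\Pi\HH$ is $\omega$-unitary whenever $\HH$ is, and the identity map is a unitary equivalence $\HH\to\Pi\HH$ after forgetting the grading. Combined with the first step, unitarity of one realization is equivalent to unitarity of the other, with the isomorphism itself providing the unitary equivalence up to parity reversal.

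The main obstacle is a subtlety in the definition rather than any computation. One must check that the adjointness condition carries no parity-dependent sign that would change under $\Pi$. With the definition from the introduction (following \cite{jakobsen1994full}) there is none. If one instead uses the super Hermitian formulation of \cite{SchmidtDirac}, the form on $\Pi\HH$ must be rescaled by $i$ or by a sign on one component, and I would verify this using the stated equivalence of the two formulations. A second point to confirm is that the odd reflections in the chains are always at isotropic simple roots, so that Lemma~\ref{lemma::relating_highest_weight_representations} applies at every step. For $\osp$, the non-isotropic roots $\delta_a$ are positive in both Case~A and Case~B and are never reflected. For $F(4)$, all odd roots are isotropic.
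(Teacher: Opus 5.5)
Your proposal is correct and takes the same route as the paper. The paper likewise gets a $\gg$-isomorphism $L(\Lambda;\Delta^{+})\cong\Pi^{\varepsilon}L(\Lambda';\Delta'^{+})$ from the odd-reflection correspondence, notes that parity reversal preserves unitarity, and transports the positive definite contravariant form along this isomorphism; you spell out the induction along the odd-reflection chains and the parity check in more detail, with one small correction: the parity flips are counted by the condition $(\Lambda_{\mathrm{cur}},\theta)\neq0$ for the intermediate highest weight $\Lambda_{\mathrm{cur}}$ at each step, not for the original $\Lambda$.
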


\begin{proof}
By \eqref{eq::equivalence_simples_odd_reflections}, there exist
$\varepsilon\in\ZZ_2$ and an isomorphism of $\gg$-representations
\begin{equation*}
\begin{aligned}
\varphi:
L(\Lambda;\Delta_{\st}^{+})
&\longrightarrow
\Pi^\varepsilon L(\Lambda';\Delta_{\nst}^{+}),
\\
\varphi:
L(\Lambda;\Delta_{A}^{+})
&\longrightarrow
\Pi^\varepsilon L(\Lambda';\Delta_{B}^{+}),
\\
\varphi:
L(\Lambda(\Uppi_{i});\Delta^{+}(\Uppi_i))
&\longrightarrow
\Pi^\varepsilon L(\Lambda(\Uppi_{j});\Delta^{+}(\Uppi_j)),
\end{aligned}
\end{equation*}
according as $\gg=\sl(m\vert n)$ or $\gl(m\vert n)$,
$\gg=\osp(m\vert2n)$, or $\gg=F(4)$, respectively. Since parity reversal preserves unitarity, a unitary structure on either representation can be transported along $\varphi$ to the other. The converse follows in the same way.
\end{proof}

Outside the typical case, there is generally no uniform closed
expression relating the highest weights associated with two positive
systems. Although each individual odd reflection has a simple
transformation rule, the resulting highest weight depends on which
vanishing conditions occur at the intermediate stages. These
conditions remain case-dependent even under the assumption of
unitarity. For typical representations, however, the shifted highest
weight is invariant.

\begin{lemma}\label{lemm::typical_shifted_highest_weight}
Let $\Delta^{+}$ and $\widetilde{\Delta}^{+}$ be positive systems
connected by a sequence of odd reflections, and let $\rho$ and
$\widetilde{\rho}$ be their respective Weyl vectors. Suppose that
$
L(\Lambda;\Delta^{+})
\cong
\Pi^\varepsilon
L(\widetilde{\Lambda};\widetilde{\Delta}^{+})
$
for some $\varepsilon\in\ZZ_2$. If $\Lambda$ is typical, then
\[
\Lambda+\rho
=
\widetilde{\Lambda}+\widetilde{\rho}.
\]
\end{lemma}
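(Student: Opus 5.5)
Since $\Delta^{+}$ and $\widetilde{\Delta}^{+}$ are connected by a sequence of odd reflections, I argue by induction on its length. The key input is Remark~\ref{rmk::shifted_odd_reflections}, which describes how $\lambda+\rho$ changes under a single odd reflection. Write the sequence as $\Delta^{+}=\Delta^{+}_{0},\Delta^{+}_{1},\ldots,\Delta^{+}_{k}=\widetilde{\Delta}^{+}$, where $\Delta^{+}_{j+1}$ is obtained from $\Delta^{+}_{j}$ by the odd reflection at an isotropic odd simple root $\beta_{j}$ of $\Delta^{+}_{j}$. (For $\osp(m\vert2n)$ with $m$ odd, the non-isotropic odd roots $\delta_a$ are positive in every system considered, so only isotropic reflections occur.) Let $\Lambda_{j}$ be the highest weight of the simple module relative to $\Delta^{+}_{j}$, as produced by Lemma~\ref{lemma::relating_highest_weight_representations}, and let $\rho_{j}$ be the corresponding Weyl vector. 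By uniqueness of the highest weight of a simple module relative to a fixed positive system, $\Lambda_{k}=\widetilde{\Lambda}$; parity reversal does not affect weights.

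The inductive step is to show $\Lambda_{j+1}+\rho_{j+1}=\Lambda_{j}+\rho_{j}$. By Remark~\ref{rmk::shifted_odd_reflections}, this holds exactly when $(\Lambda_{j}+\rho_{j},\beta_{j})\neq0$. So I must show that typicality is inherited along the chain, in the form $(\Lambda_{j}+\rho_{j},\alpha)\neq0$ for every isotropic $\alpha\in\Delta^{+}_{j}$, and in particular for $\alpha=\beta_{j}$. I take typicality of $\Lambda$ with respect to $\Delta^{+}$ to mean $(\Lambda+\rho,\alpha)\neq0$ for all isotropic odd $\alpha\in\Delta^{+}$; this is the notion of atypicality announced at the start of the section.

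Suppose, inductively, that $\Lambda_{j}+\rho_{j}=\Lambda+\rho$. The isotropic roots of $\Delta^{+}_{j}$ agree with those of $\Delta^{+}$ up to sign, since each odd reflection only replaces $\theta$ by $-\theta$. Hence $(\Lambda_{j}+\rho_{j},\alpha)=\pm(\Lambda+\rho,\alpha')$ for some isotropic $\alpha'\in\Delta^{+}$, and this is nonzero by typicality of $\Lambda$. Applying this to $\alpha=\beta_{j}$ gives $\Lambda_{j+1}+\rho_{j+1}=\Lambda_{j}+\rho_{j}=\Lambda+\rho$, which closes the induction. Taking $j=k$ yields $\Lambda+\rho=\widetilde{\Lambda}+\widetilde{\rho}$.

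The main point needing care is consistency between the two descriptions of a single reflection step. Lemma~\ref{lemma::relating_highest_weight_representations} phrases the case distinction via $(\Lambda,\theta)$, while Remark~\ref{rmk::shifted_odd_reflections} phrases it via $(\Lambda+\rho,\theta)$. These agree because $(\rho,\theta)=\tfrac12(\theta,\theta)=0$ for an isotropic simple root $\theta$, using the standard identity $(\rho,\alpha)=\tfrac12(\alpha,\alpha)$ for simple $\alpha$. In addition, $\rho_{\theta}=\rho+\theta$: removing $\theta$ from $\Delta^{+}_{\bar1}$ and adding $-\theta$ lowers $\rho_{\bar1}$ by $\theta$. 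With these two facts, the remark follows directly from the lemma. In the typical case, $\Lambda_\theta+\rho_\theta=(\Lambda-\theta)+(\rho+\theta)=\Lambda+\rho$.
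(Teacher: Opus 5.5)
Your proof is correct and takes essentially the same route as the paper. The paper reduces to a single odd reflection at an isotropic simple root $\theta$. It uses $(\rho,\theta)=\tfrac12(\theta,\theta)=0$ to turn typicality into $(\Lambda,\theta)\neq0$, and concludes from $\Lambda_\theta=\Lambda-\theta$ and $\rho_\theta=\rho+\theta$.

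The paper's phrase ``applying this argument successively'' leaves implicit that typicality is inherited along the chain. You make this explicit and justify it correctly: $\Lambda_j+\rho_j=\Lambda+\rho$, and every isotropic root of $\Delta_j^{+}$ is $\pm$ an isotropic root of $\Delta^{+}$.
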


\begin{proof}
It suffices to consider a single odd reflection with respect to an
isotropic simple root $\alpha$. Since $\Lambda$ is typical and
$
(\rho,\alpha)=\frac12(\alpha,\alpha)=0,
$
we have $(\Lambda,\alpha)\neq0$. The odd-reflection rule therefore
gives
\[
\widetilde{\Lambda}=\Lambda-\alpha,
\qquad
\widetilde{\rho}=\rho+\alpha.
\]
Consequently,
$
\widetilde{\Lambda}+\widetilde{\rho}
=
\Lambda+\rho.
$
The assertion follows by applying this argument successively along
the sequence of odd reflections.
\end{proof}

In particular, in the typical case,
\begin{equation}
\Lambda+\rho_{\st}
=
\Lambda'+\rho_{\nst},
\qquad
\Lambda+\rho_{A}
=
\Lambda'+\rho_{B}, \qquad \Lambda(\Uppi_i)+\rho(\Uppi_i)
=
\Lambda(\Uppi_j)+\rho(\Uppi_j).
\end{equation}

For $\slmn$, Remark~\ref{rmk::determination_Lambda'} provides a direct way to recover $\Lambda'$ from a given unitary
representation $L(\Lambda;\Delta_{\st}^{+})$.

Every highest weight representation $M$ of $\gg$ restricts to a representation of $\even$. In what follows, we forget the induced $\ZZ_2$-grading and regard this restriction as an ordinary $\even$-representation, denoted by $M\vert_{\even}$. Although this restriction need not be semisimple, it admits a finite filtration.

\begin{proposition}
 Let $M$ be a highest weight representation of $\gg$ relative to $\Delta^{+}$. Then $M\vert_{\even}$ admits a finite filtration whose successive quotients are highest weight representations of $\even$.
\end{proposition}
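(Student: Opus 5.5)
We filter $M$ by odd degree. Set $M=\UE(\nn^-)v_\Lambda$. By PBW, $\UE(\nn^-)=\UE(\nn^-_{\bar0})\cdot\bigwedge(\nn^-_{\bar1})$, where $\bigwedge(\nn^-_{\bar1})$ denotes the span of ordered monomials in a fixed basis of odd negative root vectors. We would rather use a filtration stable under all of $\even$, not just $\nn^-_{\bar0}$. So we take $F_k\coloneqq\UE(\even)\,\odd^{\leq k}v_\Lambda$, where $\odd^{\leq k}$ is the span of products of at most $k$ odd elements of $\gg$.

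Since $[\even,\odd]\subseteq\odd$, each $F_k$ is an $\even$-subrepresentation. Moreover $M=\UE(\gg)v_\Lambda$ and, by PBW, $\UE(\gg)=\UE(\even)\cdot\bigwedge(\odd)$. Hence $F_N=M$ with $N=\dim\odd$, and the filtration $0\subseteq F_0\subseteq F_1\subseteq\cdots\subseteq F_N=M$ is finite.

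Next we refine to get highest weight quotients. Each quotient $F_k/F_{k-1}$ is generated over $\UE(\even)$ by the images of $\odd^{k}v_\Lambda$, which is finite-dimensional. The image of $\odd^{\leq k}v_\Lambda$ is spanned by products $y_1\cdots y_kv_\Lambda$ with $y_i\in\odd$. Reordering odd elements costs only terms of lower odd degree (brackets land in $\even$, which we absorb into $\UE(\even)$). Therefore $F_k/F_{k-1}$ is a quotient of $\UE(\even)\otimes_{\UE(\even\cap\bb)}(\text{finite-dimensional }\bb_{\bar0}\text{-module})$, more precisely of $\UE(\even)\otimes\bigl(\bigwedge^k\odd\otimes\CC_\Lambda\bigr)$ restricted appropriately. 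Here $\bb_{\bar0}=\hh\oplus\nn^+_{\bar0}$ acts on $\odd^{k}v_\Lambda$ modulo $F_{k-1}$. This works because $\nn^+_{\bar0}v_\Lambda=0$ and $[\nn^+_{\bar0},\odd]\subseteq\odd$, so modulo $F_{k-1}$ the span $W_k$ of the images of $\odd^kv_\Lambda$ is a finite-dimensional $\bb_{\bar0}$-module. Then $F_k/F_{k-1}$ is a quotient of $\UE(\even)\otimes_{\UE(\bb_{\bar0})}W_k$.

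Finally we refine $W_k$ by a $\bb_{\bar0}$-stable flag with one-dimensional quotients. This is possible by Lie's theorem, since $\bb_{\bar0}$ is solvable. Inducing the flag gives a finite filtration of the induced module with Verma-module quotients $M_0(\mu)$. Taking images in $F_k/F_{k-1}$ yields a finite filtration whose successive quotients are quotients of Verma modules of $\even$. Each such quotient is either zero or a highest weight representation of $\even$. Discarding the zero steps and concatenating over $k$ gives the claimed filtration.

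The main obstacle is the step where induction is exact only on the right. Images of a Verma filtration in a quotient need not have Verma quotients, but they are always highest weight modules or zero. Concretely, we need the image of $\UE(\even)w_j$ modulo the image of $\UE(\even)\{w_1,\dots,w_{j-1}\}$ to be generated by the image of $w_j$, with that image a $\bb_{\bar0}$-eigenvector annihilated by $\nn^+_{\bar0}$. This follows from the flag being $\bb_{\bar0}$-stable, which is the point to check carefully.
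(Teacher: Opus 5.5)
Your proof is correct, but it takes a genuinely different route from the paper. The paper's proof is a short appeal to category $\mathcal{O}$. It notes that $M$ lies in the super BGG category, that restriction maps it into $\mathcal{O}(\even)$ (Andersen--Mazorchuk), and that every object of $\mathcal{O}(\even)$ has finite length. This gives a composition series whose quotients are simple modules $L_0(\mu)$.

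You instead construct the filtration explicitly. You use the odd-degree filtration $F_k=\UE(\even)\,\odd^{\leq k}v_\Lambda$, the $\bb_{\bar0}$-stability of $\odd^{k}v_\Lambda$, the surjection $\UE(\even)\otimes_{\UE(\bb_{\bar0})}W_k\to F_k/F_{k-1}$, and a Lie flag of $W_k$. This avoids the finite-length theorem for $\mathcal{O}(\even)$, which rests on central characters and linkage. It also yields information the paper's proof does not. The length is at most $\sum_k\dim W_k\leq 2^{\dim\odd}$. Moreover, $F_k/F_{k-1}$ is a quotient of $\UE(\even)\otimes_{\UE(\bb_{\bar0})}\bigl(\bigwedge^k\odd\otimes\CC_\Lambda\bigr)\cong\bigwedge^k\odd\otimes M_0(\Lambda)$. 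Hence the highest weights of the quotients lie among the weights $\Lambda+\sum_{\beta\in T}\beta$ with $T\subseteq\Delta_{\bar1}$. The price is that your quotients are only quotients of Verma modules, not simple modules. That suffices for the statement, and a composition series can still be obtained afterwards by the finite-length argument if needed.

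The step you single out as the main obstacle is not actually one. Induction from $\bb_{\bar0}$ is exact, because $\UE(\even)$ is free over $\UE(\bb_{\bar0})$. Moreover, for any surjection $\pi$ and filtration $(V_j)$, the map $V_j\to\pi(V_j)/\pi(V_{j-1})$ factors through $V_j/V_{j-1}$. So each image quotient is a quotient of $M_0(\mu_j)$, hence zero or a highest weight module. The only detail to add is that the characters from Lie's theorem vanish on $\nn^{+}_{\bar0}=[\hh,\nn^{+}_{\bar0}]$. Therefore the one-dimensional quotients really are $\CC_{\mu_j}$ with $\nn^{+}_{\bar0}$ acting trivially.
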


\begin{proof}
 It is well known that every highest weight representation $M$ belongs to the super BGG category $\mathcal{O}$ (see, for example, \cite{musson2012lie}). Moreover, restriction to $\even$ maps $\mathcal{O}$ into the BGG category $\mathcal{O}(\even)$ by \cite[Proposition~7.1]{Mazorchuk_Andersen}. Since every object of $\mathcal{O}(\even)$ has finite length and its simple objects are highest weight representations, $M\vert_{\even}$ admits a finite filtration whose successive quotients are highest weight representations of $\even$.
\end{proof}

If the highest weights of the successive quotients are
$\kk^{\CC}$-dominant integral, then their possible form is highly
constrained.

\begin{theorem}[{\cite[Theorem~2.5 and Corollary~2.7]{jakobsen1994full}}]
\label{thm::even_filtration}
Let $M$ be a highest weight representation of $\gg$ of highest weight
$\Lambda$ relative to $\Delta^{+}$. Assume that
$M\vert_{\even}$ admits a finite filtration whose nonzero successive
quotients have $\kk^{\CC}$-dominant integral highest weights. Then
every such highest weight is of the form $\Lambda-\gamma,$
where \[\gamma=\sum_{\alpha\in S}\alpha+\sum_{\beta\in\Delta_{\bar0,\mathrm{nc}}^{+}}
n_{\beta}\beta\]
for some subset $S\subseteq\Delta_{\bar1}^{+}$ and
$n_{\beta}\in\ZZ_{\geq0}$. In particular, the odd roots appearing in
the first sum are pairwise distinct. If $\gg=\sl(m\vert n)$ or $\gg=\gl(m\vert n)$, then one may take
$n_{\beta}=0$ for every
$\beta\in\Delta_{\bar0,\mathrm{nc}}^{+}$. Hence every such highest
weight is of the form
\[
\Lambda-\sum_{\alpha\in S}\alpha,
\qquad
S\subseteq\Delta_{\bar1}^{+}.
\]
\end{theorem}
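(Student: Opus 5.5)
We would reduce the statement to a PBW analysis of $M=\UE(\nn^{-})v_\Lambda$ adapted to the $\even$-module structure. Write $\nn^{-}=\nn^{-}_{\bar0}\oplus\nn^{-}_{\bar1}$. Since $\Lambda-\mu\in\ZZ_{\geq0}\Delta^{+}$ for every weight $\mu$ of $M$, any highest weight of a successive quotient lies in $\Lambda-\ZZ_{\geq0}\Delta^{+}$. The goal is to sharpen this to the stated shape.

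The first step is to introduce the $\even$-stable filtration $M_k\coloneqq\UE(\even)\,\Lambda^{\leq k}(\odd)$-span. More precisely, $M_k$ is the span of $\UE(\even)\,x_1\cdots x_j\,\UE(\even)v_\Lambda$ with $x_i\in\odd$ and $j\leq k$. By PBW, $\UE(\gg)\cong\UE(\even)\otimes\Lambda(\odd)$ as $\even$-modules up to filtration, so this filtration is finite. Its graded pieces are quotients of $\Lambda^{k}(\odd)\otimes N$, where $N=\UE(\even)v_\Lambda$. One could instead use the coarser filtration by powers of $\nn^{-}_{\bar1}$: modulo $\UE(\even)$, $M$ is spanned by products of distinct negative odd root vectors applied to $N$. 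The supersymmetric nature of $\Lambda(\nn^{-}_{\bar1})$ yields the pairwise distinct odd roots in $S$. A highest weight vector of a subquotient therefore has weight $\lambda-\sum_{\alpha\in S}\alpha$, where $\lambda$ is a weight of $N$.

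The second step uses the $\kk^{\CC}$-dominance hypothesis. Every weight of $N$ is of the form $\Lambda-\sum(\text{compact positive roots})-\sum(\text{noncompact positive roots})$. The key point is to eliminate the compact contributions. Here I would mimic Jakobsen's argument: work with $\UE(\kk^{\CC})$ and $\UE(\mathfrak p^{-})$, using $\UE(\even)=\UE(\mathfrak p^{-})\UE(\kk^{\CC})\UE(\mathfrak p^{+})$. The $\kk^{\CC}$-highest weight vectors of each $\kk^{\CC}$-isotypic piece can then be chosen in $\UE(\mathfrak p^{-})\otimes\Lambda(\nn^-_{\bar1})\otimes(\text{top }\kk^{\CC}\text{-type})$. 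Projecting to $\kk^{\CC}$-highest weights removes the compact roots, by the standard fact that a $\kk^{\CC}$-dominant highest weight in $\mathfrak p^{-}$-symmetric powers tensored with $L_{\kk^\CC}(\Lambda)$ is $\Lambda$ minus a sum of noncompact and odd positive roots. This in turn uses the PRV/Parthasarathy bound: highest weights of $V\otimes L_{\kk}(\Lambda)$ have the form $\Lambda+(\text{weight of }V)$.

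This step is the main obstacle. The odd root vectors are not $\kk^{\CC}$-stable in non-adapted systems, so the tensor-product weight argument must be applied to $\Lambda(\odd)$ rather than $\Lambda(\nn^-_{\bar1})$. The resulting weights must then be re-expressed as distinct positive odd roots, possibly after cancelling pairs $\pm\alpha$ against noncompact even roots.

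For $\sl(m\vert n)$ and $\gl(m\vert n)$, the final claim $n_\beta=0$ follows because $\gg_{\pm1}$ are abelian and each noncompact even root $\epsilon_i-\epsilon_j$ ($i\leq p<j$) equals $(\epsilon_i-\delta_a)+(\delta_a-\epsilon_j)$, a sum of two distinct positive odd roots in the nonstandard system. Since the standard system has no noncompact roots outside $\sl(m)$, there I would instead use the Kac module: the restriction $K(V)\vert_{\even}\cong\Lambda(\gg_{-1})\otimes V$ gives the form directly once $V=L_0(\Lambda)$.
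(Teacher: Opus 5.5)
The paper does not prove this statement; it quotes it from Jakobsen. Your proposal therefore has to stand on its own, and as written it does not.

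\textbf{The $\kk^{\CC}$-stable case.} Here Step~2 is the right mechanism. Suppose $\mathfrak p^{\pm}\oplus\nn^{\pm}_{\bar1}$ are $\kk^{\CC}$-stable and $F=\UE(\kk^{\CC})v_\Lambda$ is finite-dimensional. Then $(\mathfrak p^{+}\oplus\nn^{+}_{\bar1})F=0$, and $M$ is a quotient of $\UE(\gg)\otimes_{\UE(\kk^{\CC}\oplus\mathfrak p^{+}\oplus\nn^{+}_{\bar1})}F$. The $\kk^{\CC}$-types of that induced module are those of $S(\mathfrak p^{-})\otimes\bigwedge(\nn^{-}_{\bar1})\otimes F$. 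A $\kk^{\CC}$-highest weight of $V\otimes F$ is $\Lambda$ plus a weight of $V$; this is elementary and is not PRV. Together these give $\Lambda-\sum_S\alpha-\sum n_\beta\beta$ with every $\beta$ noncompact.

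This argument uses two things the statement does not supply. First, $\nn^{\pm}_{\bar1}$ must be $\kk^{\CC}$-stable. Second, $M$ must be $\kk^{\CC}$-finite and $\kk^{\CC}$-semisimple: this is needed both for $\dim F<\infty$ and to identify the $\even$-highest weights of the filtration quotients with highest weights of $\kk^{\CC}$-types of $M$. The hypothesis that the quotients have $\kk^{\CC}$-dominant integral highest weights does not give this (think of Verma modules). It does hold in the unitary application, so you should assume it explicitly.

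\textbf{The non-$\kk^{\CC}$-stable case.} This is the real gap. It covers Case~B for $\osp(m,0\vert 2n)$ with $m\geq3$ and the non-adapted $F(4)$ systems, all of which the statement includes, and you only flag it as the main obstacle. Your suggested fix, running the tensor bound on $\bigwedge(\odd)$ and then cancelling, cannot work by itself.

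Take $\osp(3\vert4)$ in Case~B. Here $\bigwedge(\odd)$ contains the weight $(\epsilon_1-\delta_1)-(\epsilon_1-\delta_2)=\delta_2-\delta_1$. So your bound, even combined with $\mu\leq\Lambda$, admits $\Lambda-\mu=\delta_1-\delta_2$. This is a compact positive root and it is not in $\Gamma$. Its $\epsilon_1$-coefficient is $0$, which forces $S\subseteq\{\delta_1,\delta_2\}$, yet its $\delta_2$-coefficient is negative.

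Excluding such weights needs module-theoretic input. One option is to transport the adapted-system result along odd reflections and show that constituents producing uncancelled terms $\delta_a-\epsilon_i$ vanish. For example, in $\osp(3\vert2)$ with $\Lambda_A=\Lambda_B$, one has $f_{\delta-\epsilon_1}v_\Lambda=e_{\epsilon_1-\delta}v_\Lambda=0$.

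\textbf{The claim $n_\beta=0$ for $\sl(m\vert n)$.} Your argument here does not follow. Sums of distinct odd roots form a finite set, while $\sum n_\beta\beta$ is unbounded. Also, rewriting $\epsilon_i-\epsilon_j=(\epsilon_i-\delta_a)+(\delta_a-\epsilon_j)$ reuses odd roots already in $S$. So large $n_\beta$ must be excluded by the module structure, not by rewriting.

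The Kac-module route does work, but not ``directly''. Refining $\bigwedge(\gg_{-1})\otimes N$ by a weight basis gives a filtration whose quotients are highest weight modules of highest weights $\Lambda-\sum_S\alpha$. These quotients need not be simple, and their composition factors are only known to be linked to those weights. You need semisimplicity of $M\vert_{\even}$ (a semisimple highest weight module is simple) to conclude that these quotients are exactly the constituents. For the nonstandard system you must still convert $\Lambda_{\mathrm{st}}-\sum_S\alpha$ into $\Lambda'-\sum_{S'}\alpha'$ with $S'\subseteq\Delta^{+}_{\mathrm{nst},\bar1}$.
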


Let $\Gamma$ denote the set of all weights $\gamma$ of the form $\gamma=\sum_{\alpha\in S}\alpha+\sum_{\beta\in\Delta_{\bar0,nc}^{+}}n_{\beta}\beta$, where $S\subseteq\Delta_{\bar1}^{+}$ and $n_{\beta}\in\ZZ_{\geq0}$. For $\gamma\in\Gamma$, let $p(\gamma)$ denote the number of such expressions. If $\gg=\sl(m\vert n)$ or $\gg=\gl(m\vert n)$, we take $\Gamma=\{\sum_{\alpha\in S}\alpha:S\subseteq\Delta_{\bar1}^{+}\}$.

\begin{corollary}
\label{corollary::even-filtration-simple}
If $L(\Lambda;\Delta^+)$ is unitary, then the filtration of Theorem~\ref{thm::even_filtration} splits, and $$L(\Lambda;\Delta^+)\vert_{\even}\cong\bigoplus_{\gamma\in\Gamma_{L}(\Lambda)}L_0(\Lambda-\gamma)^{\oplus m_{\Lambda-\gamma}}$$ for some finite subset $\Gamma_{L}(\Lambda)\subseteq\Gamma$, where $m_{\Lambda-\gamma}\in\ZZ_{\geq0}$ denotes the multiplicity of $L_0(\Lambda-\gamma)$. 
\end{corollary}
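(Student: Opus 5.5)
The plan is to exploit the positive definite Hermitian form $\langle\cdot,\cdot\rangle$ on $\HH=L(\Lambda;\Delta^+)$. First, I restrict to $\even$ and note that the restricted form is $\omega\vert_{\even}$-contravariant. Hence $\HH\vert_{\even}$ is a unitary representation of the real form $\gg_{\bar0,\RR}$ in the algebraic sense: it is a weight module, every weight space is finite-dimensional, and the weight spaces are mutually orthogonal, because $\hh$ is spanned by elements on which $\omega$ acts so as to make them act by real or imaginary scalars. Next, I use the standard orthogonal-complement argument. If $N\subseteq\HH\vert_{\even}$ is an $\even$-submodule, then $N^{\perp}$ is again $\even$-stable by contravariance. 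Since everything decomposes into finite-dimensional, mutually orthogonal weight spaces and the form is definite on each of them, we get $\HH=N\oplus N^{\perp}$ weight space by weight space. Consequently $\HH\vert_{\even}$ is completely reducible as a module in category $\mathcal O(\even)$.

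Second, I combine this with the finite filtration given by the preceding proposition (restriction lands in $\mathcal O(\even)$, which has finite length). A module of finite length that is semisimple is a finite direct sum of simple objects of $\mathcal O(\even)$, namely of $L_0(\nu)$'s. Thus every exact sequence coming from the filtration splits, and the composition factors appear as direct summands $L_0(\nu)^{\oplus m_\nu}$, with only finitely many $\nu$ and finite multiplicities.

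Third, I identify the $\nu$. Each $L_0(\nu)$ is a unitary highest weight module of $\gg_{\bar0,\RR}$. Its $\kk^{\CC}$-finite highest weight vector generates a finite-dimensional unitary $\kk^{\CC}$-module, since $\kk$ is compact and the corresponding vectors lie in finite sums of weight spaces. Hence $\nu$ is $\kk^{\CC}$-dominant integral. Now the hypothesis of Theorem~\ref{thm::even_filtration} holds for the filtration whose quotients are these $L_0(\nu)$. The theorem therefore gives $\nu=\Lambda-\gamma$ with $\gamma\in\Gamma$; in the $\sl/\gl$ case, $\gamma$ is a sum of distinct odd positive roots. Collecting these $\gamma$ into the finite set $\Gamma_L(\Lambda)$ gives the statement.

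The main obstacle is the $\kk^{\CC}$-integrality and finiteness in the third step. The argument needs that the $\kk^{\CC}$-submodule generated by a highest weight vector is finite-dimensional. For this I would first try the following: $\kk$ integrates to a compact group action on the finite-dimensional, $\hh$-stable, unitary span of weight spaces $\nu-\ZZ_{\geq0}\Delta_c^+$ intersected with the weights of $L_0(\nu)$. Because the compact roots act in $\sl(2)$-triples with $\omega$-adjointness, the standard $\su(2)$ argument forces integrality of $\langle\nu,\alpha^\vee\rangle\ge0$ for each $\alpha\in\Delta_c^+$.
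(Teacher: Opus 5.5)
Your argument is correct, and it proves more than the paper writes down. The paper's proof addresses only finiteness. It treats as evident from unitarity both the splitting and the $\kk^{\CC}$-dominant integrality of the constituents, even though the latter is needed to invoke Theorem~\ref{thm::even_filtration}. Your orthogonal-complement argument for the splitting is the same one the paper uses in Proposition~\ref{prop::tensor_product_unitary_even}.

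The real difference is where finiteness comes from. The paper uses PBW: $\UE(\gg)$ is a finitely generated left $\UE(\even)$-module, so $L(\Lambda;\Delta^+)=\UE(\gg)v_\Lambda$ is a finitely generated $\even$-module, and a finitely generated completely reducible module has only finitely many simple summands. You instead use that $L(\Lambda;\Delta^+)\vert_{\even}$ lies in $\mathcal{O}(\even)$ and therefore has finite length. The paper's route is more elementary, since finite length in $\mathcal{O}(\even)$ is a deeper fact than finite generation. Yours, in exchange, turns the passage from complemented submodules to a finite direct sum of simples into a plain induction on length.

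One step should be tidied. In your third step, do not assert that the $\kk^{\CC}$-module generated by the highest weight vector is finite-dimensional. Also drop the idea of integrating $\kk$ on the span of the weight spaces with weights in $\nu-\ZZ_{\geq0}\Delta_c^+$, since that presupposes the very finite-dimensionality in question. Theorem~\ref{thm::even_filtration} only needs $\nu(h_\alpha)\in\ZZ_{\geq0}$ for $\alpha\in\Delta_c^+$, and your $\mathfrak{sl}(2)$ argument gives exactly this. Normalize the triple $(e_\alpha,h_\alpha,f_\alpha)$ of a compact root so that $\omega(e_\alpha)=f_\alpha$. Contravariance then gives $\|f_\alpha^k v_\nu\|^2=k!\prod_{j=0}^{k-1}(\nu(h_\alpha)-j)\,\|v_\nu\|^2$, and nonnegativity for all $k$ forces $\nu(h_\alpha)\in\ZZ_{\geq0}$.
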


\begin{proof}
 It remains to prove that the sum is finite. Let $v_{\Lambda}$ be a highest weight vector of $L(\Lambda;\Delta^{+})$. Since $L(\Lambda;\Delta^{+})$ is simple, $L(\Lambda;\Delta^{+})=\UE(\gg)v_{\Lambda}$. By the PBW theorem, $\UE(\gg)$ is a finitely generated left
$\UE(\even)$-representation. 
Consequently, $L(\Lambda;\Delta^{+})$ is finitely generated as a $\even$-representation. Hence, if $L(\Lambda;\Delta^{+})$ is unitary, the sum is finite. 
\end{proof}

The preceding filtration determines the possible highest weights of the $\even$-constituents, but it does not determine which of them occur in a simple quotient or with what multiplicities. This problem is complicated by the presence of isotropic odd roots and the resulting phenomenon of atypicality.

\begin{definition}
Let $\Lambda\in\hh^{\ast}$ be a highest weight relative to $\Delta^+$, and let $\rho$ be the corresponding Weyl vector. The \emph{degree of atypicality} $\operatorname{atyp}(\Lambda)$ is the largest integer $r\in\ZZ_{\geq0}$ for which there exist pairwise orthogonal, linearly independent odd isotropic roots $\alpha_1,\ldots,\alpha_r\in\Delta_{\bar1}^{+}$ satisfying $(\Lambda+\rho,\alpha_i)=0$ for every $1\leq i\leq r$. The weight $\Lambda$ and the corresponding simple highest weight representation are called \emph{typical} if $\operatorname{atyp}(\Lambda)=0$ and \emph{atypical} otherwise.
\end{definition}

\begin{remark}
Although the definition depends on the chosen positive system, the corresponding highest weights $\Lambda$ and $\Lambda'$ introduced above have the same degree of atypicality. More precisely, $\operatorname{atyp}(\Lambda)=\operatorname{atyp}(\Lambda')$, where, for $\slmn$ and $\glmn$, the two sides are computed with respect to $\Delta_{\mathrm{st}}^{+}$ and $\Delta_{\mathrm{nst}}^{+}$, respectively, and, for $\osp(m\vert 2n)$ with respect to $\Delta_{A}^{+}$ and $\Delta_{B}^{+}$. Thus, the degree of atypicality is preserved under the correspondence induced by odd reflections.
\end{remark}

The degree of atypicality is closely related to the number of $\even$-constituents of a highest weight representation. In physics, this phenomenon is known as shortening: typical representations are called long, whereas atypical representations are called short. For instance, consider $\slmn$ with the standard positive system. In the finite-dimensional case, the basic relation between atypicality and shortening is given by Kac's irreducibility criterion \cite{Kac_representations}. Assume that $\Delta^+=\Delta_{\mathrm{st}}^+$ and that $\Lambda$ is dominant integral for $\even$, so that $L_0(\Lambda)$ and $K(L_0(\Lambda))$ are finite-dimensional. Then
\begin{equation}
K(L_0(\Lambda))
\text{ is simple}
\quad\Longleftrightarrow\quad
\operatorname{atyp}(\Lambda)=0.
\end{equation}
If $\Lambda$ is atypical, then $K(L_0(\Lambda))$ has a nonzero maximal proper subrepresentation, and its simple quotient $L(\Lambda;\Delta_{\mathrm{st}}^+)$ is therefore strictly smaller than the induced representation.

Taken together, the preceding filtration theorem provides a finite set of possible highest weights for the $\even$-constituents, but determines neither which of them actually occur nor their multiplicities in the simple quotient. In particular, these data are not determined by the degree of atypicality alone. Determining the occurring constituents and their multiplicities is precisely the branching problem studied in the following sections.

\section{Kostant's Dirac operator \texorpdfstring{$\Dirac_{\gg,\even}$}{} and unitarity}
\label{sec::Kostants_cubic_Dirac_operator}
\noindent
Let $\gg$ be either $\sl(m\vert n)$ with $m,n\geq1$ and $m\neq n$, $\gl(n\vert n)$ with $n\geq1$, $\osp(m\vert 2n)$ with $m,n\geq1$ or $F(4)$. In this section, we introduce the relative Dirac operator $\Dirac_{\gg,\even}$ associated with the pair $(\gg,\even)$. This operator was first introduced for quadratic Lie superalgebras by Huang and Pandžić \cite{huang2005dirac} and arises as the specialization of Kostant's relative cubic Dirac operator \cite{Schmidt_perturbations,Schmidt_Wernli} to the quadratic pair $(\gg,\even)$ and the decomposition $\gg=\even\oplus\odd$. 

We begin in Section~\ref{subsec::Dirac_general_theory} by recalling the general construction and basic properties of $\Dirac_{\gg,\even}$. In Section~\ref{subsec::Dirac_and_unitarity}, we study its relation to unitary highest weight representations. This relation depends essentially on the choice of positive system. We therefore introduce positive systems adapted to the relevant real form and show that, for such systems, $\Dirac_{\gg,\even}$ satisfies adjointness and positivity properties. For positive systems which are not adapted, these properties need not hold. In Section~\ref{subsec::Dirac_unitarity_slmn}, we turn to the standard positive system for $\sl(m\vert n)$, which is in general not adapted to the real forms under consideration. We develop a modification of the preceding theory which nevertheless permits the Dirac operator to be used effectively in this setting. This yields a new characterization of unitarity for the standard positive system and thereby completes the treatment initiated in \cite{Schmidt}. Finally, in Section~\ref{subsec::corrected_Dirac} we formulate Dirac inequalities for $\osp(m\vert2n)$ and $F(4)$ for non-$\omega$-adapted positive systems.

\subsection{General theory} \label{subsec::Dirac_general_theory} We briefly recall the construction and principal properties of the cubic Dirac operator, following \cite{huang2005dirac, Schmidt_perturbations}.

We first assume that $\gg\neq\sl(n\vert n)$. Let $(\cdot,\cdot)$
denote the form~\eqref{eq::supertrace_form}. In each
case, we equip $\gg$ with the normalized bilinear form $B(X,Y)\coloneqq\frac12(X,Y)$. Note that $B$ is nondegenerate, even, supersymmetric, and invariant. Its restriction to $\odd$ is symplectic. Choose complementary Lagrangian subspaces of $\odd$. For instance, any of the positive systems introduced in Example~\ref{ex::systems} or Example~\ref{ex::osp-systems} determines a polarization $\odd=\nn_{\bar1}^{+}\oplus\nn_{\bar1}^{-}$. Fix a basis $\{\partial_i\}_{i=1}^{N}$ of the first Lagrangian subspace, for example $\nn_{\bar1}^{+}$, and the corresponding basis $\{x_i\}_{i=1}^{N}$ of the second, normalized by
\begin{equation}
B(\partial_i,x_j)=-B(x_j,\partial_i)=\frac12\delta_{ij}.
\end{equation}

The associated Weyl algebra $\mathscr W(\gg_{\bar1})$ is then the algebra of differential operators with polynomial coefficients in $x_1,\ldots,x_{N}$, where $\partial_i$ acts as $\frac{\partial}{\partial x_i}$. Its defining commutation relations are $[x_i,x_j]_{\mathscr{W}}=[\partial_i,\partial_j]_{\mathscr{W}}=0$ and $[\partial_i,x_j]_{\mathscr{W}}=\delta_{ij}$ for $1\leq i,j\leq N$. There is a natural Lie algebra homomorphism $\alpha\colon\even\to\mathscr W(\odd)$ defined by $\alpha\coloneqq q\circ\lambda\circ\ad_{\odd}$ \cite{huang2005dirac}. Here, $\ad_{\odd}\colon\even\to\mathfrak{sp}(\odd,B)$ is induced by the adjoint action, which preserves $B$, while $\lambda\colon\mathfrak{sp}(\odd,B)\to\bigwedge^{2}(\odd)$\footnote{Here $\bigwedge(\odd)$ denotes the super exterior algebra of the purely odd vector space $\odd$. In particular, it coincides with the usual symmetric algebra $S(\odd)$ of the underlying vector space $\odd$.} is the canonical identification and $q\colon\bigwedge^{2}(\odd)\to\mathscr W(\odd)$ is Weyl quantization. Concretely (\emph{cf.}~\cite[Equation 10]{huang2005dirac}):
\begin{multline} \label{eq::alpha_explicit}
\alpha(X)= \sum_{i,j=1}^{N}(B(X,[\partial_{i},\partial_{j}])x_{i}x_{j}+B(X,[x_{i},x_{j}])\partial_{i}\partial_{j}) 
\\ -\sum_{i,j=1}^{N}2B(X,[x_{i},\partial_{j}])x_{j}\partial_{i}-\sum_{l=1}^{N}B(X,[\partial_{l},x_{l}])
\end{multline}
for any $X\in \even$. 

Consider $\UE(\gg)\otimes\mathscr W(\odd)$ and define the diagonal Lie algebra homomorphism $\Updelta\colon\even\to\UE(\gg)\otimes\mathscr W(\odd)$ by $\Updelta(x)\coloneqq x\otimes1+1\otimes\alpha(x)$. It induces the natural $\even$-action on $\UE(\gg)\otimes\mathscr W(\odd)$ given by $x\cdot(u\otimes w)\coloneqq [x,u]\otimes w+u\otimes[\alpha(x),w]_{\mathscr W}$. With respect to this action, the subalgebra of $\even$-invariants in $\UE(\gg)\otimes\mathscr W(\odd)$ is denoted by $\mathcal W(\gg,\even)$.

 \begin{definition}\label{def::Dirac}
Kostant's Dirac element for $(\gg,\even)$ is 
\[
\Dirac_{\gg,\even} \coloneqq 2 \sum_{i=1}^{N}(\partial_{i}\otimes x_{i}-x_{i}\otimes \partial_{i}) \in \UE(\gg)\otimes \mathscr{W}(\odd).
\]
 \end{definition}

 The following theorem summarizes its main properties.

\begin{theorem}[\cite{huang2005dirac}]\label{thm::properties_D}
 The relative Dirac element $\Dirac_{\gg,\even}$ has the following properties.
 \begin{enumerate}
 \item[(a)] It is independent of the chosen dual Lagrangian bases.
 \item[(b)] It is invariant under the diagonal action of $\even$, and hence
 $\Dirac_{\gg,\even}\in\mathcal W(\gg,\even)$.
 \item[(c)] It satisfies the Parthasarathy square formula
 \begin{equation*}
 \Dirac_{\gg,\even}^{2}
 =
 -\Omega_{\gg}\otimes1
 +\Omega_{\even,\Delta}
 -\frac18\operatorname{tr}_{\odd}
 \bigl(\Omega_{\even}\bigr),
 \end{equation*}
 where $\Omega_{\gg}$ and $\Omega_{\even}$ denote the quadratic Casimir
 elements with respect to $B$, and $\Omega_{\even,\Delta}$ denotes the
 diagonal image of $\Omega_{\even}$.
 \end{enumerate}
\end{theorem}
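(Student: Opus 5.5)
We prove the three parts in the order (a), (b), (c), working in $\UE(\gg)\otimes\mathscr W(\odd)$ with the fixed Lagrangian polarization $\odd=\nn_{\bar1}^{+}\oplus\nn_{\bar1}^{-}$ and the dual bases normalized by $B(\partial_i,x_j)=\tfrac12\delta_{ij}$.

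\emph{Part (a).} We rewrite $\Dirac_{\gg,\even}$ in a basis-free form. Let $\{e_k\}$ be any basis of $\odd$, and let $\{e^k\}$ be the basis dual to it with respect to $B$, so that $B(e^k,e_l)=\delta_{kl}$. Then the tensor $\sum_k e_k\otimes e^k\in\odd\otimes\odd$ is the canonical element associated with $B|_{\odd}$ and does not depend on the basis. For the basis $\{\partial_i,x_i\}$, the $B$-dual basis is $\{-2x_i,2\partial_i\}$ up to the sign convention. Embedding the second tensor factor $\odd\hookrightarrow\mathscr W(\odd)$ then gives
\[
\Dirac_{\gg,\even}=\sum_k e_k\otimes e^k .
\]
Any two choices of dual Lagrangian bases are related by an element of $\operatorname{Sp}(\odd,B)$, which preserves the canonical element. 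This proves (a).

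\emph{Part (b).} The diagonal $\even$-action on $\odd\otimes\odd\subseteq\UE(\gg)\otimes\mathscr W(\odd)$ is $\ad\otimes\ad$. This uses the defining property of $\alpha$: for $X\in\even$ and $y\in\odd$ one has $[\alpha(X),y]_{\mathscr W}=[X,y]$, which is exactly the statement that $q\circ\lambda$ quantizes the symplectic action. The canonical element of an invariant form is invariant, so $\Dirac_{\gg,\even}$ is $\even$-invariant. The one point to check is the normalization constant in $\alpha$, and we verify it directly from \eqref{eq::alpha_explicit}.

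\emph{Part (c).} This is the main computation. We expand
\[
\Dirac^{2}=\sum_{k,l}e_ke_l\otimes e^ke^l .
\]
We split $e^ke^l$ into its symmetric part (anticommutator) and its skew part (commutator) in $\mathscr W$.
\begin{itemize}
\item[(i)] \emph{The commutator part.} Since $[e^k,e^l]_{\mathscr W}$ is the scalar $B$-type pairing, this part produces $\sum_k e_ke^k$ in $\UE(\gg)$ (with the appropriate sign). This is the odd part of the Casimir: $\Omega_\gg=\Omega_{\even}+\Omega_{\odd}$, so it equals $\pm(\Omega_\gg-\Omega_{\even})\otimes1$.
\item[(ii)] \emph{The symmetric part.} In $\mathscr W$ the symmetric part is the Weyl-quantized quadratic $q(e^k e^l)$. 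Pairing it with the super-skew part $\tfrac12[e_k,e_l]\in\even$ and expanding $[e_k,e_l]=\sum_a B([e_k,e_l],X^a)X_a$ over dual bases of $\even$ yields
\[
2\sum_a X_a\otimes\alpha(X^a).
\]
This uses the definition of $\alpha$ as $q\circ\lambda\circ\ad$.
\end{itemize}
Next we complete the square:
\[
\Omega_{\even,\Delta}=\Omega_{\even}\otimes1+2\sum_a X_a\otimes\alpha(X^a)+1\otimes\sum_a\alpha(X_a)\alpha(X^a).
\]
Comparing with (i) and (ii) gives
\[
\Dirac^2=-\Omega_\gg\otimes1+\Omega_{\even,\Delta}-1\otimes\alpha(\Omega_{\even}).
\]

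\emph{Main obstacle.} It remains to identify $\alpha(\Omega_{\even})$ as the scalar $\tfrac18\operatorname{tr}_{\odd}(\Omega_\even)$. This is the step we expect to be hardest, as it is the super (oscillator) analogue of Kostant's computation that the image of the Casimir in the Clifford algebra is a constant. We plan two routes.
\begin{itemize}
\item[1.] \emph{Centrality route.} $\alpha(\Omega_\even)$ lies in the centralizer of $\alpha(\even)$. Moreover, it commutes with the generators $e^k$ of $\mathscr W$, because the Jacobi identity on $\odd\otimes\odd\to\even\to\operatorname{End}\odd$ (the $[\odd,[\odd,\odd]]$ identity of $\gg$) makes the relevant cubic term vanish. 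Hence $\alpha(\Omega_\even)$ is central in $\mathscr W$ and therefore a scalar.
\item[2.] \emph{Evaluation of the scalar.} We evaluate $\alpha(\Omega_\even)$ on the vacuum $1\in\CC[x_1,\dots,x_N]$ using \eqref{eq::alpha_explicit}. Only the normal-ordering constants $-\sum_l B(X,[\partial_l,x_l])$ survive, together with the $\partial\partial$ terms paired against $xx$ terms. Summing over the dual bases $X_a,X^a$ turns these into a trace over $\odd$, namely $\tfrac18\operatorname{tr}_{\odd}(\ad\Omega_\even)$, after tracking the factors of $\tfrac12$ coming from $B=\tfrac12(\cdot,\cdot)$.
\end{itemize}
Keeping the signs and normalizations consistent between $B$, the Weyl relations and $\alpha$ is the delicate bookkeeping. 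We would check the final constant on $\osp(1\vert2)$ as a test case.
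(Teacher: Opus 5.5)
\textbf{Assessment.} Your reduction is the standard Kostant--Parthasarathy-type argument behind the cited result of Huang--Pand\v{z}i\'c. The paper gives no proof of this theorem; it only cites it. Your argument is sound up to the last step.
\begin{itemize}
\item[(a)] Normalize the dual basis by $B(e_k,e^l)=\delta_{kl}$. With your convention $B(e^k,e_l)=\delta_{kl}$ the canonical element is $-\Dirac_{\gg,\even}$.
\item[(ii)] With this normalization, $\alpha(X)=\tfrac14\sum_{k,l}B(X,[e_k,e_l])\,q(e^ke^l)$. Hence (ii) gives exactly $2\sum_a X_a\otimes\alpha(X^a)$.
\item[(i)] The sign is forced: the odd part of the central Casimir is $-\sum_k e_ke^k$.
\item[] Your centrality argument is correct. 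One has $[\alpha(\Omega_\even),y]=\sum_a\bigl(\alpha(X_a)[X^a,y]+[X^a,y]\alpha(X_a)\bigr)$. This is the quantization of a cubic whose polynomial function is proportional to $B([u,u],[y,u])=B(y,[u,[u,u]])=0$.
\end{itemize}
So you correctly reach $\Dirac_{\gg,\even}^2=-\Omega_\gg\otimes1+\Omega_{\even,\Delta}-1\otimes\alpha(\Omega_\even)$ with $\alpha(\Omega_\even)$ a scalar.

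\textbf{The failing step.} The step that does not go through is the value of that scalar. With the ordinary trace one has $\alpha(\Omega_\even)=-\tfrac18\tr_\odd(\ad\Omega_\even)$, so no bookkeeping of the vacuum terms will produce $+\tfrac18\tr_\odd$.

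Your own test case shows this. Take $\osp(1\vert2)$ in the paper's normalization, so $(\delta,\delta)=-1$ and $N=1$.
\begin{itemize}
\item[] The vacuum evaluation gives $\alpha(\Omega_\even)=2B([x,x],[\partial,\partial])+B([\partial,x],[\partial,x])=2-\tfrac12=\tfrac32$.
\item[] On the other hand, $\Omega_\even$ acts on $\odd\cong\CC^2$ by $-6$, so $\tfrac18\tr_\odd(\Omega_\even)=-\tfrac32$.
\end{itemize}

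A basis-free route that yields scalarity and the constant at once:
\begin{itemize}
\item[] Use the Jordan-product identity $\alpha(X)\alpha(Y)+\alpha(Y)\alpha(X)=2q\bigl(\lambda(X)\lambda(Y)\bigr)-\tfrac14\tr_\odd(\ad X\ad Y)$. This is the second-order Moyal term. Its constant is fixed by the one-variable case $\alpha(X)=x\partial+\tfrac12$, where $2\alpha(X)^2-2q(\lambda(X)^2)=-\tfrac12$ and $\tr(\ad X)^2=2$.
\item[] The quartic symbol $\sum_a\lambda(X_a)\lambda(X^a)$ vanishes, since it is proportional to $B([u,u],[u,u])=B(u,[u,[u,u]])=0$.
\end{itemize}

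\textbf{Consequence.} Your argument actually proves $\Dirac_{\gg,\even}^2=-\Omega_\gg\otimes1+\Omega_{\even,\Delta}+\tfrac18\tr_\odd(\Omega_\even)$. The displayed formula is correct only if $\tr_\odd$ is read as the supertrace of the purely odd space $\odd$, which is the super analogue of Kostant's $\tfrac18\tr_{\mathfrak p}$. This is consistent with the value $4(\rho_{\bar0},\rho_{\bar1})-2(\rho_{\bar1},\rho_{\bar1})$ that the paper actually uses. The paper obtains that value in Theorem~\ref{thm::dirac-square-action} from $\Dirac_{\gg,\even}(v_\Lambda\otimes1)=0$. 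Once you have centrality, evaluating on $v_\Lambda\otimes1$ in a Verma module is also the quickest way for you to pin down the scalar.
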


 \begin{remark} \label{rmk::Explicit_form_D_square}
For explicit calculations, we use the alternative square formula
\begin{equation*}
\Dirac^{2}_{\gg,\even} = 2 \sum_{i,j = 1}^{N} ([\partial_{i},\partial_{j}] \otimes x_{i}x_{j} + [x_{i},x_{j}] \otimes \partial_{i}\partial_{j} - 2[\partial_{i},x_{j}]\otimes x_{i}\partial_{j}) - 4 \sum_{i=1}^{N}x_{i}\partial_{i}\otimes 1.
\end{equation*}
\end{remark}

To extract representation-theoretic information from the intrinsically defined Dirac element $\Dirac_{\gg,\even}$, we pass to the associated Dirac operator, denoted by the same symbol,
\begin{equation}
\Dirac_{\gg,\even}\in\End_{\CC}\bigl(M\otimes M(\odd)\bigr),
\end{equation}
where $M$ is a $\gg$-representation \footnote{All properties of the Dirac operator remain preserved under parity reversal.} and $M(\odd)$ is the oscillator representation of the Weyl algebra $\mathscr{W}(\odd)$, which we now briefly recall. Fixing the polarization $\odd=\nn_{\bar1}^{-}\oplus\nn_{\bar1}^{+}$, the oscillator representation is realized on the polynomial space
\begin{equation}
M(\odd)\coloneqq\CC[x_1,\ldots,x_{N}],
\end{equation}
where $x_i$ acts by multiplication and the corresponding element of $\nn_{\bar1}^{+}$ by the partial derivative $\partial_i=\partial/\partial x_i$. This is the simple Fock representation associated with the chosen polarization: it is generated by the vacuum vector $1$, which is annihilated by all $\partial_i$. This realization is the familiar Bargmann–Fock representation from quantum mechanics, in which multiplication and differentiation play the role of creation and annihilation operators, respectively \cite{BargmannHermitianForm}. Through the map $\alpha:\even\to\mathscr{W}(\odd)$, $M(\odd)$ becomes a $\even$-representation. The vacuum vector $1$ is a $\even$-weight vector and satisfies (\emph{cf}.~\cite{SchmidtDirac})
\begin{equation}
\alpha(H)1=-\rho_{\bar1}(H)1,\qquad H\in\hh.
\end{equation}
We equip $M(\odd)$ with the Bargmann–Fock Hermitian form
\begin{equation}\label{eq::Bargmann_Fock_form}
\langle\prod_{k=1}^{N}x_k^{p_k},\prod_{k=1}^{N}x_k^{q_k}\rangle_{M(\odd)}
=
\delta_{p,q}\prod_{k=1}^{N}p_k!,
\end{equation}
for which multiplication by $x_k$ is adjoint to $\partial_k$. 

Although the Bargmann--Fock form is positive definite, the $\even$-representation on $M(\odd)$ induced by $\alpha$ is unitary with respect to this form only when the chosen polarization is compatible with the conjugate-linear anti-involution $\omega$. Before establishing the precise relation between unitarity and $\omega$-adaptedness, we first record the compatibility of $B$ with the conjugate-linear anti-involutions introduced in Section~\ref{subsec:real_forms_slmb}.

\begin{lemma}\label{lemm::B_omega_consistent}
 Let $\omega$ be one of the conjugate-linear anti-involutions introduced in Section~\ref{subsec:real_forms_slmb} for $\gg=\sl(m\vert n), \osp(m\vert 2n)$ or $F(4)$. Then, for all homogeneous $X,Y\in\gg$,
 \[
 B(\omega(X),\omega(Y))
 =
 (-1)^{p(X)p(Y)}\overline{B(X,Y)}.
 \]
\end{lemma}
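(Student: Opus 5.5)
The plan is to reduce the identity to the behaviour of $\omega$ on a generating set and then use invariance and the anti-homomorphism property. Since $B=\tfrac12(\cdot,\cdot)$, it suffices to prove $(\omega(X),\omega(Y))=(-1)^{p(X)p(Y)}\overline{(X,Y)}$ for the normalized form~\eqref{eq::supertrace_form}. Both sides are conjugate-linear in each argument, so the identity only needs checking on a basis of homogeneous elements. Because the form is even, both sides vanish when $p(X)\neq p(Y)$.

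For the matrix cases $\sl(m\vert n)$, $\gl(n\vert n)$ and $\osp(m\vert 2n)$ (both $\upomega_{p,q\vert 2n}$ and $\upomega^{*}_{2r\vert a,b}$), $\omega$ has the form $\omega(X)=J^{-1}X^\dagger J$ with $J$ even and invertible. I will compute
\[
\str(\omega(X)\omega(Y))=\str(J^{-1}X^\dagger Y^\dagger J)=\str(X^\dagger Y^\dagger),
\]
using that conjugation by an even matrix preserves the supertrace. Then $X^\dagger Y^\dagger=(YX)^\dagger$, and the supertrace satisfies $\str(Z^\dagger)=\overline{\str(Z)}$ because $Z^\dagger$ has diagonal blocks $A^\dagger,D^\dagger$. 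This gives $\overline{\str(YX)}$. Finally, supersymmetry gives $\str(YX)=(-1)^{p(X)p(Y)}\str(XY)$, which yields the claim. The overall constants $1$ or $\tfrac12$ are real, so they pass through the conjugation.

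For $F(4)$ there is no matrix realization in play, so I will use the Killing-type normalization $\tfrac16\str_\gg(\ad X\,\ad Y)$. The plan is to argue that $\omega$ induces, on $\gg$, an adjoint-like operation. Define the conjugate-linear map $\omega$ on $\gg$ itself. The anti-homomorphism property $\omega([X,Y])=[\omega(Y),\omega(X)]$, combined with supersymmetry of the bracket, gives
\[
\ad(\omega(X))=-(-1)^{p(X)}\,\omega\circ\ad(X)^{\mathrm{st}}\circ\omega,
\]
where the last factor is a suitable super-transpose twisted by signs. I would then compute $\str_\gg(\ad\omega(X)\,\ad\omega(Y))$ as the supertrace of a conjugate-linear conjugate of $\ad(Y)\ad(X)$ up to signs. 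Conjugating by a conjugate-linear parity-preserving bijection conjugates the supertrace, which again gives $\overline{\str(\ad Y\ad X)}$ with the appropriate sign.

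The cleaner alternative, which I would actually try first, is uniqueness of the invariant form. The form $\beta(X,Y)\coloneqq(-1)^{p(X)p(Y)}\overline{(\omega(X),\omega(Y))}$ is bilinear, even and supersymmetric. I will check that it is invariant using the anti-homomorphism property. Since $\gg$ is simple, $\beta=c(\cdot,\cdot)$ for some scalar $c$. Evaluating on an even Cartan pair, for instance $H$ in the compact Cartan where $\omega(H)=H$ for real-weight $H$ (as $\omega|_{\even}$ defines a real form containing $i\mathfrak t_\RR$), gives $c=1$ because $(\epsilon_i,\epsilon_j)$ is real.

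The main obstacle is the sign bookkeeping in the invariance check, since $\omega$ reverses the order of brackets. This is exactly where the factor $(-1)^{p(X)p(Y)}$ must compensate. For the matrix cases the direct computation above avoids this entirely.
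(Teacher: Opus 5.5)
Your proposal is correct and uses the same two ingredients as the paper, allocated differently.

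\textbf{The paper's allocation.} The paper's main argument is the uniqueness one, applied to every simple $\gg$, including $\sl(m\vert n)$ with $m\neq n$ and $\osp(m\vert 2n)$. It shows that $(-1)^{p(X)p(Y)}\overline{B(\omega X,\omega Y)}$ is an even, supersymmetric, invariant form, hence equal to $cB$. It then fixes $c=1$ using that $\omega=-\id$ on $\gg_{\bar0,\RR}$, where $B$ is real. The direct computation $\str(J^{-1}X^\dagger Y^\dagger J)=\overline{\str(YX)}$ is used only for the non-simple $\sl(n\vert n)$.

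\textbf{Your allocation.} You use that direct computation for every matrix realization. This is more elementary: it needs no simplicity, no Schur-type uniqueness and no normalization. It also treats $\sl(m\vert n)$, $\gl(n\vert n)$ and both families of orthosymplectic anti-involutions uniformly. You reserve uniqueness for $F(4)$, where no matrix formula for $\omega$ is available. The paper's route instead gives one uniform argument for all simple cases without touching the realization.

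\textbf{The invariance check you flag as the obstacle.} It does go through. Both $\beta([X,Y],Z)$ and $\beta(X,[Y,Z])$ reduce to
$(-1)^{(p(X)+p(Y))p(Z)}\overline{(\omega Y,[\omega X,\omega Z])}$.
This uses the anti-homomorphism property, one application of invariance on each side, and one application of supersymmetry on the right-hand side.

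\textbf{Normalization of $c$.} Your normalization on the Cartan subalgebra is equivalent to the paper's, but fix the phrasing. The real form contains $\mathfrak t_{\RR}$, not $i\mathfrak t_{\RR}$, and $\omega=-\id$ on $\mathfrak t_{\RR}$. Conjugate-linearity then gives $\omega=\id$ on $i\mathfrak t_{\RR}$. On $i\mathfrak t_{\RR}$ all roots are real, and $(H,H')=\tfrac16\sum_{\alpha}\pm\alpha(H)\alpha(H')$ is real and not identically zero.

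\textbf{The first route for $F(4)$.} The sketched twisted super-transpose argument is vague and unnecessary. Drop it in favour of the uniqueness argument.
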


\begin{proof} Assume first $\gg \neq \sl(n\vert n)$. Then $\gg$ is simple. 
For homogeneous $X,Y\in\mathfrak{g}$, define
\[
B_{\omega}(X,Y)
\coloneqq
(-1)^{p(X)p(Y)}
\overline{B(\omega(X),\omega(Y))}.
\]
Since $\omega$ is conjugate-linear, $B_{\omega}$ is complex bilinear.
It is clearly even and nondegenerate. Moreover, a straightforward computation shows that $B_{\omega}$ is supersymmetric and invariant. Hence, since $\gg$ is simple, 
there exists $c\in\mathbb{C}^{\times}$ such that
$
B_{\omega}=cB.
$ It remains to determine $c$. For any $X \in \gg_{\bar{0},\RR}$ one has $\omega(X)=-X$, and hence for all $X,X'\in \gg_{\bar{0},\RR}$
\[
B(\omega(X),\omega(X'))
=
\overline{B(X,X')}
\]
as $B(X,X')\in \RR$ for all $X,X' \in \gg_{\bar{0},\RR}$. Since $\left.B\right|_{\gg_{\bar0,\RR}}\not\equiv0$, the equality
$B_{\omega}=cB$ implies $c=1$.

It remains to consider $\sl(n\vert n)$. Here, the anti-involution is of the form
 \[
 \omega(X)=J^{-1}X^{\dagger}J,
 \qquad
 X^{\dagger}=\overline{X}^{\,T},
 \]
 for some even invertible matrix $J$. Since the supertrace is invariant under
 conjugation by even invertible matrices, we obtain
 \begin{equation*}
  \str(\omega(X)\omega(Y))
  =
  \str\bigl(J^{-1}X^{\dagger}Y^{\dagger}J\bigr)
  =
  \str(X^{\dagger}Y^{\dagger})
  =
  \str((YX)^{\dagger})
  =
  \overline{\str(YX)}
  =
  (-1)^{p(X)p(Y)}\overline{\str(XY)}.
 \end{equation*}
 The assertion now follows from the definition of $B$.
\end{proof}

We now characterize the unitarity of the oscillator representation in terms of the $\omega$-adaptedness of the positive system defining the polarization.

\begin{proposition}\label{prop::unitarity_oscillator_representation}
$(M(\odd),\langle\cdot,\cdot\rangle_{M(\odd)})$ is a unitary
 $\even$-representation under the action induced by $\alpha$ if and only if
 the positive system used to define the polarization $\odd=\nn_{\bar1}^{-}\oplus\nn_{\bar1}^{+}$
 is $\omega$-adapted.
\end{proposition}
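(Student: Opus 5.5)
We plan to reduce unitarity of the oscillator representation to a compatibility condition between $\omega$ and the Bargmann–Fock adjoint, and then check this condition on the quadratic expression \eqref{eq::alpha_explicit} for $\alpha$. Concretely, $M(\odd)$ is unitary for the $\even$-action induced by $\alpha$ if and only if $\langle\alpha(X)v,w\rangle_{M(\odd)}=\langle v,\alpha(\omega(X))w\rangle_{M(\odd)}$ for all $X\in\even$ and $v,w\in M(\odd)$. Equivalently, $\alpha(X)^{\dagger}=\alpha(\omega(X))$, where $\dagger$ is the Bargmann–Fock adjoint, i.e.\ the conjugate-linear anti-involution of $\mathscr W(\odd)$ determined by $x_k^{\dagger}=\partial_k$ and $\partial_k^{\dagger}=x_k$.

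For the direction "adapted $\Rightarrow$ unitary", choose dual bases with $B(\partial_i,x_j)=\tfrac12\delta_{ij}$ (the rescaling of $(\cdot,\cdot)$ is harmless) and $\omega(x_i)=\varepsilon\partial_i$; then also $\omega(\partial_i)=\varepsilon x_i$, since $\omega^2=\id$. The key identity to prove is that, under this identification, the restriction $\omega|_{\odd}$ coincides with $\varepsilon\cdot\dagger$, where $\odd\subseteq\mathscr W(\odd)$ via $x_i\mapsto x_i$ and $\partial_i\mapsto\partial_i$. Since $\alpha(X)$ is the Weyl quantization of the quadratic element corresponding to $\ad_{\odd}(X)$, which is a sum of symmetrized products $y\,y'$ with $y,y'\in\odd$, we obtain $\alpha(X)^{\dagger}=q\bigl(\lambda(\ad_{\odd}(X))\bigr)^{\dagger}$. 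The factor $\varepsilon^2=1$ cancels, so this equals the quantization of the quadratic element obtained by applying $\omega$ to both factors. Using Lemma~\ref{lemm::B_omega_consistent}, we then verify that this coincides with $\alpha(\omega(X))$. More directly, we apply $\dagger$ term by term to \eqref{eq::alpha_explicit}: the coefficient $B(X,[\partial_i,\partial_j])$ in front of $x_ix_j$ becomes, after conjugation, the coefficient of $\partial_i\partial_j$. Lemma~\ref{lemm::B_omega_consistent} together with $\omega([\partial_i,\partial_j])=[\omega(\partial_j),\omega(\partial_i)]=[x_j,x_i]$ (signs from $\varepsilon^2$ and supersymmetry of the odd bracket) shows that this agrees with $B(\omega(X),[x_i,x_j])$. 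The mixed terms $x_j\partial_i$ and the constant term are handled the same way; for the constant $B(X,[\partial_l,x_l])$, conjugation must reproduce $B(\omega(X),[\partial_l,x_l])$, which again follows from the lemma.

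For the converse, assume $M(\odd)$ is unitary. We will use the unitarity for the noncompact part of $\even$ and deduce definiteness of $h_\omega$. Pick $X\in\even$ of the form $[y,y']$ and compare the $x_ix_j$-coefficients of $\alpha(\omega(X))$ with the $\partial_i\partial_j$-coefficients of $\alpha(X)$. This shows that the map $\omega$ intertwines the quadratic forms up to the Bargmann–Fock adjoint, which forces $\omega(\nn_{\bar1}^{-})=\nn_{\bar1}^{+}$. Equivalently, the Hermitian form $h_\omega$ on $\nn_{\bar1}^-$ is nondegenerate, and its signature must be compatible with the positive definite Fock form.

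A cleaner route is to evaluate on the vacuum and one-particle states. The vacuum is annihilated by the $\partial\partial$-terms, so unitarity gives $\|\alpha(X)1\|^2=\langle 1,\alpha(\omega(X))\alpha(X)1\rangle$. Taking $X$ in the span of $[\nn_{\bar1}^-,\nn_{\bar1}^-]$, the left side is a sum of squares of the $x_ix_j$-coefficients, while the right side involves $h_\omega$. An indefinite $h_\omega$ then produces a negative norm, a contradiction.

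The main obstacle is this converse and the sign bookkeeping in it. I would first try the simplest explicit case, the standard system of $\sl(m\vert n)$ with $pq\neq0$, exhibiting a noncompact root vector $X=E_{i j}$ with $i\le p<j$ for which $\alpha(X)$ contains $x\partial$-terms whose adjoint carries the wrong sign. I would then abstract this: whenever $h_\omega$ has basis vectors of both signs, the element $X=[x_i,\omega(x_j)]$ yields such a violation.
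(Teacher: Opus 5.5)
The reformulation of unitarity as $\alpha(X)^{\dagger}=\alpha(\omega(X))$ and your proof of ``adapted $\Rightarrow$ unitary'' follow the paper's argument: a term-by-term comparison in \eqref{eq::alpha_explicit} using Lemma~\ref{lemm::B_omega_consistent} together with $\omega(x_i)=\varepsilon\partial_i$ and $\omega(\partial_i)=\varepsilon x_i$. The converse, however, is not established, and neither of your two concrete mechanisms works in general.

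\emph{The vacuum test.} It needs some $X\in[\nn_{\bar1}^{-},\nn_{\bar1}^{-}]$. For the standard system of $\sl(m\vert n)$, $\nn_{\bar1}^{\pm}=\gg_{\pm1}$ is abelian, so $\alpha(X)1$ is a constant for every $X\in\even$. The test therefore says nothing in exactly the main non-adapted case $pq\neq0$.

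\emph{The one-particle test.} It does detect that case. For $x=E_{m+a,i}$ and $x'=E_{m+a,j}$ with $i\leq p<j$, one gets $[x,\omega(x')]=E_{ji}$. Then $\langle\alpha(E_{ji})x',x\rangle=-1$, while $\langle x',\alpha(\omega(E_{ji}))x\rangle=+1$. But your claim that $X=[x_i,\omega(x_j)]$ always yields a violation when $h_\omega$ is indefinite is false, because these brackets can vanish for every pair of opposite sign. Take $\sl(1\vert2)$ with $\omega=\omega_{1,0\vert0,2}$, the positive system of the word $(\delta_1,\epsilon_1,\delta_2)$, and $\partial_1=E_{21}$, $x_1=-E_{12}$, $\partial_2=E_{13}$, $x_2=E_{31}$. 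Then $\omega(x_1)=\partial_1$ and $\omega(x_2)=-\partial_2$, yet $[x_1,\partial_2]=[x_2,\partial_1]=0$. Here only the vacuum test sees the failure, through $[x_1,x_2]=-E_{32}\neq0$.

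To complete your route you would need a lemma saying that any two positive odd roots are linked by a chain of even-root sums or differences, with nonzero brackets and nonzero matrix coefficients at each step. That is an irreducibility statement you have not supplied. Note also that $\omega(\nn_{\bar1}^{-})=\nn_{\bar1}^{+}$ and nondegeneracy of $h_\omega$ are automatic for the compact Cartan subalgebra; what has to be proved is definiteness.

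The paper avoids any search for violating elements by reading unitarity as an identity on generators. Since $[\alpha(X),u]_{\mathscr W}=[X,u]$ for $u\in\odd$, taking Bargmann--Fock adjoints and using $\alpha(X)^{\dagger}=\alpha(\omega(X))$ gives $\tau([X,u])=-[\omega(X),\tau(u)]$. Here $\tau$ is the conjugate-linear map with $\tau(x_i)=\partial_i$ and $\tau(\partial_i)=x_i$. Comparing with $\omega([X,u])=-[\omega(X),\omega(u)]$ shows that $T=\tau^{-1}\circ\omega\vert_{\odd}$ is a complex-linear $\even$-equivariant map preserving $B$.

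Schur's lemma then finishes the argument. When $\odd$ is simple, $T=\pm\id$. In the $\sl$, $\gl$ and $\osp(2\vert2n)$ cases, $T$ acts on $\gg_{\pm1}$ by scalars $c_{\pm}$ with $c_+c_-=1$; these are real by $\omega^2=\id$, hence of one sign. Rescaling the dual pairs then gives $\omega$-adaptedness. This covers all positive systems at once, including abelian configurations and ones like the $\sl(1\vert2)$ example above.
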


\begin{proof}
We first note that $M(\odd)$ is a unitary $\even$-representation if and only if
\begin{equation}\label{eq::oscillator_unitarity}
 \alpha(X)^\dagger=\alpha(\omega(X))
 \qquad
 \text{for all }X\in\even.
\end{equation}
Using \eqref{eq::alpha_explicit}, we have
\begin{equation*}
\begin{aligned}
\alpha(X)^\dagger
={}&
\sum_{i,j=1}^{N}
\overline{B(X,[\partial_i,\partial_j])}\,
\partial_i\partial_j
+
\sum_{i,j=1}^{N}
\overline{B(X,[x_i,x_j])}\,
x_ix_j
\\
&-
2\sum_{i,j=1}^{N}
\overline{B(X,[x_j,\partial_i])}\,
x_j\partial_i
-
\sum_{l=1}^{N}
\overline{B(X,[\partial_l,x_l])},
\end{aligned}
\end{equation*}
where the indices $i$ and $j$ have been interchanged in the mixed term.
On the other hand,

\begin{multline*}
\alpha(\omega(X))
=
\sum_{i,j=1}^{N}
B(\omega(X),[\partial_i,\partial_j])\,x_ix_j
+
\sum_{i,j=1}^{N}
B(\omega(X),[x_i,x_j])\,\partial_i\partial_j
\\
-
2\sum_{i,j=1}^{N}
B(\omega(X),[x_i,\partial_j])\,x_j\partial_i
-
\sum_{l=1}^{N}
B(\omega(X),[\partial_l,x_l]).
\end{multline*}
Suppose first that the positive system is $\omega$-adapted. Thus the dual
bases may be chosen such that
\[
 \omega(x_i)=\varepsilon\partial_i,
 \qquad
 \omega(\partial_i)=\varepsilon x_i,
 \qquad
 \varepsilon\in\{\pm1\}.
\]
Using the compatibility of $B$ with $\omega$ and $\varepsilon^2=1$, we obtain
\[
\begin{gathered}
\overline{B(X,[\partial_i,\partial_j])}
 = B(\omega(X),[x_i,x_j]),
\qquad
\overline{B(X,[x_i,x_j])}
 = B(\omega(X),[\partial_i,\partial_j]),\\[0.5em]
\overline{B(X,[x_j,\partial_i])}
 = B(\omega(X),[x_i,\partial_j]).
\end{gathered}
\]
and similarly
\[
 \overline{B(X,[\partial_l,x_l])}
 =
 B(\omega(X),[\partial_l,x_l]).
\]
Consequently,
$\alpha(X)^\dagger=\alpha(\omega(X))
$
for every $X\in\even$, and hence $M(\odd)$ is unitary.

Conversely, suppose that $M(\odd)$ is unitary, so that
\eqref{eq::oscillator_unitarity} holds. Let $\tau$ be the conjugate-linear
involution of $\odd$ determined by $\tau(x_i)=\partial_i$ and
$\tau(\partial_i)=x_i$. It satisfies by definition $B(\tau(X),\tau(Y))=-\overline{B(X,Y)}$ for all $X,Y \in \odd$. Since
$[\alpha(X),u]_{\mathscr W}=[X,u]$ for $X\in\even$ and $u\in\odd$, taking
adjoints and using \eqref{eq::oscillator_unitarity} gives
$\tau([X,u])=-[\omega(X),\tau(u)]$. On the other hand, since $\omega$ is an
anti-involution, $\omega([X,u])=-[\omega(X),\omega(u)]$. Hence
$T\coloneqq\tau^{-1}\omega|_{\odd}$ is a complex-linear
$\even$-endomorphism of $\odd$. Moreover, $T$ preserves the form $B$, since
\[
B(T(X),T(Y))
=
B\bigl(\tau^{-1}(\omega(X)),\tau^{-1}(\omega(Y))\bigr)
=
-\overline{B(\omega(X),\omega(Y))}
=
B(X,Y)
\]
for all $X,Y\in\odd$, where the last equality follows from Lemma~\ref{lemm::B_omega_consistent}.

If $\gg$ is $\osp(m\vert 2n)$ with $m\neq 2$ or $F(4)$, then $\odd$ is a simple $\even$-representation, and Schur's
lemma yields $T=c\,\id_{\odd}$ for some $c\in\CC^\times$. Since $T$
preserves the nondegenerate invariant form on $\odd$, one has $c^2=1$.
Thus $c=\varepsilon\in\{\pm1\}$, and therefore
$\omega(x_i)=\varepsilon\partial_i$ and
$\omega(\partial_i)=\varepsilon x_i$. Hence the polarization is
$\omega$-adapted.

If $\gg$ is $\osp(2\vert 2n)$, $\sl(m\vert n)$ or $\glmn$, then
$\odd=\gg_{-1}\oplus\gg_{1}$ is the direct sum of two simple
$\even$-representations. Thus
$T|_{\gg_{-1}}=c_{-}\id$ and $T|_{\gg_{1}}=c_{+}\id$ for some
$c_{\pm}\in\CC^\times$. Since the invariant form pairs $\gg_{-1}$ and
$\gg_{1}$ nondegenerately, $c_{-}c_{+}=1$, while $\omega^2=\id$ implies
$c_{\pm}\in\RR^\times$. Hence $c_{-}$ and $c_{+}$ have the same sign
$\varepsilon\in\{\pm1\}$. After rescaling each dual pair by
$x_i'=a_i x_i$ and $\partial_i'=a_i^{-1}\partial_i$, with
$|a_i|^2=|c_i|^{-1}$, one obtains
$\omega(x_i')=\varepsilon\partial_i'$ and
$\omega(\partial_i')=\varepsilon x_i'$. Thus the polarization is again
$\omega$-adapted.
\end{proof}

\begin{corollary} \begin{enumerate} \item[(a)] For $\gg=\sl(m\vert n)$, the oscillator representation $M(\odd)$ associated with the nonstandard positive system is unitary with respect to both $\omega_{p,q\vert 0,n}$ and $\omega_{p,q\vert n,0}$. If $p,q>0$, the oscillator representation associated with the standard positive system is not unitary with respect to either anti-involution. If $pq=0$, it is unitary with respect to both. \item[(b)] For $\gg=\osp(m\vert 2n)$, the oscillator representation associated with the Case~A positive system is unitary with respect to both $\upomega_{m,0\vert 2n}$ and $\upomega_{0,m\vert 2n}$. The oscillator representation associated with the Case~B positive system is unitary with respect to either anti-involution if and only if $m=1$, in which case $\Delta_A^{+}=\Delta_B^{+}$. If $m=2r$, the oscillator representation associated with the Case~B positive system is unitary with respect to both $\upomega^{*}_{2r\vert2n,0}$ and $\upomega^{*}_{2r\vert0,2n}$. The oscillator representation associated with the Case~A positive system is unitary with respect to neither anti-involution. \end{enumerate} \end{corollary}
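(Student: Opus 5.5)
The corollary follows directly from Proposition~\ref{prop::unitarity_oscillator_representation}, which says that $M(\odd)$ is unitary with respect to $\omega$ exactly when the positive system defining the polarization is $\omega$-adapted. The proof therefore reduces to reading off, for each anti-involution, which positive systems are adapted. These lists are given by Lemmas~\ref{lemm::adapted_systems_slmn}, \ref{lemm::adapted_systems_osp} and~\ref{lemm::adapted_systems_osp_star}.

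For part (a), Lemma~\ref{lemm::adapted_systems_slmn} gives the following:
\begin{itemize}
\item the nonstandard system is $\omega_{p,q\vert0,n}$-adapted with sign $-1$ and $\omega_{p,q\vert n,0}$-adapted with sign $+1$;
\item the standard system is adapted to either anti-involution if and only if $pq=0$.
\end{itemize}
Applying the proposition to each case gives the three assertions. For $pq=0$ one could also note that the standard and nonstandard systems coincide up to orientation. The lemma's statement already covers this, however.

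For part (b), Lemma~\ref{lemm::adapted_systems_osp} gives the statements for $\upomega_{m,0\vert2n}$ and $\upomega_{0,m\vert2n}$:
\begin{itemize}
\item Case~A is adapted to both, with opposite signs;
\item Case~B is adapted to either one only when $m=1$, where $\Delta_A^+=\Delta_B^+$.
\end{itemize}
For $m=2r$, Lemma~\ref{lemm::adapted_systems_osp_star} gives the statements for $\upomega^{*}_{2r\vert2n,0}$ and $\upomega^{*}_{2r\vert0,2n}$: Case~B is adapted to both, and Case~A to neither.

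The only point needing care is the standing hypothesis of Proposition~\ref{prop::unitarity_oscillator_representation}. Its converse direction uses that $\odd$ is either simple or the sum of the two pieces $\gg_{\pm1}$. For the orthosymplectic case with $m=2$, the $\osp(2\vert2n)$ decomposition into $\gg_{\pm1}$ is the one compatible with Case~B. So the $m=2$ case of $\osp(m\vert2n)$ is covered by the branch of the proof treating $\osp(2\vert2n)$. I do not expect any other obstacle: the corollary is a direct combination of these results.
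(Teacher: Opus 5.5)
Your proposal is correct and is exactly how the corollary arises in the paper: it is stated as an immediate consequence of Proposition~\ref{prop::unitarity_oscillator_representation} combined with Lemmas~\ref{lemm::adapted_systems_slmn}, \ref{lemm::adapted_systems_osp} and~\ref{lemm::adapted_systems_osp_star}. Your $m=2$ caveat reaches the right conclusion, but the $\osp(2\vert 2n)$ branch of the proposition does not depend on the polarization being compatible with Case~B. Every odd root vector lies in $\gg_{+1}$ or $\gg_{-1}$, and $\tau$ interchanges these two pieces, so the branch applies equally to the mixed Case~A polarization, which you need for the $\upomega^{*}$ assertion when $r=1$.
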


Since our subsequent application is to irreducible unitary representations, Theorem~\ref{theorem::unitary-highest-lowest-weight} allows us to restrict there to highest or lowest weight representations. We formulate the following square formula for a simple highest weight representation with respect to a fixed positive system $\Delta^{+}$. The square formula of Theorem~\ref{thm::properties_D} then determines the induced action of $\Dirac_{\gg,\even}^{2}$ on the composition factors of $M\otimes M(\odd)$ as a $\even$-representation.\footnote{As a $\even$-representation, $M\otimes M(\odd)$ is a weight representation which is locally finite for $\nn_{\bar{0}}^{+}$. Hence its simple subquotients are highest weight $\even$-representations, which we denote by $L_0(\mu)$.} 

\begin{theorem}\label{thm::dirac-square-action}
Let $M$ be a simple highest weight $\gg$-representation of highest weight $\Lambda$, and let $L_{0}(\mu)$ be a $\even$-composition factor of $M\otimes M(\odd)$ of highest weight $\mu$. Then $\Dirac_{\gg,\even}^{2}$ acts on the composition factor $L_{0}(\mu)$ by the scalar
\begin{equation*}
2((\mu+\rho_{\bar{1}}+2\rho,\mu+\rho_{\bar{1}})-(\Lambda+2\rho,\Lambda)).
\end{equation*}
\end{theorem}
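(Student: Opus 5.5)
We use the Parthasarathy square formula of Theorem~\ref{thm::properties_D}(c) and compute each of its three terms on the composition factor $L_0(\mu)$ separately. The plan has three steps.

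\textbf{First term.} Since $M$ is a simple highest weight $\gg$-representation, the Casimir $\Omega_{\gg}$ acts on $M$ by a scalar. We evaluate this scalar on the highest weight vector $v_\Lambda$. To do so, write $\Omega_{\gg}$ with respect to $B=\frac12(\cdot,\cdot)$ in a PBW-adapted form: the Cartan part, plus the even and odd root vector pairs ordered with $\nn^{+}$ on the right. The usual computation gives $(\Lambda+2\rho,\Lambda)$ up to the normalization factor coming from $B$. Here the odd pairs contribute with the opposite sign, which is exactly why $\rho=\rho_{\bar0}-\rho_{\bar1}$ appears. Because $B=\frac12(\cdot,\cdot)$, the dual basis picks up a factor $2$. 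So $-\Omega_{\gg}\otimes 1$ acts by $-2(\Lambda+2\rho,\Lambda)$.

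\textbf{Second term.} The diagonal Casimir $\Omega_{\even,\Delta}$ is the Casimir of $\even$ acting on $M\otimes M(\odd)$ through $\Updelta$. On any highest weight subquotient $L_0(\mu)$ it acts by the scalar $2(\mu+2\rho_{\bar0},\mu)$, by the same normalization. The subquotient statement is justified because the Casimir is central in $\UE(\even)$, and every composition factor of this weight, $\nn_{\bar0}^+$-locally finite module is a highest weight module (footnote to the theorem).

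\textbf{Constant term.} It remains to identify $-\frac18\operatorname{tr}_{\odd}(\Omega_{\even})$. The cleanest route is to avoid computing the trace directly and instead test the whole formula on one known vector, namely $v_\Lambda\otimes 1$. This vector is annihilated by $\nn_{\bar0}^+\otimes 1$ and, via $\alpha$, by $1\otimes\alpha(\nn^+_{\bar0})$. Indeed, $\alpha(\nn_{\bar0}^+)$ involves $\partial_i\partial_j$ and $x_j\partial_i$ terms that kill $1$, together with $x_ix_j$ terms whose coefficients $B(X,[\partial_i,\partial_j])$ vanish for $X\in\nn^+_{\bar0}$ by weight reasons. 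Its weight is $\Lambda-\rho_{\bar1}$, since $\alpha(H)1=-\rho_{\bar1}(H)1$. So it generates a factor with $\mu_0=\Lambda-\rho_{\bar1}$. On the other hand, $\Dirac_{\gg,\even}$ kills $v_\Lambda\otimes 1$: $\partial_i v_\Lambda=0$ and $\partial_i 1=0$. Hence $\Dirac^2_{\gg,\even}$ acts by $0$ on this factor.

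This gives
\[
-\tfrac18\operatorname{tr}_{\odd}(\Omega_{\even})
=2(\Lambda+2\rho,\Lambda)-2(\mu_0+2\rho_{\bar0},\mu_0).
\]
Substituting back, $\Dirac^2_{\gg,\even}$ acts on $L_0(\mu)$ by
\[
2\bigl((\mu+2\rho_{\bar0},\mu)-(\mu_0+2\rho_{\bar0},\mu_0)\bigr).
\]
It remains to rewrite this in the stated form by reparametrizing the highest weight as $\mu+\rho_{\bar1}$. Since $\rho_{\bar0}=\rho+\rho_{\bar1}$,
\[
(\nu+2\rho_{\bar0},\nu)=(\nu+\rho_{\bar1}+2\rho,\nu+\rho_{\bar1})-(\rho_{\bar1},\rho_{\bar1})-2(\rho,\rho_{\bar1})\cdot 0\ldots
\]
This is where care is needed: one checks the identity $(\nu+2\rho_{\bar0},\nu)=(\nu+\rho_{\bar1}+2\rho,\,\nu+\rho_{\bar1})+c$, with $c$ independent of $\nu$. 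Applying it at $\nu=\mu_0$, where $\mu_0+\rho_{\bar1}=\Lambda$, turns the $\mu_0$ term into $(\Lambda+2\rho,\Lambda)+c$, and the constants cancel. The stated formula then reads with $\mu$ denoting the shifted label. If instead the paper's $\mu$ is the literal highest weight of the factor, the expected discrepancy is precisely a $\rho_{\bar1}$-shift convention, and I would fix the convention so that the formula matches.

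\textbf{Main obstacle.} The hard part is this last bookkeeping: normalizations of $B$ versus $(\cdot,\cdot)$, and the $\rho_{\bar1}$-shift. I would verify the identity directly using $\rho_{\bar0}=\rho+\rho_{\bar1}$.
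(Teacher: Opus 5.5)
This is essentially the paper's argument: evaluate the two Casimirs in the Parthasarathy formula, fix $-\frac18\tr_{\odd}(\Omega_{\even})$ by testing on $v_\Lambda\otimes 1$, and finish with an algebraic identity. Your explicit check that $\Dirac_{\gg,\even}(v_\Lambda\otimes 1)=0$, because every $\partial_i$ kills both $v_\Lambda$ and $1$, supplies the ``direct calculation'' the paper only asserts. The paper phrases the constant step differently but equivalently: it computes $\frac18\tr_{\odd}(\Omega_{\even})=2(\rho_{\bar1},\rho_{\bar1})-4(\rho_{\bar0},\rho_{\bar1})$ explicitly, while you keep it implicit through $\mu_0=\Lambda-\rho_{\bar1}$.

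The flaw is your last step. You leave it garbled and then hedge about a $\rho_{\bar1}$-shift convention that does not exist. Expanding with $2\rho=2\rho_{\bar0}-2\rho_{\bar1}$ gives, for every $\nu$,
\[
(\nu+\rho_{\bar1}+2\rho,\nu+\rho_{\bar1})=(\nu+2\rho_{\bar0},\nu)+2(\rho_{\bar0},\rho_{\bar1})-(\rho_{\bar1},\rho_{\bar1}).
\]
So your constant is $c=(\rho_{\bar1},\rho_{\bar1})-2(\rho_{\bar0},\rho_{\bar1})=-(\rho_{\bar1},\rho_{\bar1})-2(\rho,\rho_{\bar1})$. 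Drop the stray factor ``$\cdot 0$'': $(\rho,\rho_{\bar1})$ need not vanish, and nothing requires it to.

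Apply this identity at $\nu=\mu$ and at $\nu=\mu_0$, where $\mu_0+\rho_{\bar1}=\Lambda$. Your scalar $2\bigl((\mu+2\rho_{\bar0},\mu)-(\mu_0+2\rho_{\bar0},\mu_0)\bigr)$ then becomes exactly $2\bigl((\mu+\rho_{\bar1}+2\rho,\mu+\rho_{\bar1})-(\Lambda+2\rho,\Lambda)\bigr)$, with $\mu$ the actual highest weight of $L_0(\mu)$. There is no reparametrized label and no convention to adjust; shifting $\mu$ as you propose would make the formula false. As a sanity check, $\mu=\mu_0$ gives $0$, as it must.
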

\begin{proof}
By Theorem~\ref{thm::properties_D}, $\Dirac_{\gg,\even}^{2}$ acts on the composition factor $L_0(\mu)$ by the scalar
\begin{equation*}
-B(\Lambda+2\rho,\Lambda)+B(\mu+\rho_{\bar{0}},\mu)-\frac{1}{8}\tr_{\odd}(\Omega_{\even})=2(-(\Lambda+2\rho,\Lambda)+(\mu+2\rho_{\bar0},\mu))-\frac18\tr_{\odd}(\Omega_{\even}).
\end{equation*}
Let $v_{\Lambda}$ be a highest weight vector of $M$. Then $v_{\Lambda}\otimes 1\in M\otimes M(\odd)$ has weight $\Lambda-\rho_{\bar1}$ and generates a $\even$-highest weight subrepresentation of $M\otimes M(\odd)$. Moreover, $v_{\Lambda}\otimes1\in\ker\Dirac_{\gg,\even}^{2}$ as a direct calculation shows (\emph{cf.}~\cite{SchmidtDirac}). Hence
\begin{equation*}
0
=
2(-(\Lambda+2\rho,\Lambda)
+
(\Lambda-\rho_{\bar1}+2\rho_{\bar0}),\Lambda-\rho_{\bar1})
-
\frac18\tr_{\odd}(\Omega_{\even}).
\end{equation*}
Using $\rho=\rho_{\bar0}-\rho_{\bar1}$, the terms involving $\Lambda$ cancel, and we obtain
\begin{equation*}
\frac18\tr_{\odd}(\Omega_{\even})
=
2(\rho_{\bar1},\rho_{\bar1})
-
4(\rho_{\bar0},\rho_{\bar1}).
\end{equation*}
Finally,
\begin{equation*}
(\mu+2\rho_{\bar0},\mu)
-
(\rho_{\bar1},\rho_{\bar1})
+
2(\rho_{\bar0},\rho_{\bar1})
=
(\mu+\rho_{\bar1}+2\rho,\mu+\rho_{\bar1}),
\end{equation*}
which proves the claim.
\end{proof}

It remains to consider the case $\sl(n\vert n)$ for $n\geq 2$. Although the restriction of the supertrace form to $\sl(n\vert n)$ is degenerate, its restriction to $\sl(n\vert n)_{\bar{1}}$ is nondegenerate. Since $\gl(n\vert n)_{\bar{1}}=\sl(n\vert n)_{\bar{1}}$, Definition~\ref{def::Dirac} still defines $\Dirac_{\gg,\even}$, and hence a Dirac operator on $M\otimes M(\odd)$ for every $\sl(n\vert n)$-representation $M$. To obtain the corresponding square formula, however, we pass to $\gl(n\vert n)$, for which the supertrace form is nondegenerate. Every highest weight $\sl(n\vert n)$-representation extends to a highest weight $\gl(n\vert n)$-representation upon choosing an extension $\widetilde{\Lambda}\in\mathfrak{d}^{\ast}$ of its highest weight $\Lambda\in\hh^{\ast}$. Although this extension is not unique, the action of the square in Theorem~\ref{thm::dirac-square-action} is independent of its choice. Indeed, any two extensions differ by a multiple of
$
 \chi\coloneqq\sum_{i=1}^{n}\varepsilon_i-\sum_{a=1}^{n}\delta_a,
$
which vanishes on $\hh$ and satisfies $(\chi,\chi)=0$ and $(\chi,\alpha)=0$ for every root $\alpha$ of $\mathfrak{gl}(n\vert n)$. Moreover, once $\widetilde\Lambda$ is fixed, the extension of a weight $\mu$ occurring in the representation is chosen by
$\widetilde{\mu}=\widetilde{\Lambda}-(\Lambda-\mu)$. Replacing $\widetilde{\Lambda}$ and $\widetilde{\mu}$ by $\widetilde{\Lambda}+c\chi$ and $\widetilde{\mu}+c\chi$, respectively, therefore changes
$
 (\widetilde{\mu}+\rho_{\bar{1}}+2\rho,\widetilde{\mu}+\rho_{\bar{1}})
 -
 (\widetilde{\Lambda}+2\rho,\widetilde{\Lambda})
$
by $2c(\widetilde{\mu}+\rho_{\bar{1}}-\widetilde{\Lambda},\chi)=0$, since $\widetilde{\mu}-\widetilde{\Lambda}$ lies in the root lattice and $(\rho_{\bar{1}},\chi)=0$. Hence the scalar is independent of the chosen extension, and Theorem~\ref{thm::dirac-square-action} holds unchanged for $\sl(n\vert n)$.

\subsection{\texorpdfstring{$\Dirac_{\gg,\even}$}{} and unitarity}
\label{subsec::Dirac_and_unitarity}

We now relate the adjointness properties of $\Dirac_{\gg,\even}$ to the
real form of $\gg$ and derive the corresponding Dirac inequalities for
unitary highest weight representations.

Let $(M,\langle\cdot,\cdot\rangle_M)$ be a unitary highest weight
representation of $\gg$ with respect to a real form defined by a
conjugate-linear anti-involution $\omega$. We equip $M\otimes M(\odd)$ with the tensor-product Hermitian form
\begin{equation}
 \langle v\otimes P,w\otimes Q\rangle_{M\otimes M(\odd)}
 =
 \langle v,w\rangle_M
 \langle P,Q\rangle_{M(\odd)}.
\end{equation}
The corresponding adjoint on $U(\gg)\otimes\mathscr W(\odd)$ is given by
\begin{equation}
 (u\otimes a)^{\ast}
 =
 \omega(u)\otimes a^{\dagger},
 \qquad
 u\in U(\gg),\quad a\in\mathscr W(\odd),
\end{equation}
where $\omega$ denotes the conjugate-linear anti-involution of $U(\gg)$
extending that of $\gg$, and $(\cdot)^{\dagger}$ is the Bargmann--Fock
adjoint on $\mathscr W(\odd)$, characterized by
$x_i^{\dagger}=\partial_i$ and $\partial_i^{\dagger}=x_i$.

The Bargmann--Fock form $\langle\cdot,\cdot\rangle_{M(\odd)}$ is always
positive definite. However, under the $\even$-action induced by $\alpha$,
the oscillator representation $M(\odd)$ is unitary precisely when the positive
system defining the polarization is $\omega$-adapted (Proposition~\ref{prop::unitarity_oscillator_representation}). The same
condition governs the adjointness properties of the Dirac operator.

\begin{theorem}\label{thm::Dirac_adjointness}
 Let $(M,\langle\cdot,\cdot\rangle_M)$ be a unitary highest weight
 representation and suppose that $\Delta^{+}$ is $\omega$-adapted with
 sign $\varepsilon\in\{\pm1\}$. Then
 \[
  \Dirac_{\gg,\even}^{\ast}
  =
  -\varepsilon\Dirac_{\gg,\even}.
 \]
 In particular, $\Dirac_{\gg,\even}$ is selfadjoint if
 $\varepsilon=-1$ and skewadjoint if $\varepsilon=+1$. Consequently,
 $\Dirac_{\gg,\even}^{2}$ is positive semidefinite if
 $\varepsilon=-1$ and negative semidefinite if $\varepsilon=+1$.
\end{theorem}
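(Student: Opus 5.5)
The plan is to compute the adjoint of $\Dirac_{\gg,\even}$ term by term, using the rule $(u\otimes a)^{\ast}=\omega(u)\otimes a^{\dagger}$ together with the adaptedness relations. By Theorem~\ref{thm::properties_D}(a), $\Dirac_{\gg,\even}$ does not depend on the choice of dual Lagrangian bases. We may therefore work with dual bases $\{\partial_i\}$ of $\nn_{\bar1}^{+}$ and $\{x_i\}$ of $\nn_{\bar1}^{-}$ that satisfy $\omega(x_i)=\varepsilon\partial_i$. Applying the involution $\omega$ gives $\omega(\partial_i)=\varepsilon x_i$ as well. The normalization is the one in Definition~\ref{def::Dirac}, namely $B(\partial_i,x_j)=\tfrac12\delta_{ij}$, which agrees up to a positive rescaling with the $(\partial_i,x_j)=\delta_{ij}$ used in the definition of adaptedness. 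Since a positive rescaling of a dual pair leaves the sign $\varepsilon$ unchanged, these normalizations are compatible.

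Before computing, we must check that $\ast$ really is the Hilbert-space adjoint on $M\otimes M(\odd)$. The convention $\langle Xv,w\rangle=\langle v,\omega(X)w\rangle$ gives the adjoint of $X\in\gg$ acting on $M$ as $\omega(X)$. The Bargmann--Fock adjoint on $M(\odd)$ sends $x_i\mapsto\partial_i$. One has to be careful about the Koszul sign in the action of $u\otimes a$ on $v\otimes P$ when $u$ is odd. In the Huang--Pandžić setup, $\mathscr W(\odd)$ is treated as purely even, or equivalently the tensor product is ungraded for the Weyl factor. Under that convention no sign arises, and the adjoint of a product is the product of the adjoints in reverse order. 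This is the one place where care is needed; if instead a sign $(-1)^{p(u)p(a)}$ appears, it enters both summands uniformly and only the conventions, not the structure, change. I will take the formula for $(u\otimes a)^{\ast}$ stated just before the theorem at face value.

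With this in hand the computation is direct. We have
\[
(\partial_i\otimes x_i)^{\ast}=\omega(\partial_i)\otimes x_i^{\dagger}=\varepsilon\, x_i\otimes\partial_i,
\qquad
(x_i\otimes\partial_i)^{\ast}=\omega(x_i)\otimes\partial_i^{\dagger}=\varepsilon\,\partial_i\otimes x_i .
\]
Hence
\[
\Dirac_{\gg,\even}^{\ast}=2\sum_i\bigl(\varepsilon\, x_i\otimes\partial_i-\varepsilon\,\partial_i\otimes x_i\bigr)=-\varepsilon\Dirac_{\gg,\even}.
\]
For $\varepsilon=-1$ this makes $\Dirac_{\gg,\even}$ selfadjoint, so $\langle\Dirac^{2}v,v\rangle=\|\Dirac v\|^{2}\ge0$. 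For $\varepsilon=+1$ it makes $\Dirac_{\gg,\even}$ skewadjoint, so $\langle\Dirac^{2}v,v\rangle=-\|\Dirac v\|^{2}\le0$. This gives the claimed semidefiniteness. Positive definiteness of the tensor form follows from positivity of $\langle\cdot,\cdot\rangle_M$ and of the Bargmann--Fock form.

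The main obstacle is the adjoint bookkeeping, namely justifying that the adjoint is given by $\omega\otimes\dagger$ with no extra parity signs. I would settle it by evaluating $\langle(\partial_i\otimes x_i)(v\otimes P),w\otimes Q\rangle$ directly for homogeneous $v,w$ under the chosen action convention. A secondary point concerns domains in the infinite-dimensional case. Since $M$ and $M(\odd)$ are algebraic direct sums of finite-dimensional weight spaces, every operator considered is formally adjointable on the algebraic tensor product, and semidefiniteness is meant in this formal sense.
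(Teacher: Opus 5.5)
Your proposal is correct and follows essentially the same route as the paper: choose dual bases with $\omega(x_i)=\varepsilon\partial_i$ and $\omega(\partial_i)=\varepsilon x_i$, compute the adjoint term by term via $(u\otimes a)^{\ast}=\omega(u)\otimes a^{\dagger}$, and read off semidefiniteness from $\langle\Dirac_{\gg,\even}^{2}v,v\rangle=\pm\|\Dirac_{\gg,\even}v\|^{2}$. One small correction: no rescaling is needed, because $B=\tfrac12(\cdot,\cdot)$ makes $B(\partial_i,x_j)=\tfrac12\delta_{ij}$ literally the same condition as $(\partial_i,x_j)=\delta_{ij}$, and this exact match matters, since the Bargmann--Fock form is tied to the chosen basis and a dual-pair rescaling $x_i\mapsto a x_i$, $\partial_i\mapsto a^{-1}\partial_i$ with $a^2\neq1$ would break $\Dirac_{\gg,\even}^{\ast}=-\varepsilon\Dirac_{\gg,\even}$ even though it preserves the sign $\varepsilon$.
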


\begin{proof}
 Since $\Delta^{+}$ is $\omega$-adapted with sign $\varepsilon$, the
 dual bases may be chosen such that
 $\omega(x_i)=\varepsilon\partial_i$ and
 $\omega(\partial_i)=\varepsilon x_i$. Unitarity of $M$ therefore gives
 $x_i^{\dagger}=\varepsilon\partial_i$ and
 $\partial_i^{\dagger}=\varepsilon x_i$, whereas on $M(\odd)$ the
 Bargmann--Fock form satisfies $x_i^{\dagger}=\partial_i$ and
 $\partial_i^{\dagger}=x_i$. Hence
 \begin{equation*}
  \Dirac_{\gg,\even}^{\ast}
  =
  2\sum_{i=1}^{N}
  \bigl(
  (\partial_i\otimes x_i)^{\ast}
  -
  (x_i\otimes\partial_i)^{\ast}
  \bigr)=
  2\varepsilon\sum_{i=1}^{N}
  \bigl(
  x_i\otimes\partial_i
  -
  \partial_i\otimes x_i
  \bigr)
  =
  -\varepsilon\Dirac_{\gg,\even}.
 \end{equation*}
 If $\varepsilon=-1$, then
 $\langle\Dirac_{\gg,\even}^{2}v,v\rangle
 =\|\Dirac_{\gg,\even}v\|^{2}\geq0$, while for $\varepsilon=+1$, $\langle\Dirac_{\gg,\even}^{2}v,v\rangle
  =
  -\|\Dirac_{\gg,\even}v\|^{2}
  \leq0$.
\end{proof}

For the remainder of this discussion, fix an $\omega$-adapted positive
system of sign $\varepsilon$.

\begin{proposition}\label{prop::tensor_product_unitary_even}
 The tensor product $M\otimes M(\odd)$ is a unitary $\even$-representation with
 respect to $\langle\cdot,\cdot\rangle_{M\otimes M(\odd)}$. In
 particular, its $\even$-decomposition is completely reducible.
\end{proposition}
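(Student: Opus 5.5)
The argument has two parts. First I show that the diagonal $\even$-action on $M\otimes M(\odd)$ is compatible with the tensor-product Hermitian form. Then I deduce complete reducibility from the weight-space structure.

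For the first part, the diagonal action is
\[
X\mapsto \Updelta(X)=X\otimes 1+1\otimes\alpha(X).
\]
Unitarity of $M$ means $\langle Xv,w\rangle_M=\langle v,\omega(X)w\rangle_M$ for all $X\in\even$, so the adjoint of $X\otimes1$ with respect to $\langle\cdot,\cdot\rangle_{M\otimes M(\odd)}$ is $\omega(X)\otimes 1$. Since the fixed positive system is $\omega$-adapted, Proposition~\ref{prop::unitarity_oscillator_representation} gives $\alpha(X)^\dagger=\alpha(\omega(X))$ on $M(\odd)$. Here the same normalization of dual bases as in Theorem~\ref{thm::Dirac_adjointness} is used, which is legitimate by Theorem~\ref{thm::properties_D}(a). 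Hence the adjoint of $1\otimes\alpha(X)$ is $1\otimes\alpha(\omega(X))$, and adding the two gives
\[
\Updelta(X)^{\ast}=\Updelta(\omega(X)),\qquad X\in\even.
\]
The tensor-product form is positive definite, because both factors are: $\langle\cdot,\cdot\rangle_M$ by assumption and the Bargmann--Fock form by \eqref{eq::Bargmann_Fock_form}. This establishes that $M\otimes M(\odd)$ is a unitary $\even$-representation. Since everything here is even, no sign issues arise.

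For complete reducibility, I use that $M\otimes M(\odd)$ is an $\hh$-weight representation. Distinct weight spaces are orthogonal: for $H$ in the real span on which the weights are real, the relation $\Updelta(H)^\ast=\Updelta(\omega(H))$ forces this. However, weight spaces of $M\otimes M(\odd)$ can be infinite-dimensional, since $M(\odd)$ has infinitely many weights with finite multiplicities. So one cannot simply argue within finite-dimensional weight spaces.

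The argument I will use instead is the following. Let $U$ be any $\even$-subrepresentation that is a sum of weight spaces. Its orthogonal complement inside the algebraic direct sum is again $\even$-stable, because $\omega(\even)=\even$. It is also complementary to $U$, which follows weight-space by weight-space once each weight space is known to be finite-dimensional for the relevant $\even$-submodules generated by a single vector.

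More concretely, every vector generates a highest weight $\even$-module. The footnote before Theorem~\ref{thm::dirac-square-action} gives local $\nn_{\bar0}^+$-finiteness, and such a module has finite-dimensional weight spaces. A highest weight module carrying a positive definite invariant form is irreducible: its maximal proper submodule is orthogonal to the highest weight vector, hence lies in the radical of the form, hence is zero. So each cyclic submodule is simple. Therefore $M\otimes M(\odd)$ is a sum of simple submodules, hence semisimple.

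The main obstacle is this infinite-dimensionality of weight spaces. I handle it by working with cyclic highest weight submodules rather than with orthogonal complements in the whole space.
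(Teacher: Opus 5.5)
Your first part, that the diagonal action satisfies $(X\otimes1+1\otimes\alpha(X))^{\ast}=\omega(X)\otimes1+1\otimes\alpha(\omega(X))$ and that the tensor form is positive definite, is the paper's argument. The complete-reducibility part, however, has a genuine gap.

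The claim that every vector generates a highest weight $\even$-module is false. Local $\nn_{\bar0}^{+}$-finiteness only gives that every nonzero submodule \emph{contains} an $\nn_{\bar0}^{+}$-invariant weight vector. Take two such vectors $v_1,v_2$ of distinct weights $\mu_1\neq\mu_2$. By your own (correct) argument, each generates a simple module $L_0(\mu_i)$. Since $\hh$ separates the two weight components, $v_1+v_2$ generates $L_0(\mu_1)\oplus L_0(\mu_2)$. This module has two non-isomorphic simple quotients, so it is neither a highest weight module nor simple. Hence ``each cyclic submodule is simple'' fails. What you actually have is that every nonzero submodule contains a simple one, and that alone does not give semisimplicity: for the socle $S$ you obtain $S^{\perp}=0$, but with infinite-dimensional weight spaces this does not force $S=M\otimes M(\odd)$.

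The obstacle you tried to route around is not present, and removing it closes the gap. Weights of $M$ lie in $\Lambda-Q^{+}$ with finite multiplicities. Write $-\beta_i$ for the weight of $x_i$, so $\beta_i\in\Delta_{\bar1}^{+}$. Then $\prod_i x_i^{k_i}$ has weight $-\rho_{\bar1}-\sum_i k_i\beta_i$, so the weights of $M(\odd)$ lie in $-\rho_{\bar1}-Q^{+}$, again with finite multiplicities. The simple roots are linearly independent in $\hh^{\ast}$. For $m=n$ this relies on the paper's convention $\hh=\mathfrak d$, i.e.\ working in $\gl(n\vert n)$. On the Cartan subalgebra $\mathfrak d\cap\sl(n\vert n)$, the weight $\sum_i(\epsilon_i-\delta_i)=\kappa$ restricts to $0$, and your worry would in fact be justified for the standard system. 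Given linear independence, each element of $Q^{+}$ splits in only finitely many ways as a sum of two elements of $Q^{+}$, so every weight space of $V\coloneqq M\otimes M(\odd)$ is finite-dimensional.

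Now let $U\subseteq V$ be any submodule. Then $U^{\perp}$ is $\even$-stable, and distinct weight spaces are orthogonal. Positive definiteness gives $V^{\mu}=U^{\mu}\oplus(U^{\perp}\cap V^{\mu})$, hence $V=U\oplus U^{\perp}$. This is the paper's proof.

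Alternatively, you can keep your cyclic idea but apply this complement argument inside $\UE(\even)v$. That module is finitely generated and locally $\nn_{\bar0}^{+}$-finite, so it has finite-dimensional weight spaces. The argument then shows each cyclic submodule is semisimple (not simple), which is all you need.
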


\begin{proof}
 Both $M$ and $M(\odd)$ are unitary $\even$-representations, and therefore so is
 their tensor product under the diagonal $\even$-action. Complete
 reducibility follows from the positive definiteness of the Hermitian
 form and the invariance of orthogonal complements together with finite-dimensionality of the weight spaces.
\end{proof}

Combining Theorem~\ref{thm::Dirac_adjointness} with
Theorem~\ref{thm::dirac-square-action} yields the corresponding Dirac
inequality.

\begin{proposition}\label{prop::Dirac_inequality_adapted}
 Let $M$ have highest weight $\Lambda$, and let $L_{0}(\mu)$ be an
 $\even$-constituent of $M\otimes M(\odd)$. Then
 \[
  -\varepsilon
  \left(
  (\mu+\rho_{\bar1}+2\rho,\mu+\rho_{\bar1})
  -
  (\Lambda+2\rho,\Lambda)
  \right)
  \geq0.
 \]
\end{proposition}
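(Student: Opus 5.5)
The plan is to combine the scalar action of $\Dirac_{\gg,\even}^{2}$ from Theorem~\ref{thm::dirac-square-action} with the definiteness from Theorem~\ref{thm::Dirac_adjointness}, evaluated on a suitable vector of the constituent $L_0(\mu)$.

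First, by Proposition~\ref{prop::tensor_product_unitary_even}, $M\otimes M(\odd)$ is a unitary, completely reducible $\even$-representation. Hence the constituent $L_0(\mu)$ is realized as an honest $\even$-subrepresentation, not merely a subquotient, and it contains a nonzero highest weight vector $v\in M\otimes M(\odd)$ of weight $\mu$. Since $\Dirac_{\gg,\even}\in\mathcal W(\gg,\even)$ is $\even$-invariant (Theorem~\ref{thm::properties_D}(b)), so is $\Dirac_{\gg,\even}^{2}$. By Theorem~\ref{thm::dirac-square-action}, $\Dirac_{\gg,\even}^{2}$ acts on this constituent by the scalar
\[
c_\mu\coloneqq 2\bigl((\mu+\rho_{\bar1}+2\rho,\mu+\rho_{\bar1})-(\Lambda+2\rho,\Lambda)\bigr).
\]
In particular, $\Dirac_{\gg,\even}^{2}v=c_\mu v$. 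This holds because a composition factor that is a direct summand is acted on by the same Casimir scalars, and complete reducibility identifies the two notions.

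Second, I would apply Theorem~\ref{thm::Dirac_adjointness}, which gives $\Dirac_{\gg,\even}^{\ast}=-\varepsilon\Dirac_{\gg,\even}$. This yields
\[
c_\mu\langle v,v\rangle=\langle \Dirac_{\gg,\even}^{2}v,v\rangle=-\varepsilon\|\Dirac_{\gg,\even}v\|^{2}.
\]
Since the tensor form is positive definite, $\langle v,v\rangle>0$. Therefore $-\varepsilon c_\mu=\|\Dirac_{\gg,\even}v\|^2/\langle v,v\rangle\geq0$, and dividing by $2$ gives the stated inequality.

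The main obstacle is to make sense of the adjoint on the infinite-dimensional space $M\otimes M(\odd)$, where $\Dirac_{\gg,\even}$ is only defined on the algebraic tensor product. I would handle this by working weight space by weight space. The operator $\Dirac_{\gg,\even}$ shifts $\hh$-weights only within a finite set, since it is $\hh$-invariant and each weight space is finite-dimensional: the diagonal weight of $\partial_i\otimes x_i$ is zero. The identity $\langle \Dirac v,w\rangle=-\varepsilon\langle v,\Dirac w\rangle$ is then an algebraic identity on finite sums, which suffices for the computation with $v$. A secondary point is that $v$ and $\Dirac_{\gg,\even}v$ have the same weight, so no convergence issues arise.
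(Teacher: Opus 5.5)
Your proposal is correct and follows the paper's proof. The paper simply combines the scalar action of $\Dirac_{\gg,\even}^{2}$ on $L_0(\mu)$ from Theorem~\ref{thm::dirac-square-action} with the semidefiniteness $\langle \Dirac_{\gg,\even}^{2}v,v\rangle=-\varepsilon\|\Dirac_{\gg,\even}v\|^{2}$ from Theorem~\ref{thm::Dirac_adjointness}; your additional details (complete reducibility via Proposition~\ref{prop::tensor_product_unitary_even}, evaluation on a $\even$-highest weight vector, and keeping track of the factor $2$) only make explicit what the paper leaves implicit, and the adjointness issue you flag is not a real obstacle, since $\Dirac_{\gg,\even}$ preserves weights and $\langle \Dirac_{\gg,\even}v,w\rangle=-\varepsilon\langle v,\Dirac_{\gg,\even}w\rangle$ holds algebraically on the algebraic tensor product.
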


\begin{proof}
 By Theorem~\ref{thm::dirac-square-action},
 $\Dirac_{\gg,\even}^{2}$ acts on $L_{0}(\mu)$ by the scalar
 \[
  (\mu+\rho_{\bar1}+2\rho,\mu+\rho_{\bar1})
  -
  (\Lambda+2\rho,\Lambda).
 \]
 The assertion therefore follows from Theorem~\ref{thm::Dirac_adjointness}.
\end{proof}

The preceding Dirac inequality can be sharpened to an inequality directly on $M$.

\begin{corollary}\label{cor::Dirac_inequality_on_M}
 Let $M$ be a unitary highest weight representation of highest weight
 $\Lambda$, and write
 $
 M\vert_{\even}
 =
 \bigoplus_{\mu}L_{0}(\mu)^{m_{\Lambda}(\mu)}.
 $
 Then, for every $\mu$ with $m_{\Lambda}(\mu)\neq0$, one has
 \[
 -\varepsilon
 \bigl(
 -(\Lambda+2\rho,\Lambda)
 +(\mu+2\rho,\mu)
 \bigr)
 \geq0.
 \]
 Moreover, the inequality is strict whenever $\mu\neq\Lambda$.
\end{corollary}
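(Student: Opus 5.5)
The plan is to embed $M$ into $M\otimes M(\odd)$ via the vacuum and apply Proposition~\ref{prop::Dirac_inequality_adapted} to the $\even$-types generated there. The vacuum $1\in M(\odd)$ has weight $-\rho_{\bar1}$, and it is annihilated by all $\partial_i$. Given a constituent $L_0(\mu)\subseteq M\vert_{\even}$ with highest weight vector $v_\mu$, consider $v_\mu\otimes 1$. It has weight $\mu-\rho_{\bar1}$. Since $\alpha(\nn_{\bar0}^+)$ acts on $1$ through terms that are either of the form $x_j\partial_i$ (which kill $1$) or $\partial_i\partial_j$ (which also kill $1$), I expect $v_\mu\otimes1$ to be an $\nn_{\bar0}^+$-highest weight vector. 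If a $x_ix_j$-term appeared, it would come from $B(X,[\partial_i,\partial_j])$ with $X\in\nn_{\bar0}^+$, which vanishes by weight reasons, since $[\partial_i,\partial_j]$ has positive weight. By Proposition~\ref{prop::tensor_product_unitary_even} the module is completely reducible, so $v_\mu\otimes1$ generates a constituent $L_0(\mu-\rho_{\bar1})$.

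Proposition~\ref{prop::Dirac_inequality_adapted} with $\mu$ replaced by $\mu-\rho_{\bar1}$ then gives
\[
-\varepsilon\bigl((\mu+2\rho,\mu)-(\Lambda+2\rho,\Lambda)\bigr)\geq0,
\]
which is the claimed inequality.

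For strictness, I would suppose equality holds for some $\mu\neq\Lambda$. Then $\Dirac_{\gg,\even}^2$ vanishes on the constituent generated by $v_\mu\otimes1$. By Theorem~\ref{thm::Dirac_adjointness}, $\langle\Dirac^2 w,w\rangle=\pm\|\Dirac w\|^2$, so $\Dirac_{\gg,\even}(v_\mu\otimes1)=0$. Expanding,
\[
\Dirac_{\gg,\even}(v_\mu\otimes1)=2\sum_i\partial_iv_\mu\otimes x_i,
\]
because $\partial_i 1=0$. The monomials $x_i$ are linearly independent, so $\partial_iv_\mu=0$ for all $i$. Hence $v_\mu$ is annihilated by $\nn_{\bar1}^+$ and by $\nn_{\bar0}^+$, and so by $\nn^+$; note that $\nn^+$ is generated by these, as its even part is included. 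Thus $v_\mu$ is a $\gg$-singular vector in the simple module $M$. By simplicity, $\UE(\gg)v_\mu$ is a nonzero submodule with highest weight $\mu$, so $\mu=\Lambda$, a contradiction.

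The main obstacle is the first step: checking carefully that $v_\mu\otimes1$ is $\nn_{\bar0}^+$-primitive. This requires the weight bookkeeping in \eqref{eq::alpha_explicit} showing that $\alpha(X)1=0$ for $X\in\nn_{\bar0}^+$. In the adapted setting the relevant $B$-coefficients vanish by $\hh$-weight considerations, and I would verify this explicitly.
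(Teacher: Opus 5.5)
Your proposal is correct and follows the paper's proof essentially verbatim. You apply Proposition~\ref{prop::Dirac_inequality_adapted} to the $\even$-type generated by $v_\mu\otimes1$, of highest weight $\mu-\rho_{\bar1}$; for strictness you use $\Dirac_{\gg,\even}(v_\mu\otimes1)=2\sum_i\partial_iv_\mu\otimes x_i$ and definite adjointness to force $\partial_iv_\mu=0$, so $v_\mu$ is $\gg$-singular and $\mu=\Lambda$. The one difference is that you check the $\nn_{\bar0}^+$-primitivity of $v_\mu\otimes1$, which the paper only asserts; to finish that check, note that the constant term $-\sum_l B(X,[\partial_l,x_l])$ of $\alpha(X)$ also vanishes for $X\in\nn_{\bar0}^+$, since $[\partial_l,x_l]\in\hh$ and $B(\nn_{\bar0}^+,\hh)=0$.
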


\begin{proof}
 Let $v_{\mu}$ be a $\even$-highest weight vector of
 $L_{0}(\mu)$. Then $v_{\mu}\otimes 1$ generates an
 $\even$-subrepresentation of $M\otimes M(\odd)$ with highest weight
 $\mu-\rho_{\bar1}$. Applying
 Proposition~\ref{prop::Dirac_inequality_adapted} to this constituent
 gives
 \[
 -\varepsilon
 \bigl(
 (\mu+2\rho,\mu)-(\Lambda+2\rho,\Lambda)
 \bigr)
 \geq0,
 \]
 which is the claimed inequality.

 It remains to prove strictness. Suppose that equality holds. Since
 $\partial_i1=0$ for all $i$, we have
 $
 \Dirac_{\gg,\even}(v_{\mu}\otimes1)
 =
 2\sum_{i=1}^{mn}\partial_i v_{\mu}\otimes x_i.
 $
 Hence, using
 $\Dirac_{\gg,\even}^{*}=-\varepsilon\Dirac_{\gg,\even}$,
 \[
 0
 =
 \bigl\langle
 \Dirac_{\gg,\even}^{2}(v_{\mu}\otimes1),
 v_{\mu}\otimes1
 \bigr\rangle_{M\otimes M(\odd)}
 =
 -4\varepsilon
 \sum_{i=1}^{mn}
 \langle
 \partial_i v_{\mu},
 \partial_i v_{\mu}
 \rangle_M.
 \]
 Since $M$ is unitary, it follows that
 $\partial_i v_{\mu}=0$ for every $i$. As $v_{\mu}$ is already
 $\even$-highest and the $\partial_i$ span
 $\nn_{\bar1}^{+}$, the vector $v_{\mu}$ is a
 $\gg$-highest weight vector. Since $M$ is simple, the nonzero
 subrepresentation generated by $v_{\mu}$ is all of $M$. Hence its highest
 weight must equal that of $M$, and therefore $\mu=\Lambda$.
 Thus the inequality is strict whenever $\mu\neq\Lambda$.
\end{proof}

For the special linear Lie superalgebras $\slmn$, the Dirac inequality is strong
enough to characterize unitarity completely. Recall that every irreducible highest weight representation $M$ of highest weight $\Lambda$ carries, up to normalization, a nondegenerate $\omega$-contravariant Hermitian form $\langle\cdot,\cdot\rangle_M$ whenever $\overline{\Lambda(H)}=\Lambda(\omega(H))$ for all $H\in\hh$. This form is called the Shapovalov form. In what follows, whenever this condition is satisfied, we equip $M$ with its Shapovalov form $\langle\cdot,\cdot\rangle_M$. In particular, the condition holds for every unitary highest weight representation. Here, it is characterized by
\begin{equation}
 \langle xv,w\rangle_M
 =
 \langle v,\omega(x)w\rangle_M,
 \qquad
 x\in\gg,\quad v,w\in M.
\end{equation}
We refer to \cite{SchmidtDirac,Schmidt} for the construction of this
Shapovalov form. The representation $M$ is unitary precisely when this
form is positive definite. In \cite{SchmidtDirac,Schmidt}, the positivity of the Shapovalov form is
studied through its relation with the Dirac operator. For this purpose,
$M\otimes M(\odd)$ is equipped with the Hermitian form
\begin{equation}\label{eq::Hermitian_form_M_oscillator}
 \langle v\otimes P,w\otimes Q\rangle
 =
 \langle v,w\rangle_M
 \langle P,Q\rangle_{M(\odd)}.
\end{equation}
Moreover, the restriction of $M$ to $\even$ admits a finite filtration
whose composition factors are irreducible highest weight
$\even$-representations. The resulting characterization of unitarity is
the following.

\begin{theorem}[{\cite{SchmidtDirac}}]
\label{thm::unitarity_via_Dirac_inequality}
Let $M$ be a nontrivial irreducible highest weight $\sl(m\vert n)$-representation
of highest weight $\Lambda$ with respect to a real form $\su(p,q\vert 0,n)$ with $pq\neq 0$. Then $M$ is unitary if and only if
$L_0(\Lambda)$ is a unitary $\even$-representation and
\begin{equation*}
 (\mu+2\rho,\mu)
 >
 (\Lambda+2\rho,\Lambda)
\end{equation*}
for every simple $\even$-composition factor $L_0(\mu)\not\cong
L_0(\Lambda)$ occurring in the $\even$-filtration of $M$.
\end{theorem}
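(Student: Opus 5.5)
The plan is to prove the two implications separately. Throughout we work in the nonstandard positive system $\Delta_{\nst}^{+}$. By Lemma~\ref{lemm::adapted_systems_slmn} it is $\omega_{p,q\vert0,n}$-adapted with sign $\varepsilon=-1$, so $\rho$ denotes $\rho_{\nst}$. By Theorem~\ref{thm::Dirac_adjointness}, $\Dirac_{\gg,\even}$ is then selfadjoint with respect to the tensor-product form \eqref{eq::Hermitian_form_M_oscillator}.

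\emph{Necessity.} Assume $M$ is unitary. The highest weight vector $v_\Lambda$ generates an $\even$-subrepresentation isomorphic to $L_0(\Lambda)$, and the restriction of the positive definite form makes it unitary. By Corollary~\ref{corollary::even-filtration-simple}, the even filtration splits, so every composition factor $L_0(\mu)$ of the filtration is an honest direct summand of $M\vert_{\even}$. Corollary~\ref{cor::Dirac_inequality_on_M} with $\varepsilon=-1$ then gives $(\mu+2\rho,\mu)-(\Lambda+2\rho,\Lambda)>0$ for every $\mu\neq\Lambda$. Two points need checking here. First, a factor $L_0(\mu)\not\cong L_0(\Lambda)$ with $\mu=\Lambda$ cannot occur, since $\dim M^{\Lambda}=1$. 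Second, the sum in the proof of that corollary should run over $N=\dim\nn_{\bar1}^{+}=mn$.

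\emph{Sufficiency.} Equip $M$ with its Shapovalov form, which exists because $L_0(\Lambda)$ is unitary and so $\Lambda$ is $\omega$-real. We prove positivity by descending induction along the finite set of highest weights $\mu\in\Lambda-\Gamma$ of Theorem~\ref{thm::even_filtration}, ordered by $\leq$.
\begin{enumerate}
\item[(i)] We first show that the Shapovalov form is nondegenerate on $M$ and that distinct $\even$-isotypic pieces are orthogonal. Distinct $\even$-highest weights are separated by the $\even$-Casimir, and $\omega$ preserves $Z(\even)$.
\item[(ii)] Let $v$ be an $\even$-highest weight vector of weight $\mu<\Lambda$. Assume inductively that the form is positive definite on all constituents with highest weight $>\mu$. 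Compute $\langle\Dirac_{\gg,\even}^{2}(v\otimes1),v\otimes1\rangle$ in two ways. Theorem~\ref{thm::dirac-square-action} shows that it equals $2\bigl((\mu+2\rho,\mu)-(\Lambda+2\rho,\Lambda)\bigr)\langle v,v\rangle$. Selfadjointness gives $\|\Dirac_{\gg,\even}(v\otimes1)\|^{2}=4\sum_i\langle\partial_iv,\partial_iv\rangle_M$. Each $\partial_iv$ lies in weight spaces belonging to constituents of strictly higher highest weight, where the form is positive by induction. Since the scalar coefficient is $>0$ by hypothesis, we get $\langle v,v\rangle\geq0$.
\item[(iii)] If $\langle v,v\rangle=0$, then all $\partial_iv=0$. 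Hence $v$ is a $\gg$-singular vector, contradicting simplicity of $M$ because $\mu\neq\Lambda$. So $\langle v,v\rangle>0$, and we may pick an orthogonal basis of highest weight vectors in each isotypic component.
\end{enumerate}

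The main obstacle is the step from positivity on $\even$-highest weight vectors to positivity on the whole constituent $L_0(\mu)$. The restricted form is a contravariant form on $L_0(\mu)$ that is positive on the top vector. It is positive everywhere exactly when $L_0(\mu)$ is a unitary $\mathfrak{s}(\mathfrak u(p,q)\oplus\mathfrak u(n))$-module, and for a noncompact $\su(p,q)$-part this is not automatic. To overcome this we would use adaptedness: $\nn_{\bar1}^{-}$ is $\kk^{\CC}$-stable, so $M$ is spanned by $\UE(\mathfrak p^-)\UE(\nn_{\bar1}^{-})L_{\kk}(\Lambda)$. This allows the same Dirac/induction argument to be run on lowest $\kk^{\CC}$-types rather than on $\even$-types. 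Each $L_0(\mu)$ then embeds in a tensor product of the unitary $L_0(\Lambda)$ with a finite-dimensional unitary $\kk^{\CC}$-module built from $\bigwedge\nn_{\bar1}^{-}$, and unitarity of $L_0(\mu)$ would follow from the Enright--Howe--Wallach/Jakobsen description of the unitary highest weight constituents of such products. Alternatively, one can reduce to the $\even$-filtration result already established in \cite{SchmidtDirac}, which we may cite for this step if the direct argument becomes too long.
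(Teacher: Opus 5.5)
Your necessity argument is fine. It is also the only part of this statement the paper itself reproduces: the theorem is imported from \cite{SchmidtDirac} without proof, and its ``only if'' direction is Corollary~\ref{cor::Dirac_inequality_on_M} with $\varepsilon=-1$ for $\Delta_{\nst}^{+}$. Your remark about $\mu=\Lambda$ is vacuous; what matters is that $L_0(\Lambda)$ occurs only once, since $\dim M^{\Lambda}=1$.

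The sufficiency direction has a genuine gap, and it is the one you flag yourself. Steps (ii)--(iii) only give $\langle v,v\rangle>0$ for $\even$-singular vectors $v$. To use $c\,\langle v,v\rangle=4\sum_i\langle\partial_i v,\partial_i v\rangle$ at the next stage, you need the form to be positive definite on the span of the vectors $\partial_i v\in M^{\mu+\alpha_i}$. These are in general not $\even$-singular, so you need positivity on the whole constituents $L_0(\nu)$, $\nu>\mu$. Passing from the top vector of $L_0(\nu)$ to all of $L_0(\nu)$ requires $L_0(\nu)$ to be a unitary $\even$-module. That is not among your hypotheses (only $L_0(\Lambda)$ is assumed unitary), and $\kk^{\CC}$-dominance of $\nu$ does not supply it for the noncompact $\su(p,q)$-factor. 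So the induction never closes.

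Step (i) is independently unjustified. Before unitarity is known, $M\vert_{\even}$ need not be semisimple; Corollary~\ref{corollary::even-filtration-simple} presupposes unitarity. Consequently:
\begin{itemize}
\item there are no ``isotypic pieces'' and no orthogonal bases of $\even$-highest weight vectors to work with;
\item some composition factors need not be represented by singular vectors of $M$ at all;
\item neither the Casimir nor all of $Z(\even)$ separates highest weights (for $\sl(2)$, the weights $\lambda$ and $-\lambda-2$ share a central character).
\end{itemize}

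Your remedy does not close the gap as written. For a $\kk^{\CC}$-singular $w$ that is not $\even$-singular, $\Dirac_{\gg,\even}^{2}$ does not act on $w\otimes1$ by the scalar of Theorem~\ref{thm::dirac-square-action}. Evaluating the matrix coefficient as in the proof of Proposition~\ref{prop::Freudenthal_identity} instead adds to the right-hand side a positive multiple of $\sum_{\beta\in\Delta_{nc}^{+}}\langle e_\beta w,e_\beta w\rangle$, because $\omega(f_\beta)=-e_\beta$ for noncompact $\beta$. An induction over $\kk^{\CC}$-types built on this identity can be made to work, since positivity at a highest weight vector does propagate through a $\kk^{\CC}$-type when $\kk$ is compact.

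However, that induction proves unitarity under the inequalities $(\nu+2\rho,\nu)>(\Lambda+2\rho,\Lambda)$ at \emph{all} $\kk^{\CC}$-types $\nu$, not under the hypothesis of the theorem. To deduce the inequality at a $\kk^{\CC}$-type $\nu$ of $L_0(\mu)$ from the one at $\mu$, you need $(\nu+2\rho_{\bar0},\nu)\geq(\mu+2\rho_{\bar0},\mu)$; the remaining term $-2(\rho_{\bar1},\nu-\mu)$ is nonnegative. That inequality is the analogous $(\even,\kk^{\CC})$ inequality for $L_0(\mu)$, so it again presupposes that $L_0(\mu)$ is unitary.

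Two further points do not hold up. The proposed embedding of $L_0(\mu)$ into $L_0(\Lambda)$ tensored with a $\kk^{\CC}$-module is not meaningful, since that product is only a $\kk^{\CC}$-module, not an $\even$-module. The appeal to Enright--Howe--Wallach/Jakobsen is left unspecified.

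Finally, citing \cite{SchmidtDirac} for the missing step is circular, because that is exactly the statement you are proving.
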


If the positive system is not $\omega$-adapted, the Dirac operator need
not be either selfadjoint or skewadjoint, and the preceding argument does
not yield a Dirac inequality of a fixed sign. Nevertheless, its different
summands may have definite adjointness properties separately. We illustrate
this phenomenon for $\sl(m\vert n)$ equipped with the standard positive
system. In this case, $\Dirac_{\gg,\even}$ decomposes naturally into
selfadjoint and skewadjoint parts, which leads to a corresponding
modification of the preceding inequality.

\subsection{Dirac operators and unitarity beyond \texorpdfstring{$\omega$}{omega}-adapted positive systems for \texorpdfstring{$\slmn$}{sl(m|n)}}\label{subsec::Dirac_unitarity_slmn}
The relation between the Dirac operator $\Dirac_{\gg,\even}$ and unitarity was established in \cite{SchmidtDirac} for the standard positive system when $pq=0$ and for the nonstandard positive system when $pq\neq0$. By Lemma~\ref{lemm::adapted_systems_slmn}, these are precisely the $\omega_{p,q\vert 0,n}$-adapted positive systems for the corresponding real forms. We now turn to the complementary case. Assume $pq\neq0$, so that the unitary highest weight representations under consideration are infinite-dimensional (see Remark~\ref{rmk::infinite_sl}), and fix the standard positive system $\Delta^{+}\coloneqq\Delta_{\st}^{+}$, which is not $\omega_{p,q\vert 0,n}$-adapted. Throughout this subsection, we write $\omega\coloneqq\omega_{p,q\vert 0,n}$. We develop the Dirac theory for this choice and derive a new characterization of unitarity, thereby extending the results of \cite{SchmidtDirac} to the standard positive system in the noncompact case. 

The odd
root spaces decompose into $p$- and $q$-blocks, on which $\omega$ acts
with opposite signs:
\begin{equation}
 \omega(E_{m+j,i})
 =
 \begin{cases}
 -E_{i,m+j}, & 1\leq i\leq p,\\
 E_{i,m+j}, & p+1\leq i\leq m.
 \end{cases}
\end{equation}
Here, we refer to the odd root spaces spanned by $E_{i,m+j}$ and $E_{m+j,i}$, $1\leq j\leq n$, with $1\leq i\leq p$ and $p+1\leq i\leq m$ as the $p$- and $q$-blocks, respectively. We set $\partial_{(i-1)n+j}\coloneqq E_{i,m+j}$ and
$x_{(i-1)n+j}\coloneqq E_{m+j,i}$ for $1\leq i\leq m$ and
$1\leq j\leq n$, so that
$\nn_{\bar1}^{+}=\operatorname{span}_{\CC}\{\partial_k\}$ and
$\nn_{\bar1}^{-}=\operatorname{span}_{\CC}\{x_k\}$. Thus the $p$- and
$q$-blocks satisfy $\omega(x_k)=-\partial_k$ and
$\omega(x_k)=\partial_k$, respectively. Since $p,q>0$, dual bases
cannot simultaneously satisfy
$B(\partial_k,x_l)=\tfrac12\delta_{kl}$ and
$\omega(x_k)=-\partial_k$.

Accordingly, the Dirac operator decomposes into its $p$- and $q$-blocks as
\begin{equation}
 \Dirac_{\gg,\even}
 =
 2\sum_{i=1}^{mn}
 (\partial_i\otimes x_i-x_i\otimes\partial_i)
 =
 \Dirac_{\mathrm{self}}+\Dirac_{\mathrm{skew}}
 \in\mathcal{W}(\gg,\even),
\end{equation}
where
\begin{equation}\label{eq:Dself_Dskew}
 \Dirac_{\mathrm{self}}
 \coloneqq
 2\sum_{i=1}^{pn}
 (\partial_i\otimes x_i-x_i\otimes\partial_i),
 \qquad
 \Dirac_{\mathrm{skew}}
 \coloneqq
 2\sum_{i=pn+1}^{mn}
 (\partial_i\otimes x_i-x_i\otimes\partial_i).
\end{equation}
The notation will become clear below. Although this decomposition is
defined by the $p$- and $q$-blocks of the standard polarization, its
summands are themselves relative Dirac operators for naturally embedded
Lie subsuperalgebras of $\gg$. To make this precise, we first record the
following lemma.

\begin{lemma}
The natural block embeddings
\begin{equation*}
 \sl(p\vert n),\,\sl(q\vert n)\hookrightarrow\sl(m\vert n)
\end{equation*}
are preserved by $\omega$, whose restrictions define the compact real
forms $\su(p,0\vert0,n)$ and $\su(0,q\vert0,n)$, respectively.
\end{lemma}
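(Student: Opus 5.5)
Write $\omega=\omega_{p,q\vert 0,n}$, so that $\omega(X)=I^{-1}X^\dagger I$ with $I\coloneqq I_{p,q\vert 0,n}=\operatorname{diag}(I_p,-I_q,-I_n)$. The plan is a direct matrix computation: first describe the two embeddings, then check that they are $\omega$-stable, and finally identify the restricted real forms.

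For the embeddings, we realize $\sl(p\vert n)$ as the supermatrices in $\sl(m\vert n)$ supported on the rows and columns indexed by $\{1,\ldots,p\}\cup\{m+1,\ldots,m+n\}$. Similarly, $\sl(q\vert n)$ is supported on $\{p+1,\ldots,m\}\cup\{m+1,\ldots,m+n\}$. Extension by zero preserves the supertrace, so both maps are injective homomorphisms of Lie superalgebras.

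The key observation for $\omega$-stability is that $I$ is diagonal. Conjugation by $I$ multiplies each entry $X_{kl}$ by the sign $\pm1$ coming from the product of the diagonal entries of $I$ in positions $k$ and $l$. The operation $X\mapsto X^\dagger$ sends a matrix supported on an index set $S\times S$ to one supported on $S\times S$. Hence $\omega$ preserves the support, and it preserves the vanishing of the supertrace because $\str(X^\dagger)=\overline{\str(X)}$. So each subalgebra is $\omega$-stable, and the restriction of $\omega$ is again a conjugate-linear anti-involution.

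To identify the real forms, restrict $I$ to each index set.
\begin{itemize}
\item[(a)] On the $\sl(p\vert n)$-block, the restriction of $I$ is $\operatorname{diag}(I_p,-I_n)$. This equals $I_{p,0\vert 0,n}$, so the restriction of $\omega$ is $\omega_{p,0\vert 0,n}$, and the real form is $\su(p,0\vert 0,n)$.
\item[(b)] On the $\sl(q\vert n)$-block, the restriction of $I$ is $\operatorname{diag}(-I_q,-I_n)=-I_{q,0\vert n,0}$. An overall scalar cancels under conjugation, so the restriction of $\omega$ is $\omega_{q,0\vert n,0}$, giving $\su(q,0\vert n,0)$. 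This is exactly the realization named $\su(0,q\vert 0,n)$ in the statement, since $I_{0,q\vert 0,n}=-I_{q,0\vert n,0}$.
\end{itemize}

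Both real forms have even part $\mathfrak{s}(\mathfrak u(p)\oplus\mathfrak u(n))$, respectively $\mathfrak{s}(\mathfrak u(q)\oplus\mathfrak u(n))$, which is compact. They are therefore compact real forms, in agreement with the remark that $\su(p,q\vert r,s)$-type forms of this kind are compact iff $pq=0$.

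The main obstacle is the sign bookkeeping on the odd part: one must check that the odd signs match the labels in the statement and are not their $n,0$ versions. We check this against the explicit formula for $\omega(E_{m+j,i})$: the $p$-block carries sign $-1$ and the $q$-block carries sign $+1$. These are consistent with $\omega_{p,0\vert 0,n}$ and $\omega_{0,q\vert 0,n}$ respectively, as in Lemma~\ref{lemm::adapted_systems_slmn} applied with $q=0$ or $p=0$.
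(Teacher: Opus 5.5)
Your proposal is correct and follows the same route as the paper. Both check directly on matrix units that $X\mapsto I^{-1}X^\dagger I$, with $I$ diagonal, preserves the $p$- and $q$-blocks, and then read off the restricted anti-involutions $\omega_{p,0\vert 0,n}$ and $\omega_{0,q\vert 0,n}$; you supply the details the paper leaves implicit, and your detour in (b) through $-I_{q,0\vert n,0}$ is harmless but unnecessary, since the restricted matrix $\operatorname{diag}(-I_q,-I_n)$ is literally $I_{0,q\vert 0,n}$.
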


 \begin{proof}
The embeddings are the natural inclusions of the corresponding block
subsuperalgebras. From the explicit formula for $\omega$ on the matrix
units, the $p$- and $q$-blocks are preserved separately. Hence $\omega$
restricts to conjugate-linear anti-involutions on
$\sl(p\vert n)$ and $\sl(q\vert n)$. The resulting associated real
forms are precisely $\su(p,0\vert0,n)$ and
$\su(0,q\vert0,n)$, respectively.
\end{proof}

In what follows, we regard $\sl(p\vert n)$ and $\sl(q\vert n)$ as
subsuperalgebras of $\slmn$ via the embeddings above. Their odd parts
are precisely the $p$- and $q$-blocks of $\odd$, respectively. Hence the
corresponding summands in the decomposition of $\Dirac_{\gg,\even}$ are
the relative Dirac operators associated with the quadratic pairs
$(\sl(p\vert n),\sl(p\vert n)_{\bar0})$ and
$(\sl(q\vert n),\sl(q\vert n)_{\bar0})$ if $p\neq n$ and $q \neq n$, respectively. If $p=n$ or $q=n$, we use the same
convention as above for $\mathfrak{sl}(n|n)$.

\begin{lemma}\label{lemm::properties_D_self_and_D_skew}
If $p\neq n$, respectively $q\neq n$, the operators
$\Dirac_{\self}$, respectively $\Dirac_{\skew}$, are the relative Dirac
operators for the quadratic pairs $\bigl(\mathfrak{sl}(p|n),\mathfrak{sl}(p|n)_{\bar 0}\bigr)$ and $\bigl(\mathfrak{sl}(q|n),\mathfrak{sl}(q|n)_{\bar 0}\bigr).$ In particular,
\begin{equation*}
 \Dirac_{\self}
 \in
 \mathcal{W}\bigl(\sl(p\vert n),\sl(p\vert n)_{\bar0}\bigr), \qquad 
 \Dirac_{\skew}
 \in
 \mathcal{W}\bigl(\sl(q\vert n),\sl(q\vert n)_{\bar0}\bigr).
\end{equation*}
If $p=n$, respectively $q=n$, including the case $n=1$, the same statement is understood with the convention introduced above for $\mathfrak{sl}(n\vert n)$. In every case, the operators satisfy the properties of Theorem~\ref{thm::properties_D}.
\end{lemma}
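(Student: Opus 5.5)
The plan is to identify $\Dirac_{\self}$ directly with Definition~\ref{def::Dirac} applied to the subsuperalgebra $\gg'\coloneqq\sl(p\vert n)$, embedded in the upper-left $p$-block together with the $\delta$-block. The case of $\Dirac_{\skew}$ and $\sl(q\vert n)$ is identical after relabelling the indices $p+1,\dots,m$ as $1,\dots,q$.

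First, I would record that the odd part $\gg'_{\bar1}$ is spanned by $E_{i,m+j}$ and $E_{m+j,i}$ with $1\le i\le p$ and $1\le j\le n$. These are exactly the vectors $\partial_k,x_k$ with $1\le k\le pn$. The standard polarization of $\gg'$, namely $\nn'^{+}_{\bar1}=\operatorname{span}\{E_{i,m+j}\}$, is the restriction of the standard polarization of $\gg$.

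Second, I would check the normalization. The supertrace of a product of block matrices supported in the $(p\vert n)$-block is the same whether it is computed in $\gl(m\vert n)$ or in $\gl(p\vert n)$. Hence the restriction of $B=\tfrac12\str$ to $\gg'$ is the form $B'$ of $\gg'$ normalized as in \eqref{eq::supertrace_form}. In the case $p=n$ this is the restriction of the $\gl(n\vert n)$ form, in accordance with the convention used for $\sl(n\vert n)$. Consequently $B'(\partial_k,x_l)=B(\partial_k,x_l)=\tfrac12\delta_{kl}$ for $k,l\le pn$, so $\{\partial_k\}_{k\le pn}$ and $\{x_k\}_{k\le pn}$ are dual Lagrangian bases for $(\gg'_{\bar1},B')$. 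The Weyl algebra $\mathscr W(\gg'_{\bar1})$ is the subalgebra of $\mathscr W(\odd)$ generated by these vectors, and the commutation relations are inherited. Substituting into Definition~\ref{def::Dirac} for $\gg'$ then yields exactly $2\sum_{k\le pn}(\partial_k\otimes x_k-x_k\otimes\partial_k)=\Dirac_{\self}$, viewed inside $\UE(\gg')\otimes\mathscr W(\gg'_{\bar1})\subseteq\UE(\gg)\otimes\mathscr W(\odd)$.

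Third, the membership $\Dirac_{\self}\in\mathcal W(\gg',\gg'_{\bar0})$ and the remaining properties follow by invoking Theorem~\ref{thm::properties_D} for the basic classical pair $(\gg',\gg'_{\bar0})$. For $p=n$ one uses the $\gl(n\vert n)$ convention, which is harmless because the odd part and its form coincide. The only subtle point, and what I expect to be the main obstacle, is that invariance must be taken with respect to the map $\alpha'\colon\gg'_{\bar0}\to\mathscr W(\gg'_{\bar1})$ and not the ambient $\alpha$. I would therefore note that the explicit formula \eqref{eq::alpha_explicit} for $\alpha'$ involves only the indices $k\le pn$, so the $\even'$-invariance statement is intrinsic to $\gg'$.

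For the degenerate cases: when $p=1$, the algebra $\sl(1\vert n)$ is still covered. When $n=1$ with $p=1$, the algebra is $\sl(1\vert1)$, and the stated $\sl(n\vert n)$ convention of passing to $\gl(1\vert1)$ applies verbatim.
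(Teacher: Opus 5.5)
Your proposal is correct and follows the paper's route. The paper's proof simply notes that the odd parts of the block-embedded $\sl(p\vert n)$ and $\sl(q\vert n)$ are spanned by $\{\partial_i,x_i\}_{i\leq pn}$ and $\{\partial_i,x_i\}_{i>pn}$, so the formulas for $\Dirac_{\self}$ and $\Dirac_{\skew}$ agree with Definition~\ref{def::Dirac}. Your extra checks make explicit what the paper leaves implicit: the restricted supertrace form keeps the normalization $B(\partial_k,x_l)=\tfrac12\delta_{kl}$, and invariance is taken with respect to the intrinsic map $\alpha'$ of the subalgebra.
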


\begin{proof}
Under the embeddings above, the odd parts of $\sl(p\vert n)$ and
$\sl(q\vert n)$ are spanned by
$\{\partial_i,x_i\}_{i=1}^{pn}$ and
$\{\partial_i,x_i\}_{i=pn+1}^{mn}$, respectively. The formulas for
$\Dirac_{\self}$ and $\Dirac_{\skew}$ therefore agree exactly with the
definition of the relative Dirac operator for the corresponding
quadratic pairs.
\end{proof}

\begin{remark}\label{rmk::D2_standard_system} In $\UE(\gg)\otimes \WW(\odd)$, one has
\begin{equation*}
 \Dirac_{\gg,\even}^{2}
 =
 \Dirac_{\self}^{2}
 +
 \Dirac_{\skew}^{2}
 +
 \Dirac_{\self}\Dirac_{\skew}
 +
 \Dirac_{\skew}\Dirac_{\self}.
\end{equation*}
The mixed term $\Dirac_{\self}\Dirac_{\skew}
 +
 \Dirac_{\skew}\Dirac_{\self}$ measures the interaction between the $p$- and
$q$-blocks and is precisely the term absent from the two compact
subsuperalgebra Dirac squares.
\end{remark}

We can now proceed as in \cite{SchmidtDirac} and relate unitarity to the
operators $\Dirac_{\self}$ and $\Dirac_{\skew}$. Both operators act
naturally on $M\otimes M(\odd)$ for every $\gg$-representation $M$, with
the first tensor factor acting through the $\gg$-representation and the
second through the oscillator representation. With respect to the
decomposition into the $p$- and $q$-blocks, the polynomial realization of
$M(\odd)$ decomposes accordingly, and $\Dirac_{\self}$ and
$\Dirac_{\skew}$ involve only the variables and differential operators
belonging to the respective blocks. Thus
\begin{equation}
 \Dirac_{\self},\Dirac_{\skew}
 \in\End_{\CC}(M\otimes M(\odd)).
\end{equation}
The following immediate consequence of the construction explains the
terminology.

\begin{lemma}
 Let $M$ be a unitary $\gg$-representation. Then $\Dirac_{\self}$ is
 selfadjoint and $\Dirac_{\skew}$ is skewadjoint on
 $M\otimes M(\odd)$. In particular,
 \[
  \Dirac_{\self}^{2}\geq0,
  \qquad
  \Dirac_{\skew}^{2}\leq0.
 \]
\end{lemma}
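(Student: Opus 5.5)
The plan is to mimic the proof of Theorem~\ref{thm::Dirac_adjointness}, working block by block. The adjoint on $\UE(\gg)\otimes\mathscr W(\odd)$ relative to the tensor-product form is $(u\otimes a)^{\ast}=\omega(u)\otimes a^{\dagger}$. Here $x_k^{\dagger}=\partial_k$ and $\partial_k^{\dagger}=x_k$ in $M(\odd)$, and the first factor is computed through the unitarity of $M$. We use the index convention fixed above: $\partial_k=E_{i,m+j}$ and $x_k=E_{m+j,i}$. By Lemma~\ref{lemm::B_omega_consistent} and the normalization $B(\partial_k,x_l)=\tfrac12\delta_{kl}$, the Bargmann--Fock adjoint matches the dual basis structure. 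Moreover, $\omega$ sends each $x_k$ to a multiple of the same $\partial_k$ and vice versa, so no rescaling of the bases is needed.

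The first step is to record the signs. On the $p$-block ($k\leq pn$) we have $\omega(x_k)=-\partial_k$, and since $\omega$ is an involution also $\omega(\partial_k)=-x_k$. On the $q$-block ($k>pn$) the signs are $+$. Since $M$ is unitary, the adjoint of the operator $x_k$ on $M$ is $\omega(x_k)$.

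The second step computes the adjoints. For the $p$-block we get $(\partial_k\otimes x_k)^{\ast}=-x_k\otimes\partial_k$ and $(x_k\otimes\partial_k)^{\ast}=-\partial_k\otimes x_k$, so the summand $\partial_k\otimes x_k-x_k\otimes\partial_k$ is selfadjoint. For the $q$-block the signs flip, so the summand is skewadjoint. Summing over the blocks gives $\Dirac_{\self}^{\ast}=\Dirac_{\self}$ and $\Dirac_{\skew}^{\ast}=-\Dirac_{\skew}$. This is exactly the computation in Theorem~\ref{thm::Dirac_adjointness} with $\varepsilon=-1$, respectively $\varepsilon=+1$, now applied to the compact subpairs $\sl(p\vert n)$ and $\sl(q\vert n)$ supplied by Lemma~\ref{lemm::properties_D_self_and_D_skew}. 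The preceding lemma guarantees that $\omega$ preserves these subalgebras and restricts to their compact real forms.

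The third step deduces the sign statements. For $v$ in the algebraic tensor product,
\[
\langle\Dirac_{\self}^{2}v,v\rangle=\|\Dirac_{\self}v\|^{2}\geq0,
\qquad
\langle\Dirac_{\skew}^{2}v,v\rangle=-\|\Dirac_{\skew}v\|^{2}\leq0 .
\]

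The only delicate point is the super sign bookkeeping when taking adjoints of tensor products of odd operators. I would check that the convention $(u\otimes a)^{\ast}=\omega(u)\otimes a^{\dagger}$ introduces no extra Koszul sign, exactly as in the proof of Theorem~\ref{thm::Dirac_adjointness}. Beyond that, adjoints here are formal adjoints on $M\otimes M(\odd)$ with respect to a positive definite form, and the operators preserve this space, so no domain issues arise.
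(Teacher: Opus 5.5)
Your proposal is correct and follows essentially the same route as the paper: record $\omega(x_k)=\mp\partial_k$ and $\omega(\partial_k)=\mp x_k$ on the $p$- and $q$-blocks, then compute $(\partial_k\otimes x_k-x_k\otimes\partial_k)^{\ast}$ via $(u\otimes a)^{\ast}=\omega(u)\otimes a^{\dagger}$, and conclude $\langle\Dirac_{\self}^{2}v,v\rangle=\|\Dirac_{\self}v\|^{2}\geq0$ and $\langle\Dirac_{\skew}^{2}v,v\rangle=-\|\Dirac_{\skew}v\|^{2}\leq0$. Your side remarks (Lemma~\ref{lemm::B_omega_consistent} and the compact subpairs) are not needed, since $x_k^{\dagger}=\partial_k$ holds by construction of the Bargmann--Fock form, but they do no harm.
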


\begin{proof}
 For $1\leq i\leq pn$, one has $\omega(x_i)=-\partial_i$ and
 $\omega(\partial_i)=-x_i$. Since the Bargmann--Fock adjoint satisfies
 $x_i^{\dagger}=\partial_i$ and $\partial_i^{\dagger}=x_i$, it follows
 that
 \[
  (\partial_i\otimes x_i-x_i\otimes\partial_i)^{\ast}
  =
  -x_i\otimes\partial_i+\partial_i\otimes x_i.
 \]
 Hence $\Dirac_{\self}^{\ast}=\Dirac_{\self}$. Since the Hermitian form
 on $M\otimes M(\odd)$ is positive definite,
 \[
  \langle\Dirac_{\self}^{2}v,v\rangle_{M\otimes M(\odd)}
  =
  \langle\Dirac_{\self}v,\Dirac_{\self}v\rangle_{M\otimes M(\odd)}
  \geq0
 \]
 for all $v\in M\otimes M(\odd)$, and therefore
 $\Dirac_{\self}^{2}\geq0$.

 On the $q$-block, one has $\omega(x_i)=\partial_i$ and
 $\omega(\partial_i)=x_i$. The same computation gives
 $\Dirac_{\skew}^{\ast}=-\Dirac_{\skew}$ and hence
 \[
  \langle\Dirac_{\skew}^{2}v,v\rangle_{M\otimes M(\odd)}
  =
  -\langle\Dirac_{\skew}v,\Dirac_{\skew}v\rangle_{M\otimes M(\odd)}
  \leq0.
 \]
 Thus $\Dirac_{\skew}^{2}\leq0$.
\end{proof}

Next we define the change-of-system correction term by
\begin{equation}
\mathcal C^{\st}_{\nst}
\coloneqq 4
\sum_{i=pn+1}^{mn}[\partial_i^{\st},x_i^{\st}]\otimes1.
\end{equation}
To explain this terminology, we first clarify the notation relating the two positive systems. Fix an irreducible unitary highest weight representation $L(\Lambda;\Delta_{\st}^{+})$. By Proposition~\ref{prop::transport_of_unitarity}, there exist $\Lambda'\in\hh^{\ast}$ and $\upepsilon\in\ZZ_2$ such that
$\varphi:L(\Lambda;\Delta_{\st}^{+})\longrightarrow \Uppi^{\upepsilon}L(\Lambda';\Delta_{\nst}^{+})$
is a $\gg$-equivalence. The pullback under $\varphi$ of the Shapovalov form on $ \Uppi^{\upepsilon}L(\Lambda';\Delta_{\nst}^{+})$ is again an $\omega$-contravariant Hermitian form on $L(\Lambda;\Delta_{\st}^{+})$. Hence, by uniqueness of the Shapovalov form up to a nonzero real scalar, there exists $c\in\mathbb R^{\times}$ such that
\begin{equation*}
\langle\varphi(v),\varphi(w)\rangle_{ \Uppi^{\upepsilon}L(\Lambda';\Delta_{\nst}^{+})}
=
c\langle v,w\rangle_{L(\Lambda;\Delta_{\st}^{+})}.
\end{equation*}
After choosing compatible normalizations of the two Shapovalov forms, we may assume $c=1$.

Let $1_{\st}$ and $1_{\nst}$ denote the vacuum vectors in the Fock realizations associated with $\Delta_{\st}^{+}$ and $\Delta_{\nst}^{+}$, respectively. We distinguish the polarized bases associated with the standard and non-standard positive systems by writing $\{\partial_i^{\st},x_i^{\st}\}_{i=1}^{mn}$ and $\{\partial_i^{\nst},x_i^{\nst}\}_{i=1}^{mn}$, respectively. These bases may be chosen such that
\begin{equation}
\partial_i^{\nst}
=
\begin{cases}
\partial_i^{\st}, & 1\leq i\leq pn,\\
x_i^{\st}, & pn+1\leq i\leq mn,
\end{cases}
\qquad
x_i^{\nst}
=
\begin{cases}
x_i^{\st}, & 1\leq i\leq pn,\\
-\partial_i^{\st}, & pn+1\leq i\leq mn.
\end{cases}
\end{equation}
For $v\in L(\Lambda;\Delta_{\st}^{+})$, write $v'\coloneqq\varphi(v)$. Since $\varphi$ is $\gg$-equivariant, we then have
\begin{equation}
\partial_i^{\nst}v'
=
\begin{cases}
\varphi(\partial_i^{\st}v), & 1\leq i\leq pn,\\
\varphi(x_i^{\st}v), & pn+1\leq i\leq mn,
\end{cases}
\qquad
x_i^{\nst}v'
=
\begin{cases}
\varphi(x_i^{\st}v), & 1\leq i\leq pn,\\
-\varphi(\partial_i^{\st}v), & pn+1\leq i\leq mn.
\end{cases}
\end{equation}

Its name is explained by the following lemma. 

\begin{lemma} \label{lemm::corrected_Dirac_action}
 One has for any $v \in L(\Lambda;\Delta_{\st}^{+})$
 \[
 \langle \Dirac_{\gg,\even}^{2}(\varphi(v)\otimes 1_{\nst}),\varphi(v)\otimes 1_{\nst}\rangle_{ \Uppi^{\upepsilon}L(\Lambda';\Delta_{\nst}^{+})\otimes M(\odd)}=\langle (\Dirac_{\gg,\even}^{2}+\mathcal{C}_{\nst}^{\st})(v\otimes 1_{\st}),v\otimes 1_{\st}\rangle_{L(\Lambda;\Delta_{\st}^{+})\otimes M(\odd)}.\]
\end{lemma}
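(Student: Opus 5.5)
The plan is to evaluate both sides explicitly on vectors of the form $v\otimes 1$ using the alternative square formula of Remark~\ref{rmk::Explicit_form_D_square}, first in the nonstandard realization and then in the standard one. Write the formula once with respect to an arbitrary polarization $\{\partial_i,x_i\}_{i=1}^{mn}$:
\[
\Dirac_{\gg,\even}^{2}=2\sum_{i,j}\bigl([\partial_i,\partial_j]\otimes x_ix_j+[x_i,x_j]\otimes\partial_i\partial_j-2[\partial_i,x_j]\otimes x_i\partial_j\bigr)-4\sum_i x_i\partial_i\otimes 1 .
\]
Apply it to $w\otimes 1$, where $1$ is the vacuum of the corresponding Fock realization. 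Since $\partial_i 1=0$, the second and third groups of terms vanish. The first group produces $\sum [\partial_i,\partial_j]w\otimes x_ix_j$, which lies in the span of degree-two monomials. These are orthogonal to the vacuum for the Bargmann--Fock form~\eqref{eq::Bargmann_Fock_form}. Hence, for either realization and with $\langle 1,1\rangle_{M(\odd)}=1$,
\[
\bigl\langle \Dirac_{\gg,\even}^{2}(w\otimes 1),w\otimes 1\bigr\rangle=-4\sum_{i=1}^{mn}\langle x_i\partial_i w,w\rangle_M .
\]
Only the Bargmann--Fock orthogonality of Fock degrees is used here, not any adjointness of the Dirac operator. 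This matters because the standard system is not $\omega$-adapted.

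The nonstandard side is evaluated with $w=\varphi(v)$ and the basis $\{\partial_i^{\nst},x_i^{\nst}\}$. Equivariance of $\varphi$ and the stated relation between the two polarized bases let me rewrite each term $x_i^{\nst}\partial_i^{\nst}\varphi(v)$ as $\varphi$ applied to a word in the standard basis. For $1\le i\le pn$ this gives $\varphi(x_i^{\st}\partial_i^{\st}v)$. For $pn<i\le mn$ it gives $-\varphi(\partial_i^{\st}x_i^{\st}v)$. In $\UE(\gg)$ the odd elements satisfy $\partial_i^{\st}x_i^{\st}=[\partial_i^{\st},x_i^{\st}]-x_i^{\st}\partial_i^{\st}$, with the super bracket being the anticommutator. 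Therefore
\[
-4\,x_i^{\nst}\partial_i^{\nst}\varphi(v)=\varphi\bigl(-4x_i^{\st}\partial_i^{\st}v+4[\partial_i^{\st},x_i^{\st}]v\bigr),\qquad pn<i\le mn .
\]

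Summing over $i$ and using the normalization $c=1$, so that $\varphi$ is isometric for the Shapovalov forms (parity reversal does not change the form), the left-hand side becomes
\[
-4\sum_{i=1}^{mn}\langle x_i^{\st}\partial_i^{\st}v,v\rangle+4\sum_{i=pn+1}^{mn}\langle[\partial_i^{\st},x_i^{\st}]v,v\rangle .
\]
The first sum is $\langle\Dirac_{\gg,\even}^{2}(v\otimes 1_{\st}),v\otimes 1_{\st}\rangle$ by the identity established above. For the second sum, I note that $\mathcal C^{\st}_{\nst}(v\otimes 1_{\st})=4\sum_{i>pn}[\partial_i^{\st},x_i^{\st}]v\otimes 1_{\st}$, which yields exactly that term. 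Together these give the claimed equality.

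The main obstacle is bookkeeping of signs. I need the correct sign in $x_i^{\nst}=-\partial_i^{\st}$, and that sign is forced by the normalization $B(\partial_i,x_j)=\tfrac12\delta_{ij}$ together with supersymmetry $B(x,\partial)=-B(\partial,x)$. I also need the anticommutator convention for the super bracket of two odd elements. I would double-check both against the case $m=2$, $p=q=n=1$ before writing the general argument.
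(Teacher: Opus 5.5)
Your argument is correct, and it takes a different and shorter route than the paper. The paper proceeds in five steps:
\begin{enumerate}
\item It rewrites the nonstandard side as a sum of norms, $4\sum_i\langle\partial_i^{\nst}\varphi(v),\partial_i^{\nst}\varphi(v)\rangle$, using that $\Delta_{\nst}^{+}$ is $\omega$-adapted with sign $-1$.
\item It transports these to $\|\partial_i^{\st}v\|^2$ for $i\le pn$ and $\|x_i^{\st}v\|^2$ for $i>pn$.
\item It uses $\omega$-contravariance on the $q$-block to write $\|x_i^{\st}v\|^2=\langle[\partial_i^{\st},x_i^{\st}]v,v\rangle-\|\partial_i^{\st}v\|^2$.
\item It identifies the remaining norm sums with the vacuum expectation values of $\Dirac_{\self}^{2}$ and $\Dirac_{\skew}^{2}$.
\item It checks that the mixed term $\Dirac_{\self}\Dirac_{\skew}+\Dirac_{\skew}\Dirac_{\self}$ annihilates $v\otimes 1_{\st}$.
\end{enumerate}
You instead use the purely algebraic relation $x_i^{\nst}\partial_i^{\nst}=-\partial_i^{\st}x_i^{\st}=x_i^{\st}\partial_i^{\st}-[\partial_i^{\st},x_i^{\st}]$ in $\UE(\gg)$ for $i>pn$. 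After the common reduction $\langle\Dirac_{\gg,\even}^{2}(w\otimes1),w\otimes1\rangle=-4\sum_i\langle x_i\partial_iw,w\rangle_M$, which the paper also uses in the Freudenthal identity, the lemma then follows by a one-line transport through $\varphi$.

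Your route needs only that $\varphi$ is isometric. It uses no $\omega$-contravariance, no adaptedness, no self/skew splitting and no cross-term computation. It also yields the stronger identity before pairing: $-4\sum_i x_i^{\nst}\partial_i^{\nst}\varphi(v)=\varphi\bigl(-4\sum_i x_i^{\st}\partial_i^{\st}v+4\sum_{i>pn}[\partial_i^{\st},x_i^{\st}]v\bigr)$. The paper's route, in exchange, expresses both sides through the definite-sign quantities coming from $\Dirac_{\self}^{2}\geq0$ and $\Dirac_{\skew}^{2}\leq0$. That is what motivates reading $\mathcal C^{\st}_{\nst}$ as the correction that restores positivity in Theorem~\ref{thm::unitarity_standard_case}.

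On your sign worry: the paper fixes $x_i^{\nst}=-\partial_i^{\st}$ in its choice of bases. Your derivation of that sign from $B(\partial_i,x_j)=\tfrac12\delta_{ij}$ and the supersymmetry of $B$ is right, so the $m=2$ check is not needed.
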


\begin{proof}
Using Remark~\ref{rmk::Explicit_form_D_square} and
$[\nn_{\bar1}^{+},\nn_{\bar1}^{+}]\subseteq\nn_{\bar0}^{+}$ for the
non-standard positive system, together with the fact that $\varphi$
preserves the Shapovalov forms, we obtain for any $v'=\varphi(v)$ with $v \in L(\Lambda;\Delta_{\st}^{+})$
\begin{equation*}
\begin{aligned}
&
\left\langle
\Dirac_{\gg,\even}^{2}(v'\otimes1_{\nst}),
v'\otimes1_{\nst}
\right\rangle_
{ \Uppi^{\upepsilon}L(\Lambda';\Delta_{\nst}^{+})\otimes M(\odd)}
\\
&\qquad=
4\sum_{i=1}^{mn}
\left\langle
\partial_i^{\nst}v',
\partial_i^{\nst}v'
\right\rangle_
{ \Uppi^{\upepsilon}L(\Lambda';\Delta_{\nst}^{+})}
\\
&\qquad=
4\sum_{i=1}^{pn}
\left\langle
\partial_i^{\st}v,\partial_i^{\st}v
\right\rangle_
{L(\Lambda;\Delta_{\st}^{+})}
+
4\sum_{i=pn+1}^{mn}
\left\langle
x_i^{\st}v,x_i^{\st}v
\right\rangle_
{L(\Lambda;\Delta_{\st}^{+})}.
\end{aligned}
\end{equation*}
For $pn+1\leq i\leq mn$, one has
$\omega(x_i^{\st})=\partial_i^{\st}$ and
$\omega(\partial_i^{\st})=x_i^{\st}$. Hence, by
$\omega$-contravariance,
\begin{equation*}
\begin{aligned}
\left\langle
[\partial_i^{\st},x_i^{\st}]v,v
\right\rangle_
{L(\Lambda;\Delta_{\st}^{+})}
&=
\left\langle
x_i^{\st}v,x_i^{\st}v
\right\rangle_
{L(\Lambda;\Delta_{\st}^{+})}
+
\left\langle
\partial_i^{\st}v,\partial_i^{\st}v
\right\rangle_
{L(\Lambda;\Delta_{\st}^{+})}.
\end{aligned}
\end{equation*}
Moreover, Remark~\ref{rmk::Explicit_form_D_square} and
Lemma~\ref{lemm::properties_D_self_and_D_skew} give
\begin{equation*}
\begin{aligned}
\left\langle
\Dirac_{\self}^{2}(v\otimes1_{\st}),
v\otimes1_{\st}
\right\rangle_
{L(\Lambda;\Delta_{\st}^{+})\otimes M(\odd)}
&=
4\sum_{i=1}^{pn}
\left\langle
\partial_i^{\st}v,\partial_i^{\st}v
\right\rangle_
{L(\Lambda;\Delta_{\st}^{+})}\\
\left\langle
\Dirac_{\skew}^{2}(v\otimes1_{\st}),
v\otimes1_{\st}
\right\rangle_
{L(\Lambda;\Delta_{\st}^{+})\otimes M(\odd)}
&=
-4\sum_{i=pn+1}^{mn}
\left\langle
\partial_i^{\st}v,\partial_i^{\st}v
\right\rangle_
{L(\Lambda;\Delta_{\st}^{+})}.
\end{aligned}
\end{equation*}
Consequently,
\begin{equation*}
\begin{aligned}
&
\left\langle
\Dirac_{\gg,\even}^{2}(v'\otimes1_{\nst}),
v'\otimes1_{\nst}
\right\rangle_
{ \Uppi^{\upepsilon}L(\Lambda';\Delta_{\nst}^{+})\otimes M(\odd)}
\\
&\qquad=
\left\langle
\Dirac_{\self}^{2}(v\otimes1_{\st}),
v\otimes1_{\st}
\right\rangle_
{L(\Lambda;\Delta_{\st}^{+})\otimes M(\odd)}
+
\left\langle
\Dirac_{\skew}^{2}(v\otimes1_{\st}),
v\otimes1_{\st}
\right\rangle_
{L(\Lambda;\Delta_{\st}^{+})\otimes M(\odd)}
\\
&\qquad\quad+
\left\langle
\mathcal C_{\nst}^{\st}(v\otimes1_{\st}),
v\otimes1_{\st}
\right\rangle_
{L(\Lambda;\Delta_{\st}^{+})\otimes M(\odd)}.
\end{aligned}
\end{equation*}
By Remark~\ref{rmk::D2_standard_system}, $
\Dirac_{\gg,\even}^{2}
=
\Dirac_{\self}^{2}
+
\Dirac_{\skew}^{2}
+
\Dirac_{\self}\Dirac_{\skew}
+
\Dirac_{\skew}\Dirac_{\self}$, and a direct computation gives
\begin{equation*}
\Dirac_{\self}\Dirac_{\skew}
+
\Dirac_{\skew}\Dirac_{\self}
=
-4
\sum_{\substack{1\leq i\leq pn\\ pn+1\leq j\leq mn}}
\left(
[\partial_i^{\st},x_j^{\st}]\otimes x_i^{\st}\partial_j^{\st}
+
[x_i^{\st},\partial_j^{\st}]\otimes x_j^{\st}\partial_i^{\st}
\right).
\end{equation*}
Since $\partial_i^{\st}1_{\st}=0$ for every $i$, we have $(\Dirac_{\self}\Dirac_{\skew}+\Dirac_{\skew}\Dirac_{\self})(v\otimes1_{\st})=0$, which proves the statement.
\end{proof}

\begin{lemma}\label{lemm:transport_Dirac_inequality} For any weight $\mu$ of $L(\Lambda;\Delta_{\st}^{+})$, one has 
\[
 -(\Lambda'+2\rho^{\nst},\Lambda')+(\mu+2\rho^{\nst},\mu)= -(\Lambda+2\rho^{\st},\Lambda)+(\mu+2\rho^{\st},\mu)+2\sum_{i=p+1}^{m}\sum_{a=1}^{n}(\mu,\epsilon_{i}-\delta_{a}).
 \]
\end{lemma}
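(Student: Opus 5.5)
The identity splits into two independent pieces, which I will prove separately: a statement about the Weyl vectors and a statement about the Casimir-type constants attached to $\Lambda$ and $\Lambda'$.

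\emph{Step 1 (the $\mu$-terms).} Set $S\coloneqq\sum_{i=p+1}^{m}\sum_{a=1}^{n}(\delta_a-\epsilon_i)$. By \eqref{eq:rho-standard-nonstandard}, $\rho^{\nst}=\rho^{\st}-S$. Hence for every weight $\mu$,
\[
(\mu+2\rho^{\nst},\mu)=(\mu+2\rho^{\st},\mu)-2(S,\mu)=(\mu+2\rho^{\st},\mu)+2\sum_{i=p+1}^{m}\sum_{a=1}^{n}(\mu,\epsilon_i-\delta_a).
\]
For $m\neq n$ the weight $\mu$ is only defined modulo $\kappa$, but this causes no ambiguity: $S$ lies in the root lattice and $(\kappa,\alpha)=0$ for every root $\alpha$, so the pairing is independent of the representative. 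For $\gl(n\vert n)$ nothing needs to be checked.

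\emph{Step 2 (the $\Lambda$-terms).} It then suffices to show
\[
(\Lambda'+2\rho^{\nst},\Lambda')=(\Lambda+2\rho^{\st},\Lambda).
\]
This is plausible because both sides are the eigenvalue of the Casimir on the same simple module. I will prove it directly by induction along the chain of odd reflections in Lemma~\ref{lemm::relating_standard_and_non_standard_simple_system}. At each step $\theta$ is an isotropic simple root of the current system $\Delta^+$ with Weyl vector $\rho$. Since $\Delta^+_\theta=(\Delta^+\setminus\{\theta\})\cup\{-\theta\}$ and $\theta$ is odd, the new Weyl vector is $\rho_\theta=\rho+\theta$. Moreover, $(\rho,\theta)=\tfrac12(\theta,\theta)=0$ for a simple isotropic root, as already used in the proof of Lemma~\ref{lemm::typical_shifted_highest_weight}.

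We then apply Lemma~\ref{lemma::relating_highest_weight_representations} and distinguish its two cases.
\begin{itemize}
\item If $(\Lambda,\theta)=0$, the highest weight is unchanged and
\[
(\Lambda+2\rho+2\theta,\Lambda)=(\Lambda+2\rho,\Lambda).
\]
\item If $(\Lambda,\theta)\neq0$, the new highest weight is $\Lambda-\theta$, and
\[
(\Lambda-\theta+2\rho+2\theta,\Lambda-\theta)=(\Lambda+2\rho,\Lambda)-2(\rho,\theta)-(\theta,\theta)=(\Lambda+2\rho,\Lambda).
\]
\end{itemize}
Iterating over the chain, which carries $\Delta^+_{\st}$ to $\Delta^+_{\nst}$ and $\Lambda$ to $\Lambda'$, gives the claimed equality. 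Parity reversal plays no role here.

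\emph{Main obstacle.} The only real work is bookkeeping in Step 2. The lemma lists the reflections in the direction from $\Uppi_{\nst}$ to $\Uppi_{\st}$, so I will traverse the chain in reverse. This is harmless, since $r_{-\theta}$ inverts $r_\theta$ and the invariant computed is symmetric in the two endpoints of each step. I also need to confirm that each $\theta$ is simple and isotropic in the intermediate system; this holds because each step swaps adjacent $\epsilon$ and $\delta$ entries in the weight word. Combining Steps 1 and 2 yields the stated identity.
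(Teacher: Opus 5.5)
Your proof is correct, and it takes a genuinely different route from the paper.

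The paper derives the identity from the Dirac formalism. It evaluates $\langle\Dirac_{\gg,\even}^{2}(v\otimes1),v\otimes1\rangle$ for $v$ in the $\mu$-weight space via the Parthasarathy formula, in both the standard and the nonstandard realization. It cancels the even contributions $\sum_{\alpha\in\Delta_{\bar0}^{+}}\langle f_\alpha e_\alpha v,v\rangle$ using that $\varphi$ is an isometry. It then compares the two sides through Lemma~\ref{lemm::corrected_Dirac_action} and the scalar by which $\mathcal C^{\st}_{\nst}$ acts on the weight space.

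You instead observe that the statement is a purely weight-theoretic identity with two parts. The correction term is exactly $2(\rho^{\nst}-\rho^{\st},\mu)$, by \eqref{eq:rho-standard-nonstandard}. The remaining content is the invariance of $(\Lambda+2\rho,\Lambda)$ under a single odd reflection. That is the computation in the proof of Lemma~\ref{lemm::typical_shifted_highest_weight}, supplemented by the trivial case $(\Lambda,\theta)=0$; equivalently, the Casimir eigenvalue of a simple module does not depend on the Borel.

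What your route buys:
\begin{itemize}
\item It is shorter and avoids the normalization bookkeeping of the Dirac computation.
\item It proves more: the identity holds for every $\mu\in\hh^{\ast}$ and every $\Lambda$. No unitarity, no Shapovalov form, and no compatibility of $\varphi$ with the forms is needed.
\item It makes the omitted details of Section~\ref{subsec::corrected_Dirac} immediate. For $\osp(m\vert2n)$ and $F(4)$ the correction term is likewise $2(\rho^{\mathrm{ad}}-\rho,\mu)$, where $\rho^{\mathrm{ad}}$ is the Weyl vector of the adapted system. This reproduces every case listed there.
\end{itemize}

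What the paper's route buys is conceptual. It realizes the correction term as the action of the operator $\mathcal C^{\st}_{\nst}$ on matrix coefficients of $\Dirac_{\gg,\even}^{2}$. That is what justifies viewing the resulting inequality as a corrected Dirac inequality, rather than a rewriting of the adapted one. The cost is that the argument is confined to weights of a unitary $L(\Lambda;\Delta_{\st}^{+})$.
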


\begin{proof}
  Let $v$ be a nontrivial element of the weight space $L(\Lambda;\Delta_{\st}^{+})^{\mu} \cong  \Uppi^{\upepsilon}L(\Lambda';\Delta_{\nst}^{+})^{\mu}$. We evaluate
 $\langle\Dirac_{\gg,\even}^{2}(v\otimes1_{\st}),v\otimes1_{\st}\rangle_{L(\Lambda;\Delta^{+}_{\st})\otimes M(\odd)}$ and $\langle\Dirac_{\gg,\even}^{2}(\varphi(v)\otimes1_{\nst}),\varphi(v)\otimes1_{\nst}\rangle_{L(\Lambda';\Delta^{+}_{\nst})\otimes M(\odd)}$. We start with $\langle\Dirac_{\gg,\even}^{2}(v\otimes1_{\st}),v\otimes1_{\st}\rangle_{L(\Lambda;\Delta^{+}_{\st})\otimes M(\odd)}$. By the Parthasarathy square formula, one has
 \[
 \Dirac_{\gg,\even}^{2}
 =
 -\Omega_{\gg}\otimes1+\Omega_{\even,\Delta}-(\rho_{\bar1},\rho_{\bar1})
+
2(\rho_{\bar0},\rho_{\bar1}).
 \]
Choose root vectors $e_\alpha\in\gg_\alpha$ and $f_\alpha\in\gg^{-\alpha}$ for all $\alpha\in\Delta^+$. Normalize them by $B(e_\alpha,f_\alpha)=1$, such that we can identify $e_{\alpha_i}=\sqrt{2}\partial_i$ and $f_{\alpha_i}=\sqrt{2}x_i$. If $\{h_r\}$ and $\{h^r\}$ are dual bases of $\hh$, then
 \[
 \Omega_{\even}
 =
 \sum_r h_rh^r
 +
 \sum_{\alpha\in\Delta_{\bar0}^+}
 (e_\alpha f_\alpha+f_\alpha e_\alpha).
 \]
 Writing $\rho^{\st}=\rho_{\bar0}-\rho_{\bar1}^{\st}$, we have
 $-2\sum_i[\partial_i,x_i]v=-2(\rho_{\bar1},\mu)v$. Hence
 \[
 \begin{aligned}
\langle\Dirac_{\gg,\even}^{2}(v\otimes1),v\otimes1\rangle_{L(\Lambda;\Delta_{\st}^{+})\otimes M(\odd)}
 =
 -(\Lambda+2\rho^{\st},\Lambda)+(\mu+2\rho^{\st},\mu) +2\sum_{\alpha\in\Delta_{\bar0}^+}
 \langle f_\alpha e_\alpha v,v\rangle_{L(\Lambda;\Delta_{\st}^{+})}.
 \end{aligned}
 \]
 Analogously, writing $\rho^{\nst}=\rho_{\bar{0}}-\rho_{\bar{1}}^{\nst}$, it follows that
 \[
 \begin{aligned}
\langle\Dirac_{\gg,\even}^{2}(\varphi(v)\otimes1),\varphi(v)\otimes1\rangle_{L(\Lambda';\Delta_{\nst}^{+})}
 =
 -(\Lambda'+2\rho^{\nst},\Lambda')+(\mu+2\rho^{\nst},\mu) +2\sum_{\alpha\in\Delta_{\bar0}^+}
 \langle f_\alpha e_\alpha \varphi(v),\varphi(v)\rangle_{L(\Lambda';\Delta_{\nst}^{+})}.
 \end{aligned}
 \]
Since the $\gg$-equivalence $\varphi$ preserves the norm, one has
\[
\sum_{\alpha\in \Delta_{\bar{0}}^{+}}\langle f_{\alpha}e_{\alpha}\varphi(v),\varphi(v)\rangle_{L(\Lambda';\Delta_{\nst}^{+})} = \sum_{\alpha\in \Delta_{\bar{0}}^{+}}\langle \varphi(f_{\alpha}e_{\alpha}v),\varphi(v)\rangle_{L(\Lambda';\Delta_{\nst}^{+})}= \sum_{\alpha\in \Delta_{\bar{0}}^{+}}\langle f_{\alpha}e_{\alpha}v,v\rangle_{L(\Lambda;\Delta_{\st}^{+})}
\]
Now, using Lemma~\ref{lemm::corrected_Dirac_action}, and the fact that $\mathcal{C}^{\st}_{\nst}(v)=4\sum_{i=pn+1}^{mn}\mu([\partial_{i}^{\st},x_{i}^{\st}])=2\sum_{i=p+1}^{m}\sum_{a=1}^{n}(\mu,\epsilon_{i}-\delta_{a})$, we conclude for any $\mu \in \mathcal{P}_{L(\Lambda;\Delta_{\st}^{+})}$
 \[
 -(\Lambda'+2\rho^{\nst},\Lambda')+(\mu+2\rho^{\nst},\mu)= -(\Lambda+2\rho^{\st},\Lambda)+(\mu+2\rho^{\st},\mu)+2\sum_{i=p+1}^{m}\sum_{a=1}^{n}(\mu,\epsilon_{i}-\delta_{a}).\qedhere
 \]
\end{proof}

The following theorem reformulates the Dirac unitarity criterion
in the standard highest weight realization by transporting the unitarity criterion of Theorem~\ref{thm::unitarity_via_Dirac_inequality} along $\varphi$. In what follows, after forgetting the $\ZZ_2$-grading, we use $\varphi$ to identify $L(\Lambda;\Delta_{\st}^{+})$ with $L(\Lambda';\Delta_{\nst}^{+})$. In particular, we regard $L_0(\Lambda')$ as the same $\even$-subrepresentation in either realization.

\begin{theorem}\label{thm::unitarity_standard_case}
The highest weight representation
$
L(\Lambda;\Delta_{\st}^{+})
\cong
 \Uppi^{\upepsilon}L(\Lambda';\Delta_{\nst}^{+})
$
is unitary if and only if the following conditions hold:
\begin{enumerate}
 \item[(a)]
 $L_0(\Lambda')$ is a unitary highest weight
 $\even$-representation.

 \item[(b)]
 For every $\even$-highest weight $\mu\neq\Lambda'$ occurring in the
 $\even$-filtration of
 $L(\Lambda;\Delta_{\st}^{+})$, one has
 \[
 -(\Lambda+2\rho^{\st},\Lambda)
 +(\mu+2\rho^{\st},\mu)
 +2\sum_{i=p+1}^{m}\sum_{a=1}^{n}
 (\mu,\epsilon_i-\delta_a)
 >0.
 \]
\end{enumerate}
\end{theorem}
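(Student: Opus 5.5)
The plan is to transport the criterion of Theorem~\ref{thm::unitarity_via_Dirac_inequality} along the $\gg$-equivalence $\varphi\colon L(\Lambda;\Delta_{\st}^{+})\to\Uppi^{\upepsilon}L(\Lambda';\Delta_{\nst}^{+})$, and to rewrite its inequality with Lemma~\ref{lemm:transport_Dirac_inequality}. By Proposition~\ref{prop::transport_of_unitarity}, $L(\Lambda;\Delta_{\st}^{+})$ is unitary if and only if $L(\Lambda';\Delta_{\nst}^{+})$ is unitary. Since $\Delta_{\nst}^{+}$ is $\omega$-adapted with sign $-1$ by Lemma~\ref{lemm::adapted_systems_slmn}, Theorem~\ref{thm::unitarity_via_Dirac_inequality} applies to this realization. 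It says that unitarity holds if and only if $L_0(\Lambda')$ is unitary and
\[
(\mu+2\rho^{\nst},\mu)-(\Lambda'+2\rho^{\nst},\Lambda')>0
\]
for every $\even$-composition factor $L_0(\mu)\not\cong L_0(\Lambda')$ in the $\even$-filtration of $L(\Lambda';\Delta_{\nst}^{+})$. The trivial representation is excluded from that theorem; there $\Lambda=\Lambda'=0$, there are no other constituents, and the statement holds trivially.

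The first step identifies the index sets. After forgetting the grading, $\varphi$ is an isomorphism of $\even$-representations. It therefore matches the $\even$-highest weights occurring in the two filtrations. The filtration is by highest weight $\even$-modules, and the relevant data are the composition factors in category $\mathcal O(\even)$, which are intrinsic. Hence the set of $\mu\neq\Lambda'$ in (b) is the same for both realizations, and condition (a) is literally the first condition of the cited theorem.

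The second step converts the inequality. Each such $\mu$ is a weight of $L(\Lambda;\Delta_{\st}^{+})$, since composition factors of the restriction consist of weights of the module. Lemma~\ref{lemm:transport_Dirac_inequality} then states that the left-hand side of the $\nst$ inequality equals the expression in (b). The two inequalities are therefore equivalent for each $\mu$, and this gives the theorem.

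The main obstacle is the validity of Lemma~\ref{lemm:transport_Dirac_inequality} in the direction needed for sufficiency. Its proof compares matrix coefficients using norm preservation under $\varphi$ and contravariance. For the converse we do not yet know that the form is positive, so we only have the nondegenerate Shapovalov forms. I would check that the lemma's argument uses only $\omega$-contravariance and the compatible normalization $c=1$, and never positivity. Then the identity holds for both forms whenever $\langle v,v\rangle\neq0$. If that is unclear, I would avoid the Hermitian form altogether and use Theorem~\ref{thm::properties_D}(c) directly. The identity is purely algebraic: $\Lambda'+\rho^{\nst}$ and $\Lambda+\rho^{\st}$ define the same central character, so $(\Lambda'+2\rho^{\nst},\Lambda')=(\Lambda+2\rho^{\st},\Lambda)$ by Harish-Chandra. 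Combined with the explicit formula for $\rho^{\nst}-\rho^{\st}$ in \eqref{eq:rho-standard-nonstandard}, this gives $(\mu,2\rho^{\nst}-2\rho^{\st})=2\sum_{i>p,a}(\mu,\epsilon_i-\delta_a)$ for every $\mu$, independently of unitarity.
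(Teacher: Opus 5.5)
Your proposal is correct and follows the paper's proof. The paper likewise combines Proposition~\ref{prop::transport_of_unitarity} and Lemma~\ref{lemm:transport_Dirac_inequality} with Theorem~\ref{thm::unitarity_via_Dirac_inequality}, applied to the adapted realization $L(\Lambda';\Delta_{\nst}^{+})$. Your fallback check is also sound and worth keeping: the identity in Lemma~\ref{lemm:transport_Dirac_inequality} is purely algebraic, since $\Omega_{\gg}$ acts on the same module by the same scalar, so $(\Lambda+2\rho^{\st},\Lambda)=(\Lambda'+2\rho^{\nst},\Lambda')$, and $2(\mu,\rho^{\nst}-\rho^{\st})=2\sum_{i>p}\sum_{a}(\mu,\epsilon_i-\delta_a)$; hence it holds for every $\mu$ without any positivity, which is exactly what the ``if'' direction needs and what the paper leaves implicit.
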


\begin{proof}
By Lemma~\ref{lemm:transport_Dirac_inequality}, the Dirac inequality for
$L(\Lambda';\Delta_{\nst}^{+})$ is equivalent to the inequality in~(b) for the
corresponding $\even$-highest weights occurring in the
$\even$-filtration of
$L(\Lambda;\Delta_{\st}^{+})$.
Therefore, by Theorem~\ref{thm::unitarity_via_Dirac_inequality},
$L(\Lambda;\Delta_{\st}^{+})$ is unitary if and only if
$L_0(\Lambda')$ is unitary and all these inequalities are strict.
\end{proof}

\begin{remark} \begin{enumerate}
 \item[(a)] This is an explicit unitarity criterion for the standard positive system. Nevertheless, its application requires determining the corresponding highest weight $\Lambda'$ with respect to the adapted positive system $\Delta_{\nst}^{+}$. 
 \item[(b)] The inequality \[
 -(\Lambda+2\rho^{\st},\Lambda)
 +(\mu+2\rho^{\st},\mu)
 +2\sum_{i=p+1}^{m}\sum_{a=1}^{n}
 (\mu,\epsilon_i-\delta_a)
 >0\]
 is regarded as the appropriate Dirac inequality for the standard positive system $\Delta^{+}_{\st}$.
\end{enumerate}
\end{remark}

For a unitary representation $L(\Lambda;\Delta_{\st}^{+})$, this
provides a characterization of the highest weight
$\Lambda'\in\hh^{\ast}$ of its nonstandard realization in terms
of its $\even$-constituents.

\begin{corollary}\label{cor::finding_Lambda'}
Let $L(\Lambda;\Delta_{\st}^{+})$ be a unitary highest weight
representation. Then there exists a unique $\even$-constituent
$L_0(\Lambda')$ satisfying
\[
-(\Lambda+2\rho^{\st},\Lambda)
+(\Lambda'+2\rho^{\st},\Lambda')
+2\sum_{i=p+1}^{m}\sum_{a=1}^{n}
(\Lambda',\epsilon_i-\delta_a)
=0.
\]
For this weight $\Lambda'$, we have
$
L(\Lambda;\Delta_{\st}^{+})
\cong
 \Uppi^{\upepsilon}L(\Lambda';\Delta_{\nst}^{+})
$
for some $\varepsilon\in\{0,1\}$.
\end{corollary}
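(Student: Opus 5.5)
Let $\mu$ range over the $\even$-highest weights of the constituents of $L(\Lambda;\Delta_{\st}^{+})$ and set
\[
F(\mu)\coloneqq -(\Lambda+2\rho^{\st},\Lambda)+(\mu+2\rho^{\st},\mu)+2\sum_{i=p+1}^{m}\sum_{a=1}^{n}(\mu,\epsilon_i-\delta_a).
\]
The plan is to reduce both existence and uniqueness to the adapted Dirac inequality in the nonstandard realization, transported along $\varphi$ by Lemma~\ref{lemm:transport_Dirac_inequality}.

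First we obtain the weight $\Lambda'$. By Proposition~\ref{prop::transport_of_unitarity} and \eqref{eq::equivalence_simples_odd_reflections}, there are $\Lambda'$ and $\upepsilon$ with $\varphi\colon L(\Lambda;\Delta_{\st}^{+})\to\Uppi^{\upepsilon}L(\Lambda';\Delta_{\nst}^{+})$ a $\gg$-equivalence. The latter representation is unitary.

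Next we check that $L_0(\Lambda')$ is a constituent with $F(\Lambda')=0$. The $\gg$-highest weight vector of the nonstandard realization is annihilated by $\nn^{+}_{\nst}\supseteq\nn_{\bar0}^{+}$. It is therefore an $\even$-highest weight vector of weight $\Lambda'$, and it generates $L_0(\Lambda')$ inside the completely reducible restriction of Corollary~\ref{corollary::even-filtration-simple}. Pulling back by $\varphi$, the same $L_0(\Lambda')$ occurs in $L(\Lambda;\Delta_{\st}^{+})|_{\even}$. Since $\Lambda'$ is a weight of $L(\Lambda;\Delta_{\st}^{+})$, Lemma~\ref{lemm:transport_Dirac_inequality} with $\mu=\Lambda'$ gives
\[
F(\Lambda')=-(\Lambda'+2\rho^{\nst},\Lambda')+(\Lambda'+2\rho^{\nst},\Lambda')=0.
\]

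For uniqueness, let $L_0(\mu)$ be any constituent with $\mu\neq\Lambda'$. By Lemma~\ref{lemm::adapted_systems_slmn}, $\Delta_{\nst}^{+}$ is $\omega$-adapted with sign $\varepsilon=-1$. Applying Corollary~\ref{cor::Dirac_inequality_on_M} to the unitary module $L(\Lambda';\Delta_{\nst}^{+})$ gives the strict inequality
\[
(\mu+2\rho^{\nst},\mu)-(\Lambda'+2\rho^{\nst},\Lambda')>0.
\]
Parity reversal does not affect this, since the Dirac properties are parity-invariant. By Lemma~\ref{lemm:transport_Dirac_inequality}, the left-hand side equals $F(\mu)$, so $F(\mu)>0$. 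Equivalently, one can invoke Theorem~\ref{thm::unitarity_standard_case}(b) directly. Hence $\Lambda'$ is the only constituent on which $F$ vanishes.

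The final isomorphism is the defining property of $\Lambda'$. Because the vanishing of $F$ characterizes $\Lambda'$ intrinsically, any constituent with $F=0$ coincides with it.

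The main point requiring care is the second step: one has to be sure that $\Lambda'$ really occurs as an $\even$-highest weight of a constituent, and not merely as a weight of the representation. This follows because $\nn_{\bar0}^{+}\subseteq\nn^{+}_{\nst}$, and because the restriction is completely reducible by unitarity. A minor subtlety is that the identification of $\mu$ as a weight of both realizations is exactly how Lemma~\ref{lemm:transport_Dirac_inequality} is applied, so no further argument is needed there.
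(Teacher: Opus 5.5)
Your proposal is correct and is essentially the paper's argument. The paper reads the corollary off from Lemma~\ref{lemm:transport_Dirac_inequality}, which rewrites the corrected expression as $(\mu+2\rho^{\nst},\mu)-(\Lambda'+2\rho^{\nst},\Lambda')$ and so makes it vanish at $\mu=\Lambda'$, combined with the strict Dirac inequality for the $\omega$-adapted nonstandard realization, i.e.\ Theorem~\ref{thm::unitarity_standard_case}(b). Your explicit check that $L_0(\Lambda')$ genuinely occurs as a constituent, via $\nn_{\bar0}^{+}\subseteq\nn_{\nst}^{+}$ and complete reducibility, fills in a point the paper leaves implicit; it even occurs with multiplicity one, since the $\Lambda'$-weight space is one-dimensional.
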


\begin{remark}\label{rmk::determination_Lambda'}
There is a direct way to determine $\Lambda'$. Every $\even$-constituent of
$L(\Lambda;\Delta_{\st}^{+})$ has highest weight $\Lambda-\gamma$, where
$\gamma$ is a sum of distinct positive odd roots in $\Delta_{\st}^{+}$.
Only those weights $\Lambda-\gamma$ for which $L_0(\Lambda-\gamma)$ is
unitary and the corrected Dirac inequality is nonnegative can occur.
Among the weights $\mu=\Lambda-\gamma$ satisfying
\[
-(\Lambda+2\rho^{\st},\Lambda)
+(\mu+2\rho^{\st},\mu)
+2\sum_{i=p+1}^{m}\sum_{a=1}^{n}
(\mu,\epsilon_i-\delta_a)
=0,
\]
the weight $\Lambda'$ is the maximal one with respect to the order induced by
$\Delta_{\nst}^{+}$, that is, $\mu\leq_{\nst}\nu$ if
$\nu-\mu\in Q_{\nst}^{+}\coloneqq \sum_{\alpha \in \Delta_{\nst}^{+}}\ZZ_{\geq 0}\alpha$. Hence $\Lambda'$ can be recovered directly from
the possible $\even$-constituents of $L(\Lambda;\Delta_{\st}^{+})$.
\end{remark}

\subsection{The Dirac inequality for non-\texorpdfstring{$\omega$}{}-adapted positive systems}\label{subsec::corrected_Dirac}

Let $\gg$ be one of the Lie superalgebras $\sl(m\vert n)$, with
$m,n\geq1$, $\osp(m\vert 2n)$, with $m,n\geq1$, or $F(4)$.
In Proposition~\ref{prop::Dirac_inequality_adapted}, we established the
Dirac inequality for $\omega$-adapted positive systems, which was sharpened
in Corollary~\ref{cor::Dirac_inequality_on_M} to an inequality directly on
$M$. More precisely, if $M$ has highest weight $\Lambda$ with respect to
an $\omega$-adapted positive system of sign $\varepsilon$ and
$L_{0}(\mu)$ occurs in the restriction of $M$ to $\even$, then
\begin{equation}
-\varepsilon
\left(
(\mu+2\rho,\mu)
-
(\Lambda+2\rho,\Lambda)
\right)
\geq0.
\end{equation}
In particular, this inequality is strict whenever $\mu \neq \Lambda$. 

We now state the corresponding Dirac inequalities for the
non-$\omega$-adapted positive systems. They are obtained by transporting
the preceding inequality along the odd reflections relating the different
positive systems. More precisely, we use
Proposition~\ref{prop::transport_of_unitarity} together with the explicit
sequences of odd reflections described in
Section~\ref{subsec::structure_theory_slmn}. The resulting correction
terms are determined entirely by these odd reflections. Since the
argument is identical to the one carried out for $\slmn$ in
Section~\ref{subsec::Dirac_unitarity_slmn}, we omit the details.

For $\osp(m\vert 2n)$, recall that the relevant positive systems are
$\Delta_A^{+}$ and $\Delta_B^{+}$, which are related as described in
Lemma~\ref{lemm::relating_A_and_B_simple_system}. For the real forms $\osp(m,0\vert 2n)$ admitting nontrivial unitary highest weight representations, the Case~A positive system is $\omega$-adapted, whereas the Case~B positive system is not. For the real forms $\osp^{\ast}(2r\vert 0,2n)$, the $\omega$-adaptedness of the Case~A and Case~B positive systems is reversed. Recall that $r=\lfloor m/2\rfloor$. Transporting the Dirac
inequality from Case~A to Case~B gives the following result.

\begin{proposition}
Let $L(\Lambda;\Delta_A^{+})
\cong
 \Uppi^{\upepsilon}L(\Lambda';\Delta_B^{+})
$
be a unitary highest weight representation with respect to a
conjugate-linear anti-involution $\omega$, and assume that
$\Delta_A^{+}$ is $\omega$-adapted with sign $\varepsilon$.
Then, for every $\even$-highest weight $\mu$ occurring in the
$\even$-decomposition of $L(\Lambda';\Delta_B^{+})$, one has
\[
-\varepsilon(
-(\Lambda'+2\rho^{B},\Lambda')
+(\mu+2\rho^{B},\mu)
+2\sum_{i=1}^{r}\sum_{a=1}^{n}
(\mu,\epsilon_i-\delta_a))\geq 0.
\]
Moreover, the inequality is strict for all $\even$-constituents of highest weight $\mu \neq \Lambda$. 

Conversely, assume that $\Delta_B^{+}$ is $\omega$-adapted with sign $\varepsilon$. Then, for every $\even$-highest weight $\mu$ occurring in the $\even$-decomposition of $L(\Lambda;\Delta_A^{+})$, one has
\[
-\varepsilon\left(
-(\Lambda+2\rho^{A},\Lambda)
+(\mu+2\rho^{A},\mu)
-2\sum_{i=1}^{r}\sum_{a=1}^{n}
(\mu,\epsilon_i-\delta_a)
\right)\geq0.
\]
Moreover, the inequality is strict for all $\even$-constituents of highest weight $\mu\neq\Lambda'$.
\end{proposition}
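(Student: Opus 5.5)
The plan is to mirror the $\slmn$ argument of Section~\ref{subsec::Dirac_unitarity_slmn}. The starting point is Corollary~\ref{cor::Dirac_inequality_on_M} in the adapted realization, and the goal is to rewrite its left-hand side in the non-adapted realization by an exact weight identity, the analogue of Lemma~\ref{lemm:transport_Dirac_inequality}. We treat the first assertion, where $\Delta_A^{+}$ is $\omega$-adapted with sign $\varepsilon$. By Proposition~\ref{prop::transport_of_unitarity}, $\varphi\colon L(\Lambda;\Delta_A^{+})\to\Uppi^{\upepsilon}L(\Lambda';\Delta_B^{+})$ is a unitary equivalence, so both realizations have the same $\even$-constituents $L_0(\mu)$. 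Corollary~\ref{cor::Dirac_inequality_on_M} applied in the Case~A realization then gives
\[
-\varepsilon\bigl((\mu+2\rho^{A},\mu)-(\Lambda+2\rho^{A},\Lambda)\bigr)\geq0,
\]
with strict inequality for $\mu\neq\Lambda$, where $\Lambda$ is the Case~A highest weight.

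It remains to show that for every weight $\mu$ of the representation
\[
-(\Lambda+2\rho^{A},\Lambda)+(\mu+2\rho^{A},\mu)=-(\Lambda'+2\rho^{B},\Lambda')+(\mu+2\rho^{B},\mu)+2\sum_{i=1}^{r}\sum_{a=1}^{n}(\mu,\epsilon_i-\delta_a).
\]
I would obtain this as in Lemmas~\ref{lemm::corrected_Dirac_action} and~\ref{lemm:transport_Dirac_inequality}. By Lemma~\ref{lemm::relating_A_and_B_simple_system}, the two odd polarizations differ exactly on the isotropic roots $\pm(\epsilon_i-\delta_a)$, while the roots $\epsilon_i+\delta_a$ and, for odd $m$, $\delta_a$ are common to both. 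So we may choose bases with $\partial^{A}=\partial^{B}$ on the common part, and on the swapped block $\partial^{A}_{ia}=x^{B}_{ia}$ and $x^{A}_{ia}=-\partial^{B}_{ia}$. Evaluate $\langle\Dirac_{\gg,\even}^{2}(v\otimes1),v\otimes1\rangle$ on a weight vector $v$ of weight $\mu$ in both Fock realizations via the Parthasarathy formula. As in the proof of Lemma~\ref{lemm:transport_Dirac_inequality}, each side equals the corresponding Casimir difference $-(\Lambda+2\rho,\Lambda)+(\mu+2\rho,\mu)$ plus the common term $2\sum_{\alpha\in\Delta_{\bar0}^{+}}\langle f_\alpha e_\alpha v,v\rangle$, which $\varphi$ preserves. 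Using Remark~\ref{rmk::Explicit_form_D_square} and $\partial_i1=0$, the two Fock evaluations differ only through the swapped block. The discrepancy there is $4\sum\mu([\partial^{B}_{ia},x^{B}_{ia}])$, which, with the normalization $B(\partial,x)=\tfrac12$, equals $2\sum_{i,a}(\mu,\epsilon_i-\delta_a)$. The sign is fixed by the fact that $\epsilon_i-\delta_a$ is positive in Case~B. As a check, one can instead derive the identity purely combinatorially: track $\rho^{B}=\rho^{A}+\sum_{i,a}(\delta_a-\epsilon_i)$ and show that the $\Lambda$-dependent terms agree. For that, I would use that both sides vanish at $\mu=\Lambda'$, since $v_{\Lambda'}\otimes1_B\in\ker\Dirac_{\gg,\even}^2$ by the argument in Theorem~\ref{thm::dirac-square-action}.

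Substituting the identity into the inequality gives the stated corrected inequality. Strictness holds for every constituent other than the top constituent of the adapted Case~A realization. The converse statement, with $\Delta_B^{+}$ adapted, follows by the same argument with the roles of A and B exchanged. The block signs flip, which produces the $-2\sum(\mu,\epsilon_i-\delta_a)$ term, and strictness now fails only at the adapted top weight. The main obstacle is the bookkeeping of signs and normalizations in the correction term: whether $[\partial,x]$ on the swapped block gives $+(\mu,\epsilon_i-\delta_a)$ or its negative, and the factor $\tfrac12$ from the osp normalization of $B$. I would settle this first by checking the identity at $\mu=\Lambda$ and $\mu=\Lambda'$ in a small case such as $\osp(2\vert2)$.
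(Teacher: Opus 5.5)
Your proposal is correct and is essentially the paper's argument, which simply repeats the Fock-space comparison of Lemmas~\ref{lemm::corrected_Dirac_action} and~\ref{lemm:transport_Dirac_inequality} on the swapped block $\pm(\epsilon_i-\delta_a)$. In fact, since $\rho^{A}-\rho^{B}=\sum_{i,a}(\epsilon_i-\delta_a)$ accounts for all the $\mu$-dependence, the identity you need is equivalent to $(\Lambda+2\rho^{A},\Lambda)=(\Lambda'+2\rho^{B},\Lambda')$; this holds because $\Omega_{\gg}$ acts by one scalar on the module (or step by step via Lemma~\ref{lemma::relating_highest_weight_representations}), and it also fixes the coefficient $2$ you were worried about.

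There are two slips in your bookkeeping, neither of which affects the conclusion. First, at $\mu=\Lambda'$ the two sides do not vanish; both equal $2\sum_{i,a}(\Lambda',\epsilon_i-\delta_a)$. Second, with $B(\partial,x)=\tfrac12$ one has $\mu([\partial^{B}_{ia},x^{B}_{ia}])=(\mu,\epsilon_i-\delta_a)$, so the Fock discrepancy is really $4\sum_{i,a}(\mu,\epsilon_i-\delta_a)$. It produces the coefficient $2$ only after dividing by the overall factor $2$ in the Dirac scalar of Theorem~\ref{thm::dirac-square-action}.
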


Finally, we give the Dirac inequalities for $F(4)$. Recall that the relevant real forms are $F(4,0)$ and $F(4,2)$, associated with the conjugate-linear anti-involutions $\omega_{0}$ and $\omega_{2}$, respectively. Their corresponding $\omega$-adapted positive systems are $\Delta^{+}(\Uppi_{1})$ and $\Delta^{+}(\Uppi_{2})$, with signs $-1$ and $+1$, respectively, by Lemma~\ref{lemm::F4_adapted_positive_systems}. To keep track of the unitary $\gg$-equivalence between the different choices of positive systems, we denote by $\Lambda(\Uppi_i)$ the highest weight of the given unitary representation with respect to $\Delta^{+}(\Uppi_i)$, and by $\rho(\Uppi_i)$ the corresponding Weyl vector.

\begin{proposition}
Let
$
L\bigl(\Lambda(\Uppi_i);\Delta^{+}(\Uppi_i)\bigr)
$
be a unitary highest weight representation.

\begin{enumerate}
 \item[(a)] If the representation is unitary with respect to the real form $F(4,0)$,
then, for every $\even$-highest weight $\mu$ occurring in the $\even$-decomposition of $L\bigl(\Lambda(\Uppi_i);\Delta^{+}(\Uppi_i)\bigr)$, one has
\[
\begin{aligned}
0\leq\,
-(\Lambda(\Uppi_i)+2\rho(\Uppi_i),\Lambda(\Uppi_i))
+(\mu+2\rho(\Uppi_i),\mu)
-
\begin{cases}
0,
& i=1,\\[2mm]
4(\mu,-\epsilon_1+\delta),
& i=2,\\[2mm]
(\mu,-3\epsilon_1-\epsilon_2-\epsilon_3+3\delta),
& i=3,\\[2mm]
(\mu,-\epsilon_1-\epsilon_2-\epsilon_3+\delta),
& i=4,\\[2mm]
2(\mu,-\epsilon_1-\epsilon_2+\delta),
& i=5.
\end{cases}
\end{aligned}
\]
Moreover, the inequality is strict for all $\even$-constituents of highest weight $\mu \neq \Lambda(\Uppi_{1})$.
\item[(b)] If the representation is unitary with respect to the real form $F(4,2)$,
then, for every $\even$-highest weight $\mu$ occurring in the $\even$-decomposition of $L\bigl(\Lambda(\Uppi_i);\Delta^{+}(\Uppi_i)\bigr)$, one has
\[
\begin{aligned}
0\geq\,
-(\Lambda(\Uppi_i)+2\rho(\Uppi_i),\Lambda(\Uppi_i))
+(\mu+2\rho(\Uppi_i),\mu)
+
\begin{cases}
4(\mu,-\epsilon_1+\delta),
& i=1,\\[2mm]
0,
& i=2,\\[2mm]
(\mu,-\epsilon_1+\epsilon_2+\epsilon_3+\delta),
& i=3,\\[2mm]
(\mu,-3\epsilon_1+\epsilon_2+\epsilon_{3}+3\delta),
& i=4,\\[2mm]
2(\mu,-\epsilon_1+\epsilon_2+\delta),
& i=5.
\end{cases}
\end{aligned}
\]
Moreover, the inequality is strict for all $\even$-constituents of highest weight $\mu \neq \Lambda(\Uppi_{2})$.
\end{enumerate}
\end{proposition}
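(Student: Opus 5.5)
The plan mirrors the proof of Lemma~\ref{lemm:transport_Dirac_inequality}, which needs only the identity
\[
(\mu+2\rho,\mu)-(\Lambda+2\rho,\Lambda)\quad\text{computed in two positive systems}.
\]
Fix $i$ and let $j_0$ be the $\omega$-adapted index: $j_0=1$ for $F(4,0)$ (sign $-1$) and $j_0=2$ for $F(4,2)$ (sign $+1$), by Lemma~\ref{lemm::F4_adapted_positive_systems}. By Proposition~\ref{prop::transport_of_unitarity}, there is a unitary $\gg$-equivalence
\[
\varphi\colon L(\Lambda(\Uppi_i);\Delta^+(\Uppi_i))\to\Uppi^{\upepsilon}L(\Lambda(\Uppi_{j_0});\Delta^+(\Uppi_{j_0})),
\]
which identifies $\even$-constituents and weights. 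Corollary~\ref{cor::Dirac_inequality_on_M} in the adapted realization then gives
\[
-\varepsilon\bigl((\mu+2\rho(\Uppi_{j_0}),\mu)-(\Lambda(\Uppi_{j_0})+2\rho(\Uppi_{j_0}),\Lambda(\Uppi_{j_0}))\bigr)\geq0,
\]
with strict inequality for $\mu\neq\Lambda(\Uppi_{j_0})$. So it suffices to show, for every weight $\mu$ of the representation, that
\[
-(\Lambda(\Uppi_{j_0})+2\rho(\Uppi_{j_0}),\Lambda(\Uppi_{j_0}))+(\mu+2\rho(\Uppi_{j_0}),\mu)
=-(\Lambda(\Uppi_i)+2\rho(\Uppi_i),\Lambda(\Uppi_i))+(\mu+2\rho(\Uppi_i),\mu)+\mathcal C_{j_0}^{i}(\mu),
\]
where $\mathcal C^i_{j_0}(\mu)$ is the stated linear correction term.

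The identity is derived as in Lemma~\ref{lemm:transport_Dirac_inequality}. Take a weight vector $v$ of weight $\mu$ and evaluate the Parthasarathy formula (Theorem~\ref{thm::properties_D}(c)) on $v\otimes1$ in both Fock realizations. The $\Omega_{\even}$ contributions $2\sum_{\alpha\in\Delta_{\bar0}^+}\langle f_\alpha e_\alpha v,v\rangle$ agree, because $\Delta_{\bar0}^+$ is common to both systems and $\varphi$ is isometric. The Casimir eigenvalue $(\Lambda+2\rho,\Lambda)$ is intrinsic to the representation.

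Hence the only discrepancy comes from $\rho_{\bar1}$, that is, from the odd roots whose sign differs between $\Delta^+(\Uppi_i)$ and $\Delta^+(\Uppi_{j_0})$. Equivalently, one writes $\Dirac^2$ for $\Uppi_{j_0}$ in the $\Uppi_i$-basis: each flipped pair $(\partial,x)\mapsto(x,-\partial)$ contributes $4[\partial,x]$ acting by $2(\mu,\beta)$, exactly as $\mathcal C^{\st}_{\nst}$ arose. The sign of each contribution is dictated by $\omega$ on that pair, using the explicit formulas for $\omega_0$ and $\omega_2$ on $y_k$ and $z_k$.

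Concretely, the correction is $2\sum_\beta(\mu,\beta)$ over the isotropic roots $\beta\in\Delta^+(\Uppi_i)\setminus\Delta^+(\Uppi_{j_0})$, up to the overall sign convention. Using~\eqref{eq::odd_root_space_different_systems_F(4)} and Lemma~\ref{lemm::odd_reflections_F(4)}, the flipped sets are read off along the chain $\Uppi_1\leftrightarrow\Uppi_4\leftrightarrow\Uppi_5\leftrightarrow\Uppi_3\leftrightarrow\Uppi_2$. For example, from $\Uppi_1$ to $\Uppi_4$ only $\wt(z_1)$ flips, and
\[
2(\mu,\wt(z_1))=(\mu,-\epsilon_1-\epsilon_2-\epsilon_3+\delta).
\]
From $\Uppi_1$ to $\Uppi_2$, the roots $\wt(z_1),\dots,\wt(z_4)$ flip, and their sum is $-2\epsilon_1+2\delta$, giving $4(\mu,-\epsilon_1+\delta)$. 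The cases $i=3,5$ are obtained the same way, as sums over $\{z_1,z_2,z_3\}$ and $\{z_1,z_2\}$ relative to $\Uppi_1$. For (b) one uses the complementary sets relative to $\Uppi_2$, namely $\wt(y_8),\dots$.

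The main obstacle is bookkeeping of signs. The overall sign $-\varepsilon$ and the sign of each correction depend on how $\omega_0$ or $\omega_2$ pairs the flipped root vectors and on the normalization $B(\partial,x)=\tfrac12$. I would verify these signs on the single reflection $\Uppi_1\to\Uppi_4$ and, as a consistency check, confirm that the identity holds for $\mu=\Lambda(\Uppi_i)$ using Lemma~\ref{lemma::relating_highest_weight_representations}. Strictness transfers directly from the adapted case, giving strict inequality whenever $\mu\neq\Lambda(\Uppi_{j_0})$.
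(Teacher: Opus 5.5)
Your proposal is correct and matches the paper's argument: the paper also transports the adapted Dirac inequality of Corollary~\ref{cor::Dirac_inequality_on_M} along the equivalence of Proposition~\ref{prop::transport_of_unitarity}, using the analogue of Lemma~\ref{lemm:transport_Dirac_inequality}, and omits the details as identical to the $\slmn$ case. The sign bookkeeping you flag is not a real obstacle and needs no input from $\omega$: since $(\Lambda+2\rho,\Lambda)$ is the Casimir eigenvalue in every realization, the correction is exactly $2(\rho(\Uppi_{j_0})-\rho(\Uppi_i),\mu)$ with $\rho(\Uppi_{j_0})-\rho(\Uppi_i)=\sum_{\beta\in\Delta^{+}(\Uppi_i)\setminus\Delta^{+}(\Uppi_{j_0})}\beta$. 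In (a) this sum is minus the sum of $\wt(z_1),\dots,\wt(z_k)$, which explains the minus sign in front of the cases; in (b) it is the sum of $\wt(z_k),\dots,\wt(z_4)$, not a set starting at $\wt(y_8)$.
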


\section{Unitary branching}\label{sec::unitary_branching}\noindent
Let $\gg$ be either $\sl(m\vert n)$ with $m,n\geq1$, $\osp(m\vert 2n)$ with $m,n\geq1$ or $F(4)$. We solve the branching problem $\gg\downarrow \even$ of unitary highest weight representations. Throughout this section, we fix a real form of $\gg$ determined by a conjugate-linear anti-involution $\omega$ and, unless otherwise stated, an $\omega$-adapted positive system $\Delta^{+}$.

Let $\HH=L(\Lambda;\Delta^{+})$ be a unitary highest weight representation of highest weight $\Lambda\in\hh^{\ast}$. Recall that its weight set satisfies $\mathcal P_{\HH}\subseteq\Lambda-Q^{+}$, where
\begin{equation}
Q^{+}\coloneqq\sum_{\alpha\in\Delta^{+}}\ZZ_{\geq0}\alpha,
\qquad
\nu>\mu
\quad\Longleftrightarrow\quad
\nu-\mu\in Q^{+}\setminus\{0\}.
\end{equation}
As above, in the case $m=n$ all weight-theoretic statements for
$\sl(n\vert n)$ are understood after passing to $\gl(n\vert n)$.
Analogously, we denote by $Q_{\bar{0}}^{+}$ the positive root lattice of the even root system. We use this partial order throughout the section. By Corollary~\ref{corollary::even-filtration-simple}, the restriction of $\HH$ to $\even$ is a finite direct sum of unitary highest weight $\even$-representations,
\begin{equation}
\HH\big\vert_{\even}
\cong
\bigoplus_{\gamma\in\Gamma_{L}(\Lambda)}
L_0(\Lambda-\gamma)^{\oplus m_{\Lambda-\gamma}},
\end{equation}
for some finite subset $\Gamma_{L}(\Lambda)\subseteq\Gamma$, where $m_{\Lambda-\gamma}\in\ZZ_{\geq0}$ denotes the multiplicity of $L_0(\Lambda-\gamma)$. Recall that $\Gamma$ consists of weights of the form $\gamma=\sum_{\alpha\in S}\alpha+\sum_{\beta\in\Delta_{\bar0,nc}^{+}}n_\beta\beta$, where $S\subseteq\Delta_{\bar1}^{+}$ and $n_\beta\in\ZZ_{\geq0}$, while for $\gg=\sl(m\vert n)$ one may take $\Gamma=\{\sum_{\alpha\in S}\alpha:S\subseteq\Delta_{\bar1}^{+}\}$.

Unitarity imposes further necessary conditions on the elements of $\Gamma_{L}(\Lambda)$. Indeed, if $\gamma\in\Gamma_{L}(\Lambda)$, then $L_0(\Lambda-\gamma)$ is a unitary highest weight $\even$-representation, and Corollary~\ref{cor::Dirac_inequality_on_M} implies
\begin{equation}
-\varepsilon
\left(
(\Lambda-\gamma+2\rho,\Lambda-\gamma)
-
(\Lambda+2\rho,\Lambda)
\right)
>0
\end{equation}
whenever $\gamma\neq0$. Consequently, $\Gamma_{L}(\Lambda)
\subseteq \Gamma(\Lambda)$ with
\begin{equation}
\Gamma(\Lambda)\coloneqq
\left\{
\gamma\in\Gamma:
L_0(\Lambda-\gamma)\text{ is unitary and }
-\varepsilon
\left(
(\Lambda-\gamma+2\rho,\Lambda-\gamma)
-
(\Lambda+2\rho,\Lambda)
\right)>0
\text{ if }\gamma\neq0
\right\}.
\end{equation}

For $\gg \neq \sl(m\vert n)$, the set $\Gamma(\Lambda)$ is infinite. However, the finiteness argument of Corollary~\ref{corollary::even-filtration-simple} yields the following explicit criterion, which bounds the possible even highest weights occurring in the restriction and, in particular, provides a finite stopping rule for the computation of the branching multiplicities.

For $\gamma=\sum_{\alpha\in\Pi}n_{\alpha}\alpha\in Q^{+}$, we define its height by $\operatorname{ht}(\gamma)\coloneqq\sum_{\alpha\in\Pi}n_{\alpha}$, where $\Pi$ is the simple system corresponding to $\Delta^{+}$.

\begin{lemma}\label{lemm::finite_stopping_branching}
Let $\HH=L(\Lambda;\Delta^{+})$ be a unitary highest weight representation.
\begin{enumerate}
\item[(a)] If $m_{\Lambda-\gamma}\neq0$ for some $\gamma\in\Gamma(\Lambda)$, then there exists a sum $\eta$ of pairwise distinct positive odd roots such that $\eta-\gamma\in Q_{\bar0}^{+}$.

\item[(b)] Let $N\coloneqq\max\{\operatorname{ht}(\eta):\eta\text{ is a sum of pairwise distinct positive odd roots}\}$. If $m_{\Lambda-\gamma}\neq0$, then $\operatorname{ht}(\gamma)\leq N$.
\end{enumerate}
\end{lemma}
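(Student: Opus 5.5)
The idea is to make the finiteness argument of Corollary~\ref{corollary::even-filtration-simple} quantitative. We show that $\HH$ is generated as a $\even$-representation by finitely many explicit weight vectors of weight $\Lambda-\eta_S$, where $\eta_S$ is a sum of distinct positive odd roots. Then every $\even$-type must contain one of these weights, and this forces $\Lambda-\eta_S\leq\Lambda-\gamma$ in the even order. That is exactly the direction required in (a).

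\emph{Step 1: a $\even$-generating set.} By the PBW theorem for Lie superalgebras we may order a homogeneous basis freely. We choose the ordering
\[
\UE(\gg)=\UE(\even)\cdot\textstyle\bigwedge(\nn_{\bar1}^{-})\cdot\bigwedge(\nn_{\bar1}^{+}),
\]
meaning that $\UE(\gg)$ is spanned by products $u\,x_S\,\partial_T$. Here $u\in\UE(\even)$, and $x_S$, $\partial_T$ are ordered products of pairwise distinct negative, respectively positive, odd root vectors. Squares of odd elements are even and of lower filtration degree, so the usual filtration argument shows these products span.

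Since $\nn_{\bar1}^{+}v_\Lambda=0$, we get $\partial_Tv_\Lambda=0$ for $T\neq\emptyset$. Hence
\[
\HH=\UE(\gg)v_\Lambda=\sum_{S\subseteq\Delta_{\bar1}^{+}}\UE(\even)\,x_Sv_\Lambda .
\]
Each vector $x_Sv_\Lambda$ is a weight vector of weight $\Lambda-\eta_S$, where $\eta_S\coloneqq\sum_{\alpha\in S}\alpha$. In the case $m=n$ we pass to $\gl(n\vert n)$ as agreed.

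\emph{Step 2: locating a $\even$-type.} By Corollary~\ref{corollary::even-filtration-simple}, $\HH\vert_{\even}$ is the direct sum of its isotypic components $\HH_{[\nu]}$. Let $P_\nu$ denote the $\even$-equivariant projection onto $\HH_{[\nu]}$. Suppose $m_{\Lambda-\gamma}\neq0$. If $P_{\Lambda-\gamma}(x_Sv_\Lambda)=0$ for every $S$, then by equivariance $P_{\Lambda-\gamma}$ vanishes on $\sum_S\UE(\even)x_Sv_\Lambda=\HH$, which is a contradiction.

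So there is some $S$ with $0\neq P_{\Lambda-\gamma}(x_Sv_\Lambda)$. This vector is a weight vector of weight $\Lambda-\eta_S$ inside a direct sum of copies of $L_0(\Lambda-\gamma)$. All weights of $L_0(\Lambda-\gamma)$ lie in $\Lambda-\gamma-Q_{\bar0}^{+}$, so $\Lambda-\eta_S\in\Lambda-\gamma-Q_{\bar0}^{+}$. Equivalently, $\eta_S-\gamma\in Q_{\bar0}^{+}$, and (a) holds with $\eta=\eta_S$.

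\emph{Step 3: part (b).} Since $Q_{\bar0}^{+}\subseteq Q^{+}$, the element $\eta-\gamma$ is a nonnegative integer combination of simple roots, so $\operatorname{ht}(\eta-\gamma)\geq0$. Also $\gamma\in Q^{+}$, and height is additive on $Q^{+}$. Therefore
\[
\operatorname{ht}(\gamma)=\operatorname{ht}(\eta)-\operatorname{ht}(\eta-\gamma)\leq\operatorname{ht}(\eta)\leq N .
\]

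The main point to check carefully is Step 1: the PBW spanning statement with this particular ordering, with the odd-positive factors placed rightmost. Once that spanning set is available, everything else is formal, given the complete reducibility supplied by unitarity.
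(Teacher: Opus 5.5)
Your proposal is correct and follows the paper's proof essentially step for step: PBW with the positive odd factors placed rightmost gives $\HH=\sum_S\UE(\even)x_Sv_\Lambda$; an equivariant projection onto $L_0(\Lambda-\gamma)$ must be nonzero on some $x_Sv_\Lambda$, which forces $\eta_S-\gamma\in Q_{\bar0}^{+}$; and (b) follows from additivity of height. The only differences are cosmetic: you project onto the full isotypic component rather than onto a single copy, and you make explicit the PBW ordering that the paper uses implicitly.
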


\begin{proof}
Choose nonzero root vectors $x_{-\alpha}\in\gg^{-\alpha}$ for
$\alpha\in\Delta_{\bar1}^{+}$. By the PBW theorem, the odd root vectors
occur with exponents at most one. Since the positive root spaces
annihilate the highest weight vector $v_{\Lambda}$, it follows that
$\HH$ is generated as a $\UE(\even)$-representation by the finitely many vectors
\[
x_{-\alpha_1}\cdots x_{-\alpha_r}v_{\Lambda},
\]
where $\alpha_1,\ldots,\alpha_r\in\Delta_{\bar1}^{+}$ are pairwise
distinct.

Assume that $m_{\Lambda-\gamma}\neq0$. Since $\HH$ is unitary, its
restriction to $\even$ is completely reducible. Hence there exists a
$\even$-equivariant projection
\[
\pi:\HH\longrightarrow L_0(\Lambda-\gamma)
\]
onto one copy of $L_0(\Lambda-\gamma)$. As the above vectors generate
$\HH$ over $\UE(\even)$, there exist pairwise distinct
$\alpha_1,\ldots,\alpha_r\in\Delta_{\bar1}^{+}$ such that
\[
\pi\bigl(x_{-\alpha_1}\cdots x_{-\alpha_r}v_{\Lambda}\bigr)\neq0.
\]
Set $\eta\coloneqq\alpha_1+\cdots+\alpha_r$.
The vector
$x_{-\alpha_1}\cdots x_{-\alpha_r}v_{\Lambda}$ has weight
$\Lambda-\eta$. Since $\pi$ preserves weights, $\Lambda-\eta$ is a
weight of the highest weight $\even$-representation $L_0(\Lambda-\gamma)$.
Therefore
\[
\Lambda-\eta
=
\Lambda-\gamma-\beta
\]
for some $\beta\in Q_{\bar0}^{+}$. Hence
$
\eta-\gamma=\beta\in Q_{\bar0}^{+},
$
which proves~(a).

For~(b), write $\eta-\gamma\in Q_{\bar0}^{+}\subseteq Q^{+}$. It follows
that
\[
\operatorname{ht}(\gamma)
\leq
\operatorname{ht}(\eta) \leq N
\]
by the definition of $N$. This proves (b).
\end{proof}

These conditions considerably restrict the possible constituents, but they do not determine whether a given candidate actually occurs in the simple representation.

\begin{remark}\label{rmk::Dirac_inequality_not_enough}
Strictness of the Dirac inequality for an individual $\even$-constituent of a Kac module does not imply that this constituent survives in the unique simple quotient. Thus, the strict Dirac inequality is first of all a necessary condition for a $\even$-constituent to occur in a unitary simple representation, rather than a sufficient condition for its occurrence.

Consider $\gg=\mathfrak{sl}(2\vert 1)$ with the standard positive system $\Delta_{\bar0}^{+}=\{\epsilon_1-\epsilon_2\}$ and $\Delta_{\bar1}^{+}=\{\epsilon_1-\delta_1,\epsilon_2-\delta_1\}$. The corresponding Weyl vector is $\rho=-\epsilon_2+\delta_1$. Let $\Lambda=(1,0\vert0)$ and consider the Kac module $K(\Lambda)\coloneqq K(L_{0}(\Lambda))$. Its restriction to $\even$ decomposes as
\begin{equation*}
K(1,0\vert0)\big\vert_{\even}
\cong
L_0(1,0\vert0)
\oplus
L_0(0,0\vert1)
\oplus
L_0(1,-1\vert1)
\oplus
L_0(0,-1\vert2).
\end{equation*}
For a $\even$-constituent $L_0(\Lambda-\gamma)$, the strict Dirac inequality is $2(\Lambda+\rho,\gamma)-(\gamma,\gamma)>0$. Since $\Lambda+\rho=\epsilon_1-\epsilon_2+\delta_1$, the three nontrivial constituents correspond to
\begin{equation*}
\begin{aligned}
L_0(0,0\vert1):\quad
&\gamma=\epsilon_1-\delta_1,
&&2(\Lambda+\rho,\gamma)-(\gamma,\gamma)=4,\\
L_0(1,-1\vert1):\quad
&\gamma=\epsilon_2-\delta_1,
&&2(\Lambda+\rho,\gamma)-(\gamma,\gamma)=0,\\
L_0(0,-1\vert2):\quad
&\gamma=\epsilon_1+\epsilon_2-2\delta_1,
&&2(\Lambda+\rho,\gamma)-(\gamma,\gamma)=6.
\end{aligned}
\end{equation*}
In particular, the Dirac inequality holds strictly on $L_0(0,-1\vert2)$. However, the unique simple quotient of $K(1,0\vert0)$ is isomorphic to the unitary standard representation $L(1,0\vert0)\cong\CC^{2\vert1}$, and (see also Example~\ref{ex::easiest_example})
\begin{equation*}
L(1,0\vert0)\big\vert_{\even}
\cong
L_0(1,0\vert0)\oplus L_0(0,0\vert1).
\end{equation*}
Hence $L_0(0,-1\vert2)$ does not occur in the unique simple quotient, although the corresponding Dirac inequality is strict.
\end{remark}

The unitary branching problem is therefore to determine the subset $\Gamma_{L}(\Lambda)\subseteq\Gamma$ and, for each $\gamma\in\Gamma_{L}(\Lambda)$, the multiplicity $m_{\Lambda-\gamma}$ of $L_0(\Lambda-\gamma)$ in $\HH\vert_{\even}$. Neither the filtration of a Verma or Kac module nor the Dirac inequalities on its $\even$-constituents determine these data, since they do not decide which of the possible constituents survive in the simple quotient. Our approach below instead works directly with the weight multiplicities of the simple representation $\HH$. A Freudenthal-type identity yields a recursive determination of the multiplicities $m_{\Lambda-\gamma}$ and, in particular, of the set $\Gamma_{L}(\Lambda)$, without requiring a separate analysis according to the degree of atypicality.

We begin with a Dirac-theoretic derivation of the super analogue of Freudenthal's identity. To the best of our knowledge, this identity has received very little attention in the Lie superalgebra literature and has previously been treated explicitly only in \cite{Kang_Oh, Kang_Oh_Freudenthal}. The point of the argument below is different: the identity arises directly from the square of the relative Dirac operator $\Dirac_{\gg,\even}$. In particular, the quadratic expression appearing on the left-hand side is precisely the quantity governing the Dirac inequality in the unitary setting. Thus the Freudenthal recursion and the Dirac inequality have a common Dirac-theoretic origin.

 Although our main interest lies in unitary representations, the following argument applies more generally to any highest weight representation $M$ admitting an $\omega$-contravariant Hermitian form $\langle\cdot,\cdot\rangle_M$. For any $\mu\in\hh^{\ast}$, we denote by $d(\mu)$ the dimension of the weight space $M^\mu$, where $M^{\mu}=\{0\}$ if $\mu \notin \mathcal{P}_{M}$. We set $d(\mu)=0$ if $\mu \notin \mathcal{P}_{M}$. The square of $\Dirac_{\gg,\even}$ then yields the following identity for highest weight representations of $\gg$.

\begin{proposition}\label{prop::Freudenthal_identity}
Let $M$ be a highest weight representation of highest weight $\Lambda$. Then, for every $\mu\in\mathcal{P}_{M}$,
\[
\bigl(-(\Lambda+2\rho,\Lambda)+(\mu+2\rho,\mu)\bigr)d(\mu)
=
-2\sum_{\alpha\in\Delta^{+}}\sum_{k\geq1}
(-1)^{(k+1)p(\alpha)}
(\mu+k\alpha,\alpha)d(\mu+k\alpha).
\]
\end{proposition}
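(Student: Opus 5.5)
The plan is to compute the trace of the Casimir element on each weight space $M^{\mu}$ and to evaluate the root-vector part of that trace by a string recursion. I would obtain the Casimir expression from the square of the Dirac operator, following Theorem~\ref{thm::properties_D}(c) and Remark~\ref{rmk::Explicit_form_D_square}. Evaluating $\Dirac_{\gg,\even}^{2}$ on $v\otimes 1$ with $v\in M^{\mu}$, the $\Omega_{\even,\Delta}$ part and the constant $-\frac18\tr_{\odd}(\Omega_{\even})$ recombine, exactly as in the proof of Theorem~\ref{thm::dirac-square-action}, into $(\mu+2\rho,\mu)$ plus quadratic terms in root vectors. The $-\Omega_{\gg}\otimes 1$ part contributes the scalar $-(\Lambda+2\rho,\Lambda)$, since $M$ is a highest weight module and $\Omega_{\gg}$ is central. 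Equivalently, and this is what I would actually write down, I would express $\Omega_{\gg}$ in a PBW-adapted form. With root vectors normalized by $B(e_\alpha,f_\alpha)=1$ as in the proof of Lemma~\ref{lemm:transport_Dirac_inequality}, and after moving all $e_\alpha$ to the right and using $[e_\alpha,f_\alpha]=h_\alpha$, this gives an identity on $M^{\mu}$ of the form
\[
(\Lambda+2\rho,\Lambda)\,\id_{M^{\mu}}=(\mu+2\rho,\mu)\,\id_{M^{\mu}}+2\sum_{\alpha\in\Delta^{+}}f_\alpha e_\alpha\big\vert_{M^{\mu}} .
\]
The $\rho=\rho_{\bar0}-\rho_{\bar1}$ shift arises precisely from the supersymmetry signs $B(f_\alpha,e_\alpha)=(-1)^{p(\alpha)}$ when reordering.

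Taking traces over the finite-dimensional space $M^{\mu}$ yields
\[
\bigl((\Lambda+2\rho,\Lambda)-(\mu+2\rho,\mu)\bigr)d(\mu)=2\sum_{\alpha\in\Delta^{+}}T_\alpha(\mu),
\qquad T_\alpha(\nu)\coloneqq\tr_{M^{\nu}}(f_\alpha e_\alpha).
\]
The next step is the string recursion. Since $e_\alpha\colon M^{\nu}\to M^{\nu+\alpha}$ and $f_\alpha\colon M^{\nu+\alpha}\to M^{\nu}$ are ordinary linear maps, cyclicity gives $\tr_{M^{\nu}}(f_\alpha e_\alpha)=\tr_{M^{\nu+\alpha}}(e_\alpha f_\alpha)$. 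The super bracket gives $e_\alpha f_\alpha=(-1)^{p(\alpha)}f_\alpha e_\alpha+h_\alpha$, where $h_\alpha$ acts on $M^{\nu+\alpha}$ by $(\nu+\alpha,\alpha)$. Hence
\[
T_\alpha(\nu)=(-1)^{p(\alpha)}T_\alpha(\nu+\alpha)+(\nu+\alpha,\alpha)\,d(\nu+\alpha).
\]
Because $\mathcal P_M\subseteq\Lambda-Q^{+}$, the space $M^{\mu+k\alpha}$ vanishes for $k$ large. Iterating the recursion therefore terminates and gives
\[
T_\alpha(\mu)=\sum_{k\geq1}(-1)^{(k-1)p(\alpha)}(\mu+k\alpha,\alpha)\,d(\mu+k\alpha).
\]
Since $(-1)^{(k-1)p(\alpha)}=(-1)^{(k+1)p(\alpha)}$, substituting this into the trace identity and multiplying by $-1$ produces exactly the claimed formula.

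This argument also works when $\alpha$ is isotropic: there $(\mu+k\alpha,\alpha)=(\mu,\alpha)$, and the recursion is still valid without using $e_\alpha^{2}=0$. It likewise covers non-isotropic odd roots of $\osp(2r+1\vert 2n)$, where $2\alpha$ is even. The Freudenthal identity uses only the trace, not unitarity or a contravariant form.

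I expect the main obstacle to be the first step: fixing the normalizations and signs so that the Casimir, written with the factor $B=\frac12(\cdot,\cdot)$ and the supersymmetric dual bases, yields precisely the coefficient $2$ in front of $\sum f_\alpha e_\alpha$. It must also reproduce precisely $(\mu+2\rho,\mu)$ with the super Weyl vector $\rho=\rho_{\bar0}-\rho_{\bar1}$, rather than $\rho_{\bar0}+\rho_{\bar1}$. I would check this by verifying the formula at $\mu=\Lambda$, where both sides vanish, and on the standard representation of $\sl(2\vert1)$ from Remark~\ref{rmk::Dirac_inequality_not_enough}. For $\sl(n\vert n)$ I would pass to $\gl(n\vert n)$ as in the discussion following Theorem~\ref{thm::dirac-square-action}, where the scalar is independent of the extension.
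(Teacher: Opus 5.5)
Your proof is correct, and it reaches the key trace identity by a more direct route than the paper. The string recursion afterwards is the same in both.

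\textbf{How the paper gets the trace identity.} The paper obtains $\bigl((\Lambda+2\rho,\Lambda)-(\mu+2\rho,\mu)\bigr)d(\mu)=\sum_{\alpha\in\Delta^{+}}\tr_{M^{\mu}}(f_\alpha e_\alpha)$, in its normalization $B(e_\alpha,f_\alpha)=1$. It evaluates the matrix coefficients $\langle\Dirac_{\gg,\even}^{2}(v_a\otimes1),v_a\otimes1\rangle$ in two ways. One side uses the Parthasarathy formula. The other uses the explicit square formula of Remark~\ref{rmk::Explicit_form_D_square} together with Bargmann--Fock orthogonality of the vacuum. It then sums over a pseudo-orthonormal basis of $M^{\mu}$ for an $\omega$-contravariant Hermitian form.

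\textbf{How your route differs.} You use only that $\Omega_{\gg}$ is central, hence acts on $M=\UE(\gg)v_\Lambda$ by the scalar read off at $v_\Lambda$, and you reorder it in PBW form. This is the classical Freudenthal argument.

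\textbf{What each approach buys.} Your route is more elementary and matches the generality of the statement. It holds for every highest weight module, including reducible Verma and Kac modules. The paper's pseudo-orthonormal basis needs a contravariant form that is nondegenerate on each $M^{\mu}$. Strictly, that covers simple $M$ with $\overline{\Lambda(H)}=\Lambda(\omega(H))$, not modules whose Shapovalov form is degenerate. What the paper's route buys is the reading of the coefficient of $d(\mu)$ as a diagonal matrix coefficient of $\Dirac_{\gg,\even}^{2}$. That is how it ties the recursion to the Dirac inequality and motivates the corrected identities for non-adapted positive systems.

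\textbf{A small normalization slip.} Your two displayed formulas, with the factor $2$ in front of $\sum f_\alpha e_\alpha$ and $h_\alpha$ acting by $(\nu+\alpha,\alpha)$, are the ones for $(e_\alpha,f_\alpha)=1$, i.e.\ $B(e_\alpha,f_\alpha)=\tfrac12$. With $B(e_\alpha,f_\alpha)=1$ as you stated, the factor $2$ moves out of the Casimir identity and into $[e_\alpha,f_\alpha]$, which then acts by $2(\nu,\alpha)$ on $M^{\nu}$. The product of the two factors is $2$ either way, so the final identity is unaffected.
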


\begin{proof} Choose a pseudo-orthonormal basis $\{v_a\}$ of the weight space $M^\mu$, so that
\[
\langle v_a,v_b\rangle_M
=
\varepsilon_a\delta_{ab},
\qquad
\varepsilon_a\in\{\pm1\}.
\] We evaluate
$
\varepsilon_a
\bigl\langle
\Dirac_{\gg,\even}^{2}(v_a\otimes1),
v_a\otimes1
\bigr\rangle_{M\otimes M(\odd)}
$
in two ways and sum over $a$. Note that for every endomorphism
$T\in\operatorname{End}(M^\mu)$, one has
$
\operatorname{tr}_{M^\mu}(T)
=
\sum_a
\varepsilon_a
\langle Tv_a,v_a\rangle_M.
$

By the Parthasarathy square formula and
 Remark~\ref{rmk::Explicit_form_D_square},
 \[
 \Dirac_{\gg,\even}^{2}
 =
 -\Omega_{\gg}\otimes1+\Omega_{\even,\Delta}+C
 =
 2\sum_{i,j=1}^{mn}
 \bigl(
 [\partial_i,\partial_j]\otimes x_ix_j
 +[x_i,x_j]\otimes\partial_i\partial_j
 -2[\partial_i,x_j]\otimes x_i\partial_j
 \bigr)
 -4\sum_{i=1}^{mn}x_i\partial_i\otimes1.
 \]
 Since $\partial_i1=0$ and the vacuum $1$ is orthogonal to every polynomial of positive degree with respect to the Bargmann--Fock form,
 \[
 \langle\Dirac_{\gg,\even}^{2}(v_a\otimes1),v_a\otimes1\rangle_{M\otimes M(\odd)}
 =
 -4\sum_{i=1}^{mn}\langle x_i\partial_i v_a,v_a\rangle_M.
 \]

 We now evaluate the left-hand side of the Parthasarathy formula. Choose root vectors $e_\alpha\in\gg_\alpha$ and $f_\alpha\in\gg^{-\alpha}$ for all $\alpha\in\Delta^+$. Normalize them by $B(e_\alpha,f_\alpha)=1$, such that we can identify $e_{\alpha_i}=\sqrt{2}\partial_i$ and $f_{\alpha_i}=\sqrt{2}x_i$. If $\{h_r\}$ and $\{h^r\}$ are dual bases of $\hh$, then
 \[
 \Omega_{\even}
 =
 \sum_r h_rh^r
 +
 \sum_{\alpha\in\Delta_{\bar0}^+}
 (e_\alpha f_\alpha+f_\alpha e_\alpha).
 \]
 Writing $\rho=\rho_{\bar0}-\rho_{\bar1}$, we have
 $-2\sum_i[\partial_i,x_i]v_a=-4(\rho_{\bar1},\mu)v_a$. Hence
 \[
 \begin{aligned}
\langle\Dirac_{\gg,\even}^{2}(v_a\otimes1),v_a\otimes1\rangle
 =
 2(-(\Lambda+2\rho,\Lambda)+(\mu+2\rho,\mu)) +2\sum_{\alpha\in\Delta_{\bar0}^+}
 \langle f_\alpha e_\alpha v_a,v_a\rangle_M.
 \end{aligned}
 \]
 Therefore, after summing over $a$ and including $\varepsilon_{a}$,
 \[
 \begin{aligned}
 2(-(\Lambda+2\rho,\Lambda)+(\mu+2\rho,\mu))d(\mu) =
 -2\sum_{\alpha\in\Delta^+}
 \tr_{M^\mu}(f_\alpha e_\alpha).
 \end{aligned}
 \]

For any $\alpha\in\Delta^+$, cyclicity of the trace and
$e_\alpha f_\alpha=[e_\alpha,f_\alpha]+(-1)^{p(\alpha)}f_\alpha e_\alpha$ give
\[
\begin{aligned}
\tr_{M^\mu}(f_\alpha e_\alpha)
&=\tr_{M^{\mu+\alpha}}(e_\alpha f_\alpha)\\
&=2(\mu+\alpha,\alpha)d(\mu+\alpha)
+(-1)^{p(\alpha)}\tr_{M^{\mu+\alpha}}(f_\alpha e_\alpha)\\
&=2\sum_{k\geq1}(-1)^{(k+1)p(\alpha)}
(\mu+k\alpha,\alpha)d(\mu+k\alpha).
\end{aligned}
\]
Here the iteration terminates since $M^{\mu+k\alpha}=0$ for suffiently large $k$ as $M$ is of highest weight type. Summing over all $\alpha \in \Delta^{+}$ yields the statement.
\end{proof}

For a fixed conjugate-linear anti-involution $\omega$ admitting nontrivial unitary highest weight representations, one may choose an $\omega$-adapted positive system (see Section~\ref{subsec:real_forms_slmb}). For such a choice, Corollary~\ref{cor::Dirac_inequality_on_M} shows that the relevant eigenvalue of $\Dirac_{\gg,\even}^{2}$ is strictly positive or strictly negative on every non-top $\even$-constituent. In particular, the coefficient of $d(\mu)$ in the preceding Freudenthal identity does not vanish when $\mu$ is the highest weight of such a constituent. We therefore obtain the following recursive formula for the dimensions of these weight spaces.

\begin{corollary}
Let $M=L(\Lambda;\Delta^{+})$ be a unitary highest weight representation defined with respect to an $\omega$-adapted positive system. Then, for every $\gamma\in\Gamma_{L}(\Lambda)\setminus\{0\}$,
\begin{equation*}
d(\Lambda-\gamma)
=
\frac{2}{
(\Lambda+2\rho,\Lambda)
-
(\Lambda-\gamma+2\rho,\Lambda-\gamma)
}
\sum_{\alpha\in\Delta^{+}}\sum_{k\geq1}
(-1)^{(k+1)p(\alpha)}
(\Lambda-\gamma+k\alpha,\alpha)
d(\Lambda-\gamma+k\alpha).
\end{equation*}
\end{corollary}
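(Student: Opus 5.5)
This corollary follows by solving the super Freudenthal identity of Proposition~\ref{prop::Freudenthal_identity} for $d(\mu)$ at $\mu=\Lambda-\gamma$. The only real work is to check that the coefficient of $d(\mu)$ is nonzero.

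First, I check that Proposition~\ref{prop::Freudenthal_identity} applies. It requires an $\omega$-contravariant Hermitian form on $M$. Since $M=L(\Lambda;\Delta^{+})$ is unitary, its Shapovalov form is such a form, and it is even positive definite. I also need $\mu=\Lambda-\gamma\in\mathcal P_M$. For $\gamma\in\Gamma_L(\Lambda)$, Corollary~\ref{corollary::even-filtration-simple} gives that $L_0(\Lambda-\gamma)$ occurs in $M\vert_{\even}$ with $m_{\Lambda-\gamma}\neq0$, so $\Lambda-\gamma$ is a weight of $M$. The identity then reads
\[
\bigl((\mu+2\rho,\mu)-(\Lambda+2\rho,\Lambda)\bigr)d(\mu)
=-2\sum_{\alpha\in\Delta^{+}}\sum_{k\geq1}(-1)^{(k+1)p(\alpha)}(\mu+k\alpha,\alpha)\,d(\mu+k\alpha).
\]

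Second, I show the coefficient is nonzero. Because $\Delta^{+}$ is $\omega$-adapted with some sign $\varepsilon$, Corollary~\ref{cor::Dirac_inequality_on_M} applies to the constituent $L_0(\mu)$, where $\mu\neq\Lambda$ since $\gamma\neq0$. It gives
\[
-\varepsilon\bigl((\mu+2\rho,\mu)-(\Lambda+2\rho,\Lambda)\bigr)>0.
\]
Hence $(\Lambda+2\rho,\Lambda)-(\mu+2\rho,\mu)\neq0$. Multiplying the identity by $-1$ and dividing by this scalar gives the stated formula exactly.

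The main point to verify is this nonvanishing step. Everything else is bookkeeping, namely checking that the signs in the identity match the stated formula after multiplying by $-1$. I would also note that the right-hand side involves only weights $\mu+k\alpha$ strictly above $\mu$, so the formula is genuinely recursive, though this is not needed for the statement itself.
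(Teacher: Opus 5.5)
Your proposal is correct and follows the paper's own justification. The paper also applies the super Freudenthal identity at $\mu=\Lambda-\gamma$, uses the strict Dirac inequality of Corollary~\ref{cor::Dirac_inequality_on_M} for the $\omega$-adapted system to see that $(\Lambda+2\rho,\Lambda)-(\mu+2\rho,\mu)\neq0$, and then divides; your sign check and your reading of $\Gamma_L(\Lambda)$ as the actual branching support (so that $\Lambda-\gamma\in\mathcal P_M$ and the corollary applies) are exactly the needed bookkeeping.
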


\begin{remark}
Clearly, $d(\Lambda)=\dim L(\Lambda)^{\Lambda}=1$.
\end{remark}

\begin{example} \label{ex::easiest_example}
We begin with the simplest example, namely $L(1,0\vert0)$ from Remark~\ref{rmk::Dirac_inequality_not_enough}, and illustrate how the preceding recursion is used. Recall that
\begin{equation*}
K(1,0\vert0)\big\vert_{\even}
\cong
L_0(1,0\vert0)
\oplus
L_0(0,0\vert1)
\oplus
L_0(1,-1\vert1)
\oplus
L_0(0,-1\vert2).
\end{equation*}
Hence, a priori, $\Gamma(\Lambda)\subseteq\{0,\epsilon_1-\delta_1,\epsilon_2-\delta_1,\epsilon_1+\epsilon_2-2\delta_1\}$. Since $L_0(1,-1\vert1)$ does not satisfy the strict Dirac inequality, it cannot occur in the unitary simple quotient, and therefore
\begin{equation*}
\Gamma(\Lambda)
\subseteq
\{0,\epsilon_1-\delta_1,\epsilon_1+\epsilon_2-2\delta_1\}.
\end{equation*}
Thus it remains to compute the dimensions of the weight spaces of weights $(0,0\vert1)$ and $(0,-1\vert2)$, that is, $d((0,0\vert1))$ and $d((0,-1\vert2))$.

Let $\mu=(0,0\vert1)=\delta_1$. Since $L(\Lambda)$ is a highest weight representation, $d(\mu+k\alpha)\neq0$ implies $\Lambda-\mu-k\alpha\in Q^+$. For $\alpha\in\Delta^+$, this leaves only $k=1$. Moreover, $\mu+(\epsilon_1-\epsilon_2)=(1,-1\vert1)$. This weight cannot occur in $L(\Lambda)$: by the above list of possible $\even$-constituents, the only constituent which could contain it is $L_0(1,-1\vert1)$, which has already been excluded by the strict Dirac inequality. Hence $d(1,-1\vert1)=0$. On the other hand,
\begin{equation*}
\mu+(\epsilon_1-\delta_1)=\Lambda,
\qquad
\mu+(\epsilon_2-\delta_1)=(0,1\vert0).
\end{equation*}
Clearly, $d(\Lambda)=1$, so it remains to determine $d((0,1\vert0))$. Set $\nu=(0,1\vert0)$. Applying the proposition to $\nu$, the only nonzero term on the right-hand side is $\nu+(\epsilon_1-\epsilon_2)=\Lambda$. Since
\begin{equation*}
-(\Lambda+2\rho,\Lambda)+(\nu+2\rho,\nu)=-2,
\qquad
(\Lambda,\epsilon_1-\epsilon_2)=1,
\end{equation*}
we obtain $d((0,1\vert0))=1$. Therefore, using the Dirac eigenvalue computed in Remark~\ref{rmk::Dirac_inequality_not_enough},
\begin{equation*}
-4d(\mu)
=
-2(\Lambda,\epsilon_1-\delta_1)d(\Lambda)
-2((0,1\vert0),\epsilon_2-\delta_1)d((0,1\vert0))
=
-4,
\end{equation*}
and hence $d((0,0\vert1))=1$.

It remains to determine $d((0,-1\vert2))$. Set $\mu=(0,-1\vert2)$. Since $\Lambda-\mu=(\epsilon_1-\delta_1)+(\epsilon_2-\delta_1)$, the only possible nonzero terms are $\mu+(\epsilon_2-\delta_1)=(0,0\vert1)$ and $\mu+2(\epsilon_2-\delta_1)=(0,1\vert0)$. Indeed, $\mu+(\epsilon_1-\epsilon_2)=\Lambda-2(\epsilon_2-\delta_1)$ is not a weight, while $\mu+(\epsilon_1-\delta_1)=\Lambda-(\epsilon_2-\delta_1)$ is not a weight of $L(\Lambda)$. Since
\[
-(\Lambda+2\rho,\Lambda)+(\mu+2\rho,\mu)=-6,
\]
the proposition, together with $d(0,0\vert1)=d(0,1\vert0)=1$, gives
\[
-6d(\mu)
=-2\bigl((\delta_1,\epsilon_2-\delta_1)-(\epsilon_2,\epsilon_2-\delta_1)\bigr)=0.
\]
Hence $d((0,-1\vert2))=0$.
\end{example}

\begin{example}\label{ex::oscillator_representation_dimensions}
We fix $p=q=1$ and the real form $\su(1,1\vert2)$ of $\sl(2\vert2)$. We consider the oscillator representation of $\sl(2\vert2)$ \cite{furutsu1991classification}, an infinite-dimensional unitary highest weight representation for this real form, with respect to the non-standard simple system $\Pi_{\nst}={\epsilon_1-\delta_1,\delta_1-\delta_2,\delta_2-\epsilon_2}$. The associated Weyl vector is
$
\rho
=
\left(
-\tfrac12,\tfrac12
\,\middle\vert\,
\tfrac12,-\tfrac12
\right).$
The oscillator representation
$L(\Lambda;\Delta_{\mathrm{nst}}^{+})$
has highest weight
\[
\Lambda
=
\left(
-\tfrac12,\tfrac12
\,\middle\vert\,
\tfrac12,-\tfrac12
\right).
\]
Thus
$
\Lambda+\rho=(-1,1\vert1,-1),
$ and $\Lambda$ is maximal atypical. Enumerate the positive odd roots as
\[
\alpha_1=\epsilon_1-\delta_1,\qquad
\alpha_2=\epsilon_1-\delta_2,\qquad
\alpha_3=\delta_1-\epsilon_2,\qquad
\alpha_4=\delta_2-\epsilon_2.
\]
Then every even constituent has a
highest weight $\Lambda-\sum_{j\in S}\alpha_j$ for some
$S\subseteq\{1,2,3,4\}$. We exclude a non-top candidate $\mu$ if it fails the strict Dirac
inequality (also referred to as Dirac gap) or if it is not a unitary highest weight for the even subalgebra.
For $\mu=(a,b\vert c,d)$ in this list, the noncompact condition is
automatic, since $a-b=-1-|S|<0$. The remaining even unitarity condition
is $c-d\in\mathbb Z_{\geq0}$.
The top weight is retained. We obtain 

\begin{center}
\small
\renewcommand{\arraystretch}{1.1}
\setlength{\tabcolsep}{5pt}
\begin{tabular}{|c|c|c|c|}
\hline
$S$
&
$\mu=\Lambda-\displaystyle\sum_{j\in S}\alpha_j$
&
Dirac gap
\\
\hline
$\varnothing$
& $(-\tfrac12,\tfrac12\vert\tfrac12,-\tfrac12)$
& $0$
\\
\hline
$\{2\}$
& $(-\tfrac32,\tfrac12\vert\tfrac12,\tfrac12)$
& $-4$
\\
\hline
$\{3\}$
& $(-\tfrac12,\tfrac32\vert-\tfrac12,-\tfrac12)$
& $-4$
\\
\hline
$\{1,2\}$
& $(-\tfrac52,\tfrac12\vert\tfrac32,\tfrac12)$
& $-6$
\\
\hline
$\{1,3\}$
& $(-\tfrac32,\tfrac32\vert\tfrac12,-\tfrac12)$
& $-6$
\\
\hline
$\{2,4\}$
& $(-\tfrac32,\tfrac32\vert\tfrac12,-\tfrac12)$
& $-6$
\\
\hline
$\{3,4\}$
& $(-\tfrac12,\tfrac52\vert-\tfrac12,-\tfrac32)$
& $-6$
\\
\hline
$\{1,2,3\}$
& $(-\tfrac52,\tfrac32\vert\tfrac12,\tfrac12)$
& $-12$
\\
\hline
$\{1,2,4\}$
& $(-\tfrac52,\tfrac32\vert\tfrac32,-\tfrac12)$
& $-8$
\\
\hline
$\{1,3,4\}$
& $(-\tfrac32,\tfrac52\vert\tfrac12,-\tfrac32)$
& $-8$
\\
\hline
$\{2,3,4\}$
& $(-\tfrac32,\tfrac52\vert-\tfrac12,-\tfrac12)$
& $-12$
\\
\hline
$\{1,2,3,4\}$
& $(-\tfrac52,\tfrac52\vert\tfrac12,-\tfrac12)$
& $-16$
\\
\hline
\end{tabular}
\end{center}

On the Cartan subalgebra of $\mathfrak{sl}(2\vert2)$, one has
$\alpha_1=\alpha_4$ and $\alpha_2=\alpha_3$. Therefore,
$\{2\}$ and $\{3\}$ give the same weight
$(-\tfrac32,\tfrac12\vert\tfrac12,\tfrac12)$,
the subsets $\{1,2\}$, $\{1,3\}$, $\{2,4\}$, and $\{3,4\}$ give the same weight
$(-\tfrac32,\tfrac32\vert\tfrac12,-\tfrac12)$,
the subsets $\{1,2,4\}$ and $\{1,3,4\}$ give the same weight
$(-\tfrac52,\tfrac32\vert\tfrac32,-\tfrac12)$,
and the subsets $\{1,2,3\}$ and $\{2,3,4\}$ give the same weight
$(-\tfrac52,\tfrac32\vert\tfrac12,\tfrac12)$.
Together with $\varnothing$ and $\{1,2,3,4\}$, these yield the six rows of the corrected table.

\begin{center}
\small
\setlength{\tabcolsep}{6pt}
\renewcommand{\arraystretch}{1.1}
\begin{tabular}{|c|c|c|}
\hline
$\mu$ & Dirac gap & $d(\mu)$ \\
\hline
$(-\tfrac12,\tfrac12\vert\tfrac12,-\tfrac12)$
& $0$ & $1$ \\
\hline
$(-\tfrac32,\tfrac12\vert\tfrac12,\tfrac12)$
& $-4$ & $2$ \\
\hline
$(-\tfrac32,\tfrac32\vert\tfrac12,-\tfrac12)$
& $-6$ & $1$  \\
\hline
$(-\tfrac52,\tfrac32\vert\tfrac32,-\tfrac12)$
& $-8$ & $0$  \\
\hline
$(-\tfrac52,\tfrac32\vert\tfrac12,\tfrac12)$
& $-12$ & $2$  \\
\hline
$(-\tfrac52,\tfrac52\vert\tfrac12,-\tfrac12)$
& $-16$ & $1$ \\
\hline
\end{tabular}
\end{center}
\end{example}

Set $a_{\nu,\mu}\coloneqq\dim L_0(\nu)^\mu$. If $L_0(\nu)$ is finite-dimensional, then $a_{\nu,\mu}$ is given by Kostant's multiplicity formula. More generally, for a possibly infinite-dimensional unitary highest weight representation $L_0(\nu)$, the weight multiplicities can be determined from the Enright--Willenbring character formula, which may be viewed as the corresponding extension to the unitary highest weight setting. Recall that $\kk^{\CC}$ denotes the complexification of a maximal compact subalgebra of $\even$, and recall the notation introduced in Section~\ref{subsec::maximal_compact_subalgebra}. Moreover, we denote by $P_{\bar0}$ the Kostant partition function for the positive even roots, which counts the number of ways a weight can be expressed as a nonnegative integral combination of roots in $\Delta_{\bar0}^{+}$.

\begin{lemma} Let $W_{\kk}$ denote the Weyl group of $\kk^{\CC}$, and let $l_{\kk}$ denote its length function. Moreover, let $\nu_{w}$ denote the unique
$\Delta_{c}^{+}$-dominant weight in the $W_{\kk}$-orbit of
$w(\nu+\rho_{\bar0})$. Then the multiplicity $a_{\nu,\mu}\coloneqq\dim L_0(\nu)^\mu$ is
\[
 a_{\nu,\mu}
 =
 \sum_{w\in W_\nu^{\kk}}
 \sum_{u\in W_{\kk}}
 (-1)^{\ell_\nu(w)+\ell_{\kk}(u)}
 P_{\bar{0}}
 \left(
 u(\nu_w+\rho_{c})
 -\rho_{c}-\mu
 \right).
\]
In particular, $a_{\nu,\nu}=1$.
\end{lemma}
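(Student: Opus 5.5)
The plan is to deduce the formula from the Enright--Willenbring character formula for unitary highest weight modules, combined with the Weyl character formula for $\kk^{\CC}$ and a branching from $\kk^{\CC}$-types to $\hh$-weights via the Kostant partition function of the noncompact positive roots. Write $\mathfrak p^{-}=\bigoplus_{\beta\in\Delta_{nc}^{+}}\gg^{-\beta}$.

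First, we recall the Enright--Willenbring resolution for the unitary highest weight module $L_0(\nu)$. Their theorem gives a BGG-type resolution of $L_0(\nu)$ by generalized Verma modules $N(\nu_w)=\UE(\even)\otimes_{\UE(\kk^{\CC}\oplus\mathfrak p^{+})}L_{\kk^{\CC}}(\nu_w)$, with $w$ ranging over $W_\nu^{\kk}$ and the term in homological degree $\ell_\nu(w)$. Here $\Psi_\nu$, $\Phi_\nu$, $W_\nu$, $W_\nu^{\kk}$ and $\ell_\nu$ are exactly the data introduced in Section~\ref{subsec::maximal_compact_subalgebra}. Taking Euler characteristics yields the formal character identity
\[
\operatorname{ch}L_0(\nu)=\sum_{w\in W_\nu^{\kk}}(-1)^{\ell_\nu(w)}\operatorname{ch}N(\nu_w).
\]
This step is purely a citation, but the main point to check is the identification of the highest weights. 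The highest $\kk^{\CC}$-type of the $w$-th term must be the $\Delta_c^+$-dominant representative of the orbit $W_{\kk}\,w(\nu+\rho_{\bar0})$, shifted appropriately by $\rho$'s. I expect matching the paper's convention ($\nu_w$ as the dominant element of $W_{\kk}w(\nu+\rho_{\bar0})$, then subtracting $\rho_c$ inside $P_{\bar0}$) against Enright--Willenbring's normalisation to be the main obstacle. I would handle it by comparing with the case $W_\nu^{\kk}=\{1\}$, the holomorphic discrete series range, where the formula must reduce to $\operatorname{ch}N(\nu)$.

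Second, we compute the character of each generalized Verma module. As a $\kk^{\CC}$-module, $N(\lambda)\cong S(\mathfrak p^{-})\otimes L_{\kk^{\CC}}(\lambda)$. By the Weyl character formula in Weyl numerator form,
\[
\operatorname{ch}L_{\kk^{\CC}}(\lambda)=\sum_{u\in W_{\kk}}(-1)^{\ell_{\kk}(u)}\frac{e^{u(\lambda+\rho_c)-\rho_c}}{\prod_{\beta\in\Delta_c^+}(1-e^{-\beta})}.
\]
Multiplying by $\operatorname{ch}S(\mathfrak p^{-})=\prod_{\beta\in\Delta_{nc}^+}(1-e^{-\beta})^{-1}$ combines the two denominators into $\prod_{\alpha\in\Delta_{\bar0}^+}(1-e^{-\alpha})^{-1}=\sum_\gamma P_{\bar0}(\gamma)e^{-\gamma}$. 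This uses $\Delta_{\bar0}^+=\Delta_c^+\sqcup\Delta_{nc}^+$, which holds for the adapted positive systems fixed above.

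Third, we extract the coefficient of $e^{\mu}$. This gives
\[
\dim N(\lambda)^\mu=\sum_{u\in W_{\kk}}(-1)^{\ell_{\kk}(u)}P_{\bar0}\bigl(u(\lambda+\rho_c)-\rho_c-\mu\bigr),
\]
and inserting this into the alternating sum from the first step yields the stated formula for $a_{\nu,\mu}$.

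Finally, for $a_{\nu,\nu}=1$ we argue directly rather than from the formula: $\nu$ is the highest weight of $L_0(\nu)$, so its weight space is one-dimensional. As a consistency check, the formula agrees. Only $w=1$, $u=1$ contributes $P_{\bar0}(0)=1$, since for every other pair the argument $u(\nu_w+\rho_c)-\rho_c-\nu$ fails to lie in $Q_{\bar0}^{+}$. This holds because every constituent other than the leading one has a strictly lower highest weight.
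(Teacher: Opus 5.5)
Your route is the same as the paper's. The paper's proof also quotes the Enright--Willenbring character identity, which is equivalent to your Euler characteristic of their resolution. It then substitutes Weyl's formula for $\kk^{\CC}$, merges the denominators into $\prod_{\alpha\in\Delta_{\bar0}^{+}}(1-e^{-\alpha})^{-1}$, and reads off coefficients via $P_{\bar0}$. Your steps 2 and 3 are correct for any $\Delta_c^{+}$-dominant integral $\lambda$.

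The gap is the normalization you postponed in step 1. It is not a matter of conventions: the check you propose fails for the formula as displayed. Since $\Delta_{nc}^{+}$ is $\kk^{\CC}$-stable, $\rho_{nc}$ is orthogonal to $\Delta_c$, so $\nu+\rho_{\bar0}$ is already $\Delta_c^{+}$-dominant. The stated definition therefore gives $\nu_1=\nu+\rho_{\bar0}$. For $W_\nu^{\kk}=\{1\}$ the displayed expression thus computes $\dim N(\nu+\rho_{\bar0})^{\mu}$, not $\dim N(\nu)^{\mu}$. Because the resolution begins with $N(\nu)\to L_0(\nu)$, its terms must be $N(\nu_w-\rho_{\bar0})$.

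Two concrete checks show the displayed formula is wrong:
\begin{itemize}
\item If $\even$ is compact (e.g.\ $pq=0$ for $\sl(m\vert n)$), then $W_\nu^{\kk}=\{1\}$ and $\rho_{\bar0}=\rho_c$. The formula gives $P_{\bar0}(u(\nu+2\rho_c)-\rho_c-\mu)$ instead of Kostant's $P_{\bar0}(u(\nu+\rho_c)-\rho_c-\mu)$.
\item For $\osp(1\vert 2)$ one has $\kk^{\CC}=\hh$ and $\Delta_{\bar0}^{+}=\{\alpha\}$ noncompact. Every unitary $\nu$ with $\langle\nu,\alpha^{\vee}\rangle<0$ has $\Phi_\nu=\emptyset$. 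The formula then returns $P_{\bar0}(\nu+\tfrac12\alpha-\mu)=0$ at every weight $\mu$ of $L_0(\nu)$, in particular $a_{\nu,\nu}=0$.
\end{itemize}

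The paper's own proof makes exactly the same slip, by inserting $\operatorname{ch}F_{\kk}(\nu_w)$ into the Enright--Willenbring formula. What your argument and the paper's actually prove is the identity with $\kk^{\CC}$-types $\nu_w-\rho_{\bar0}$. Using the $W_{\kk}$-invariance of $\rho_{nc}$, your step 3 then gives
\[
a_{\nu,\mu}=\sum_{w\in W_\nu^{\kk}}\sum_{u\in W_{\kk}}(-1)^{\ell_\nu(w)+\ell_{\kk}(u)}P_{\bar0}\bigl(u\nu_w-\rho_{\bar0}-\mu\bigr).
\]
Your closing check, that only $w=u=1$ contributes $P_{\bar0}(0)=1$, tacitly uses $\nu_1=\nu$. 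It holds for this corrected formula, but for the displayed one the $w=u=1$ term at $\mu=\nu$ is $P_{\bar0}(\rho_{\bar0})$. Your direct highest-weight argument does establish $a_{\nu,\nu}=1$. It does not follow from the formula as stated. The step you flagged as the main obstacle has to be resolved by correcting the statement, not by matching normalizations.
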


\begin{proof}
 By the Enright--Willenbring character formula \cite{Enright_Willenbring}, we have 
\[
 \operatorname{ch}L_0(\nu)
 =
 \frac{
 \displaystyle
 \sum_{w\in W_\nu^{\kk}}
 (-1)^{\ell_\nu(w)}
 \operatorname{ch}F_{\kk}(\nu_w)
 }{
 \displaystyle
 \prod_{\alpha\in\Delta_{\mathrm{nc}}^+}
 (1-e^{-\alpha})
 }.
\]
The Weyl character formula for $\kk^{\CC}$ gives
\[
 \operatorname{ch}F_{\kk^{\CC}}(\nu_w)
 =
 \frac{
 \displaystyle
 \sum_{u\in W_{\kk}}
 (-1)^{\ell_{\kk}(u)}
 e^{u(\nu_w+\rho_{c})-\rho_{c}}
 }{
 \displaystyle
 \prod_{\alpha\in\Delta_{c}^+}
 (1-e^{-\alpha})
 }.
\]
Substituting this expression into the Enright--Willenbring character formula yields
\[
 \operatorname{ch}L_0(\nu)
 =
 \sum_{w\in W_\nu^{\kk}}
 \sum_{u\in W_{\kk}}
 (-1)^{\ell_\nu(w)+\ell_{\kk}(u)}
 e^{u(\nu_w+\rho_{c})-\rho_{c}}
 \prod_{\alpha\in\Delta_{\bar0}^+}
 (1-e^{-\alpha})^{-1}.
\]
Expanding the product using $P_{\bar{0}}$ and comparing
the coefficients of $e^\mu$ proves the assertion.
\end{proof}

In the following, we sum over the full candidate set $\Gamma(\Lambda)$. For $\gamma\in\Gamma(\Lambda)$, let $m_{\Lambda-\gamma}\coloneqq[\HH\vert_{\even}:L_0(\Lambda-\gamma)]\in\mathbb Z_{\geq0}$, where we allow $m_{\Lambda-\gamma}=0$. Thus $\HH\vert_{\even}\cong\bigoplus_{\gamma\in\Gamma(\Lambda)}L_0(\Lambda-\gamma)^{\oplus m_{\Lambda-\gamma}}$, with summands of multiplicity zero understood to be omitted. The actual branching support is
\[
\Gamma_L(\Lambda)
\coloneqq
\{\gamma\in\Gamma(\Lambda):m_{\Lambda-\gamma}\neq0\}.
\]
The super Freudenthal identity is initially formulated for
\(\mu\in\mathcal P_{\HH}\) and determines the corresponding weight
multiplicities \(d(\mu)=\dim \HH^\mu\). Recall that we extend this notation to all
$\gamma\in\Gamma(\Lambda)$ by declaring
$
d(\Lambda-\gamma)\coloneqq0$ whenever $\Lambda-\gamma\notin\mathcal P_{\HH}$.
With this convention, the super Freudenthal recursion may be applied
uniformly to every candidate weight \(\Lambda-\gamma\), with
\(\gamma\in\Gamma(\Lambda)\).

We obtain the following recursive formula for the branching multiplicities, which works uniformly for arbitrary atypicality.

\begin{theorem}\label{thm::branching_recursion}
Let $\HH=L(\Lambda;\Delta^+)$ be unitary with respect to an $\omega$-adapted positive system, and write $\HH\vert_{\even}\cong\bigoplus_{\mu\in\Lambda-\Gamma(\Lambda)}L_0(\mu)^{\oplus m_\mu}$. Then $m_\Lambda=1$, and for every $\mu\in(\Lambda-\Gamma(\Lambda))\setminus\{\Lambda\}$,
\begin{equation*}
m_\mu
=
d(\mu)-\sum_{\substack{\nu\in\Lambda-\Gamma(\Lambda)\\ \nu>\mu}}m_\nu a_{\nu,\mu}
=
\sum_{\substack{\nu\in\Lambda-\Gamma(\Lambda)\\ \nu>\mu}}
B_{\mu,\nu}(\Lambda)m_\nu,
\end{equation*}
where
\begin{equation*}
B_{\mu,\nu}(\Lambda)
=
\frac{2\displaystyle\sum_{\alpha\in\Delta^+}\sum_{k\geq1}
(-1)^{(k+1)p(\alpha)}
(\mu+k\alpha,\alpha)a_{\nu,\mu+k\alpha}}
{(\Lambda+2\rho,\Lambda)-(\mu+2\rho,\mu)}
-a_{\nu,\mu}.
\end{equation*}
\end{theorem}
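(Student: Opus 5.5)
The plan is to compare weight multiplicities on both sides of the $\even$-decomposition and then substitute the super Freudenthal recursion. By Corollary~\ref{corollary::even-filtration-simple}, $\HH\vert_{\even}$ is a finite direct sum of the $L_0(\nu)$ with $\nu\in\Lambda-\Gamma(\Lambda)$. Taking the $\mu$-weight space on both sides gives
\[
d(\mu)=\sum_{\nu\in\Lambda-\Gamma(\Lambda)} m_\nu\, a_{\nu,\mu}.
\]
Since $a_{\nu,\mu}\neq0$ forces $\mu\leq\nu$ in the even order, hence in $Q^+$, only $\nu\geq\mu$ contribute. The lemma on the Enright--Willenbring formula gives $a_{\mu,\mu}=1$, so the system is unitriangular, and we obtain the first equality
\[
m_\mu=d(\mu)-\sum_{\nu>\mu}m_\nu a_{\nu,\mu}.
\]
For $\mu=\Lambda$, the same identity with $d(\Lambda)=1$ and no $\nu>\Lambda$ gives $m_\Lambda=1$. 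Candidates $\mu$ that are not weights of $\HH$ cause no trouble, since then $d(\mu)=0$ and the identity still holds with $m_\mu=0$.

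For the second equality, I would express $d(\mu)$ through Proposition~\ref{prop::Freudenthal_identity}. Fix $\mu\in(\Lambda-\Gamma(\Lambda))\setminus\{\Lambda\}$. By the definition of $\Gamma(\Lambda)$, i.e.\ the strict Dirac inequality of Corollary~\ref{cor::Dirac_inequality_on_M} for adapted systems, the coefficient $(\Lambda+2\rho,\Lambda)-(\mu+2\rho,\mu)$ is nonzero. We can therefore divide and obtain
\[
d(\mu)=\frac{2\sum_{\alpha\in\Delta^+}\sum_{k\ge1}(-1)^{(k+1)p(\alpha)}(\mu+k\alpha,\alpha)\,d(\mu+k\alpha)}{(\Lambda+2\rho,\Lambda)-(\mu+2\rho,\mu)}.
\]
One subtlety must be checked here: the Freudenthal identity is stated only for $\mu\in\mathcal P_{\HH}$. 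If $\mu\notin\mathcal P_{\HH}$, the trace argument in its proof still applies, because $\tr_{M^\mu}$ of anything is $0$ while the right-hand side telescopes to the same expression. So I would either re-run that proof verbatim or note that both sides vanish.

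Next, substitute $d(\mu+k\alpha)=\sum_\nu m_\nu a_{\nu,\mu+k\alpha}$ for each weight $\mu+k\alpha$. Here $a_{\nu,\mu+k\alpha}\neq0$ requires $\nu\geq\mu+k\alpha>\mu$, so only $\nu>\mu$ survive. Interchanging the finite sums over $\nu$ and over $(\alpha,k)$ expresses $d(\mu)$ as $\sum_{\nu>\mu} m_\nu$ times the first term of $B_{\mu,\nu}(\Lambda)$. Subtracting $\sum_{\nu>\mu}m_\nu a_{\nu,\mu}$ then yields exactly $\sum_{\nu>\mu}B_{\mu,\nu}(\Lambda)m_\nu$.

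The step I expect to be the main obstacle is the finiteness and bookkeeping. Each sum over $k$ is finite since $\mu+k\alpha\leq\Lambda$. The set $\Lambda-\Gamma(\Lambda)$ may be infinite outside $\sl(m\vert n)$, but only the finitely many $\nu$ with $\mu<\nu\leq\Lambda$ enter, so all interchanges are legitimate. A further point to watch is that $a_{\nu,\mu+k\alpha}$ must count weights $\mu+k\alpha$ that need not themselves lie in $\Lambda-\Gamma(\Lambda)$; the decomposition identity for $d$ holds at every weight, so this is harmless.
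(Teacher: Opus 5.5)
Your proposal is correct and follows the paper's proof essentially step for step. The unitriangular identity $d(\mu)=\sum_\nu m_\nu a_{\nu,\mu}$ gives the first equality. For the second, you invert the super Freudenthal identity, whose denominator is nonzero by the strict Dirac inequality built into $\Gamma(\Lambda)$, and substitute $d(\mu+k\alpha)=\sum_{\nu>\mu}m_\nu a_{\nu,\mu+k\alpha}$.

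The differences are minor. You get $m_\Lambda=1$ from the triangular identity with $d(\Lambda)=1$, rather than from the $\even$-module generated by the highest weight vector. You also explicitly justify applying the Freudenthal identity at candidate weights outside $\mathcal P_{\HH}$, which the paper handles only by the convention $d(\Lambda-\gamma)=0$.
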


\begin{proof}
The highest weight space $\HH_{\Lambda}$ is one-dimensional. Since the restriction of $\HH$ to $\even$ contains the $\even$-highest weight representation generated by a highest weight vector of $\HH$, it follows that $m_{\Lambda}=1$.

For any weight $\eta\in \mathcal{P}_{\HH}$, the decomposition
$
\HH\big\vert_{\even}
\cong
\bigoplus_{\nu\in\Lambda-\Gamma(\Lambda)}
L_0(\nu)^{\oplus m_\nu}
$
gives
\begin{equation}\label{eq::weight_multiplicity_branching}
d(\eta)
=
\sum_{\nu\in\Lambda-\Gamma(\Lambda)}
m_\nu a_{\nu,\eta}.
\end{equation}
If $a_{\nu,\eta}\neq0$, then $\eta$ is a weight of $L_0(\nu)$ and hence $\nu-\eta$ is a nonnegative integral combination of positive even roots. In particular, $\nu\geq\eta$. Taking $\eta=\mu$ and using $a_{\mu,\mu}=1$, we therefore obtain
\[
d(\mu)
=
m_\mu+
\sum_{\substack{\nu\in\Lambda-\Gamma(\Lambda)\\ \nu>\mu}}
m_\nu a_{\nu,\mu},
\]
and consequently
\begin{equation}\label{eq::branching_triangular_recursion}
m_\mu
=
d(\mu)-
\sum_{\substack{\nu\in\Lambda-\Gamma(\Lambda)\\ \nu>\mu}}
m_\nu a_{\nu,\mu}.
\end{equation}

Now let $\mu\neq\Lambda$. By the strict Dirac inequality,
$
(\Lambda+2\rho,\Lambda)-(\mu+2\rho,\mu)\gtrless 0,
$
so the denominator below is nonzero. Applying the preceding proposition gives
\[
d(\mu)
=
\frac{2}{(\Lambda+2\rho,\Lambda)-(\mu+2\rho,\mu)}
\sum_{\alpha\in\Delta^+}\sum_{k\geq1}
(-1)^{(k+1)p(\alpha)}
(\mu+k\alpha,\alpha)d(\mu+k\alpha).
\]

For every $\alpha\in\Delta^+$ and $k\geq1$, \eqref{eq::weight_multiplicity_branching} yields
\[
d(\mu+k\alpha)
=
\sum_{\nu\in\Lambda-\Gamma(\Lambda)}
m_\nu a_{\nu,\mu+k\alpha}= \sum_{\substack{\nu\in\Lambda-\Gamma(\Lambda)\\ \nu>\mu}}
m_\nu a_{\nu,\mu+k\alpha}.
\]
Substituting this into the preceding formula and interchanging the finite sums gives
\[
d(\mu)
=
\sum_{\substack{\nu\in\Lambda-\Gamma(\Lambda)\\ \nu>\mu}}
\frac{2\displaystyle\sum_{\alpha\in\Delta^+}\sum_{k\geq1}
(-1)^{(k+1)p(\alpha)}
(\mu+k\alpha,\alpha)a_{\nu,\mu+k\alpha}}
{(\Lambda+2\rho,\Lambda)-(\mu+2\rho,\mu)}
\,m_\nu.
\]
Combining this with \eqref{eq::branching_triangular_recursion}, we obtain
\[
m_\mu
=
\sum_{\substack{\nu\in\Lambda-\Gamma(\Lambda)\\ \nu>\mu}}
\left(
\frac{2\displaystyle\sum_{\alpha\in\Delta^+}\sum_{k\geq1}
(-1)^{(k+1)p(\alpha)}
(\mu+k\alpha,\alpha)a_{\nu,\mu+k\alpha}}
{(\Lambda+2\rho,\Lambda)-(\mu+2\rho,\mu)}
-a_{\nu,\mu}
\right)m_\nu,
\]
which is precisely
$
m_\mu
=
\sum_{\substack{\nu\in\Lambda-\Gamma(\Lambda)\\ \nu>\mu}}
B_{\mu,\nu}(\Lambda)m_\nu$.
\end{proof}

\begin{example}
Using the weight multiplicities computed for the unitary highest weight
representations $L((1,0\vert 0);\Delta_{\st}^{+})$ of $\sl(2\vert 1)$ in
Example~\ref{ex::easiest_example} and
$L((-\tfrac12,\tfrac12\vert\tfrac12,-\tfrac12);\Delta_{\nst}^{+})$ of
$\sl(2\vert 2)$ in
Example~\ref{ex::oscillator_representation_dimensions}, the recursive
branching formula yields
\[
L((1,0\vert 0);\Delta_{\st}^{+})\vert_{\even}
\cong
L_{0}(1,0\vert 0)\oplus L_{0}(0,0\vert 1),
\]
and
\[
L((-\tfrac12,\tfrac12\vert\tfrac12,-\tfrac12);\Delta_{\nst}^{+})\vert_{\even}
\cong
L_{0}(-\tfrac12,\tfrac12\vert\tfrac12,-\tfrac12)
\oplus
L_{0}(-\tfrac32,\tfrac12\vert\tfrac12,\tfrac12)^{\oplus 2}.
\]
\end{example}

\begin{corollary}\label{cor::branching_support}
Let $\HH=L(\Lambda;\Delta^+)$ be unitary with respect to an $\omega$-adapted positive system. Then
\[
\Gamma_L(\Lambda)
=
\{0\}
\cup
\{
\gamma\in\Gamma(\Lambda)\setminus\{0\}:
d(\Lambda-\gamma)
\neq
\sum_{\substack{\nu\in\Lambda-\Gamma(\Lambda)\\
\nu>\Lambda-\gamma}}
m_\nu a_{\nu,\Lambda-\gamma}
\}.
\]
Equivalently, for $\gamma\in\Gamma(\Lambda)\setminus\{0\}$,
\[
\gamma\in\Gamma_L(\Lambda)
\quad\Longleftrightarrow\quad
\sum_{\substack{\nu\in\Lambda-\Gamma(\Lambda)\\
\nu>\Lambda-\gamma}}
B_{\Lambda-\gamma,\nu}(\Lambda)m_\nu\neq0.
\]
\end{corollary}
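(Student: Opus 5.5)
The corollary is a direct reformulation of Theorem~\ref{thm::branching_recursion}. The plan is to read the support condition off the triangular relation and then substitute the Freudenthal expression for $d(\mu)$.

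First, we have $0\in\Gamma_L(\Lambda)$ because $m_\Lambda=1$ by Theorem~\ref{thm::branching_recursion}. Next, fix $\gamma\in\Gamma(\Lambda)\setminus\{0\}$ and put $\mu\coloneqq\Lambda-\gamma$. By definition, $\gamma\in\Gamma_L(\Lambda)$ if and only if $m_\mu\neq0$. The first equality in Theorem~\ref{thm::branching_recursion} gives
\[
m_\mu=d(\mu)-\sum_{\substack{\nu\in\Lambda-\Gamma(\Lambda)\\ \nu>\mu}}m_\nu a_{\nu,\mu}.
\]
This identity is derived from \eqref{eq::weight_multiplicity_branching} together with $a_{\mu,\mu}=1$ and the vanishing of $a_{\nu,\mu}$ unless $\nu\geq\mu$. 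Hence $m_\mu\neq0$ exactly when $d(\mu)$ differs from the displayed sum, which is the first description of $\Gamma_L(\Lambda)$.

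One point needs care: the convention $d(\mu)=0$ for $\mu\notin\mathcal P_{\HH}$. If $\mu$ is not a weight of $\HH$, then no $L_0(\nu)$ with $m_\nu\neq0$ contains $\mu$. Thus both sides of \eqref{eq::weight_multiplicity_branching} vanish, and the triangular identity still holds, giving $m_\mu=0$ as it should. So I would first check that \eqref{eq::weight_multiplicity_branching} holds for every $\eta\in\hh^{\ast}$, not only for $\eta\in\mathcal P_{\HH}$.

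For the equivalent formulation I would use the second equality of Theorem~\ref{thm::branching_recursion}, namely $m_\mu=\sum_{\nu>\mu}B_{\mu,\nu}(\Lambda)m_\nu$. This equality comes from substituting the inverted super Freudenthal identity for $d(\mu)$. The inversion needs a nonzero denominator $(\Lambda+2\rho,\Lambda)-(\mu+2\rho,\mu)$, which is guaranteed by the strictness clause in the definition of $\Gamma(\Lambda)$ for $\gamma\neq0$. The main obstacle is that Proposition~\ref{prop::Freudenthal_identity} is stated only for $\mu\in\mathcal P_{\HH}$, while the corollary quantifies over all of $\Gamma(\Lambda)$. I would handle this by observing that the proof of Proposition~\ref{prop::Freudenthal_identity} is a trace computation on $M^\mu$, which also works when $M^\mu=0$. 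In that case the left side is zero, and each trace $\tr_{M^\mu}(f_\alpha e_\alpha)$ vanishes as well, so the right-hand telescoped sums are zero too. The recursion therefore holds uniformly with the convention $d=0$. Combining the two equalities gives that $\gamma\in\Gamma_L(\Lambda)$ if and only if $\sum_{\nu>\Lambda-\gamma}B_{\Lambda-\gamma,\nu}(\Lambda)m_\nu\neq0$.
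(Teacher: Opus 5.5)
Your proposal is correct and follows the same route as the paper, which treats this corollary as an immediate consequence of Theorem~\ref{thm::branching_recursion}: since $\gamma\in\Gamma_L(\Lambda)$ exactly when $m_{\Lambda-\gamma}\neq0$, both displayed criteria are simply the two expressions for $m_{\Lambda-\gamma}$ given there. Your additional checks make explicit what the paper only asserts through its convention $d(\Lambda-\gamma)=0$. These are that the triangular identity and the super Freudenthal identity hold with both sides zero at candidate weights $\Lambda-\gamma\notin\mathcal P_{\HH}$, and that the denominator is nonzero by the strict inequality built into the definition of $\Gamma(\Lambda)$.
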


\begin{corollary}
 Let $\HH=L(\Lambda;\Delta^{+})$ be a unitary highest weight representation. Then for all $\gamma \in \Gamma_{L}(\Lambda)$ one has
 \[
 0\leq  m_{\Lambda-\gamma} \leq d(\Lambda-\gamma).
 \]
\end{corollary}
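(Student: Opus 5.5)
The lower bound $m_{\Lambda-\gamma}\geq 0$ holds by definition, since $m_{\Lambda-\gamma}=[\HH\vert_{\even}:L_0(\Lambda-\gamma)]$ counts the copies of $L_0(\Lambda-\gamma)$ in the direct sum decomposition of Corollary~\ref{corollary::even-filtration-simple}. The whole proof therefore reduces to the upper bound, and for that I plan to compare weight spaces directly.

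For the upper bound, fix $\gamma\in\Gamma_L(\Lambda)$ and set $\mu\coloneqq\Lambda-\gamma$. I will use identity~\eqref{eq::weight_multiplicity_branching} from the proof of Theorem~\ref{thm::branching_recursion}:
\[
d(\mu)=\sum_{\nu\in\Lambda-\Gamma(\Lambda)} m_\nu\, a_{\nu,\mu}.
\]
This identity follows from the complete reducibility of $\HH\vert_{\even}$, which holds because $\HH$ is unitary, by taking the $\mu$-weight space of both sides of the decomposition. Note that it does not require $\Delta^+$ to be $\omega$-adapted: only the decomposition is used, not the Dirac recursion. This matters because the present corollary drops the adaptedness assumption.

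Every term in this sum is a product of nonnegative integers, since $m_\nu\in\ZZ_{\geq0}$ and $a_{\nu,\mu}=\dim L_0(\nu)^\mu\geq0$. Hence $d(\mu)$ is bounded below by the single term with $\nu=\mu$. This term occurs because $\mu\in\Lambda-\Gamma(\Lambda)$. Since $a_{\mu,\mu}=1$, as the highest weight space of $L_0(\mu)$ is one-dimensional, the term equals $m_\mu$, and so $d(\mu)\geq m_\mu$. Equivalently, in the language of \eqref{eq::branching_triangular_recursion}, the quantity $d(\mu)-m_\mu=\sum_{\nu>\mu}m_\nu a_{\nu,\mu}$ is nonnegative.

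No step is a genuine obstacle. The only point needing care is to justify the weight-space identity without invoking adaptedness. I will do this by noting that weight spaces of $\HH$ are finite-dimensional and that the decomposition into $\even$-constituents is a finite direct sum, so taking $\mu$-weight spaces commutes with the sum.
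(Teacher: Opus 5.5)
Your proposal is correct and is essentially the paper's argument: both read off $d(\mu)=m_\mu+\sum_{\nu>\mu}m_\nu a_{\nu,\mu}\geq m_\mu$ from the decomposition of $\HH\vert_{\even}$, using $a_{\mu,\mu}=1$ and the nonnegativity of all $m_\nu$ and $a_{\nu,\mu}$.

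The one difference is that the paper first passes to an $\omega$-adapted positive system via Proposition~\ref{prop::transport_of_unitarity}. You correctly observe that this step is unnecessary, since the identity uses only the complete reducibility of $\HH\vert_{\even}$ and not the Dirac recursion. One small point: for a non-adapted system you should index the sum by $\Lambda-\Gamma_L(\Lambda)$ rather than $\Lambda-\Gamma(\Lambda)$, because the latter is defined using the adapted sign $\varepsilon$.
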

\begin{proof}
 By Proposition~\ref{prop::transport_of_unitarity}, we can assume we work with an $\omega$-adapted positive system. The triangular recursion and the nonnegativity of all branching multiplicities $m_\nu$ and even weight multiplicities $a_{\nu,\mu}$ imply
\[
m_\mu
=
d(\mu)
-
\sum_{\substack{\nu\in \Lambda-\Gamma(\Lambda)\\ \nu>\mu}}
m_\nu a_{\nu,\mu}
\leq d(\mu).
\]
Since $m_\mu\geq0$, we obtain $0\leq m_\mu\leq d(\mu)$. 
\end{proof}

We next derive a closed expression for the branching multiplicities by iterating the preceding recursion along chains in the partially ordered set $\Lambda-\Gamma(\Lambda)$.

\begin{theorem}\label{thm::closed_formula}
Let $\HH=L(\Lambda;\Delta^+)$ be unitary with respect to an $\omega$-adapted positive system. Then, for every
$\mu\in(\Lambda-\Gamma(\Lambda))\setminus\{\Lambda\}$,
\[
m_\mu = 
\sum_{r\geq0}(-1)^r
\sum_{\substack{
\mu=\mu_0<\mu_1<\cdots<\mu_r\\
\mu_i\in \Lambda-\Gamma(\Lambda)
}}
d(\mu_r)\prod_{j=1}^r a_{\mu_j,\mu_{j-1}}
=
\sum_{r\geq1}
\ \sum_{\substack{
\mu=\mu_0<\mu_1<\cdots<\mu_r=\Lambda\\
\mu_i\in\Lambda-\Gamma(\Lambda)
}}
\prod_{j=0}^{r-1}
B_{\mu_j,\mu_{j+1}}(\Lambda).
\]
\end{theorem}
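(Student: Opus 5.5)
Both identities come from solving the triangular system of Theorem~\ref{thm::branching_recursion} over the finite poset $P\coloneqq\Lambda-\Gamma(\Lambda)$, by induction on the number of elements of $P$ strictly above $\mu$. For $\mathfrak{sl}(m\vert n)$ this is finite because $\Gamma$ is finite. For $\osp(m\vert 2n)$ and $F(4)$ we restrict to the finite subset cut out by Lemma~\ref{lemm::finite_stopping_branching}. On the complement $m_\nu=0$, and the chains through such $\nu$ will have to be handled; I return to this at the end.

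For the first (alternating) formula, use the identity $d(\eta)=\sum_{\nu\in P}m_\nu a_{\nu,\eta}$ with $a_{\nu,\nu}=1$. In matrix form this reads $d=A^{T}m$, where $A=(a_{\nu,\eta})$ is unitriangular with respect to the partial order. Write $A=I+N$, where $N$ is supported on strictly decreasing pairs $\nu>\eta$. Since $P$ is finite, $N$ is nilpotent, so $(I+N)^{-1}=\sum_{r\geq0}(-1)^rN^r$ is a finite sum. The entry of $N^r$ connecting $\mu_r$ to $\mu_0$ is $\sum_{\mu_0<\mu_1<\cdots<\mu_r}\prod_{j}a_{\mu_j,\mu_{j-1}}$. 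Hence
\[
m_\mu=\sum_{r\geq0}(-1)^r\sum_{\mu=\mu_0<\cdots<\mu_r}d(\mu_r)\prod_{j=1}^{r}a_{\mu_j,\mu_{j-1}}.
\]
Equivalently, one can substitute the first equality of Theorem~\ref{thm::branching_recursion} into itself repeatedly; the substitution terminates because chains in $P$ have bounded length.

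For the second formula, use the second equality $m_\mu=\sum_{\nu>\mu}B_{\mu,\nu}(\Lambda)m_\nu$, valid for $\mu\neq\Lambda$, together with $m_\Lambda=1$. Unrolling this recursion expresses $m_\mu$ as a sum over chains $\mu=\mu_0<\mu_1<\cdots$ of products of $B$'s. Each chain continues until it reaches $\Lambda$, where the recursion stops with the value $m_\Lambda=1$. Chains cannot end at any other point, because every $\nu\neq\Lambda$ is itself re-expanded. Every $\nu\in P$ satisfies $\nu\leq\Lambda$, and since $P$ is finite the unrolling terminates. This gives the sum over chains ending at $\Lambda$ with $r\geq1$.

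I will verify the step cleanly by strong induction along a linear extension of the order, descending from $\Lambda$. In the induction step I substitute the hypothesis for each $m_\nu$ with $\nu>\mu$, $\nu\neq\Lambda$. The term $\nu=\Lambda$ contributes the chain of length one, $B_{\mu,\Lambda}(\Lambda)$. The remaining terms contribute $B_{\mu,\nu}$ times the chains from $\nu$ to $\Lambda$. Concatenating yields a bijection with the chains from $\mu$ to $\Lambda$ of length at least two.

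The main obstacle is finiteness and well-definedness in the $\osp$ and $F(4)$ cases, where $\Gamma(\Lambda)$ is infinite. There I replace $P$ by its finite subset of height at most $N$ from Lemma~\ref{lemm::finite_stopping_branching}(b). I need to check that the recursion of Theorem~\ref{thm::branching_recursion} remains valid when restricted to this subset. This holds because $m_\nu=0$ for $\nu$ outside it, so those terms vanish in both the $d$-formula and the $B$-formula. Moreover, only $\nu>\mu$ enter the sums, so the sums, and hence all chain sums, become finite.
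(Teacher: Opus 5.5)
Your algebra matches the paper's. Inverting the unitriangular system $d(\eta)=\sum_{\nu}m_\nu a_{\nu,\eta}$, which is the same as iterating the first recursion of Theorem~\ref{thm::branching_recursion}, gives the alternating chain sum. Unrolling $m_\mu=\sum_{\nu>\mu}B_{\mu,\nu}(\Lambda)m_\nu$ down to $m_\Lambda=1$ gives the product formula. The gap is in how you secure finiteness for $\osp(m\vert 2n)$ and $F(4)$, where $\Lambda-\Gamma(\Lambda)$ is infinite.

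Truncating to the set $P_N$ of weights $\Lambda-\gamma$ with $\operatorname{ht}(\gamma)\leq N$ gives a statement only for $\mu\in P_N$. The theorem, however, is asserted for every $\mu\in(\Lambda-\Gamma(\Lambda))\setminus\{\Lambda\}$, including weights of height greater than $N$. Your justification that the truncation is harmless (``$m_\nu=0$ outside, so those terms vanish'') also does not apply where you need it. It is valid for the recursions, but not for the closed formulas. After unrolling, the summands $d(\mu_r)\prod_j a_{\mu_j,\mu_{j-1}}$ and $\prod_j B_{\mu_j,\mu_{j+1}}(\Lambda)$ contain no factor $m_\nu$, so chains through weights with $m_\nu=0$ are not killed. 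For $\mu\in P_N$ the problem happens not to arise, because $\nu>\mu$ forces $\operatorname{ht}(\Lambda-\nu)<\operatorname{ht}(\Lambda-\mu)\leq N$. You would have to say this explicitly, and it still leaves every $\mu\notin P_N$ unproved.

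The missing idea is local finiteness. This is the argument the paper uses, and it makes Lemma~\ref{lemm::finite_stopping_branching} unnecessary. Every chain $\mu=\mu_0<\mu_1<\cdots<\mu_r$ in $\Lambda-\Gamma(\Lambda)$ lies in the interval $\{\nu:\mu\leq\nu\leq\Lambda\}$. Writing $\nu=\Lambda-\gamma$ gives $0\leq\gamma\leq\Lambda-\mu$ in $Q^{+}$. Hence the simple-root coordinates of $\gamma$ are bounded by those of $\Lambda-\mu$, and the interval is finite.

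The triangular recursion holds for every $\mu$ in the full candidate set, with zero multiplicities allowed. So for each fixed $\mu$ it unrolls into finitely many terms. Your Neumann series $\sum_r(-1)^rN^r$ has finite entries, and your descending induction on the number of elements above $\mu$ is well founded, uniformly for all three families. With this replacement, the rest of your argument goes through, including the concatenation bijection for the $B$-chains.
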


\begin{proof} The first equality follows from the triangular recursion
\[
m_\mu
=
d(\mu)
-
\sum_{\substack{\nu\in \Lambda-\Gamma(\Lambda)\\ \nu>\mu}}
m_\nu a_{\nu,\mu}.
\]
Substituting the same recursion for each $m_{\mu_1}$ occurring on the right-hand side gives
\[
\begin{aligned}
m_\mu
&=
d(\mu)
-
\sum_{\substack{\mu_1\in \Lambda-\Gamma(\Lambda)\\ \mu_1>\mu}}
a_{\mu_1,\mu}
\left(
d(\mu_1)
-
\sum_{\substack{\mu_2\in \Lambda-\Gamma(\Lambda)\\ \mu_2>\mu_1}}
m_{\mu_2}a_{\mu_2,\mu_1}
\right)\\
&=
d(\mu)
-
\sum_{\substack{\mu<\mu_1\\ \mu_1\in \Lambda-\Gamma(\Lambda)}}
d(\mu_1)a_{\mu_1,\mu}
+
\sum_{\substack{\mu<\mu_1<\mu_2\\ \mu_1,\mu_2\in \Lambda-\Gamma(\Lambda)}}
m_{\mu_2}a_{\mu_2,\mu_1}a_{\mu_1,\mu}.
\end{aligned}
\]
Iterating this substitution, each step adds one further strict inequality in a chain $\mu=\mu_0<\mu_1<\cdots<\mu_r$, one additional factor $a_{\mu_j,\mu_{j-1}}$, and changes the sign. For fixed $\mu$, the set
$
\{\nu\in\Lambda-\Gamma(\Lambda):\mu\leq\nu\leq\Lambda\}
$
is finite. Indeed, writing $\nu=\Lambda-\gamma$ with $\gamma\in Q^{+}$, the condition
$\mu\leq\nu\leq\Lambda$ implies
$
0\leq\gamma\leq\Lambda-\mu.
$
Since $Q^{+}$ is generated by the finitely many simple roots, there are only finitely many such
$\gamma$. Hence there are no strictly increasing chains
\[
\mu=\mu_{0}<\mu_{1}<\cdots<\mu_{r}=\Lambda
\]
of arbitrarily large length, and the iteration terminates after finitely many steps. Hence there are no strictly increasing chains $\mu=\mu_0<\mu_1<\cdots<\mu_r=\Lambda$ of arbitrarily large length, and the iterated substitution terminates after finitely many steps. This yields
\[
m_\mu
=
\sum_{r\geq0}(-1)^r
\sum_{\substack{
\mu=\mu_0<\mu_1<\cdots<\mu_r\\
\mu_i\in \Lambda-\Gamma(\Lambda)
}}
d(\mu_r)\prod_{j=1}^r a_{\mu_j,\mu_{j-1}}.
\]
For the second equality, we start from 
\[
m_\mu
=
\sum_{\substack{\nu\in\Lambda-\Gamma(\Lambda)\\ \nu>\mu}}
B_{\mu,\nu}(\Lambda)m_\nu.
\]
We repeatedly substitute the same recursion for each $m_\nu$ with $\nu\neq\Lambda$.
Since $\{\nu\in\Lambda-\Gamma(\Lambda):\mu\leq\nu\leq\Lambda\}$ is finite and every substitution strictly increases the weight, the process terminates after finitely many steps. Every resulting term is indexed by a strictly increasing chain
\[
\mu=\mu_0<\mu_1<\cdots<\mu_r=\Lambda
\]
and contributes
\[
B_{\mu_0,\mu_1}(\Lambda)\cdots
B_{\mu_{r-1},\mu_r}(\Lambda)m_\Lambda.
\]
Since $m_\Lambda=1$, summing over all such chains gives the asserted formula.
\end{proof}

We formulated the unitary branching law $\gg\downarrow\even$ for
$\omega$-adapted positive systems. By means of odd reflections and
Proposition~\ref{prop::transport_of_unitarity}, this result can be
transported by a finite procedure to all positive systems relevant to
unitarity. Together, these results solve the unitary branching problem.
Alternatively, for non-adapted positive systems one may work directly
with the corrected Dirac operators of
Section~\ref{subsec::Dirac_unitarity_slmn} and
Section~\ref{subsec::corrected_Dirac}. The crucial ingredient is again
a recursive formula for the dimensions of weight spaces.

We illustrate this for $\slmn$ with the standard positive system.
Analogously to Proposition~\ref{prop::Freudenthal_identity}, one obtains
\begin{equation}
\begin{aligned}
&\Biggl(
-(\Lambda+2\rho^{\st},\Lambda)
+(\mu+2\rho^{\st},\mu)
+2\sum_{i=p+1}^{m}\sum_{a=1}^{n}
(\mu,\epsilon_i-\delta_a)
\Biggr)d(\mu)
\\
&\qquad=
2\sum_{\alpha\in\Delta_{\bar0}^{+}}\sum_{k\ge1}
(\mu+k\alpha,\alpha)d(\mu+k\alpha)
\\
&\qquad\quad
+2\sum_{i=1}^{p}\sum_{a=1}^{n}\sum_{k\ge1}
(-1)^{k-1}
\bigl(\mu+k(\epsilon_i-\delta_a),\epsilon_i-\delta_a\bigr)
d\bigl(\mu+k(\epsilon_i-\delta_a)\bigr)
\\
&\qquad\quad
-2\sum_{i=p+1}^{m}\sum_{a=1}^{n}\sum_{k\ge1}
(-1)^{k-1}
\bigl(\mu+k(\epsilon_i-\delta_a),\epsilon_i-\delta_a\bigr)
d\bigl(\mu+k(\epsilon_i-\delta_a)\bigr).
\end{aligned}
\end{equation}
The coefficient
\begin{equation}\label{eq::corrected_Dirac_inequality_slmn}
-(\Lambda+2\rho^{\st},\Lambda)
+(\mu+2\rho^{\st},\mu)
+2\sum_{i=p+1}^{m}\sum_{a=1}^{n}
(\mu,\epsilon_i-\delta_a)
\end{equation}
is the scalar arising from the corrected Dirac inequality. By
Theorem~\ref{thm::unitarity_standard_case} and Corollary~\ref{cor::finding_Lambda'}, it is strictly positive
for every $\even$-highest weight $\mu\neq\Lambda'$ occurring in
$L(\Lambda;\Delta_{\st}^{+})$, and vanishes at the unique weight $\Lambda'$
such that
$L(\Lambda;\Delta_{\st}^{+})\cong
 \Uppi^{\upepsilon}L(\Lambda';\Delta_{\nst}^{+})$.
For this exceptional weight, $d(\Lambda')=1$, and $\Lambda'$ can be
determined directly by Remark~\ref{rmk::determination_Lambda'}.
Consequently, the corrected Freudenthal identity determines the dimensions
of the weight spaces corresponding to all $\even$-highest weights occurring
in $L(\Lambda;\Delta_{\st}^{+})$, and hence yields, exactly as above,
recursive and closed formulas for the branching multiplicities.

\bibliography{literatur}
\bibliographystyle{alpha}
\end{document}